\documentclass[12pt]{amsart}
\usepackage[margin=1in,rmargin=1in]{geometry}
\usepackage{amsmath,latexsym,amssymb,amsthm,graphicx}
\usepackage{mathtools}
\usepackage{subfig}
\usepackage{xfrac}
\usepackage{fix-cm}
\usepackage{color,transparent}
\usepackage{verbatim}

\usepackage[bookmarks=true,%
    colorlinks=true,%
    linkcolor=blue,%
    citecolor=blue,%
    filecolor=blue,%
    menucolor=blue,%
    urlcolor=blue]{hyperref}
\usepackage{doi}
\usepackage{stmaryrd}
\usepackage{multirow}
\usepackage{float}
\usepackage{array}
\usepackage[shortlabels,inline]{enumitem}
\usepackage{pdfsync}
\usepackage{tikz}
\usepackage{tikz-cd}
\usepackage{float}
\usepackage[T1]{fontenc}
\usepackage[utf8]{inputenc}
\usepackage{hyphenat}
\usepackage{booktabs}

\newcommand*{\eprint}[1]{\url{#1}}
\newcommand{\ora}[1]{\overrightarrow{#1}}

\graphicspath{{mpdraws/}{draws/}}
\def\mathcenter#1{\vcenter{\hbox{$#1$}}}
\def\mfig#1{\mathcenter{\includegraphics{#1}}}

\setenumerate[1]{label=(\arabic*)}

\setcounter{tocdepth}{1}
\numberwithin{equation}{section}
\numberwithin{figure}{section}

\newtheorem{mainthm}{Theorem}

\newtheorem{theorem}[equation]{Theorem}
\newtheorem{lemma}[equation]{Lemma}

\newtheorem{corollary}[equation]{Corollary}
\newtheorem{proposition}[equation]{Proposition}

\newtheorem*{ex*}{Exercise}

\theoremstyle{definition}
\newtheorem{remark}[equation]{Remark}

\newtheorem{example}[equation]{Example}
\newtheorem{definition}[equation]{Definition}
\newtheorem{question}[equation]{Question}
\newtheorem{conjecture}[equation]{Conjecture}

\newtheorem{warning}[equation]{Warning}
\newtheorem{convention}[equation]{Convention}

\theoremstyle{plain}


\newcounter{parentThm}
\newenvironment{subtheorems}{%
  \refstepcounter{equation}%
  \setcounter{parentThm}{\value{equation}}%
  \setcounter{equation}{0}
  %
  %
}{%
  \setcounter{equation}{\value{parentThm}}%
}


\newcommand\Q{\ensuremath{\mathbb{Q}}}
\newcommand\R{\ensuremath{\mathbb{R}}}

\def\H{\ensuremath{\mathbb{H}}} 

\DeclareMathOperator{\Teich}{Teich}

\DeclareMathOperator{\ang}{ang}

\DeclareMathOperator{\tr}{tr}
\DeclareMathOperator{\diam}{diam}
\DeclareMathOperator{\EL}{EL}

\DeclareMathOperator{\Area}{Area}

\DeclareMathOperator{\id}{id}
\DeclareMathOperator{\gd}{gd}

\newcommand{\OneHalf}{{\textstyle \frac{1}{2}}}
\newcommand*{\wt}[1]{\widetilde{#1}}

\renewcommand{\epsilon}{\varepsilon}


\DeclareMathOperator{\SO}{\mathit{SO}}

\DeclareMathOperator{\PSL}{\mathit{PSL}}
\DeclareMathOperator{\SU}{\mathit{SU}}


\DeclarePairedDelimiter\abs{\lvert}{\rvert}
\DeclarePairedDelimiter\norm{\lVert}{\rVert}

\newcommand{\Simples}{\mathcal{S}}
\newcommand{\Curves}{\mathcal{C}}
\newcommand{\Meas}{\mathcal{M}}

\newcommand{\GC}{\mathcal{GC}}
\newcommand{\ML}{\mathcal{ML}}

\newcommand{\reducesto}{\mathrel{\searrow}}
\DeclareMathOperator{\MCG}{\mathit{MCG}}
\DeclareMathOperator{\supp}{supp}
\hyphenation{Teich-müller}

\usetikzlibrary{arrows}
\tikzset{
    labl/.style={anchor=south, rotate=90, inner sep=.5mm}
}
\tikzstyle{every picture}=[> = to]
\tikzset{cdlabel/.style={execute at begin node=$\scriptstyle,execute at end node=$}}
\tikzset{implication/.style={double equal sign distance, -implies}}
\tikzset{biimplication/.style={double equal sign distance, implies-implies}}

\begin{document}
\title{From curves to currents}
\author[Martínez-Granado]{Dídac Martínez-Granado}
\address{Department of Mathematics\\University of Luxembourg\\Av.\ de la Fonte 6, Esch-sur-Alzette, L-4364, Luxembourg}
\email{didac.martinezgranado@uni.lu}
\author[Thurston]{Dylan~P.~Thurston}
\address{Department of Mathematics\\
         Indiana University,
         Bloomington, Indiana 47405\\
         USA}
\email{dpthurst@iu.edu}
\begin{abstract}
  Many natural real-valued functions of closed curves are known to extend
  continuously to the larger space of geodesic currents. For instance,
  the extension of length with respect to a fixed hyperbolic
  metric was a motivating example for the development of geodesic
  currents.  We give a simple criterion on a curve function that
  guarantees a continuous extension to geodesic currents. The main condition of
  our criterion is the smoothing property, which has played a role in
  the study of systoles of translation lengths for Anosov
  representations. It is easy to see that our criterion is satisfied
  for almost all the known examples of
  continuous functions on geodesic currents, such as non-positively curved lengths or stable lengths for surface groups, while also applying to new examples like extremal length. We use this extension to obtain a new curve counting result for extremal length.
\end{abstract}

\maketitle

\tableofcontents

\section{Introduction}
Geodesic currents on surfaces are measures that realize a suitable
closure of the
space of weighted (multi-)curves on a surface. They were first introduced by Bonahon in
his seminal paper \cite{Bonahon86:EndsHyperbolicManifolds}. Many metric
structures can be embedded in the space of currents, such as
hyperbolic metrics \cite[Theorem~12]{Bonahon88:GeodesicCurrent} or
half-translation structures
\cite[Theorem~4]{DLR10:DegenerationFlatMetrics}. Thus, geodesic
currents allow one to treat curves and metric structures on surfaces
as the same type of object. Via this unifying framework, counting curves of a
given topological type and counting lattice points in the space of
deformations of geometric structures become the same problem
\cite[Main~Theorem]{RS19:CountingProblems}. Geodesic currents also
play a key step in the proof of rigidity of the marked length spectrum
for metrics, via an argument by Otal
\cite[Th\'eor\`eme~2]{Otal90:SpectreMarqueNegative}. Finally, they
provide a boundary of the Teichmüller space, in both the compact \cite[Proposition~17]{Bonahon88:GeodesicCurrent} and non-compact \cite[Theorem~2]{BS21:NoncompactBoundary} cases.
   
In this paper we consider the problem of extending continuously a function defined on the space of weighted multi-curves to its closure, the space of geodesic currents.
    
Previous work of Bonahon extended the notion of geometric intersection
number as a continuous function of two geodesic currents
\cite[Proposition~4.5]{Bonahon86:EndsHyperbolicManifolds}. This
allowed him to extend hyperbolic length to geodesic currents by
following the principle of realizing it as an intersection number with
a distinguished geodesic current
\cite[Proposition~14]{Bonahon88:GeodesicCurrent}.

The same principle using intersection numbers has been used by many
authors to extend length for many other metrics:
Otal for negatively curved Riemannian metrics
\cite[Proposition~3]{Otal90:SpectreMarqueNegative},
Croke-Fathi-Feldman for non-positively curved Riemannian metrics
\cite[Theorem~A]{CFF92:RigidityNonPosCurvedRiem},
Hersonsky-Paulin for negatively curved metrics with conical
singularities \cite[Theorem~A]{HP97:RigidityNegCurvedCone},
Bankovic-Leininger~for non-positively curved Euclidean cone metrics
\cite{BL17:FlatRigidity},
Duchin-Leininger-Rafi more explicitly for singular Euclidean structures
associated to quadratic differentials
\cite[Lemma~9]{DLR10:DegenerationFlatMetrics}, and
Erlandsson for word-length with respect to simple generating sets of
the fundamental group \cite[Theorem~1.2]{Erlandsson:WordLength}.

Another line of results on extending functions to geodesic currents
was also started by Bonahon, who
showed how to extend stable lengths to geodesic currents, not just for
surface groups but for general hyperbolic groups
\cite[Proposition~10]{Bon91:NegativelyCurvedGroups}.
This result was recently
improved by Erlandsson-Parlier-Souto \cite[Theorem~1.5]{EPS:CountingCurves}, who used the return map of the geodesic
flow to remove technical assumptions. These constructions
apply, for instance, to arbitrary Riemannian metrics and the stable
version of word-lengths for arbitrary generating sets.

The problem of extending functions to geodesic currents is interesting
in itself, since, by a result of Rafi and Souto reviewed in
Section~\ref{sec:counting}, it provides a way to
compute asymptotics of the number of curves of a fixed type with a bounded
``length'', for a notion of ``length'' that extends to currents
\cite{RS19:CountingProblems}. Their result builds on work by Mirzakhani \cite[Theorem~7.1]{Mir16:Counting} and Erlandsson-Souto \cite[Proposition~4.1]{ES16:Counting}.
Recently, Erlandsson and Souto have given a new argument to compute
these asymptotics \cite[Theorem~8.1]{ES20:GeodesicCount}.

Our main theorem gives a
simple criterion on functions defined on multi-curves that
guarantees they extend to geodesic currents.
Our result subsumes most of the previous extension results mentioned
above, and provides new extensions for other notions of ``length'', such as
extremal length, thus yielding counting asymptotics for them.
    
Our proof does not use Bonahon's principle on intersection numbers.
Although we drew some
inspiration from the dynamics of Erlandsson-Parlier-Souto
\cite{EPS:CountingCurves}, our techniques are distinct.

\subsection{Main results}
\label{subsec:mainresult}

\begin{table}
    \begin{tabular}{@{}ll@{}}
      \toprule
      Notation & Meaning \\
      \midrule
         $S$ &  topological surface \\
         $\Sigma$ & Riemannian surface \\
         $UT \Sigma$ & unit tangent bundle over $\Sigma$\\
         $\phi_t$ & geodesic flow on $UT \Sigma$ \\
         $\tau$ & cross-section to the geodesic flow \\
         $\psi$ & bump function on a cross-section \\
         $\Simples(S)$ & unoriented simple multi-curves \\
         $\Curves(S)$ & unoriented multi-curves \\
         $\Simples^+(S)$ & oriented simple multi-curves \\
         $\Curves^+(S)$ & oriented multi-curves \\
         $G(S)$ & unoriented geodesics on $\tilde{S}$ \\
         $G^+(S)$ & oriented geodesics on $\tilde{S}$ \\
         $\R\Simples(S)$ & weighted unoriented simple multi-curves \\
         $\R\Curves(S)$ & weighted unoriented multi-curves \\
         $\R\Simples^+(S)$ & weighted oriented simple multi-curves \\
         $\R\Curves^+(S)$ & weighted oriented multi-curves \\ 
         $\GC(S)$ & unoriented geodesic currents \\
         $\GC^+(S)$ & oriented geodesic currents \\
         $\gamma$ & concrete multi-curve on $S$ \\
         $C$ & multi-curve on $S$\\
      \bottomrule\addlinespace
    \end{tabular}
  \caption{Notation for the objects related to surfaces, curves, and
    geodesic currents.}\label{tab:object-notation}
\end{table}

We start by summarizing our main results. Complete definitions of the
terms are deferred to Section~\ref{sec:curves}.

\begin{definition}\label{def:properties}
  For $S$ a compact topological surface without boundary,
  let
  $f \colon \Curves^+(S) \to \R$ be a
  function defined on the space of oriented multi-curves,
  not-necessarily-simple oriented curves; see
  Definition~\ref{def:multi-curve}, and see
  Table~\ref{tab:object-notation} for a summary of notation.
  We will also refer to $f$ as a \emph{curve functional} for short.
  (\emph{Functional} means that it takes values in scalars; it is not
  assumed to be linear.)
  We will also refer to \emph{unoriented} or \emph{weighted} curve
  functionals for real-valued functions defined on the appropriate
  type of multi-curves.
  We define several
  properties that $f$ might satisfy.

  \begin{itemize}
    \item \textbf{Quasi-smoothing:} There is a constant $R\ge 0$ with the
      following property. Let $C$~be an oriented curve on~$S$, and
    let $x$~be an essential crossing of~$C$. Let $C'$ be the oriented
    smoothing of~$C$ at~$x$. Then $f(C) \ge f(C')-R$. Schematically, we
    have
    \begin{equation}\label{eq:quasismoothing}
      f\left(\mfig{curves-3}\right) \ge f\left(\mfig{curves-4}\right) -R
    \end{equation}
    See Definition~\ref{def:essential-crossing} for ``essential
    crossing''. Loosely, it is a crossing that cannot be removed by homotopy. See Definition~\ref{def:smoothing} for ``oriented smoothing''.
  \item \textbf{Smoothing:} We take $R=0$ in the above definition of quasi-smoothing:
    \begin{equation}\label{eq:smoothing}
      f\left(\mfig{curves-3}\right) \ge f\left(\mfig{curves-4}\right)
    \end{equation}
  \item \textbf{Convex union:} Let $C_1$ and~$C_2$ be two oriented curves
    on~$S$. Then
    \begin{equation}\label{eq:union-convex}
      f(C_1 \cup C_2) \le f(C_1) + f(C_2).
    \end{equation}
  \item \textbf{Additive union:} The inequality in convex union
    becomes an equality:
    \begin{equation}\label{eq:union-add}
      f(C_1 \cup C_2) = f(C_1) + f(C_2).
    \end{equation}
  \end{itemize}
\end{definition}

Many natural curve functionals satisfy the additive union property;
for instance, length with
respect to an arbitrary length metric on~$S$ satisfies it by
definition. The square root of extremal length is an example
of a curve functional satisfying convex union, but not additive union
(Section~\ref{sec:extremal-length}).
The name ``convex union'' comes from the fact that, if we extend to
weighted curves and additionally
assume homogeneity (Definition~\ref{def:homogeneous-stable}), then
for a fixed
oriented multi-curve with varying weights, $f$ is a convex as a
function of the weights
(Proposition~\ref{prop:convexweights}).

There are many curve functionals satisfying the smoothing
property, such as hyperbolic length, extremal length, intersection
number with a fixed curve, or length from a length metric
on~$S$. For an example of a natural curve functional that satisfies
quasi-smoothing but not smoothing, we have the
word-length of with respect to an arbitrary generating set of
$\pi_1(S)$
(Example~\ref{ex:puncturedtorus}).
The (quasi-)smoothing property is usually easy to check.

The smoothing property plays an important role in the study of
translation lengths associated to Anosov representations, as we
discuss in Section~\ref{sec:transl-lengths}, following
\cite{MZ19:PositivelyRatioed} and \cite{BIPP19:Currents}. These works
use the smoothing property to reduce the study of length systoles to
the case of simple closed curves. Although these papers point out the
parallelism between the smoothing property of Anosov translation
lengths and that of hyperbolic length
(\cite[Section~4]{BIPP19:Currents} or negatively curved lengths
\cite[Corollary~1.3]{MZ19:PositivelyRatioed}, the results in our paper
reveal the much more universal nature of the smoothing property.
Indeed, we show many other natural notions of lengths which are not
associated to negatively curved structures satisfy the smoothing
condition, such as lengths on Riemannian metrics with no curvature
assumption, lengths coming from more general length space structures,
extremal lengths, or
word-lengths with respect to certain generating sets; see
Section~\ref{sec:examples}.

\begin{definition}
  For $C$ an oriented multi-curve, $nC$ is the oriented multi-curve
  that consists of $n$~parallel
  copies of~$C$ (so with $n$ times as many components or,
  in the context of weighted oriented multi-curves,
  with weights multiplied by~$n$), and
  $C^n$ is the oriented multi-curve with as many components as~$C$, in which each
  component of~$C$ is covered by an $n$-fold cover. That is, if
  $g \in \pi_1(S,x)$ represents $C$, $g^n$ represents~$C^n$.
\end{definition}

\begin{definition}\label{def:homogeneous-stable}
     Let $f$ be a curve
     functional and let $n>0$ be an
     integer. We define some
     properties $f$ might satisfy:
     \begin{itemize}
     \item \textbf{Homogeneity:} For an arbitrary oriented multi-curve~$C$,
       \begin{equation}
         \label{eq:homogeneous}
         f(nC) = nf(C).
       \end{equation}
     \item \textbf{(Weak) stability:} For an arbitrary oriented multi-curve~$C$,
       \begin{equation}
         \label{eq:stable}
         f(C^n) = f(nC)
       \end{equation}
     \item \textbf{Strong stability:} For arbitrary oriented multi-curves~$C,D$,
       \begin{equation}
         \label{eq:strongstable}
         f(D\cup C^n) = f(D\cup nC)
       \end{equation}
     \end{itemize}
\end{definition}

Additive union implies homogeneity, and
if
$f$ satisfies convex union, $f(nC) \le nf(C)$. If $f$ satisfies quasi-smoothing, then $f(nC) -nR \le f(C^n)$, since the self-crossings in $C^n$ are essential crossings by definition (see Definition~\ref{def:essential-crossing}).

We furthermore note that curve functionals are not necessarily positive. 

With this background, we can state our main theorems on extensions of
curve functionals to the space $\GC^+(S)$ of oriented geodesic currents.

\begin{mainthm}\label{thm:convex}
   Let $f$ be a curve functional satisfying the
   quasi-smoothing, convex union, stability, and homogeneity
  properties. Then there is a unique continuous homogeneous function
  $\bar{f} \colon \GC^+(S) \to \R_{\ge 0}$ that extends~$f$.
\end{mainthm}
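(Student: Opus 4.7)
The plan is to extend $f$ from the dense subset of weighted multi-curves to $\GC^+(S)$ by uniform continuity. First, homogeneity extends $f$ unambiguously to multi-curves with positive rational weights, and weighted multi-curves with rational weights are dense in $\GC^+(S)$. Convex union then upgrades to subadditivity $f(\lambda C \cup \mu D) \le \lambda f(C) + \mu f(D)$ on this subset. The real work is to establish uniform continuity, from which a continuous extension $\bar{f}$ is uniquely determined by density.

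The technical heart of the proof is an \emph{a priori} Lipschitz bound: fix a filling reference current $\mu_0$ (for instance, the Liouville current of an auxiliary hyperbolic metric $\Sigma$ on $S$) and establish
\[ |f(C)| \le K \cdot i(C,\mu_0) \]
for all rational-weighted multi-curves $C$, with $K = K(f,\mu_0)$. For the upper bound, use a cross-section $\tau$ to the geodesic flow on $UT\Sigma$ with bump function $\psi$ to decompose $C$ into short geodesic arcs; each such arc has bounded topological complexity, so when closed up by a bounded connector it becomes a small multi-curve whose $f$-value is bounded by a universal constant. Convex union then bounds $f(C)$ above by the total number of arcs, which is proportional to $i(C,\mu_0)$. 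The lower bound requires renormalization: passing from $C$ to $C^n/n$ via stability and homogeneity, then applying quasi-smoothing to trade the $O(n^2)$ essential crossings of $C^n$ for a simple multi-curve whose $f$-value is controlled by the upper bound; dividing through by $n$ allows the smoothing error to be dominated by the main term as $n \to \infty$.

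With the Lipschitz bound in hand, uniform continuity on rational-weighted multi-curves follows. Given $C_n \to \nu$ in $\GC^+(S)$, one shows $\{f(C_n)\}$ is Cauchy by comparing the traces of $C_n$ and $C_m$ on the cross-section $\tau$: the traces agree up to a total mass that tends to zero, so most of the arcs cut by $\tau$ can be paired between $C_n$ and $C_m$. Subadditivity together with the Lipschitz bound on the unpaired residues then yields $|f(C_n) - f(C_m)| \to 0$. Defining $\bar{f}(\nu) := \lim_n f(C_n)$ gives the extension; the Lipschitz estimate, combined with continuity of $i(\cdot,\mu_0)$, produces continuity of $\bar{f}$; homogeneity passes to the limit, and density forces uniqueness. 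That $\bar{f}$ takes values in $\R_{\ge 0}$ is a separate matter, following from the observation that the renormalized approximants $C^n/n$ realize $\bar{f}(\nu)$ as a limit of smoothed simple multi-curves after absorbing the (sub-linear in the renormalized scale) smoothing error.

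The main obstacle is the \emph{a priori} Lipschitz bound. Quasi-smoothing and convex union are inherently local properties, yet the required estimate is global and must scale linearly in $i(C,\mu_0)$. The cross-section/bump-function decomposition is the decisive tool for the upper bound; for the lower bound, the stability property plays a surprisingly central role, since the quadratic smoothing error can only be absorbed through the renormalization $f(C) = f(C^n)/n$. Once the bound is in place, the continuity argument reduces to standard dynamical bookkeeping, and homogeneity together with density yield the unique continuous extension to $\GC^+(S)$.
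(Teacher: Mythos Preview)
Your outline names the right tools (cross-sections, bump functions, smoothing), but the two central steps have genuine gaps.

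The lower Lipschitz bound fails as stated. Smoothing $C^n$ all the way to a simple multi-curve costs on the order of $n^2$ essential crossings, so after dividing by $n$ the quasi-smoothing error is $O(n)R$, which diverges rather than being ``dominated by the main term''. Stability and homogeneity give $f(C)=f(C^n)/n$, but no choice of $n$ absorbs a quadratic smoothing cost. More seriously, the Cauchy-sequence step is where the real difficulty lies, and convex union alone cannot supply it. From convex union you get $f(C_n)\le f(\text{paired part})+f(\text{residue})$, but to conclude $|f(C_n)-f(C_m)|$ is small you also need the reverse direction, $f(\text{paired part})\le f(C_n)+(\text{small})$. Quasi-smoothing lets you merge two pieces into one with bounded error, and convex union lets you split one into two, but each is a one-sided inequality; nothing in your outline produces the two-sided control required here.

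The paper is organized differently and supplies exactly this missing ingredient. Rather than proving uniform continuity of $f$ on multi-curves and extending by density, it defines the extension directly: $f_\tau(\mu)=\lim_{k\to\infty} f(R^k(\mu))/k$, where $R^k(\mu)$ is the weighted multi-curve obtained by integrating the $k$-th \emph{smeared} homotopy return map against $\psi\,d\mu_\tau$. The key device is a two-sided \emph{Join Lemma}: up to adding a fixed curve $K_\tau$ and a bounded number of smoothings, one can both merge $[M^k(\vec x)]\cup[M^\ell(p^k(\vec x))]$ into $[M^{k+\ell}(\vec x)]$ \emph{and} split $[M^{k+\ell}(\vec x)]$ back into the two pieces. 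This gives almost-subadditivity and (at dyadic steps) almost-superadditivity of $k\mapsto f^k_\tau(\mu)$; Fekete's lemma then yields existence of the limit, a telescoping estimate gives uniform convergence of $f^k_\tau/k$ to $f_\tau$, and continuity of each $f^k_\tau$ (via the smeared, hence continuous, return map) transfers to $f_\tau$. A separate matching argument shows $f_\tau$ agrees with $f$ on curves. The two-sided Join Lemma, proved via explicit hyperbolic-geometry estimates on broken paths, is precisely what your ``pairing arcs'' step would need and does not have.
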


In the case of unoriented curves, there are two possible smoothings of
an essential crossing, not distinguished from each other.
Then we have the following version of the theorem, deduced from
Theorem~\ref{thm:convex} in
Section~\ref{sec:unor-current}.

\begin{corollary}\label{cor:unorconvex}
   Let $f$ be an unoriented curve functional satisfying
   quasi-smoothing for both possible smoothings of a crossing, in
   addition to the convex union,
   stability, and homogeneity
  properties. Then there is a unique continuous homogeneous function
  $\bar{f} \colon \GC(S) \to \R$ that extends~$f$.
\end{corollary}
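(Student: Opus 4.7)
The plan is to reduce Corollary~\ref{cor:unorconvex} to Theorem~\ref{thm:convex} by lifting the unoriented curve functional $f$ to an oriented one via the orientation-forgetting map, invoking Theorem~\ref{thm:convex} in the oriented setting, and then descending through the natural relation between $\GC^+(S)$ and $\GC(S)$ using invariance under the orientation-reversing involution.

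First, I would define an oriented curve functional $\tilde f \colon \Curves^+(S) \to \R$ by $\tilde f(C) = f(\abs{C})$, where $\abs{C}$ denotes the underlying unoriented multi-curve. Verifying the hypotheses of Theorem~\ref{thm:convex} for $\tilde f$ should be routine: homogeneity, stability, and convex union transfer directly from the corresponding properties of $f$ using $\abs{nC} = n\abs{C}$, $\abs{C^n} = \abs{C}^n$, and $\abs{C_1 \cup C_2} = \abs{C_1} \cup \abs{C_2}$. For quasi-smoothing, a crossing of $C$ is essential if and only if the corresponding crossing of $\abs{C}$ is, since essentiality is orientation-independent, and the underlying multi-curve of the oriented smoothing $C'$ of $C$ is one of the two possible unoriented smoothings of $\abs{C}$; since by hypothesis $f$ quasi-smooths with respect to either choice, $\tilde f(C) \ge \tilde f(C') - R$. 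Applying Theorem~\ref{thm:convex} then yields a unique continuous homogeneous extension $F \colon \GC^+(S) \to \R_{\ge 0}$.

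Next, let $\iota \colon \GC^+(S) \to \GC^+(S)$ be the continuous orientation-reversing involution induced from the involution on $G^+(S)$. Since $\abs{\iota C} = \abs{C}$ for any oriented multi-curve $C$, we have $\tilde f \circ \iota = \tilde f$ on the dense subspace of weighted oriented multi-curves, and hence $F \circ \iota = F$ on all of $\GC^+(S)$ by continuity. To descend $F$ to $\GC(S)$, I would use the continuous linear pullback $\pi^* \colon \GC(S) \to \GC^+(S)$ sending an unoriented current $\mu$ to its $\iota$-symmetric lift, so that $\pi^* \mu_\gamma$ is the oriented current associated to $\gamma^+ \cup \gamma^-$ for a multi-curve $\gamma$. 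Define $\bar f \colon \GC(S) \to \R_{\ge 0}$ by $\bar f(\mu) = \tfrac{1}{2} F(\pi^* \mu)$. This is continuous and homogeneous because both $\pi^*$ and $F$ are, and it extends~$f$: for a multi-curve~$\gamma$,
\[
F(\pi^* \mu_\gamma) = \tilde f(\gamma^+ \cup \gamma^-) = f(2\gamma) = 2 f(\gamma),
\]
so $\bar f(\mu_\gamma) = f(\gamma)$. Uniqueness follows from density of weighted multi-curves in $\GC(S)$ together with continuity of $\bar f$.

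The main obstacle I anticipate is pinning down the descent step precisely: constructing $\pi^*$ with the right normalization and confirming its continuity in the weak-$\ast$ topology on currents. This amounts to unpacking the two-to-one cover $G^+(S) \to G(S)$ at the level of measures and identifying $\GC(S)$ with the $\iota$-invariant part of $\GC^+(S)$, with the factor of two arising from the cover. Once these normalizations are fixed, the verification of the various properties of $\bar f$ goes through without further difficulty.
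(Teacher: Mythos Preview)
Your proposal is correct and follows essentially the same route as the paper: define the oriented functional $\tilde f = f \circ (\text{forget orientation})$, verify the hypotheses of Theorem~\ref{thm:convex} (which is where the quasi-smoothing for \emph{both} resolutions is used), and then descend. The only cosmetic difference is in the descent step: the paper identifies $\GC(S)$ directly as the $\sigma$-invariant subspace of $\GC^+(S)$ via the projection $\Pi(\mu)=\tfrac12(\mu+\sigma_*\mu)$ and simply restricts the extension $\bar g$ to that subspace, which sidesteps the normalization bookkeeping you flag as your ``main obstacle'' (your $\pi^*$ with the factor of $\tfrac12$ is exactly the inverse of this identification, so no new content is needed there).
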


Theorem~\ref{thm:convex} should be thought of as an analogue of the
classical theorem that a convex function defined on the rational
points in a finite-dimensional vector space
automatically extends continuously to a convex function defined on the
whole vector space (Proposition~\ref{prop:convexcont}(iv)).
As in the classical case, the functions on geodesic
currents arising from this construction are restricted, as the
next example shows.
(This example is almost the only function we are aware of where our
techniques do not suffice to prove continuity of the extension. See Section~\ref{sec:intersection-number})

\begin{example}\label{ex:selfintersection}
  Consider the curve curves given by $f(C) \coloneqq \sqrt{i(C,C)}$. 
Since intersection number is a continuous two-variable function on
currents \cite[Proposition~4.5]{Bonahon86:EndsHyperbolicManifolds}, it
follows that $f$ extends continuously to geodesic currents.
However, $f$ does not satisfy convex union. For
instance, take
$C_1$ and $C_2$ be two simple closed curves intersecting once.
Then
$f(C_1 \cup C_2)=\sqrt{2}$, but $f(C_1)+ f(C_2) =
0$, contradicting convex union.
On the other hand, $f$ satisfies smoothing, since $i(C,C)$ is
generally twice the self-intersection number. (The slight difference is
for non-primitive curves, where $i(C,C)$ does not count the
essential intersections from the multiple covers. This does not affect
smoothing.)
\end{example}

On the other hand, the stability and homogeneous properties are
necessary conditions for an extension to exist for elementary reasons,
as the multi-curves $nC$ and $C^n$ should represent the same currents
(Example~\ref{ex:puncturedtorus}).
However, a curve functional that satisfies
quasi-smoothing and convex union can be modified to get a curve functional
satisfying all the hypotheses of Theorem~\ref{thm:convex}.

\begin{mainthm}\label{thm:stable}
  Let $f$ be a curve functional satisfying quasi-smoothing and convex union.
  Then the stabilized curve functional 
  \[
    \|f\|(C) \coloneqq \lim_{n \to \infty} \frac{f(C^n)}{n}.
  \]
  satisfies quasi-smoothing, convex union, strong stability, and homogeneity,
  and thus extends to a continuous function on $\GC^+(S)$.
\end{mainthm}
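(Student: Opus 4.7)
The plan is to verify that $\|f\|$ satisfies the four hypotheses of Theorem~\ref{thm:convex}---quasi-smoothing, convex union, weak stability, and homogeneity---and then apply that theorem directly. First, I would establish existence of the limit. Setting $a_n := f(C^n)$, the multi-curve $C^m \cup C^n$ admits an essential pass-crossing whose oriented smoothing joins the two components into $C^{m+n}$; quasi-smoothing gives $f(C^m \cup C^n) \ge f(C^{m+n}) - R$, while convex union gives $f(C^m \cup C^n) \le f(C^m) + f(C^n)$. Combining yields $a_{m+n} \le a_m + a_n + R$, so $a_n + R$ is subadditive and $\lim_n a_n/n$ exists in $[-\infty, \infty)$ by Fekete's lemma; finiteness in the applications of interest follows from the complementary bound $a_n \ge f(nC) - (n-1)R$ noted after Definition~\ref{def:homogeneous-stable}. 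Convex union for $\|f\|$ is then immediate from the identity $(A \cup B)^n = A^n \cup B^n$ combined with convex union for $f$.

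The key technical tool for the remaining properties is the bounded-difference estimate: for any oriented multi-curve $E$ and any single-component multi-curve $C$,
\[
  \bigl|\,f(E \cup nC) - f(E \cup C^n)\,\bigr| \le (n-1) R.
\]
Both directions come from iterating quasi-smoothing along a path of $n-1$ smoothings: in one direction, the $n$ parallel copies of $C$ in $E \cup nC$ are successively merged via essential pass-crossings between copies; in the other, the essential self pass-crossings of $C^n$ are successively smoothed to split off one copy of $C$ at a time. Applying this with $E := D^m$ and $C$ replaced by $C^m$ yields $|f(D^m \cup nC^m) - f(D^m \cup C^{nm})| \le (n-1)R$; dividing by $m$ and passing to $m \to \infty$ gives strong stability $\|f\|(D \cup nC) = \|f\|(D \cup C^n)$ for single-component $C$, extended to multi-component $C$ by iterating over components. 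Specializing $D = \emptyset$ gives weak stability; combining with the definitional identity $\|f\|(C^n) = \lim_m f(C^{nm})/m = n \lim_m f(C^{nm})/(nm) = n\|f\|(C)$ yields homogeneity $\|f\|(nC) = n\|f\|(C)$.

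Quasi-smoothing for $\|f\|$ is the subtlest step, and I expect it to be the main obstacle. Given $D$ with essential crossing $x$ and oriented smoothing $D'$, the plan is to show $f(D^n) \ge f((D')^n) - O(n)R$ by smoothing only $O(n)$ carefully chosen lifts of $x$ in $D^n$. The cover $D^n$ contains $n^2$ essential crossings above $x$ (each of the $n$ passes of one strand transversely meeting each of the $n$ passes of the other), but a ``diagonal'' matching of $n$ of these---paired according to the cyclic order of passes---when smoothed simultaneously transforms $D^n$ into a multi-curve of the form $k \cdot (D')^{n/k}$ for some $k \mid n$ determined by the reconnection combinatorics. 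A second application of the bounded-difference estimate then gives $|f(k \cdot (D')^{n/k}) - f((D')^n)| = O(k) R = O(n) R$, hence $f(D^n) \ge f((D')^n) - O(n)R$; dividing by $n$ and passing to the limit yields quasi-smoothing for $\|f\|$ with constant a bounded multiple of $R$. The hard part will be verifying the combinatorial claim that $n$ diagonal smoothings in $D^n$ really do produce a multi-curve of the form $k\cdot(D')^{n/k}$---in both the inter-component case and the self-crossing case---by carefully tracing the oriented reconnection rule through the cyclic pass structure of the covers.

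With the four hypotheses of Theorem~\ref{thm:convex} verified, that theorem provides the unique continuous homogeneous extension of $\|f\|$ to $\GC^+(S)$.
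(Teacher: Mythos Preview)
Your proposal has a genuine gap at the very first step, and it propagates through the rest of the argument. You assert that the two-component multi-curve $C^m \cup C^n$ has an essential crossing whose oriented smoothing is $C^{m+n}$, and more generally that the $n$ parallel copies in $nC$ can be merged into $C^n$ via essential crossings between copies. But look back at Definition~\ref{def:essential-crossing}: an essential crossing between points $p,q$ requires either that the lifts $\wt{\gamma_p},\wt{\gamma_q}$ have \emph{linked} (in particular, distinct) endpoints on $S^1_\infty$, or that $p$ and $q$ lie on the \emph{same} component of $X(\gamma)$. For two separate components that are both powers of the same primitive curve, neither clause applies: lifts with the same axis have coincident (hence unlinked) endpoints, and the components are distinct. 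If $C$ is simple this is decisive---$C^m$ and $C^n$ can be realized as disjoint parallel copies with no crossings at all---so there is simply no essential crossing available to perform the merge. The direction $C^n \reducesto_{n-1} nC$ does work (that is case~(b) of the definition and is the paper's Lemma~\ref{lem:quasistable-a}), but the reverse direction $nC \reducesto C^n$ does not.

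This is exactly why the paper's argument is not elementary: to merge $C^m \cup C^n$ into $C^{m+n}$, and to go from $nC$ back to $C^n$, the paper introduces an auxiliary filling curve~$K$ coming from the global cross-section of the geodesic flow (Lemmas~\ref{lem:stable-def} and~\ref{lem:quasistable-b}), and uses the hyperbolic-geometry estimates of Section~\ref{sec:hyperbolic} to certify that the resulting crossings with~$K$ are essential. The cost is an additive $f(K)^+$ term and a larger quasi-smoothing constant for $\norm{f}$, but the limits are unaffected. Your bounded-difference estimate, your subadditivity for the existence of the limit, and your strong-stability argument all rely on the false merge direction, so they collapse once this is noticed; the paper's route through the join-lemma machinery is not optional here.
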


Theorem~\ref{thm:stable} is proved in Section~\ref{sec:stable},
although the implication that weak stability implies strong stability
is used in the proof of Theorem~\ref{thm:convex}.

If the convex union property of $f$ is strengthened to additive union
and the quasi-smoothing property is strengthened to smoothing, then in
fact this extension to geodesic currents comes from intersection with
a fixed current (as in the proofs of extension that used Bonahon's
principle).
This will appear in a forthcoming paper. For this stronger result,
the strict smoothing property is necessary, since intersection number cannot increase after smoothing an essential crossing.

\subsection{Acknowledgments}
\label{sec:acknowledgments}

We thank Francisco Arana, Martin Bridgeman, Maxime Fortier Bourque,
Kasra Rafi, Dalton Sconce,
and Tengren Zhang for helpful conversations and comments, and the
anonymous referee
for careful reading and suggestions.
The first author was supported by the Mathematics Department at
Indiana University Bloomington, via the Hazel King Thompson fellowship
and the Indiana University Graduate School under the Dissertation
Research Fellowship, and by the Luxembourg National research Fund
AFR/Bilateral-ReSurface 22/17145118 for the last edits of this
manuscript.
The second author was supported by the National Science
Foundation under Grant Numbers DMS-1507244 and DMS-2110143.


\section{Background on curves and currents}
\label{sec:curves}

Throughout this paper, $S$ is a fixed oriented compact 2-manifold
without boundary. (For a discussion of the more general surface case,
see Remark~\ref{rmk:opensurface}.)
If we fix an arbitrary (hyperbolic) metric on $S$, we will denote it
by $\Sigma$. 
The various types of curves and associated objects we consider are summarized in
Table~\ref{tab:object-notation}.

\subsection{Curves}

\begin{definition}[multi-curve]\label{def:multi-curve}
A \emph{concrete multi-curve} $\gamma$ on a surface $S$ is a smooth
1-manifold without boundary $X(\gamma)$ together with a map
(also called $\gamma$) from $X(\gamma)$
into~$S$. $X(\gamma)$ is not necessarily connected.
We say that $\gamma$ is \emph{trivial} if it is
homotopic to a point.
Two concrete multi-curves $\gamma$ and~$\gamma'$ are \emph{equivalent} if
they are related by a sequence of the following moves:
\begin{itemize}
    \item \emph{homotopy} within the space of all maps from $X(\gamma)$ to~$S$;
    \item \emph{reparametrization} of the 1-manifold;
      and
    \item \emph{dropping} trivial components.
\end{itemize}
The equivalence class of $\gamma$ is denoted by~$[\gamma]$, and we will
call it a \emph{multi-curve}.
If $X(\gamma)$ is connected, we will call $[\gamma]$ a \emph{curve}; a
curve is equivalent to a conjugacy class in $\pi_1(S)$.
When we just want to refer to the equivalence class of a
(multi-)curve, without
distinguishing a representative, we will use capital letters such
as~$C$.
A concrete multi-curve~$\gamma$ is \emph{simple} if $\gamma$ is
injective, and a multi-curve is simple if it has a concrete representative
that is simple.
We write $\Simples(S)$ for the space of simple multi-curves on~$S$
and $\Curves(S)$ for the space of all multi-curves.

We also consider \emph{oriented multi-curves}, which we will still
denote by~$\gamma$, in which $X(\gamma)$ is
oriented. We add the further condition in the equivalence
relation that the reparametrizations must be orientation-preserving.
In this paper, unless stated otherwise, we will be working with
oriented multi-curves. The spaces of oriented simple and general
multi-curves are denoted $\Simples^+(S)$ and $\Curves^+(S)$,
respectively.
\end{definition}

\begin{definition}[weighted multi-curve]\label{def:weighted-multi-curve}
  A \emph{weighted multi-curve} $C=\bigcup_i a_i C_i$ is a multi-curve in
  which each connected component is given a non-negative real
  coefficient~$a_i$. If coefficients are not specified, they are~$1$.
  We add further moves to the equivalence
  relation:
  \begin{itemize}
  \item \emph{merging} two parallel components and
    adding their weights; and
  \item \emph{nullifying}, deleting a
    component with weight~$0$.
  \end{itemize}
  For
  instance, $C \cup C$ is equivalent to $2C$.
The space of weighted multi-curves up to equivalence is denoted by
appending an $\mathbb{R}$ in front of their non-weighted names, so
$\R\Simples(S)$ is the space of weighted simple multi-curves and
$\R\Curves(S)$ is the space of weighted general multi-curves. This
is a slight abuse of notation since the weights are required to be non-negative.
\end{definition}

\begin{remark}
Since the weighted curve functionals we
are considering are not necessarily positive, they may increase after
dropping a component (see Definition~\ref{def:weighted-multi-curve}).
\end{remark}

\subsection{Crossings}
Loosely speaking, an essential crossing is a crossing of a
multi-curve that can't be homotoped away. We make this definition
precise as follows.

\begin{definition}[linked points on a circle]
  We say that two sets of two points $\{a,b\}$ and $\{c,d\}$ in
  $S^1$ are \emph{linked} if the four points are distinct and both
  connected components of
  $S^1- \{ a,b \}$ have an element of $\{c,d\}$.
\end{definition}

\begin{definition}[lift of a concrete curve]\label{def:lift}
  Given a concrete multi-curve~$\gamma$ on~$S$ and a choice
  $p \in X(\gamma)$, set $x=\gamma(p) \in S$. Pick a lift
  $\wt x \in \wt S$ of~$x$. The unique lifting property gives a unique
  lift $\wt{\gamma_p}: \wt{X}(\gamma;p) \to \wt S$ of $\gamma$ with
  $\wt{\gamma_p}(\wt p) = \wt x$, where $X(\gamma;p)$ is the component
  of~$X(\gamma)$ containing~$p$ and $\wt{X}(\gamma;p)$ is its
  universal cover with basepoint~$\wt p$.
\end{definition}

\begin{definition}\label{def:essential-crossing}
  A representative~$\gamma$ of a multi-curve~$C$ is \emph{minimal} if
  it has only transverse self-intersections and a minimum number of
  them.
    Let $\gamma$ be a concrete multi-curve on~$S$ representing a multi-curve $C$. A \emph{crossing} of $\gamma$ is a pair $(p,q)$ of
  points $p,q \in X(\gamma)$ so that
  $x \coloneqq \gamma(p) = \gamma(q) \in S$.
  A crossing that appears in some minimal representative is said
  to be an \emph{essential crossing}.
\end{definition}

\begin{definition}[smoothings]
  \label{def:smoothing}
  Let $(p,q) \in X(\gamma)$ be an essential crossing of~$\gamma$
  on~$S$. To make a \emph{smoothing}~$\gamma'$ of $(p,q)$, cut
  $X(\gamma)$ at $p$ and~$q$ and reglue the resulting four endpoints
  in one of the two other possible ways, getting a new $1$-manifold
  $X(\gamma')$. The map $\gamma'$ agrees with~$\gamma$; this is
  well-defined since $\gamma(p) = \gamma(q)$. In pictures we will
  homotop $\gamma'$ slightly to round out the resulting
  corners, and we will write $\gamma \reducesto \gamma'$. If $\gamma$ is oriented, then the \emph{oriented smoothing}
  is the smoothing that respects the orientation on $X(\gamma)$:
  \begin{equation*}
    \mfig{curves-3} \reducesto \mfig{curves-4}
  \end{equation*}
  
  If we obtain a curve $C'$ from $C$ by a sequence of
$k$ smoothings of essential crossings,  we will write
$C \reducesto_k C'$. (We check whether the
crossings are essential at each stage of this process; this is
more restrictive than checking at the beginning.)
\end{definition}

``Some'' in the definition of essential crossing can be replaced by ``every'':  if there is a crossing $x$ in some minimal representative
  $\gamma$ of~$C$, then for any other  representative $\gamma'$
  there is a corresponding crossing~$x'$ giving the same
  smoothings.

\begin{lemma}\label{lem:essential-cross}
  If $\gamma$ and $\gamma'$ are homotopic concrete
  multi-curves, with $\gamma$ minimal and $(p,q) \in X(\gamma)$ is an essential crossing of~$\gamma$,
  then there is a crossing $(p',q') \in X(\gamma')$ so
  that the smoothings of $(p,q)$ and of $(p',q')$ are homotopic.
\end{lemma}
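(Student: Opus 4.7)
The plan is to lift everything to the universal cover $\wt{S}$, where essential crossings translate into intersections of quasi-geodesic arcs whose endpoints at $S^1_\infty$ are homotopy invariants. The key point is that these endpoints are determined by the $\pi_1(S)$-conjugacy class of the corresponding component, so they do not change under a homotopy from $\gamma$ to $\gamma'$. After dropping trivial components (which contain no essential crossings) and reparametrizing, I arrange that $\gamma$ and $\gamma'$ share the source 1-manifold $X$, connected by a homotopy $H \colon X \times [0,1] \to S$ with $H_0 = \gamma$, $H_1 = \gamma'$. Lifting $H$ to $\wt{H} \colon \wt{X} \times [0,1] \to \wt{S}$ and choosing $\wt{p}, \wt{q}$ with $\wt{H}_0(\wt{p}) = \wt{H}_0(\wt{q}) = \wt{x}$, the component lifts at time $t$ are quasi-geodesic arcs $\wt{H_t}_p, \wt{H_t}_q$ in $\wt{S}$ whose endpoints in $S^1_\infty$ are independent of $t$.

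For case~(a) of Definition~\ref{def:essential-crossing}, where the endpoints of $\wt{\gamma_p}$ and $\wt{\gamma_q}$ are linked, the linked configuration persists at $t = 1$; the Jordan curve theorem in the closed disk $\wt{S} \cup S^1_\infty$ then forces $\wt{\gamma'_p}$ and $\wt{\gamma'_q}$ to intersect in $\wt{S}$, and any intersection descends to a pair $(p',q') \in X(\gamma')$ forming an essential crossing of case~(a) type. For case~(b), where $\wt{\gamma_q} = \delta^m \cdot \wt{\gamma_p}$ and both lifts share the two fixed points $\alpha^\pm$ of $\delta$ as endpoints, mere equality of endpoints does not by itself force an intersection, and this is the main obstacle. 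To handle it, I would use Fermi coordinates $(u, v)$ around the axis of $\delta$: write $\wt{\gamma'_p}(t) = (u(t), v(t))$ with $u(t+1) = u(t) + n\tau_\delta$ and $v(t+1) = v(t)$, where $\tau_\delta$ is the translation length of $\delta$. Reparametrizing the source so that $u$ is linear (valid because the lift is a quasi-geodesic joining $\alpha^-$ to $\alpha^+$), the intersection condition $\wt{\gamma'_p}(s + m/n) = \delta^m \wt{\gamma'_p}(s)$ reduces to the scalar equation $v(s + m/n) = v(s)$. The difference $v(s + m/n) - v(s)$ has mean zero over one period by periodicity of $v$, so by the intermediate value theorem it vanishes at some $s$, producing the desired intersection and hence an essential crossing $(p',q')$ of case~(b) type with the same~$m$.

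Finally, the smoothings are homotopic because each is determined by the same topological data that has just been transported across the homotopy. In case~(a), the oriented smoothing produces two new arcs in $\wt{S}$ whose endpoints at $S^1_\infty$ are fixed by the orientations and the four original endpoints $\{a, b, c, d\}$; the same holds for $(p', q')$, and any two properly embedded arcs in $\wt{S}$ with matching endpoints at $S^1_\infty$ project to homotopic curves on~$S$. In case~(b), both smoothings split the component into homotopy classes $[\delta^m]$ and $[\delta^{n-m}]$, so they agree as multi-curves. A subtlety in case~(b) is that the reparametrization making $u$ linear requires the axis projection to be monotonic, which may fail for an arbitrary representative; this can be addressed either by first homotoping $\gamma'$ to a quasi-geodesic representative or by a degree argument on the map $(s, t) \mapsto (u(s) - u(t) - m\tau_\delta, v(s) - v(t))$ on the cylinder $\{s - t \in [0,1]\}/\Z$.
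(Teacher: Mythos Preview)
Your approach for case~(a) is correct and, as the paper itself notes in the remark following its proof, the lift-to-the-universal-cover argument via linked endpoints and the Jordan curve theorem gives a direct route that bypasses the Reidemeister-move machinery. The identification of the smoothing by its endpoints at infinity is also sound.

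The gap is in case~(b). Your claim that one may reparametrize so that the Fermi coordinate $u$ is linear ``because the lift is a quasi-geodesic'' is false: a quasi-geodesic stays within bounded distance of the axis of~$\delta$, but nothing prevents its $u$-coordinate from backtracking, so $u$ need not be monotone and the reduction to a single scalar equation $v(s+m/n)=v(s)$ is unavailable. Neither of your proposed fixes works. Homotoping $\gamma'$ to a nicer representative is circular, since the whole point is to establish the crossing in the \emph{given} representative~$\gamma'$. And the degree argument on the cylinder $\{s-t\in[0,1]\}/\Z$ fails outright: the map $(s,t)\mapsto\bigl(u(s)-u(t)-m\tau_\delta,\;v(s)-v(t)\bigr)$ is constant on each boundary circle (with values $(-m\tau_\delta,0)$ and $((n-m)\tau_\delta,0)$), so it factors through $S^2\to\R^2$ and has degree zero with respect to any point---one can check this explicitly by perturbing the geodesic representative and finding two preimages of opposite sign. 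Thus there is no topological obstruction to $F$ missing the origin, and the argument does not force an intersection.

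The paper handles both cases uniformly by invoking the Hass--Scott / de~Graaf--Schrijver theorem: any representative can be connected to a minimal-crossing form by Reidemeister moves using I and~II only in the simplifying direction, and one then traces the desired crossing (which visibly exists in the perturbed geodesic $\delta^n$) backward through the moves. This is less elementary than your case~(a) argument, but it is what makes case~(b) go through.
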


\begin{proof}
 If $\gamma$ is connected, by a result of Hass and
  Scott \cite{HS94:ShorteningCurves}, $\gamma'$ can be
  turned into the minimal form~$\gamma$ using
  Reidemeister~I, II, and~III moves, with the Reidemeister~I and~II
  moves being used only in the forward (simplifying) direction. Since
  we know that $\gamma$ has a crossing of the desired type, we can
  trace the crossings backwards through these moves: there is a bijection between the crossings before and after a
  Reidemeister~III move that does not change the homotopy types of curves
  achievable by a single smoothing, and we can ignore the additional
  crossings created by backwards Reidemeister~I and~II moves.

  For multi-curves, Hass and Scott show that we need to add one more move, swapping the position of two
  components that are homotopically powers of the
  same primitive curve $\delta$
  \cite[pp.~31--32]{HS94:ShorteningCurves}. Again this operation induces a bijection
  between crossings that preserving the curves that result from
  smoothing.
\end{proof}

As a result of Lemma~\ref{lem:essential-cross}, we can speak of essential crossings also for concrete curves $\gamma$ that are not minimal.

\begin{subtheorems}
To give a more workable and algebraic criterion for essential
crossings, we
adopt a group-theoretic point of view. For $a,b \in \pi_1(S,*)$, write
$[a]$ for the free homotopy class of curves represented by $a$;
algebraically this is the union of the conjugacy classes of~$a$ and
of~$A$. Write $[a][b]$ for the multi-curve $[a] \cup [b]$. Say that a
factorization $[ab]$ \emph{reduces} if some (equiv.\ every) minimal
representative of $[ab]$ has a crossing so that one smoothing gives
$[a][b]$ and the other smoothing gives $[aB]$. Similarly say that
$[a][b]$ \emph{reduces} if some/every minimal representative has a crossing
whose smoothings give $[ab]$ and $[aB]$. (This is an abuse of
notation, since the reduction is not invariant under individual
conjugation of $a$ and $b$.)

For $a,b \in \pi_1(S,*)$, let $a * b$ be a 4-valent graph mapped into
$S$ obtained by taking the union of the paths $a,b$ joined at the
basepoint~$*$, and let $[a * b]$
be its free homotopy class.
Free homotopy means in particular invariance under simultaneous
conjugation: $[a * b] = [caC * cbC]$. A \emph{taut} representative of $[a * b]$
is one with a minimum number of transverse crossings.

Finally, if we fix a hyperbolic structure on~$S$, a non-trivial
element $a \in \pi_1(S,*)$ determines a geodesic $\ora{a}$ in
$\wt{S}$, the unique geodesic that stays a bounded distance from
$a^k \cdot \wt{*}$, with endpoints $a^+$ and $a^-$. Depending on the
cyclic order of the endpoints up to orientation reversal, we say that
$\ora{a}$ and $\ora{b}$ are
\begin{itemize}
\item \emph{parallel} if they appear in order $a^-,a^+,b^+,b^-$;
\item \emph{anti-parallel} if they appear in order $a^-,a^+,b^-,b^+$;
  and
\item \emph{cross} if they appear in order $a^-,b^-,a^+,b^+$.
\end{itemize}

We can now reformulate essential
crossings.

\begin{proposition}\label{prop:crossings-powers}
  If $a = c^k$ and $b = c^\ell$ are common powers of a primitive
  element~$c$ with $k,\ell \ge 1$, then the factorization $[ab]$
  reduces.
\end{proposition}

\begin{proof}
  See Hass and Scott \cite[Lemma
  1.12]{HS85:IntersectionsCurves}.
\end{proof}

\begin{proposition}\label{prop:crossings-triality}
  Let $a,b \in \pi_1(S,*)$ be non-trivial elements that are not
  common powers. Then in the following three trialities the same
  possibility holds in each case.
  \begin{enumerate}
  \item (Reduction) One of the following factorizations reduces
    to the other two:\\
    \begin{enumerate*}[(a),itemjoin=\qquad]
    \item $[ab]$;
    \item $[aB]$; or
    \item $[a][b]$.
    \end{enumerate*}
  \item (Geometric) For every taut embedding of $a * b$, around the
    4-valent vertex we see in cyclic
    order, up to orientation reversal:\\
    \begin{enumerate*}[(a),itemjoin=\qquad]
    \item $\mfig{curves-30}$;
    \item $\mfig{curves-31}$; or
    \item $\mfig{curves-32}$.
    \end{enumerate*}
  \item (Algebraic) The geodesics $\ora{a}$ and $\ora{b}$ are:\\
    \begin{enumerate*}[(a),itemjoin=\qquad]
    \item parallel;
    \item anti-parallel; or
    \item cross.
    \end{enumerate*}
  \end{enumerate}
\end{proposition}

\begin{proof}
  Since $a$ and $b$ are not common powers, the endpoints of
  $\ora{a}$ and $\ora{b}$ are all distinct from each other, so
  the algebraic triality is mutually exclusive and exhaustive. For the
  geometric triality, for any given embedding of $a * b$, one of the
  three given possibilities holds, but it is not \emph{a priori} clear that
  the same possibility holds for all taut embeddings. For the
  reduction triality, the possibilities are mutually exclusive (we
  can't have circular reductions, since at each reduction length with
  respect any Riemannian metric strictly decreases) but now it is not
  clear that any of the three possibilities holds.
  
  We start with the geometric triality. Consider any taut representative
  $\rho$ of $[a * b]$ in one of the cases (a), (b), or~(c). Consider
  the (concrete) multi-curve~$\gamma$ given by replacing the
  $4$-valent vertex with a crossing. If we pick orientations on $\gamma$
  correctly, this is an embedding of $[ab]$, $[aB]$, or $[a][b]$,
  respectively, with a distinguished crossing~$x$.

  We claim that $\gamma$ is minimal. Otherwise, there is some reducing
  isotopy using RIII moves and some RI/RII reduction
  \cite{HS94:ShorteningCurves}. Any RIII move on~$\gamma$
  can be copied with an isotopy of~$\rho$, as can an RI or RII reduction of
  $\gamma$ that does not involve~$x$. An RI
  reduction at~$x$ is only possible if $a$ or $b$ is the identity, and
  an RII reduction involving~$x$ also reduces $\rho$,
  reducing the number of intersections of $\rho$ by one. In any case
  this contradicts minimality of~$\rho$ or
  non-triviality of $a,b$.

  We have thus shown that one of $[ab]$, $[aB]$, or $[a][b]$ has a
  minimal representative $\gamma$ with a crossing~$x$ that can be
  smoothed to give the other two multi-curves. Since we cannot have a
  circular chain of essential smoothings (for instance because we get
  strict inequalities of hyperbolic length), it follows that the cases
  are exclusive and thus that \emph{every} taut representative of
  $[a * b]$ has the same cyclic order around the 4-valent vertex.
  Combined this
  proves equivalence of the first two trialities.

  For the remaining equivalence, suppose first we
  are in case~(c) of the geometric triality. For the fixed hyperbolic
  structure, take $\gamma$ to be the geodesic representative of the free
  homotopy class $[a][b]$, intersecting at the distinguished point
  $x \in S$, and let $\ora{a}$ and $\ora{b}$ be the respective
  lifts passing through some lift $\wt{x}$ of~$x$. Then $\ora{a}$ and
  $\ora{b}$ intersect only once and thus cross at infinity, so we are
  also in case~(c) of the algebraic triality.

  If we are in case~(a) of the geometric triality, again take the
  geodesic representative $\gamma$ of $ab$, with distinguished
  crossing~$x$. For concreteness, if necessary reverse the
  orientation of~$S$ so the branches appear in the order shown below.
  If we
  perform the disconnected (oriented) smoothing at $x$, we get two
  components $\gamma_a \in [a]$ and $\gamma_b \in [b]$; these concrete
  curves are typically not minimal. However, $\gamma_a$ and $\gamma_b$ are
  regular isotopic to a taut curve \cite[Lemma 5.5]{Thurston14:PBI},
  i.e., there is no RI move required in making them taut. This is
  equivalent to saying that their lifts $\wt{\gamma_a}$ and $\wt{\gamma_b}$ to
  $\wt{S}$ passing through a neighborhood of $\wt{x}$ are embedded in
  the disk. Now consider
  the endpoints $a^+, a^-$ of $\wt{\gamma_a}$. As an oriented curve,
  $\wt{\gamma_a}$ is geodesic outside of neighborhoods of lifts of $x$, and
  inside those neighborhoods turns to the left. It then follows that
  $(ab)^+, a^+, a^-, (ba)^-$ occur in that cyclic order on
  $\partial \wt{S}$, and similarly $(ab)^-, b^-,b^+,(ba)^+$ occur
  in that cyclic order:
  \[
    \resizebox{60mm}{!}{\Huge{
\begingroup%
  \makeatletter%
  \providecommand\color[2][]{%
    \errmessage{(Inkscape) Color is used for the text in Inkscape, but the package 'color.sty' is not loaded}%
    \renewcommand\color[2][]{}%
  }%
  \providecommand\transparent[1]{%
    \errmessage{(Inkscape) Transparency is used (non-zero) for the text in Inkscape, but the package 'transparent.sty' is not loaded}%
    \renewcommand\transparent[1]{}%
  }%
  \providecommand\rotatebox[2]{#2}%
  \newcommand*\fsize{\dimexpr\f@size pt\relax}%
  \newcommand*\lineheight[1]{\fontsize{\fsize}{#1\fsize}\selectfont}%
  \ifx\svgwidth\undefined%
    \setlength{\unitlength}{572.76495025bp}%
    \ifx\svgscale\undefined%
      \relax%
    \else%
      \setlength{\unitlength}{\unitlength * \real{\svgscale}}%
    \fi%
  \else%
    \setlength{\unitlength}{\svgwidth}%
  \fi%
  \global\let\svgwidth\undefined%
  \global\let\svgscale\undefined%
  \makeatother%
  \begin{picture}(1,0.79061173)%
    \lineheight{1}%
    \setlength\tabcolsep{0pt}%
    \put(0,0){\includegraphics[width=\unitlength,page=1]{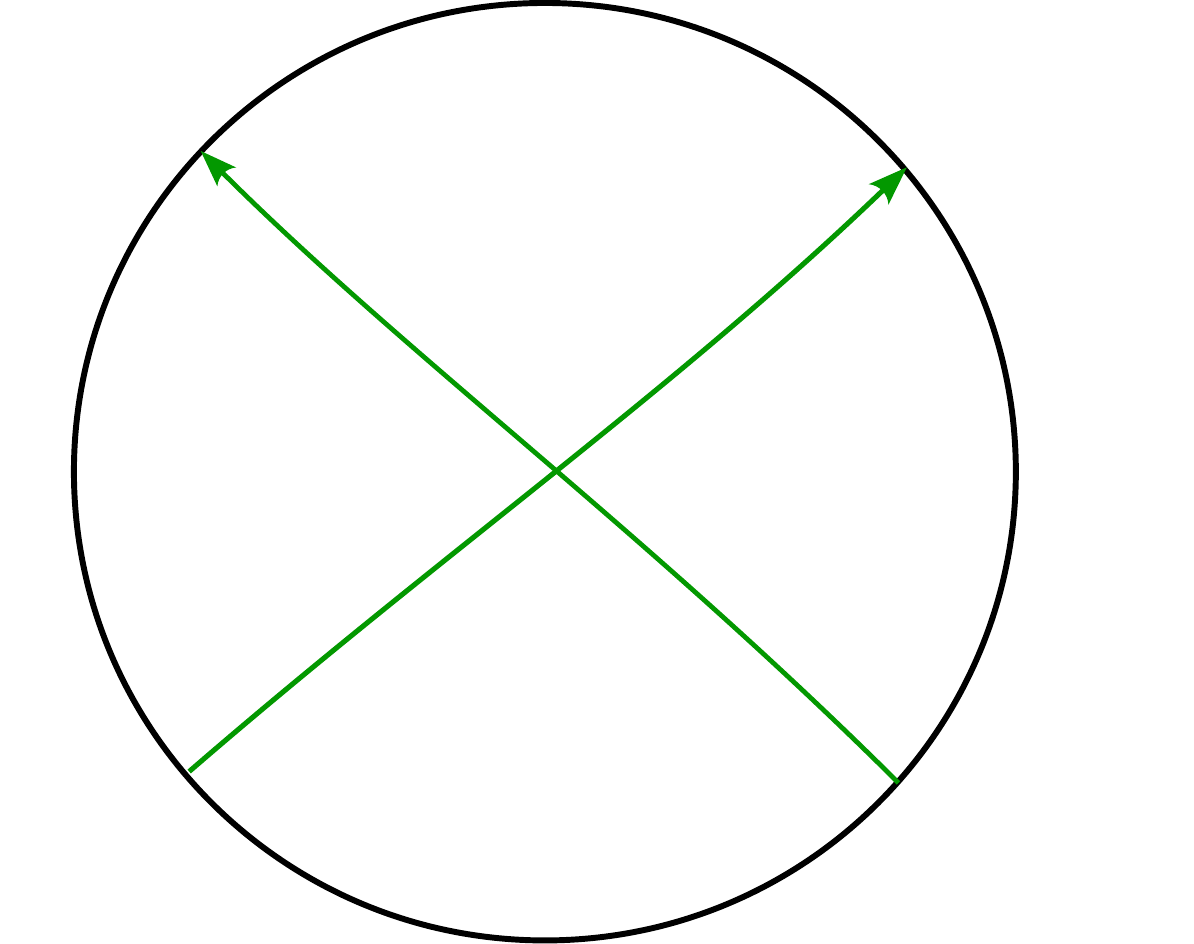}}%
    \put(0.76818664,0.6552911){\color[rgb]{0,0,0}\makebox(0,0)[lt]{\lineheight{1.25}\smash{\begin{tabular}[t]{l}$ba^+$\end{tabular}}}}%
    \put(0.06038788,0.07996839){\color[rgb]{0,0,0}\makebox(0,0)[lt]{\lineheight{1.25}\smash{\begin{tabular}[t]{l}$ba^-$\end{tabular}}}}%
    \put(0.08226172,0.66316997){\color[rgb]{0,0,0}\makebox(0,0)[lt]{\lineheight{1.25}\smash{\begin{tabular}[t]{l}$ab^+$\end{tabular}}}}%
    \put(0.75427633,0.09261365){\color[rgb]{0,0,0}\makebox(0,0)[lt]{\lineheight{1.25}\smash{\begin{tabular}[t]{l}$ab^-$\end{tabular}}}}%
    \put(0.79524099,0.15039774){\color[rgb]{0,0,0}\makebox(0,0)[lt]{\lineheight{1.25}\smash{\begin{tabular}[t]{l}$b^-$\end{tabular}}}}%
    \put(0.81867417,0.56831173){\color[rgb]{0,0,0}\makebox(0,0)[lt]{\lineheight{1.25}\smash{\begin{tabular}[t]{l}$b^+$\end{tabular}}}}%
    \put(0,0){\includegraphics[width=\unitlength,page=2]{lifts.pdf}}%
    \put(0.03202651,0.56937251){\color[rgb]{0,0,0}\makebox(0,0)[lt]{\lineheight{1.25}\smash{\begin{tabular}[t]{l}$a^+$\end{tabular}}}}%
    \put(0.02269948,0.1712114){\color[rgb]{0,0,0}\makebox(0,0)[lt]{\lineheight{1.25}\smash{\begin{tabular}[t]{l}$a^-$\end{tabular}}}}%
  \end{picture}%
\endgroup%
}}
  \]
  It therefore
  follows that $\ora{a}$ and
  $\ora{b}$ are parallel, so we are in case~(a) of the algebraic
  triality.

  Case~(b) follows in the same way, interchanging $b \leftrightarrow B$.
\end{proof}

We will also need a result on the relative position of axes of two hyperbolic elements. 

\begin{lemma}
  If $a$ and $b$ are two hyperbolic elements of $\PSL(2,\R)$ with
  crossing axes, then $ab$ and $ba$ are also hyperbolic with parallel
  axes.
  \label{lem:parallel_implies_crossing}
\end{lemma}

\begin{proof}
  We will prove that the
  endpoints of the various geodesics must be laid out as follows, up
  to orientation reversal:
  \[
    \resizebox{60mm}{!}{\Huge{
\begingroup%
  \makeatletter%
  \providecommand\color[2][]{%
    \errmessage{(Inkscape) Color is used for the text in Inkscape, but the package 'color.sty' is not loaded}%
    \renewcommand\color[2][]{}%
  }%
  \providecommand\transparent[1]{%
    \errmessage{(Inkscape) Transparency is used (non-zero) for the text in Inkscape, but the package 'transparent.sty' is not loaded}%
    \renewcommand\transparent[1]{}%
  }%
  \providecommand\rotatebox[2]{#2}%
  \newcommand*\fsize{\dimexpr\f@size pt\relax}%
  \newcommand*\lineheight[1]{\fontsize{\fsize}{#1\fsize}\selectfont}%
  \ifx\svgwidth\undefined%
    \setlength{\unitlength}{601.45427554bp}%
    \ifx\svgscale\undefined%
      \relax%
    \else%
      \setlength{\unitlength}{\unitlength * \real{\svgscale}}%
    \fi%
  \else%
    \setlength{\unitlength}{\svgwidth}%
  \fi%
  \global\let\svgwidth\undefined%
  \global\let\svgscale\undefined%
  \makeatother%
  \begin{picture}(1,0.96290048)%
    \lineheight{1}%
    \setlength\tabcolsep{0pt}%
    \put(0,0){\includegraphics[width=\unitlength,page=1]{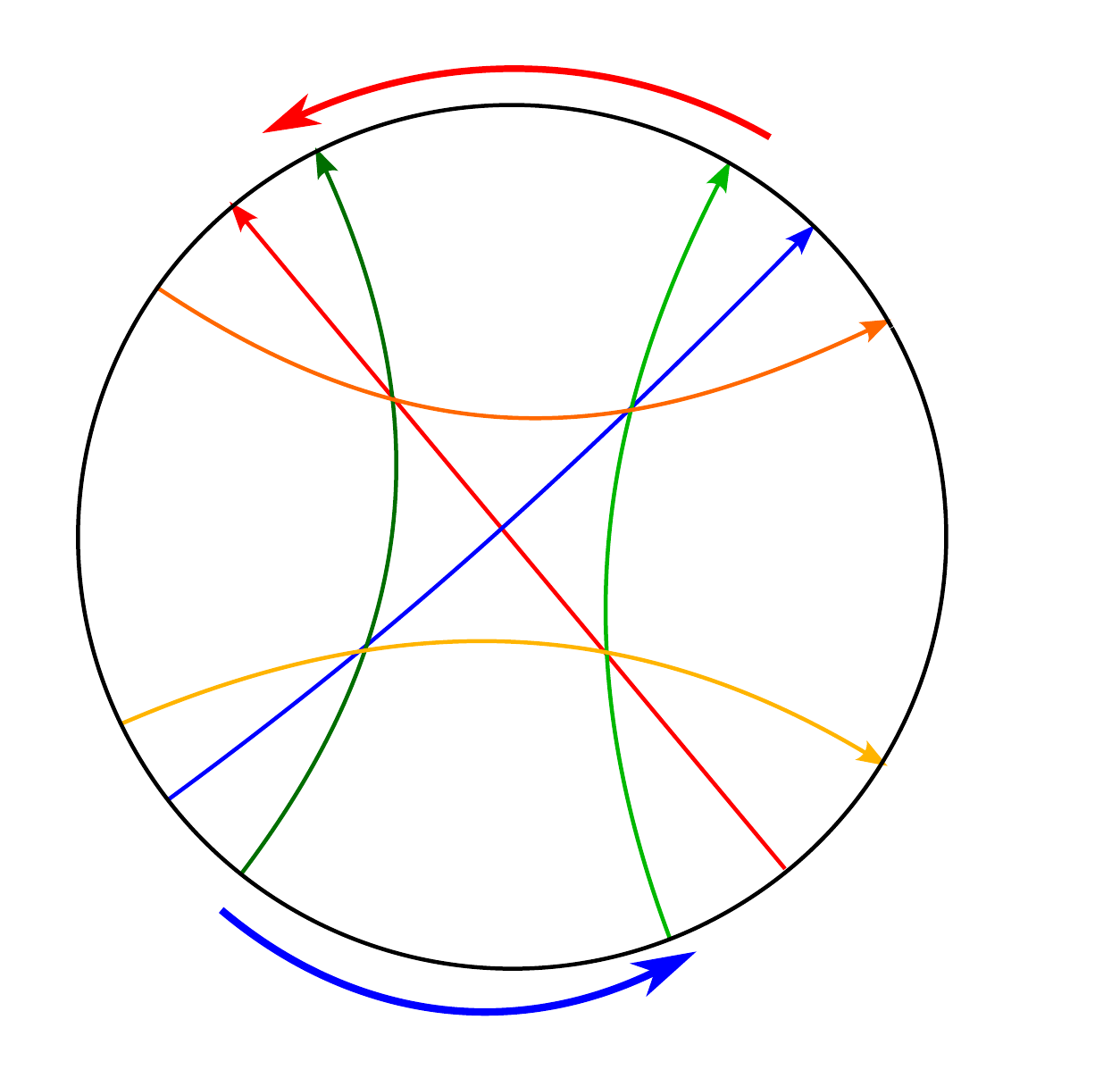}}%
    \put(0.6906423,0.84550275){\color[rgb]{0,0,0}\makebox(0,0)[lt]{\lineheight{1.25}\smash{\begin{tabular}[t]{l}$ba^+$\end{tabular}}}}%
    \put(0.60202973,0.04886308){\color[rgb]{0,0,0}\makebox(0,0)[lt]{\lineheight{1.25}\smash{\begin{tabular}[t]{l}$ba^-$\end{tabular}}}}%
    \put(0.18986065,0.85973288){\color[rgb]{0,0,0}\makebox(0,0)[lt]{\lineheight{1.25}\smash{\begin{tabular}[t]{l}$ab^+$\end{tabular}}}}%
    \put(0.40834104,0.92451707){\color[rgb]{1,0,0}\makebox(0,0)[lt]{\lineheight{1.25}\smash{\begin{tabular}[t]{l}$a$\end{tabular}}}}%
    \put(0.38069309,0.00879213){\color[rgb]{0,0,1}\makebox(0,0)[lt]{\lineheight{1.25}\smash{\begin{tabular}[t]{l}$b$\end{tabular}}}}%
    \put(0.14966573,0.08319217){\color[rgb]{0,0,0}\makebox(0,0)[lt]{\lineheight{1.25}\smash{\begin{tabular}[t]{l}$ab^-$\end{tabular}}}}%
    \put(0.07940002,0.1948772){\color[rgb]{0,0,0}\makebox(0,0)[lt]{\lineheight{1.25}\smash{\begin{tabular}[t]{l}$b^-$\end{tabular}}}}%
    \put(0.73606966,0.76237808){\color[rgb]{0,0,0}\makebox(0,0)[lt]{\lineheight{1.25}\smash{\begin{tabular}[t]{l}$b^+$\end{tabular}}}}%
    \put(0.14624663,0.78929958){\color[rgb]{0,0,0}\makebox(0,0)[lt]{\lineheight{1.25}\smash{\begin{tabular}[t]{l}$a^+$\end{tabular}}}}%
    \put(0.70880588,0.14867489){\color[rgb]{0,0,0}\makebox(0,0)[lt]{\lineheight{1.25}\smash{\begin{tabular}[t]{l}$a^-$\end{tabular}}}}%
    \put(0.01972425,0.72263571){\color[rgb]{0,0,0}\makebox(0,0)[lt]{\lineheight{1.25}\smash{\begin{tabular}[t]{l}$aB^+$\end{tabular}}}}%
    \put(0.81241815,0.67170725){\color[rgb]{0,0,0}\makebox(0,0)[lt]{\lineheight{1.25}\smash{\begin{tabular}[t]{l}$aB^-$\end{tabular}}}}%
    \put(-0.00377502,0.27707596){\color[rgb]{0,0,0}\makebox(0,0)[lt]{\lineheight{1.25}\smash{\begin{tabular}[t]{l}$Ba^+$\end{tabular}}}}%
    \put(0.80086193,0.24724933){\color[rgb]{0,0,0}\makebox(0,0)[lt]{\lineheight{1.25}\smash{\begin{tabular}[t]{l}$Ba^-$\end{tabular}}}}%
  \end{picture}%
\endgroup%
}}
  \]
  By the North-South dynamics of $a$ and $b$ on the circle, $ab$ maps the
  interval $(a^+,b^+)$ strictly inside itself, which implies that it
  is hyperbolic with attracting endpoint $(ab)^+$ in that interval;
  similarly $ab$ maps $(a^-,b^-)$ to a strict superset of itself, so
  $(ab)^- \in (a^-,b^-)$. The actions of $a$ and $b$
  interchange $(ab)^- \leftrightarrow (ba)^-$ and
  $(ab)^+ \leftrightarrow (ba)^+$, forcing the configuration shown.
  (The endpoints $(ab)^\pm,(ba)^\pm$ are necessarily on the
  portions of the circle where $a$ and $b$ move in opposite
  directions.)
\end{proof}

\begin{remark}
  A very similar argument shows that if $\ora{a}$ and $\ora{b}$ are
  parallel, then $\ora{ab}$ and $\ora{ba}$ cross. However, in the
  self-crossing case, the condition of $\ora{ab}$ and $\ora{ba}$ crossing is necessary but
  not sufficient to conclude that $\ora{a}$ and $\ora{b}$ are parallel, as the following example illustrates.

  On the pair of pants~$P$, consider
the curve~$\gamma$ shown solid below, with marked
crossing~$x$.
\[
  \mfig{curves-21}
\]
If we take the natural generators $a,b$ of $\pi_1(P)$ shown dashed,
then $\gamma$ is in the free homotopy class of $[ab]$. Note the crossing at $X$
divides $\gamma$ into two subpaths, representing
$p = bab$ and $q = B$ (adopting the convention that capital letters
mean inverse, and connecting $x$ to $*$ by the short path). Since $pq
= ba$ and $qp = ab$, the
corresponding lifts of $\gamma$ to $\wt P$ cross at infinity, since
their geodesic axes $\ora{ab}$ and $\ora{ba}$ cross. But $x$ is not an
essential crossing in any reasonable
sense: it can clearly be isotoped away, and it does not satisfy the desired smoothing: if we take lengths with
respect to any Riemannian metric,
$\ell([p Q]) = \ell([ab^3]) > \ell([ab]) = \ell([pq])$, and not
the other way around.
\end{remark}

\end{subtheorems}

We have similar notions for weighted curves.
\begin{definition}
  Let $C \in \R\Curves^+(S)$ be a weighted oriented curve, and let
  $\gamma$ be a concrete representative of the underlying unweighted
  curve. Let $(p,q) \in X(\gamma)$ be an essential crossing
  of~$\gamma$ on~$S$, and let $\gamma'$ be the smoothing as defined
  above. If the corresponding components of~$C$ have equal weight
  $w > 0$, then we can make a weighted curve $C'$ by giving every
  component in $[\gamma']$ not involved in the smoothing the same
  weight it had in~$C$, and giving the one or two new components
  weight~$w$. In this case we say that $C'$ is obtained from $C$ by a
  \emph{smoothing of weight~$w$}, and write $C \reducesto_w C'$.
\end{definition}
Using this, we define conditions on a weighted curve functional, extending Definition~\ref{def:properties}.
\begin{itemize}
\item \textbf{Weighted quasi-smoothing:}  There is a constant $R>0$
  so that, $C \reducesto_w C'$ are weighted curves related by a
  smoothing of weight~$w$,
  \[
    f(C) \ge f(C')-wR.
  \]
\item \textbf{Weighted smoothing:} Take $R=0$ in the definition of
  weighted quasi-smoothing.
\end{itemize}

See Proposition~\ref{prop:weightextend} for justification for these
definitions.

\subsection{Space of geodesics}

\begin{definition}[Boundary at infinity]
\label{def:boundaryinfty}
Endow $S$ with a complete
hyperbolic  metric~$g$; we denote the pair $(S,g)$ by~$\Sigma$. Then
we can consider the metric
universal covering $p \colon \wt{\Sigma} \to \Sigma$, with
$\wt{\Sigma}$ isometric to the hyperbolic plane.
Two quasi-geodesic rays $c,c' \colon [0,\infty) \to \wt{\Sigma}$ are
said to be \emph{asymptotic} if there exists a constant $K$ for which
$d(c(t),c'(t)) \leq K$ for all $t \geq 0$. We define
$\partial_{\infty} \Sigma$, the \emph{boundary at infinity}
of~$S$, to be the set of equivalence classes of
asymptotic quasi-geodesic rays. This boundary at infinity is
independent of the hyperbolic structure on~$S$ up to canonical
homeomorphism.
\end{definition}

\begin{definition}[Space of oriented geodesics]

\label{def:spacegeodesics}
Let $G^{+}(\Sigma)$ denote the space of \emph{oriented geodesics} in
$\wt{\Sigma}$, i.e.,
\[
G^+(\Sigma) \coloneqq\partial_\infty \Sigma  \times\partial_\infty\Sigma - \Delta.
\]
Since this is independent of the hyperbolic structure, we will also
write $G^+(S)$.
\end{definition}

\subsection{Geodesic currents}

\begin{definition}[Geodesic current definition 1]\label{def:currents-1}
We define $\GC^+(S)$, the space of oriented geodesic currents on $S$,
to be the
space of $\pi_1(S)$-invariant (positive) Radon measures on
$G^{+}(S)$.
\end{definition}

Since the action of $\pi_1(S)$ on $G^+(S)$ is not discrete, this
definition is hard to visualize. We give alternate
definitions that play a key role in our proofs. For a hyperbolic
surface~$\Sigma$, let $UT\Sigma$ be the unit tangent bundle and let
$\phi_t$ be the geodesic flow on it.

\begin{definition}[Geodesic current definition 2]\label{def:currents-2}
  We can also define oriented geodesic currents to be the space of
  (positive) finite
  Radon measures $\mu$ on
$UT \Sigma$ which are invariant under the geodesic flow, in the sense
that $(\phi_t)_*(\mu) = \mu$ for all $t\in\R$.
\end{definition}

We can also look at induced measures on cross-sections.

\begin{definition}[Geodesic current definition 3]\label{def:currents-3}
A geodesic current is a \emph{transverse invariant measure}: a family of
measures $\{\mu_{\tau}\}_{\tau}$,
where $\tau \subset UT \Sigma$ is a submanifold-with-boundary of the unit tangent
bundle of real codimension 1 transverse to the geodesic
foliation~$\mathcal{F}$, with the following invariance property:  if
$x_1 \in \tau_1, x_2 \in \tau_2$ are two points on transversal
submanifolds on the same leaf of $\mathcal{F}$, and $\phi \colon U_1
\to U_2$ a holonomy diffeomorphism between neighborhoods of $x_1$ and
$x_2$ respectively, then $\phi_{*}\mu_{\tau_1}=\mu_{\tau_2}$.
\end{definition}

The equivalence of the three definitions was known to Bonahon
\cite[Chapter~4]{Bonahon86:EndsHyperbolicManifolds}. Details can be
found in \cite[Section~3.4]{AL17:HyperbolicStructures}. Briefly, given
a measure $\mu$ on $UT\Sigma$ as in Definition~\ref{def:currents-2}
and a cross-section~$\tau$, there is an induced \emph{flux} $\mu_\tau$
on~$\tau$, as explained in Definition~\ref{def:flux}; this gives
a geodesic current in the sense of Definition~\ref{def:currents-3}.
Lifting to the universal cover then gives a geodesic current in the
sense of Definition~\ref{def:currents-1}. We can also relate
Definitions~\ref{def:currents-1} and~\ref{def:currents-2} directly by
connecting both to measures on $UT\H^2$ that are invariant under both
$\pi_1(\Sigma)$ and the geodesic flow
\cite[Proposition~8.1]{BO07:Equidistribution}.

Because Definitions~\ref{def:currents-1} and~\ref{def:currents-3} are
invariant under the mapping class
group, we will also write $G^+(S)$ and $\GC^+(S)$ in the sequel. We will also write $\pi_1(S)$. On the other hand, we will emphasize the dependence of $UT\Sigma$ on the hyperbolic structure.

\begin{remark}
  For $\Sigma,\Sigma'$ hyperbolic surfaces, any homeomorphism $\psi \colon \Sigma \to \Sigma'$, there is a homeomorphism $\hat\psi \colon UT\Sigma \to UT\Sigma'$ that is an orbit
  equivalence, and it is tempting to use this to define an induced map
  between geodesic currents in the sense of
  Definition~\ref{def:currents-2}. But this does
  not quite work: $\hat\psi_*$ does not take geodesic currents to
  geodesic currents. Orbit equivalence means that
  $\psi(\phi_t(x)) = \phi_{f(t)}(\psi(x))$ for some monotonic function
  $f \colon \R \to \R$, but this is not enough to guarantee that
  $(\phi_t)_*(\hat\psi_*\mu) = \hat\psi_*\mu$, and indeed this is
  usually false. See \cite[Theorem~3.6]{Wil14:Marked}.
\end{remark}

\subsection{Oriented vs unoriented currents}
\label{sec:unor-current}

We will be mostly working in the setting of oriented geodesic
currents, but most of our natural examples (like measured laminations)
use unoriented currents.

\begin{definition}[Unoriented geodesic currents]
\label{def:unorientedcurrents}

To define the subspace $\GC(S) \subset \GC^{+}(S)$ of
\emph{unoriented geodesics currents},
let $\sigma: G^{+}(S) \to G^{+}(S)$ be the \emph{flip map}
that switches the two factors in the definition of $G^+(S)$,
reversing the orientation of the geodesic. This induces a map
$\sigma_* \colon \GC^{+}(S)
\to\GC^{+}(S)$. Set
\[
\GC(\Sigma) \coloneqq \{ \mu \in  \GC^{+}(\Sigma) \mid \sigma_*(\mu)=\mu \}.
\]
There is a map $\Pi \colon \GC^{+}(S) \to\GC^{+}(S)$ 
given by $\Pi(\mu)\coloneqq
\frac{1}{2}(\mu + \sigma_*(\mu))$ with image the subset of unoriented currents.
\end{definition}

In the proof of the main result, we shall work with
oriented currents $\GC^+(S)$;  oriented currents are more general
and just as easy to
work with for our proof.

The maps $\sigma$ and $\Pi$ have obvious analogues for curves.

\begin{proof}[Proof of Corollary~\ref{cor:unorconvex}, assuming
  Theorem~\ref{thm:convex}]
  For a curve functional as in the statement,
  let $g \colon \R \Curves^+(S) \to \R$ be $f \circ \Pi$. Then $g$
  satisfies quasi-smoothing, with the same constant as~$f$, and thus
  by Theorem~\ref{thm:convex} extends uniquely to a continuous
  function $\bar g \colon \GC^+(S) \to \R$. The desired extension $\bar f$ is the
  restriction of $\bar g$ to the subspace of unoriented currents.
\end{proof}

\subsection{Curves as currents}
\label{sec:curves-as-currents}

For an oriented multi-curve~$C$ on a hyperbolic surface~$\Sigma$, we
can construct a geodesic current as follows.

For Definition~\ref{def:currents-1}, consider all lifts of all
non-trivial components of~$C$ to $\wt\Sigma$. Each lift gives a quasi-geodesic in
$\wt\Sigma$, and thus a unique fellow-traveling geodesic in
$G^+(S)$; we thus get an infinite countable subset of
$G^+(S)$, which is easily seen to be discrete and
$\pi_1(S)$-invariant. Define the geodesic current to be the
$\delta$-function of this subset.

For Definition~\ref{def:currents-2}, take the geodesic
representative~$\gamma$ of~$C$, and consider the canonical
lift~$\wt\gamma$ of~$\gamma$ to $UT\Sigma$; this is an orbit
of~$\phi_t$. Let $\mu_C$ be the length-normalized $\delta$-function on
this orbit. That is, for an open set~$U$ we set $\mu_C(U)$ to
be the total length of $\wt \gamma \cap U$ with respect to the natural
Riemannian metric on $UT\Sigma$.

For Definition~\ref{def:currents-3} on a cross-section~$\tau$, again
take $\wt \gamma \subset UT\Sigma$, and let $\mu_\tau$ be the
$\delta$-function on the discrete set of points $\wt\gamma \cap \tau$.
(This is compatible with the length normalization in the previous
paragraph.)

From any of these points of view the inclusion naturally extends to weighted
multi-curves.

Weighted closed (multi-)curves are dense in the
space of geodesic currents
\cite[Proposition~4.4]{Bonahon86:EndsHyperbolicManifolds}.

Geometric intersection number extends continuously to geodesic
currents \cite[Proposition~4.5]{Bonahon86:EndsHyperbolicManifolds}.
The space of measured laminations (defined in
\cite[Section~1.7]{HP92:TrainTracks}) can be characterized
\cite[Proposition~17]{Bonahon88:GeodesicCurrent}
as a subset of
(unoriented) geodesic currents:
\[
\ML(S) \coloneqq \{ \alpha \in \GC(S) \mid \sigma(\alpha)=\alpha, i(\alpha,\alpha)=0 \}.
\]
The following square of inclusions is useful to keep in
mind:
\[
  \begin{tikzpicture}[x=2.25cm,y=1.25cm]
    \node (simples) at (0,0) {$\R\Simples(S)$};
    \node (laminations) at (1,0) {$\ML(S)$};
    \node (curves) at (0,-1) {$\R\Curves(S)$};
    \node (currents) at (1,-1) {$\GC(S)$.};
    \draw[right hook->] (simples) to (curves);
    \draw[right hook->] (simples) to (laminations);
    \draw[right hook->] (curves) to (currents);
    \draw[right hook->] (laminations) to (currents);
  \end{tikzpicture}
\]
Here the horizontal inclusions have dense image:
Douady and Hubbard showed that weighted simple multi-curves are dense
in $\ML$
\cite[Theorem]{DH75:DensityStrebel}. Soon after, Masur showed that
weighted simple curves are also
dense
\cite[Theorem~1]{Mas79:JenkinsStrebelOneCyl}.

\subsection{Topology on currents and measures}
\label{subsec:topologycurrents}
Let $\Meas(X)$ denote the space of positive Borel measures on a topological space $X$. $\Meas_1(X)$ will denote the space of Borel probability measures on $X$.
The topology on $\Meas(X)$ is the \emph{weak$^*$ topology}, i.e.,
the smallest topology so that,
for all continuous, compactly-supported functions~$f$ on~$G^+(S)$, the functional
\[
\mu \mapsto \int_{G^+(S)} fd\mu
\]
is continuous. 

The topology on $\GC^+(S)$ (in
Definition~\ref{def:currents-1}) is the weak$^*$ topology as a subspace
of measures 
on $G^+(S)$, 
We could also look at the weak$^*$ topology on currents as a subspace of
measures on $UT\Sigma$ (Definition~\ref{def:currents-2}); these two
points of view give the same
topology \cite[Proposition~8.1]{BO07:Equidistribution}.
On the other
hand, if we take $\tau$ to be a closed cross-section (including the
boundary), the map $\mu
\mapsto \mu_\tau$ relating Definitions~\ref{def:currents-2}
and~\ref{def:currents-3} is \emph{not} usually continuous with
respect to the weak$^*$ topologies,
so it is delicate to use the weak$^*$
topology on $\Meas(\tau)$; see
Lemma~\ref{lem:flux-continuous} and Remark~\ref{rem:flux-not-cont}.

There are in fact two topologies on spaces of measures that are
sometimes called the weak$^*$ topology; the one above is also called
the \emph{wide topology}
\cite[\textit{inter alia}]{EoM10:Convergence}.
There is also
the \emph{narrow topology} on measures $\Meas(X)$ on a space~$X$,
defined as the smallest
topology so that,
for all continuous bounded~$f$ on~$X$, the functional
$\mu \mapsto \int_{X} fd\mu$
is continuous. (That is, replace \emph{compactly supported} with
\emph{bounded} in the functions considered.)

\begin{remark}
Some authors call the weak$^*$ topology the vague topology, and use
the term weak topology for the narrow one (for example, Bauer's
textbook \cite{Bau01:Measure}).
However, this conflicts with the notion of weak topology used for Banach spaces, and we prefer the wide/narrow usage.
\end{remark}

In general, the weak$^*$ or wide topology is
weaker than the narrow topology, but in some particular cases they are
equivalent. 

A topological space $X$ is called \emph{Polish} if its topology has a countable base and can be defined by a complete metric.
\begin{theorem}[{\cite[Theorem 31.5]{Bau01:Measure}}]
\label{thm:spacemeasuresmetrizable}
Let $X$ be a locally compact topological space. Then $X$ is Polish if
and only if $\Meas(X)$ is Polish with respect to the weak$^*$-topology.
\end{theorem}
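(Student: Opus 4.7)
The plan is to prove the two implications separately, using the structure of locally compact Polish spaces and the Riesz representation of Radon measures.

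For the forward direction, I would exploit that a locally compact Polish space is $\sigma$-compact and second countable. First I would fix an exhaustion $K_1 \subset \interior(K_2) \subset K_2 \subset \cdots$ by compacta and a countable family $\{f_n\} \subset C_c(X)$ that is dense in the inductive limit topology. Then I would define the candidate metric
\[
  d(\mu,\nu) = \sum_{n=1}^{\infty} 2^{-n}\, \min\!\left(1,\ \left|\int f_n\, d\mu - \int f_n\, d\nu\right|\right),
\]
and verify it induces the weak$^*$ topology by uniformly approximating arbitrary $g \in C_c(X)$ by elements of $\{f_n\}$ (with support controlled by the $K_n$). Separability would follow by approximating measures with rational linear combinations of Dirac masses at a countable dense subset of~$X$. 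The delicate step is completeness: for a $d$-Cauchy sequence $(\mu_k)$, the limits $\Lambda(f) = \lim_k \int f\, d\mu_k$ exist for each $f$ in the dense family, and testing the Cauchy condition against bump functions supported on each $K_n$ yields uniform bounds $\sup_k \mu_k(K_n) < \infty$; this lets one extend $\Lambda$ to a positive linear functional on all of $C_c(X)$, and the Riesz--Markov--Kakutani theorem produces a Radon measure $\mu$ with $\mu_k \to \mu$ weakly.

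For the reverse direction, I would realize $X$ as a topologically nice subspace of $\Meas(X)$ via the Dirac embedding $\delta \colon X \to \Meas(X)$, $x \mapsto \delta_x$. Continuity is immediate (evaluation against $f \in C_c(X)$ is continuous in $x$), and injectivity uses that $C_c(X)$ separates points on a locally compact Hausdorff space. A standard Urysohn bump function argument shows $\delta$ is a topological embedding: if $\delta_{x_k} \to \delta_x$ weakly, any compactly supported bump centered near $x$ must eventually evaluate to a positive number on $x_k$, forcing $x_k$ into arbitrarily small neighborhoods of~$x$. To finish, I would show $\delta(X)$ is locally closed, and hence $G_\delta$, in $\Meas(X)$: if $\delta_{x_k}$ converges weakly, then either a subsequence $x_{k_j}$ converges in~$X$ (producing a Dirac mass) or $x_k$ escapes every compact set (forcing $\int f\,d\delta_{x_k} \to 0$ for all $f \in C_c(X)$, giving the zero measure as the limit). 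Thus $\delta(X) \cup \{0\}$ is closed, and since $\{0\}$ is closed, $\delta(X)$ is the intersection of a closed set with an open set, so locally closed in the Polish space $\Meas(X)$, and hence Polish itself.

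The hard part will be the completeness step in the forward direction. Naive weak$^*$ Cauchy sequences of positive Radon measures may lose mass to infinity or fail to define a bona fide positive linear functional; the resolution leverages the $\sigma$-compact exhaustion to construct bump function cutoffs whose evaluation extracts uniform mass bounds on each $K_n$, ruling out these pathologies before Riesz representation is invoked.
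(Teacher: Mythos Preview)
The paper does not prove this statement; it is quoted verbatim from Bauer's textbook \cite[Theorem~31.5]{Bau01:Measure} and used as a black box, so there is no in-paper argument to compare against.

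Your outline is the standard one and is essentially correct. Two small points worth tightening. In the forward direction, when you argue that the metric~$d$ induces the weak$^*$ topology, the $3\epsilon$ approximation of a general $g \in C_c(X)$ by some~$f_n$ needs a uniform bound on $\mu_k$ over a compact set containing $\supp(g) \cup \supp(f_n)$; you obtain this bound by first testing against a bump function in the family~$\{f_n\}$, so make sure such bump functions for each $K_m$ are explicitly included in your countable family from the outset. In the reverse direction, the dichotomy ``either a subsequence of $x_k$ converges in~$X$ or $x_k$ leaves every compact set'' uses that compact subsets of~$X$ are sequentially compact, which you do not know a~priori; however, once you have established that $\delta$ is a topological embedding (your Urysohn argument does this without metrizability of~$X$, and works for nets), $X$ is homeomorphic to a subspace of the metrizable space $\Meas(X)$, hence metrizable, and the dichotomy follows. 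Your ordering already places the embedding step first, so the logic is sound---just flag that metrizability of~$X$ is being used at that point.
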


Thus, $\GC^+(S)$ is second countable completely metrizable and second countable, so in particular
sequential continuity is the same as continuity.
Although we will be dealing with Radon measures, for Polish spaces it is equivalent to consider the a priori more general class of Borel measures.

\begin{theorem}[{\cite[Theorem 26.3]{Bau01:Measure}}]
On a Polish space, a locally finite Borel measure is a $\sigma$-finite Radon measure.
\end{theorem}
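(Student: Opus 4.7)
The plan is to establish two assertions: first, that a locally finite Borel measure $\mu$ on a Polish space $X$ is $\sigma$-finite; second, that any such $\sigma$-finite $\mu$ is inner regular by compact sets (the Radon property) and finite on compact sets.

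For $\sigma$-finiteness I would invoke second countability. A Polish space admits a countable base and is therefore Lindel\"of. Local finiteness supplies an open cover $\{U_x\}_{x \in X}$ with $\mu(U_x) < \infty$; extracting a countable subcover $X = \bigcup_n U_{x_n}$ exhibits $X$ as a countable union of sets of finite $\mu$-measure. Since any compact subset of $X$ is covered by finitely many such neighborhoods, finiteness on compacta is immediate.

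For inner compact regularity I would reduce to the case of a finite $\mu$, by restricting to one of the countably many finite pieces provided by the previous paragraph, and then prove the classical tightness theorem of Ulam. Fix a complete metric $d$ generating the topology and a countable dense sequence $(y_m)_{m \geq 1}$. Given $\epsilon > 0$, for each integer $k \geq 1$ choose $N_k$ with
\[
  \mu\Bigl(X \setminus \bigcup_{m \leq N_k} B(y_m, 1/k)\Bigr) < \epsilon / 2^k,
\]
and set $K \coloneqq \bigcap_k \bigcup_{m \leq N_k} \overline{B(y_m, 1/k)}$. Then $K$ is closed and totally bounded in the complete metric $d$, hence compact, and a single application of subadditivity gives $\mu(X \setminus K) < \epsilon$.

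Finally, to upgrade the tightness of $X$ to inner compact regularity of an arbitrary Borel set $B$, I would combine the Ulam lemma with the standard metric-space fact that every finite Borel measure is approximated above by open sets and below by closed sets; the latter is proved by checking that the collection of Borel sets with this property is a $\sigma$-algebra that contains every closed $F$ (using $F = \bigcap_n \{x : d(x,F) < 1/n\}$ to approximate from the outside by open neighborhoods). Given $\epsilon > 0$, choose a closed $F \subset B$ with $\mu(B \setminus F) < \epsilon/2$ and a compact $K \subset X$ with $\mu(X \setminus K) < \epsilon/2$; then $F \cap K$ is closed inside a compact set, hence compact, and approximates $B$ within $\epsilon$. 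The principal obstacle is Ulam's tightness argument, which is where completeness of the metric is essential; the remaining steps are routine consequences of separability and the $\sigma$-algebra argument in metric spaces.
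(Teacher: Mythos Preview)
Your proof is correct and is the standard argument for this result (Ulam's tightness theorem for Polish spaces combined with the Lindel\"of property). Note, however, that the paper does not actually prove this theorem: it is simply quoted from Bauer's textbook \cite[Theorem~26.3]{Bau01:Measure} without proof, so there is no proof in the paper to compare against. Your sketch is essentially what one finds in Bauer or any standard measure-theory reference.
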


The narrow and wide topology agree in certain sequences on locally compact spaces.

\begin{theorem}[{\cite[Theorem 30.8]{Bau01:Measure}}]
Let $X$ be a locally compact topological space, and $\mu_n$ a sequence of Radon measures of uniformly bounded mass converging to a Radon measure $\mu$ in the wide topology.
Then $\mu_n$ converges to $\mu$ in the narrow topology if and only if $\lim_n \mu_n(X) \to \mu(X)$.
\end{theorem}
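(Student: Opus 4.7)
The plan is to prove both implications, with the forward direction being essentially trivial and the reverse direction requiring a tightness argument.

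For the forward direction ($\Rightarrow$), observe that the constant function $f \equiv 1$ is continuous and bounded on $X$. If $\mu_n \to \mu$ in the narrow topology, applying the definition directly to this $f$ gives $\mu_n(X) = \int_X 1\,d\mu_n \to \int_X 1\,d\mu = \mu(X)$.

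For the reverse direction ($\Leftarrow$), assume wide convergence together with $\mu_n(X) \to \mu(X)$. I need to show $\int f\,d\mu_n \to \int f\,d\mu$ for every $f \in C_b(X)$. The strategy is to reduce to the compactly supported case via a cutoff. Fix $\epsilon > 0$. Since $\mu$ is a finite Radon measure on the locally compact space~$X$, inner regularity yields a compact set $K \subset X$ with $\mu(X \setminus K) < \epsilon$. By local compactness and Urysohn's lemma, choose $\phi \in C_c(X)$ with $0 \le \phi \le 1$ and $\phi \equiv 1$ on~$K$. Decompose
\[
\int_X f\,d\mu_n - \int_X f\,d\mu = \left(\int_X f\phi\,d\mu_n - \int_X f\phi\,d\mu\right) + \left(\int_X f(1-\phi)\,d\mu_n - \int_X f(1-\phi)\,d\mu\right).
\]
The first bracketed difference tends to $0$ as $n \to \infty$ by wide convergence, since $f\phi \in C_c(X)$.

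The crucial step is controlling the tail. Write
\[
\int_X (1-\phi)\,d\mu_n = \mu_n(X) - \int_X \phi\,d\mu_n.
\]
By the mass hypothesis, $\mu_n(X) \to \mu(X)$, and by wide convergence applied to $\phi \in C_c(X)$, we have $\int \phi\,d\mu_n \to \int \phi\,d\mu$. Subtracting, $\int (1-\phi)\,d\mu_n \to \int (1-\phi)\,d\mu \le \mu(X \setminus K) < \epsilon$. Thus, for $n$ sufficiently large, $\int (1-\phi)\,d\mu_n < 2\epsilon$, and the tail terms are bounded by $\|f\|_\infty \cdot 3\epsilon$. Letting $\epsilon \to 0$ completes the proof.

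The main obstacle is precisely turning the pointwise scalar condition $\mu_n(X) \to \mu(X)$ into uniform smallness of the tail mass $\int (1-\phi)\,d\mu_n$; this is what the identity in the previous paragraph accomplishes, by converting the tail into a difference of two convergent sequences whose limits are both controlled (one by wide convergence, the other by the mass hypothesis). Without the mass hypothesis, the sequence $\mu_n$ could leak mass to infinity and the tail would not shrink — this is the standard failure mode showing that narrow convergence is strictly stronger than wide convergence on non-compact spaces.
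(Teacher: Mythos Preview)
The paper does not give its own proof of this statement; it is quoted without proof as a citation to Bauer's textbook. Your argument is correct and is the standard one: the forward direction is immediate from testing against $f\equiv 1$, and for the reverse direction the identity $\int(1-\phi)\,d\mu_n = \mu_n(X) - \int\phi\,d\mu_n$ is precisely the device that converts the mass hypothesis into uniform control of the tails, after which the decomposition $f = f\phi + f(1-\phi)$ finishes the job.
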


\begin{proposition}[{\cite[Corollary 30.9]{Bau01:Measure}}]
Let $X$ be a locally compact topological space, and $\mu$, $\mu_n$, Borel probability measures. Then
$\mu_n \to \mu$ in the wide topology if and only if $\mu_n \to \mu$ in the narrow topology.
\end{proposition}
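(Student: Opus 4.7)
The plan is to realize this proposition as an immediate corollary of the immediately preceding theorem, which characterizes when wide convergence of uniformly mass-bounded Radon measures upgrades to narrow convergence. The setup is essentially tailor-made, since probability measures are as uniformly mass-bounded as possible.

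First I would dispose of the implication narrow $\Rightarrow$ wide, which is immediate from the definitions: every compactly supported continuous function is bounded, so $C_c(X) \subset C_b(X)$, and therefore narrow convergence (which tests against all of $C_b(X)$) forces convergence against the smaller class $C_c(X)$, i.e.\ wide convergence. No hypothesis of local compactness or of being a probability measure is needed for this direction.

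For the converse, I would assume $\mu_n \to \mu$ in the wide topology and verify the two hypotheses of the preceding theorem (Bauer~30.8). Since all $\mu_n$ and $\mu$ are probability measures, $\mu_n(X) = \mu(X) = 1$ for every~$n$; hence the masses are uniformly bounded, and trivially $\lim_n \mu_n(X) = 1 = \mu(X)$. The preceding theorem then yields $\mu_n \to \mu$ in the narrow topology, completing the proof.

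The only mildly delicate point is that the preceding theorem is stated for Radon measures, whereas the hypothesis here is merely ``Borel probability measures''. In the situations that arise in this paper, $X$ is second countable locally compact Hausdorff and therefore Polish, so by the result cited as Theorem~\ref{thm:spacemeasuresmetrizable} (together with the fact that locally finite Borel measures on Polish spaces are Radon) every Borel probability measure on~$X$ is automatically Radon, and the application of the preceding theorem is unambiguous. In greater generality one should interpret ``Borel'' as ``Radon'' throughout, as in Bauer's formulation.
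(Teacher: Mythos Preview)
Your argument is correct and is exactly the intended deduction: the paper does not supply its own proof of this proposition but merely cites it from Bauer, and your derivation from the preceding Theorem~30.8 is the standard one-line reduction (probability measures have constant total mass~$1$, so the mass condition is automatic). Your remark on the Borel/Radon distinction is also apt and matches how the paper handles measures elsewhere.
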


In particular, when $X$ is Polish, the two topologies agree for the
space $\Meas_1(X)$ of Borel probability measures.

\begin{proposition}
  If $X$ is a locally compact Polish space, the weak$^*$ and narrow
  topologies agree on $\Meas_1(X)$.
\label{prop:narrowwide}
\end{proposition}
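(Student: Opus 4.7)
The plan is to show that the two topologies coincide on $\Meas_1(X)$ by establishing mutual refinement, combining the metrizability of the weak$^*$ topology (Theorem~\ref{thm:spacemeasuresmetrizable}) with the sequential equivalence quoted immediately above (Corollary 30.9 of~\cite{Bau01:Measure}).

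First I would observe that one direction is essentially definitional: every continuous compactly supported function on $X$ is bounded, so $C_c(X) \subseteq C_b(X)$. The narrow topology is defined using a strictly larger family of test functionals than the weak$^*$ topology, so it is automatically at least as fine as the weak$^*$ topology on $\Meas_1(X)$.

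For the reverse refinement, the key ingredient is first countability. By Theorem~\ref{thm:spacemeasuresmetrizable}, $\Meas(X)$ endowed with the weak$^*$ topology is Polish, hence metrizable, and so the subspace $\Meas_1(X)$ with the induced weak$^*$ topology is first countable. To conclude, I would consider the identity map $(\Meas_1(X), \text{weak}^*) \to (\Meas_1(X), \text{narrow})$. By the cited corollary, any sequence of probability measures converging in the wide topology to another probability measure also converges narrowly: the uniform mass bound $1$ and the equality $\mu_n(X) = 1 = \mu(X)$ make the hypothesis of Theorem 30.8 of~\cite{Bau01:Measure} automatic. Sequential continuity of the identity map out of a first countable space upgrades to honest continuity, giving the reverse refinement.

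The only conceptually delicate point is the one already encoded in the corollary: on non-compact $X$, weak$^*$ convergence can lose mass at infinity and thereby fail to imply narrow convergence; restricting to probability measures and controlling the limit mass is precisely what rules this out. Beyond invoking this, there is no real obstacle; the argument is essentially a two-line application of the results collected in the preceding paragraph.
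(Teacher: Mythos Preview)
Your proof is correct and fills in exactly the argument the paper leaves implicit: the paper states the proposition without proof, presenting it as an immediate consequence of the cited Corollary~30.9 of~\cite{Bau01:Measure} together with the metrizability from Theorem~\ref{thm:spacemeasuresmetrizable}. Your two-step argument (narrow refines weak$^*$ trivially; the reverse via sequential continuity plus first countability from metrizability) is precisely the intended route.
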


\begin{convention}
  For any topological space~$X$, we will always use the weak$^*$
  topology on $\Meas(X)$. We will also work with the dense subspace
  $\R X \subset \Meas(X)$ of finitely-supported measures on~$X$
  (also called weighted linear combinations of~$X$), with its
  inherited subspace topology. (The weights are positive, but we
  usually omit that from the notation.)
  \label{conv:finitemeasures}
\end{convention}

\begin{remark}
  If we limit to sums with at most $k$ terms in
  the linear combination (or points in the support of the measure),
  we get a further subspace temporarily denoted
  $\R_{(k)}X \subset \R X \subset \Meas(X)$. We can view $\R_{(k)} X$
  as a quotient of $(\R_{\ge 0} \times X)^k$, quotienting by the action of the
  symmetric group and other evident equivalences; as such, it inherits an
  obvious topology, which agrees with the subspace topology.
  \label{rem:boundedfinitemeasures}
\end{remark}

\begin{remark}
  Geodesic currents can also be defined more generally for finite type hyperbolic surfaces. Depending on if we consider ends as cusps or funnels, we get two different spaces, that we will call $\GC_{\mathrm{cusp}}(S)$ and $\GC_{\mathrm{open}}(S)$, respectively.
In the first case, we define the space of geodesic currents analogously to Definition~\ref{def:currents-1} for closed surfaces, i.e., as invariant measures supported on the space of geodesics of the universal cover, noting that now the space of geodesics contains arcs going from cusp to cusp. 
In the second case, we consider geodesic currents supported on geodesics projecting to the convex core of the surface.
Extending continuously curve functionals to these spaces is more delicate.

In the case of $\GC_{\mathrm{cusp}}(S)$, let $S$ be a surface with two open ends and let $\Sigma$ be a complete hyperbolic metric of finite area, with respect to which
the ends of $S$ are cusps. Let $a$ be an arc going from cusp to cusp. Let $C_n$ be the closed curve going along for some time $a$, winding $n$ times around one cusp,
going along $a$ again and winding $n$ times around the other cusp.
Observe that although $C_n \to a$ in the weak$^*$ topology, and $i(a,a)=0$, we have $i(a,C_n)=2n$, so intersection number is not a continuous function on geodesic currents.

The case of $\GC_{\mathrm{open}}(S)$ is different, since intersection number is continuous. Indeed, let $\overline{\Sigma}$ denote the convex core of the complete hyperbolic surface of infinite area, which is a compact surface with geodesic boundary.
We can consider the intersection number on the double $D(\overline{\Sigma})$
of~$\overline{\Sigma}$, which is a closed surface $\overline{\Sigma}$ embeds into. This
intersection number on $D(\overline{\Sigma})$ is continuous
\cite[Proposition~4.5]{Bonahon86:EndsHyperbolicManifolds}.
Restricting this intersection number to~$\overline{\Sigma}$, we obtain continuity
of intersection number on $\GC_{\mathrm{open}}(S)$.
However, the conditions
of Theorem~\ref{thm:convex} alone are not enough to guarantee a
continuous extension $f \colon \GC_{\mathrm{open}}(S) \to \mathbb{R}$.
Indeed, let $\ell$ be the restriction to $\overline{\Sigma}$ of the hyperbolic
length on $D(\overline{\Sigma})$, and consider the modified curve functional $\ell'$
obtained by setting
\[
  \ell'(C) \coloneqq
  \begin{cases}
    0 & \text{$C$ is a boundary curve}\\
    \ell(C) & \text{otherwise.}
  \end{cases}
\]
We note that $\ell'$ satisfies additivity, stability and homogeneity
properties because $\ell$ does. Also, it satisfies smoothing because
$\ell$ does and non-boundary curves don't intersect boundary curves.
However, $\ell'$ does not extend to a continuous function  on
$\GC_{\mathrm{open}}(S)$: let $\gamma,\beta \in \pi_1(S,p)$ be elements based at a point $p \in
S$, and denote $C=[\beta]$, $D=[\gamma]$. Assume that $C$ is a
boundary curve and $D$ is not. For each $n$, define a non-simple,
non-boundary parallel curve by $C_n \coloneqq [\gamma \beta^n]$.
Observe that the sequence $\frac{1}{n}C_n$ converges to $C$ in the
weak$^*$ topology,
but
\begin{align*}
  \ell'(C_n) &\asymp n,\\
  \intertext{whereas}
  \ell'(C)&=0,
\end{align*}
so $\ell'$ can't be a continuous function on $\GC_{\mathrm{open}}(S)$.
So additional conditions on $f$ are needed to guarantee a continuous
extension to $\GC_{\mathrm{open}}(S)$.
\label{rmk:opensurface}
\end{remark}


\section{Convexity and continuity}
\label{sec:convexity}

\subsection{Convexity on the reals}

The
curve functionals~$f$ we study have some convexity property as a
function of the
weights, because of the convex union and homogeneity properties.
We first review some background on convex functions and their continuity properties.

A function $f\colon \mathbb{R}^n \to \mathbb{R}$ is called \emph{$\mathbb{R}$-convex} (resp.\ \emph{$\mathbb{Q}$-convex}) if
\[
f(ax + (1-a)y) \leq a f(x) + (1-a)f(y)
\]
for all $x,y\in \mathbb{R}^n$ and $a \in [0,1]$ (resp.\ $a \in [0,1] \cap \mathbb{Q}$).
A function $f \colon \mathbb{Q}^n \to \mathbb{R}$ might also be
$\mathbb{Q}$-convex, with the same definition. We furthermore say that
$f$ is \emph{midpoint-convex} if
\[
  f\bigl(\OneHalf x + \OneHalf y \bigr) \le \OneHalf f(x) + \OneHalf f(y).
\]

\begin{proposition}\label{prop:convexcont}
The following are true:
\begin{enumerate}[(i)]
    \item A midpoint-convex function $f \colon \mathbb{Q}^n \to \mathbb{R}$ is $\mathbb{Q}$-convex. \label{item:midpoint-rat-convex}
    \item An $\mathbb{R}$-convex function $f \colon \mathbb{R}^n \to
      \mathbb{R}$ is continuous. \label{item:realconvex-cont}
    \item A $\mathbb{Q}$-convex function $f \colon \mathbb{Q}^n \to
      \mathbb{R}$ is continuous. \label{item:ratconvex-cont}
    \item Every $\mathbb{Q}$-convex function
      $f \colon \mathbb{Q}^n \to \mathbb{R}$ has a unique continuous
      extension to an $\mathbb{R}$-convex function $\bar f \colon \R^n
      \to \R$. \label{item:ratconvex-extend}
\end{enumerate}
\end{proposition}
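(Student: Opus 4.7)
The plan is to prove the four parts in sequence. Part (i) is a purely algebraic Jensen argument, parts (ii) and (iii) share essentially the same analytic argument showing that convex functions are locally Lipschitz, and part (iv) then follows from uniform continuity on bounded subsets of $\mathbb{Q}^n$.

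For (i), the standard doubling trick: by induction on $k$, midpoint-convexity implies $f\bigl(2^{-k}\sum_{i=1}^{2^k} x_i\bigr) \le 2^{-k}\sum_{i=1}^{2^k} f(x_i)$. To pass from powers of two to arbitrary $N$ points, pad with $2^k-N$ copies of the mean $\bar x = \frac{1}{N}\sum x_i$ for some $2^k \ge N$, apply the dyadic inequality, and solve for $f(\bar x)$ to conclude $f(\bar x) \le \frac{1}{N}\sum f(x_i)$. Then $\mathbb{Q}$-convexity for weight $a = p/q \in [0,1]\cap\mathbb{Q}$ follows by taking $p$ copies of $x$ and $q-p$ copies of $y$: their average is exactly $ax+(1-a)y$.

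For (ii), I would use the classical local Lipschitz estimate. Convexity bounds $f$ from above on any cube by the maximum of its $2^n$ vertex values (by iterated one-variable convexity). To bound $f$ below near a point $x$, the identity $x = \tfrac{1}{2}(y + (2x-y))$ and midpoint convexity give $f(y) \ge 2f(x) - f(2x-y)$, and the right-hand side is controlled by the upper bound. Localizing and keeping track of constants yields the local Lipschitz estimate, hence continuity. Part (iii) is then immediate: every construction in (ii) — cube vertices, midpoints, the reflection $2x - y$ — stays in $\mathbb{Q}^n$ when $x, y \in \mathbb{Q}^n$, so the same inequalities hold and $f$ is locally Lipschitz on $\mathbb{Q}^n$, hence continuous in the subspace topology.

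For (iv), local Lipschitz continuity on $\mathbb{Q}^n$ implies uniform continuity on bounded subsets, so since $\mathbb{R}^n$ is the metric completion of $\mathbb{Q}^n$, the function $f$ extends uniquely to a continuous function $\bar f \colon \mathbb{R}^n \to \mathbb{R}$. To check that $\bar f$ is $\mathbb{R}$-convex, take $x, y \in \mathbb{R}^n$ and $a \in [0,1]$, approximate by rational sequences $x_k \to x$, $y_k \to y$, $a_k \to a$ with $a_k \in [0,1]\cap\mathbb{Q}$, apply $\mathbb{Q}$-convexity of $f$ to each triple, and pass to the limit using continuity of $\bar f$. Uniqueness of the extension follows from density of $\mathbb{Q}^n$ in $\mathbb{R}^n$. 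The main delicacy is verifying that the local Lipschitz proof in (ii) adapts to the rational setting in (iii); this is painless because the proof only involves midpoints, dyadic averages, and reflections, all of which preserve $\mathbb{Q}^n$.
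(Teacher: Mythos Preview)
Your argument is correct and, for parts (i)--(iii), matches the paper's approach: part~(i) is identical (the paper credits this padding argument to Hardy--Littlewood--P\'olya), and for (ii)--(iii) the paper simply cites Kuczma's textbook and the Bernstein--Doetsch theorem where you sketch the local-Lipschitz estimate explicitly. The one genuine variation is in part~(iv): you use the local Lipschitz bound from~(iii) to get uniform continuity on bounded sets, extend $f$ by completion, and then verify $\mathbb{R}$-convexity by rational approximation; the paper instead defines $\bar f(x) \coloneqq \liminf_{y\to x,\,y\in\mathbb{Q}^n} f(y)$, checks directly from the definition that $\bar f$ is convex, and only then deduces continuity from part~(ii). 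Your route makes continuity of the extension automatic at the cost of needing the quantitative Lipschitz estimate; the paper's $\liminf$ route needs only continuity (not Lipschitz) on $\mathbb{Q}^n$ and bootstraps continuity of $\bar f$ from convexity. Both are standard and equally short.
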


\begin{proof}
\begin{enumerate}[(i)]
   \item
This proof is due to
 Ivan Meir \cite{Meir19:MidpointConvex}, following \cite[Page~17]{HLP88:Inequalities}.
We first prove that midpoint inequality extends
     to arbitrary means:
\[
g((x_1+\dots+x_m)/m)\leq (g(x_1)+\dots+g(x_m))/m
\]
 for any $m\in\mathbb{Z}_{\geq1}$.
 We can prove this first for $m=2^k$ by using midpoint convexity repeatedly. 
For general $m\leq 2^{i}$, we take $x_1,\dots,x_m$ plus $2^i-m$ copies
of $x'=(x_1+\dots+x_m)/m$, yielding
\[
g(x')=g\left(\frac{(2^i-m)x'+x_1+\cdots+x_m}{2^i}\right) \le\frac{(2^i-m)g(x')+g(x_1)+\dots+g(x_m)}{2^i},
\]
 which implies $g(x')=g((x_1+\dots+x_m)/m) \leq
 (g(x_1)+\dots+g(x_m))/m$.
 
To prove $\Q$-convexity, taking $a$ copies of $x$ and $b$ copies of
$y$ we obtain
\[
g\left(\frac{ax+by}{a+b}\right)\leq \frac{ag(x)+bg(y)}{a+b}=\left(\frac{a}{a+b}\right)g(x)+\left(\frac{b}{a+b}\right)g(y)
\]
for $a,b\in \mathbb{Z}_{\geq0}$ not both zero.
    \item See \cite[Theorem~7.1.1]{Ku09}.
    \item The proof of \cite[Theorem~7.1.1]{Ku09} can be adapted for
      functions on $\mathbb{Q}^n$. The proof relies on
      Bernstein-Doetsch Theorem, which works in high generality for
      topological vector spaces (see
      \cite[Theorem~B]{KK89:BernsteinDoetsch}), and the fact that
      any point $x \in \mathbb{Q}^n$ is the interior of some
      full-dimensional $\mathbb{Q}$-simplex, on which $f$ is bounded.
    \item Define the extension by
      \[
        \bar{f}(x) \coloneqq \liminf_{\substack{y \to x\\ y \in \mathbb{Q}^n}} f(y).
      \]
      By continuity of $f$ on $\Q^n$, $\bar f$ is an extension of~$f$.
      To study $\bar{f}(a x + (1-a)y)$,
      let $x_i, y_i$ be sequences in $\mathbb{Q}^n$
      with $\lim x_i=x$,
      $\lim y_i=y$, $\liminf f(x_i) = f(x)$, and $\liminf f(y_i) = y$. Let
      $a_i \in [0,1] \cap \mathbb{Q}$ be a sequence with
      $\lim a_i = a$. Then
      \begin{align*}
    \bar{f}(a x + (1-a)y) &\leq \liminf f\bigl(a_i x_i + (1-a_i)y_i\bigr) \\
        &\leq \liminf \bigl(a_i f(x_i) + (1-a_i)f(y_i)\bigr)\\
        &= a\bar{f}(x) + (1-a)\bar{f}(y).
      \end{align*}
      Thus $\bar f$ is convex and therefore continuous.\qedhere
\end{enumerate}
\end{proof}

It is not true that all $\Q$-convex functions
$f \colon \mathbb{R}^n \to \mathbb{R}$ must be
continuous, but all counterexamples are highly pathological.  In
particular, any measurable $\Q$-convex function $f \colon \mathbb{R}^n \to \mathbb{R}$ is necessarily continuous
(see \cite[Theorem~9.4.2]{Ku09}).

\begin{remark} Proposition~\ref{prop:convexcont}\ref{item:realconvex-cont} does not hold for
  infinite dimensional
  topological vector spaces: an unbounded linear functional is convex
  but not continuous.
\end{remark}

\subsection{Convexity for curve functionals}

We now apply the results above to our setting of real-valued functions
on curves.

As an immediate consequence of Proposition~\ref{prop:convexcont}\ref{item:ratconvex-extend}, for
a curve functional~$f$ satisfying convex
union and homogeneity, we can extend $f$ to a weighted curve functional that is convex and therefore
continuous
for a fixed set of components. This will play a role in the proof of
Theorem~\ref{thm:convex}, specifically in
Proposition~\ref{prop:iteratescont}.

First, for any curve functional
satisfying homogeneity, we adopt the convention that we extend $f$ to
rationally-weighted curves
$\Q\Curves^+(S)$ in the usual way by clearing denominators: set
\begin{equation}
  f\Bigl(\sum a_i C_i\Bigr) \coloneqq \frac{1}{d}f\Bigl(\sum d a_i C_i\Bigr)
  \label{eq:rationalex}
\end{equation}
for some integer~$d$ sufficiently large so all the $d a_i$ are
integers.
By homogeneity of~$f$, the extension does not depend on~$d$.

\begin{proposition}
  Let $C = (C_i)_{i=1,\dots,n}$ be a finite sequence of multi-curves, and
  consider combinations $\sum_{i=1}^n a_i C_i$. Let $f$ be a
  curve functional that satisfies homogeneity and convex union.
  Define a function $f_{C}\colon \Q^n \to \R$ by
\[
  f_{C}(a_1,\dots,a_n)\coloneqq f\left( \sum_{i=1}^n a_i C_i \right).
\]
Then $f_C$ is $\Q$-convex and thus continuous.
\label{prop:convexweights}
\end{proposition}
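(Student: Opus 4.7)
The plan is to chain together the three parts of Proposition~\ref{prop:convexcont}: first establish midpoint-convexity of $f_C$ on $\Q^n$ by combining homogeneity with convex union, then invoke Proposition~\ref{prop:convexcont}\ref{item:midpoint-rat-convex} to upgrade to full $\Q$-convexity, and finally apply Proposition~\ref{prop:convexcont}\ref{item:ratconvex-cont} for continuity. Thus the whole proof reduces to verifying a single inequality
\[
  f_C\bigl(\tfrac{1}{2}a + \tfrac{1}{2}b\bigr) \le \tfrac{1}{2} f_C(a) + \tfrac{1}{2} f_C(b)
\]
for arbitrary $a,b \in \Q^n$.

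To prove this inequality, I will clear denominators using the convention \eqref{eq:rationalex}. Pick an integer $d > 0$ large enough that $d a_i$ and $d b_i$ are non-negative integers for all~$i$; then $d(a_i + b_i)$ are also non-negative integers. By homogeneity,
\[
  f_C\bigl(\tfrac{1}{2}a + \tfrac{1}{2}b\bigr)
  = f\Bigl(\sum_i \tfrac{a_i+b_i}{2}\, C_i\Bigr)
  = \tfrac{1}{2d}\, f\Bigl(\sum_i d(a_i+b_i)\, C_i\Bigr).
\]
Now the key observation is that the integer-weighted multi-curve inside is, by definition of weighted union, literally the disjoint union
\[
  \sum_i d(a_i+b_i)\, C_i \;=\; \Bigl(\sum_i d a_i\, C_i\Bigr) \,\cup\, \Bigl(\sum_i d b_i\, C_i\Bigr),
\]
so convex union~\eqref{eq:union-convex} applies and gives
\[
  f\Bigl(\sum_i d(a_i+b_i)\, C_i\Bigr)
  \;\le\; f\Bigl(\sum_i d a_i\, C_i\Bigr) + f\Bigl(\sum_i d b_i\, C_i\Bigr).
\]
Dividing by $2d$ and using homogeneity again to re-identify the two terms on the right with $f_C(a)$ and $f_C(b)$ respectively yields midpoint-convexity.

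There is essentially no obstacle here; the argument is a routine translation of the hypotheses into the hypotheses of Proposition~\ref{prop:convexcont}. The only mild subtlety is confirming that the identification $\sum d(a_i+b_i) C_i = (\sum d a_i C_i) \cup (\sum d b_i C_i)$ is legitimate for weighted multi-curves as defined in Definition~\ref{def:weighted-multi-curve}, where one uses the \emph{merging} move in the equivalence relation to identify the parallel copies; with this in place, homogeneity and convex union together deliver midpoint-convexity, and Proposition~\ref{prop:convexcont} finishes the argument.
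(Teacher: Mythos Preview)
Your proof is correct and takes essentially the same approach as the paper: establish midpoint-convexity of $f_C$ and then appeal to Proposition~\ref{prop:convexcont}. The paper's proof is simply the one-line version, declaring that midpoint-convexity is ``immediate from the definitions'' and then citing Proposition~\ref{prop:convexcont}; you have merely unpacked that immediacy via the clearing-denominators argument (which is exactly how the extension to $\Q$-weights was set up in Equation~\eqref{eq:rationalex}).
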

\begin{proof}
  It is immediate from the definitions that $f_C$ is midpoint convex.
  The result follows from
  Proposition~\ref{prop:convexcont}.
\end{proof}

\begin{corollary}
  \label{cor:convex-extend}
  If a curve functional $f$ satisfies homogeneity and convex union, 
  then there is a unique continuous homogeneous extension of~$f$ to weighted curve functional.
\end{corollary}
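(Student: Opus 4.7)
The plan is to build the continuous extension slice-by-slice, using Proposition~\ref{prop:convexcont}\ref{item:ratconvex-extend} on each finite-dimensional piece of $\R\Curves^+(S)$ and then patching together via the quotient description of the topology in Remark~\ref{rem:boundedfinitemeasures}.

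First, I would use homogeneity to extend $f$ canonically from $\Curves^+(S)$ to $\Q\Curves^+(S)$ by clearing denominators as in~\eqref{eq:rationalex}; the extension still satisfies homogeneity and convex union at rational weights. Then, for any finite tuple $C = (C_1, \dots, C_n)$ of multi-curves, Proposition~\ref{prop:convexweights} already gives that $f_C(a_1,\dots,a_n) \coloneqq f\bigl(\sum_i a_i C_i\bigr)$ is a $\Q$-convex, hence continuous, function on $\Q_{\ge 0}^n$, so Proposition~\ref{prop:convexcont}\ref{item:ratconvex-extend} produces a unique continuous $\R$-convex extension $\bar f_C \colon \R_{\ge 0}^n \to \R$.

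Next, I would verify compatibility between slices: if $C \subseteq C'$ are nested finite tuples, then $\bar f_{C'}$ restricted to the coordinate subspace spanned by~$C$ agrees with $\bar f_C$, since both are continuous extensions of the same $\Q$-valued function on rational points, and such an extension is unique. This lets me set $\bar f\bigl(\sum_i a_i C_i\bigr) \coloneqq \bar f_C(a_1, \dots, a_n)$ for any finite tuple~$C$ containing the support of the weighted multi-curve, with the value independent of the choice of~$C$.

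Continuity on all of $\R\Curves^+(S)$ reduces to continuity on each subspace $\R_{(k)}\Curves^+(S)$ of weighted multi-curves with at most $k$ components. By Remark~\ref{rem:boundedfinitemeasures}, this subspace is a quotient of $(\R_{\ge 0} \times \Curves^+(S))^k$; since $\Curves^+(S)$ is discrete, a convergent sequence of such measures admits representatives eventually supported on a single fixed tuple~$C$, so continuity reduces to continuity of the single function $\bar f_C$ on $\R_{\ge 0}^n$. Uniqueness follows because any continuous homogeneous extension must agree with the rational extension~\eqref{eq:rationalex} and then with each $\bar f_C$ by density of $\Q^n$ in~$\R^n$. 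The main subtlety to watch is the bookkeeping when weights drop to zero and components disappear, but this is precisely what the slice-compatibility absorbs.
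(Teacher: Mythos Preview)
Your core argument—extend to $\Q$ by clearing denominators, then apply Proposition~\ref{prop:convexcont}\ref{item:ratconvex-extend} on each finite-dimensional slice via Proposition~\ref{prop:convexweights}, checking that the slice extensions agree on overlaps—is exactly the paper's approach; the corollary is stated without proof precisely because it is immediate from those two propositions. The slice-compatibility check you spell out is correct and routine.

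The final paragraph, arguing continuity on all of $\R\Curves^+(S)$, goes beyond what the paper proves or needs: every later use (Propositions~\ref{prop:weightextend} and~\ref{prop:iteratescont}) only invokes continuity in the weights for a fixed finite set of components. More to the point, that paragraph has a gap. The claim that a weak$^*$-convergent sequence in $\R_{(k)}\Curves^+(S)$ admits representatives eventually supported on a single fixed tuple is false. For example, with $k=2$ take $\mu_n = C + \tfrac{1}{n} D_n$ for pairwise distinct curves $D_n$; then $\mu_n \to C$ in the weak$^*$ topology (any finitely supported test function eventually misses $D_n$), yet the support is never constant, and nothing in the hypotheses forces $\bar f(C + \tfrac{1}{n} D_n) \to f(C)$. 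So either drop that paragraph, or rephrase the conclusion as continuity in the weights for each fixed finite support—which is what your slice argument actually establishes and what the paper uses downstream.
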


\begin{proposition}
  If a curve functional satisfies 
  convex union, homogeneity, and quasi-smoothing, the extension from
  Corollary~\ref{cor:convex-extend} satisfies
  weighted quasi-smoothing with the same constant.
  \label{prop:weightextend}
\end{proposition}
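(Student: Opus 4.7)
The plan is to prove weighted quasi-smoothing with the same constant $R$ in three stages: integer weights by iteration of the unweighted inequality, rational weights by homogeneity, and general real weights by continuity of the extension on any fixed set of topological components.

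First I would handle the integer case. Suppose $C \reducesto_w C'$ with $w$ a positive integer and (by possibly rewriting $C$) all weights in $C$ integer. Realizing each integer weight as that many parallel copies gives a concrete unweighted multi-curve underlying $C$. The weighted smoothing is then realized as $w$ consecutive unweighted smoothings of essential crossings: pair up the $w$ parallel copies of the one or two components involved in the smoothing, and at each step smooth the crossing between one matched pair. Essentiality is preserved at every intermediate stage because parallel copies have lifts to $\wt S$ with the same endpoints on $\partial_\infty \wt S$, so the linking condition of Definition~\ref{def:essential-crossing} (respectively, the power-of-primitive condition) is inherited from the original essential crossing. Iterating the unweighted quasi-smoothing inequality $w$ times gives $f(C) \ge f(C') - wR$.

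For the rational case, when $C$ has rational weights and $w$ is rational, choose a common denominator $d$ so that $dC$ and $dC'$ have integer weights. Then $dC \reducesto_{dw} dC'$ with $dw$ a positive integer, and the integer case yields $f(dC) \ge f(dC') - (dw)R$; dividing by $d$ via the homogeneity convention~\eqref{eq:rationalex} yields $f(C) \ge f(C') - wR$. For the general real case, hold the topological types of the components of $C$ fixed and approximate by rationally-weighted curves $C^{(k)} \to C$ whose smoothing components carry a common rational weight $w^{(k)} \to w$; the corresponding smoothings ${C'}^{(k)} \to C'$ have the same topological types as $C'$ with rational weights, so the rational case applies to give $f(C^{(k)}) \ge f({C'}^{(k)}) - w^{(k)} R$. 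By Proposition~\ref{prop:convexweights} together with Proposition~\ref{prop:convexcont}\ref{item:ratconvex-extend}, the extension $f$ is continuous as a function of the weights on any fixed list of topological components, so the inequality passes to the limit and yields $f(C) \ge f(C') - wR$.

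The main obstacle is Step~1, specifically verifying that the $w$ successive smoothings are each smoothings of an essential crossing, since essentiality must be checked at every intermediate stage in the definition of $\reducesto_k$. I expect this to go through cleanly: parallel copies are mutually homotopic, so their lifts to $\wt S$ share endpoints on $\partial_\infty \wt S$, and the linking (or power-of-primitive) conditions in Definition~\ref{def:essential-crossing} are inherited by each intermediate concrete multi-curve encountered during the iteration. Steps~2 and~3 are then routine applications of homogeneity and continuity.
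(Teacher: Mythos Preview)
Your proof is correct and follows essentially the same three-stage strategy as the paper: integer weights by iterating the unweighted quasi-smoothing inequality on parallel copies, rational weights by clearing denominators and homogeneity, and real weights by continuity of the extension on a fixed set of components. The paper's version is terser and does not spell out the essentiality check at each intermediate smoothing, whereas you explicitly justify this via the shared endpoints at infinity of parallel lifts; this extra care is appropriate and matches what the paper implicitly relies on.
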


\begin{proof}
We first observe that $f$, as a function on integrally weighted multi-curves,
satisfies weighted quasi-smoothing.
 If $C=[\gamma]$, $\gamma \reducesto \gamma'$ and
  $k$ is an integer, then $k\gamma \reducesto_k k \gamma'$, since $k\gamma$ are $k$ disjoint parallel copies of $\gamma$.
Thus,
\[
f(kC) \geq f(kC')-kR.
\]
By the method of clearing denominators and homogeneity, as in
Equation~(\ref{eq:rationalex}), we obtain rational weighted
quasi-smoothing.

Finally, by continuity of $f$ as a function of the weights of a fixed
multi-curve, we get real weighted quasi-smoothing.
\end{proof}

  Theorems~\ref{thm:convex} and~\ref{thm:stable} as stated start from
  a curve functional of various types. Many curve
  functionals naturally come as functions on weighted curves (see Section~\ref{sec:examples}).
  On the other hand, we have seen in Proposition~\ref{prop:weightextend} that a curve functional satisfying convex union and homogeneity and stability properties yields weighted curve functional satisfying the same properties.
  

\section{Examples}
\label{sec:examples}

We give several examples of curve functionals that extend to
functions on currents, mostly as a consequence of our main
theorems. This includes known results, such as hyperbolic lengths and
intersection numbers, or more generally lengths for any length metric
structure, as well as new results, such as extremal lengths with
respect to a conformal structure or with respect to a graph.
In the following applications we consider unoriented curves unless
otherwise stated.

\subsection{Intersection number}
\label{sec:intersection-number}
Fix a multi-curve~$D$ and consider the curve functional
$\Curves(S)$ defined by $f(C) = i(C, D)$, where $i(C,D)$ is the 
minimal number of intersection points between representatives of $C$
and~$D$ in general position.
Then $f$ is
homogeneous, additive, and stable.

There is a simple geometric
argument, that we will use repeatedly, to see that $f$ satisfies
smoothing. Fix a minimal representative $\delta$ for~$D$. Take a
curve~$C$ with an
essential self-intersection~$x$, and a representative~$\gamma$ with
minimal intersection with~$\delta$. Then $\gamma$ has
a self-intersection point~$x'$ of the homotopy type of~$x$. If we
consider the
curve representative~$\gamma' \in C'$ obtained by smoothing at~$x'$, then,
since $i(C',D)$ is an infimum, we have
\[
  i(C',D) \le i(\gamma',\delta) = i(\gamma,\delta) = i(C,D),
\]
as desired.

By Theorem~\ref{thm:convex}, intersection
number with~$D$ extends to a
continuous function on geodesic currents
\[
  i(\cdot, D) \colon \GC(S) \to \R.
\]
We can then fix $C$ and vary $D$ to show that, for $\mu$ a geodesic
current, $i(\cdot,\mu)$ is a continuous function on $\GC(S)$.
In \cite[Proposition~4.5]{Bonahon86:EndsHyperbolicManifolds}, Bonahon
shows that the geometric intersection number $i \colon
\mathbb{R}\Curves(S) \times \mathbb{R}\Curves(S) \to \mathbb{R}_{\geq
  0}$  between two weighted multi-curves extends to a continuous
two-variable function
\[i \colon \GC(S) \times \GC(S) \to \mathbb{R}_{\geq 0}.\]

\begin{question}\label{quest:intersect-cont}
  Can the arguments in this paper be extended to give an alternate
  proof that
  geometric intersection number is a continuous two-variable function?
\end{question}

Following Example~\ref{ex:selfintersection}, proving that there is a
continuous extension of
$\sqrt{i(C,C)}$ to a function on $\GC(S)$ is equivalent to proving
continuity of $i$ as a two-variable function, by a simple polarization
argument:
\[
 i(C,D) = \frac{i(C\cup D, C \cup D) - i(C,C) - i(D,D)}{2}.
\]

\subsection{Hyperbolic length}
\label{sec:hyperbolic-length}
We continue with the original motivating example for geodesic currents
\cite[Proposition~14]{Bonahon88:GeodesicCurrent}. Fix a hyperbolic metric~$g$ on~$S$, and denote the hyperbolic structure by $\Sigma$.
 Then, for any closed curve $C$ on~$S$
(not necessarily simple), we can consider its hyperbolic length with
respect to the Riemannian metric. In terms of the holonomy
representation $\rho_g: \pi_1(S) \to \PSL_2(\R)$, this is given by
\[
  \ell_g(C) = 2\cosh^{-1}\left(\OneHalf \tr(\rho(c))\right)
\]
where $c \in \pi_1(S)$ is a representative of $C$.
We extend $\ell_g$ to a
weighted curve functional by additivity and homogeneity:
\[
  \ell_g(t_1C_1 \cup \dots \cup t_nC_n) = \sum_{i=1}^n t_i\ell_g(C_i).
\]
By definition, $\ell_g$ is additive and homogeneous.
Stability follows from properties of the trace of $2\times 2$
matrices, or geometrically from the length. Smoothing follows by the
argument for intersection number.
Thus, by Theorem~\ref{thm:convex}, $\ell_g$ extends to a continuous
function on geodesic currents.

We recall that Bonahon shows that
\[
\ell_g(C)=i(\mathcal{L}_{\Sigma},C)
\]
where $\ell_g(C)$ denotes the hyperbolic length of $C$, i.e., the
length of the $g$-geodesic representative, and $\mathcal{L}_{\Sigma}$
denotes the \emph{Liouville  current}, a geodesic current induced by
the volume form on $UT\Sigma$. (For an equivalent formulation in terms
of Definition~\ref{def:currents-1}, see
\cite[\S~2]{Bonahon88:GeodesicCurrent}.)

\subsection{Length with respect to arbitrary metrics}
\label{sec:arbitrary-metrics}

The argument from Section~\ref{sec:hyperbolic-length} applies equally
well to show that for any Riemannian or, more
generally, length metric~$g$
on~$S$, 
length~$\ell_g$ with respect to~$g$ satisfies smoothing. For
completeness and later use, we prove
that these curve functionals
are stable. (Freedman-Hass-Scott give a proof in the Riemannian case
\cite[Lemma~1.3]{FHS82:ClosedGeodesics}.)
\begin{lemma}\label{lem:length-stable}
  For any orientable surface~$S$ and length metric~$g$ on~$S$, the
  curve functional $\ell_g$ is stable: $\ell_g(C^n) = n \ell_g(C)$.
\end{lemma}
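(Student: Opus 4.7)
The inequality $\ell_g(C^n) \le n \ell_g(C)$ is immediate: an almost-minimal loop representing $C$, traversed $n$ times, represents $C^n$ with $n$ times the length.

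For the reverse inequality, my plan is to pass to the cyclic cover $\pi \colon \tilde S_C \to S$ corresponding to $\langle c \rangle \subset \pi_1(S)$, where $c$ represents $C$. This cover is a local isometry, and loops in $S$ of class $C^k$ lift to closed loops of winding number $k$ in the cylinder $\tilde S_C$ (because $c^k \in \langle c \rangle$), and vice versa, with the same length. This reduces the claim to the purely annular statement: for any length metric on a cylinder $A$ with $\pi_1(A) = \Z$, the infimum length of winding-$n$ loops equals $n$ times that of winding-$1$ loops.

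For the annular statement, I would pass further to the universal cover $\tilde A \to A$. The deck generator $T$ acts on $\tilde A$ by isometry, and the infimum length of winding-$k$ loops in $A$ equals $\inf_{\tilde x \in \tilde A} d_{\tilde A}(\tilde x, T^k \tilde x)$. The $\le$ direction follows from the triangle inequality along the chain $\tilde x, T\tilde x, \dotsc, T^n \tilde x$, using that $T$ is an isometry. For the $\ge$ direction, I would construct a continuous $1$-Lipschitz function $W \colon \tilde A \to \R$ satisfying $W(T \tilde x) = W(\tilde x) + \tau$, where $\tau \coloneqq \ell_A(1)$; then any path from $\tilde x$ to $T^n \tilde x$ has length at least $|W(T^n \tilde x) - W(\tilde x)| = n \tau$, giving the bound.

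The main obstacle is producing $W$ with optimal Lipschitz constant $1$, equivalently showing that $\tau$ coincides with the stable translation length $\lim_n d(\tilde x, T^n \tilde x)/n$. In the Riemannian nonpositively curved case this is classical via Busemann functions, and Freedman-Hass-Scott handle general Riemannian metrics using the existence of length-minimizing closed geodesics. For a general length metric on a cylinder, I would employ a Busemann-type limiting construction along the $T$-orbit, relying on the cylindrical topology of $A$ (in particular that $H^1(A;\R) \cong \R$) to ensure both convergence of the limit and tightness of the Lipschitz constant.
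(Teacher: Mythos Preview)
Your reduction to the cyclic cover is sound, and the easy inequality is fine. But the hard inequality is where the content lies, and your proposal leaves it as a gap you explicitly flag but do not close. Producing a $1$-Lipschitz $W$ with $W\circ T = W + \tau$ is exactly equivalent to showing that the minimal displacement $\tau = \inf_{\tilde x} d(\tilde x, T\tilde x)$ equals the stable translation length $\lim_n d(\tilde x, T^n\tilde x)/n$, which is the lemma itself restated. A Busemann limit $W(y) = \lim_k\bigl(k\tau - d(y,T^k x_0)\bigr)$ along a $T$-orbit is monotone once you pick $x_0$ with $d(x_0,Tx_0)=\tau$, but to show it is \emph{bounded} (hence convergent) you need $d(x_0,T^k x_0)\ge k\tau$, which is again the statement you are trying to prove. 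The appeal to $H^1(A;\R)\cong\R$ does not help: a cohomology class has many $1$-form representatives, and finding one whose pointwise comass is at most $1/\tau$ is the same calibration problem. For general length metrics (no curvature hypothesis, no smooth geodesics) I do not see how to break this circularity without a genuinely new idea.

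The paper's proof avoids all of this by using the smoothing property, which was verified for length metrics immediately before the lemma. Since $C^n$ has essential self-crossings (Definition~\ref{def:essential-crossing}, case~\ref{item:ess-multi}) whose oriented smoothings eventually yield $nC$, one has $C^n \reducesto nC$ and hence $\ell_g(C^n) \ge \ell_g(nC) = n\,\ell_g(C)$. This is a two-line argument with no analysis of covers or Busemann functions. If you want to salvage your annular viewpoint, the honest way to finish is the same smoothing idea specialized to the cylinder: a winding-$n$ loop in an orientable annulus with $n\ge 2$ cannot be embedded, and an oriented smoothing at a self-intersection of a near-minimizer splits it into loops of windings $m$ and $n-m$ with $0<m<n$ without increasing total length; iterate to get $n$ winding-$1$ loops of total length at most $\ell_g(C^n)+\epsilon$. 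That argument works, but at that point you are reproducing the paper's proof inside the cover rather than giving an independent one.
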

\begin{proof}
  One inequality is true in any length space: by taking the obvious
  $n$-fold representative of $C^n$, we see that
  $\ell_g(C^n) \le n \ell_g(C)$. The other inequality follows from the
  smoothing property and Proposition~\ref{prop:crossings-powers}: since $C^n \reducesto n C$, we have
  $\ell_g(C^n) \ge n \ell_g(C)$.
\end{proof}

\begin{remark}
  Lemma~\ref{lem:length-stable} is false if $S$ is not orientable. For
  instance, if $S$ is the projective plane, $g$ is any metric, and $C$
  is the non-trivial curve on~$\Sigma$, then $\ell_g(C) > 0$ but $C^2$
  is null-homotopic so $\ell_g(C^2) = 0$. We can get a similar
  inequality without torsion on a Möbius strip, by removing a small
  disk from this projective plane.
\end{remark}

\subsection{Length with respect to embedded graphs}
\label{sec:length-emb-graph}
We can generalize further beyond length metrics. Let
$\iota \colon \Gamma \hookrightarrow S$ be an embedding of a finite
graph in~$S$ that
is filling, in the sense that the complementary regions are disks, or
equivalently
$\iota_*$ is surjective on~$\pi_1$. Endow $\Gamma$
with a length metric~$g$. Then any closed multi-curve~$C$ on~$S$ can
be homotoped so that it factors through~$\Gamma$, in many different
ways. Let $\ell_\Gamma(C)$ be the length of the smallest multi-curve $D$
on~$\Gamma$ so that $\iota(D)$ is homotopic to~$C$. It is easy to see
that this length is realized and is positive.
(In fact we can see $\ell_\Gamma$ as a limit of lengths with respect
to Riemannian metrics, by fixing an embedding of~$\Gamma$ and making
the metric on the complement of a regular neighborhood of~$\Gamma$ be
very large, following Shepard~\cite{Shepard91:TopologyShortest}.)

As before, $\ell_\Gamma$ is clearly additive and homogeneous, and is
stable by the argument of Lemma~\ref{lem:length-stable}. To see that
$\ell_\Gamma$ satisfies smoothing at an essential crossing, take a
minimal-length concrete representative $\delta$ of $D$ on~$\Gamma$.
Since the image $\iota\circ\delta$ has a corresponding crossing by
Lemma~\ref{lem:essential-cross} and $\iota$ is an embedding, there is
a corresponding crossing of $\delta$ that can be smoothed and then
tightened to get the desired inequality.

As a special case, we can consider
the case when $\Gamma$ is a \emph{rose graph} with only one
vertex~$\ast$ and edges of length~$1$.
Since $\iota$ is filling, the image of the edges of~$\Gamma$
give generators for $\pi_1(S,\iota(\ast))$. Then the length
$\ell_\Gamma(C)$ of a curve~$C$
is the length
of~$C$ as a conjugacy class in $\pi_1(S)$ with respect to
these generators. This is a simple generating set in the sense of
Erlandsson \cite{Erlandsson:WordLength}, who proved this continuity
and constructed an explicit
multi-curve~$K$ so that $\ell_\Gamma(C) = i(C,K)$.

\subsection{Stable lengths}
\label{sec:stable-examp}

Generalizing the previous example, let
$\iota \colon \Gamma \to S$ be an \emph{immersion} from a finite graph
to~$S$ so that $\iota_* \colon \pi_1(\Gamma) \to \pi_1(S)$
is surjective, and again give a length metric
on~$\Gamma$. For instance, if $\Gamma$ has a single vertex and all
edges have length~$1$, this is
equivalent to giving an arbitrary generating set for $\pi_1(S)$.
We can define $\ell_\Gamma(C)$ as before, as the minimum length of any
multi-curve~$D$ on~$\Gamma$ so that $\iota_*(D) = C$.

The curve functional $\ell_\Gamma$ is still additive, but unlike the previous
examples it is not stable (see Example \ref{ex:puncturedtorus} below).
Thus we cannot hope to extend
$\ell_\Gamma$ to currents, but rather extend the stable
curve functional~$\norm{\ell_\Gamma}$ (defined in Section \ref{sec:stable}).
We do have quasi-smoothing.

\begin{lemma}
  For any connected, $\pi_1$-surjective immersion of a length
  graph~$\iota \colon \Gamma \to S$, the curve functional~$\ell_\Gamma$
  satisfies quasi-smoothing.
\end{lemma}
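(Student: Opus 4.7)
The plan is to adapt the embedded-graph argument from Section~\ref{sec:length-emb-graph}. That argument used the fact that an essential crossing of $\iota \circ \delta$ in $S$ pulls back to an honest self-crossing of $\delta$ in $\Gamma$, which can be smoothed without changing length. For a general immersion this fails: the two preimages $y_1, y_2 \in \Gamma$ of the crossing point may be distinct. My strategy is to compensate by inserting short ``bridging'' paths in $\Gamma$ connecting $y_1$ to $y_2$, chosen so their images in $S$ are null-homotopic loops, and to splice the cut arcs of $\delta$ back together with these bridges. The resulting spliced multi-curve still represents $C'$ (because the bridges contribute trivially in $\pi_1(S)$), but exceeds $\ell(\delta)$ only by a bounded amount.

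The heart of the proof is producing such bridges with uniformly bounded length. Let
\[
  \Phi = \{(y_1, y_2) \in \Gamma \times \Gamma : \iota(y_1) = \iota(y_2)\}
\]
(compact since $\iota$ is continuous and $\Gamma$ is compact) and define $m \colon \Phi \to [0, \infty)$ to be the infimum length of a path $\alpha$ in $\Gamma$ from $y_1$ to $y_2$ with $\iota \circ \alpha$ null-homotopic as a loop at $\iota(y_1)$. Pointwise finiteness of $m$ follows from $\pi_1$-surjectivity of $\iota_*$: given any such $\alpha$, one can correct its image class by concatenating with a loop in $\pi_1(\Gamma, y_2)$ mapping to $[\iota \circ \alpha]^{-1}$. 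To get uniform boundedness, I would verify upper semi-continuity of $m$: a bridge at $(y_1, y_2)$ extends to a bridge at a nearby $(y_1', y_2') \in \Phi$ by appending short $\Gamma$-paths at its endpoints, whose images combine into a small correction loop sitting inside a contractible neighborhood of $\iota(y_1) \in S$ (hence null-homotopic). Compactness then yields $R_1 \coloneqq \sup_\Phi m < \infty$; set $R \coloneqq 2 R_1$.

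For the main construction, given $C \reducesto C'$, take $\delta$ a minimum-length representative of $C$ on $\Gamma$. By Lemma~\ref{lem:essential-cross}, the image $\iota \circ \delta$ has an essential crossing $(p', q')$ whose smoothing represents $C'$. Write $\delta = \delta_1 \cdot \delta_2$ cyclically split at $(p', q')$, with $y_i \in \Gamma$ the two preimages of the crossing point, and choose a trivial-image bridge $\alpha_0$ from $y_1$ to $y_2$ with $\ell(\alpha_0) \leq R_1$. The two loops $\tau_1 = \delta_1 \cdot \bar\alpha_0$ and $\tau_2 = \alpha_0 \cdot \delta_2$ in $\Gamma$ satisfy $\iota_*[\tau_i] = [\iota \circ \delta_i] \cdot 1 = [\iota \circ \delta_i]$ in $\pi_1(S)$, so $\tau_1 \cup \tau_2$ represents $C'$, and its total length is $\ell(\delta) + 2\ell(\alpha_0) \leq \ell_\Gamma(C) + R$.

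The main obstacle is the upper-semicontinuity check for $m$: one must show that endpoint perturbations in $\Phi$ can be absorbed by short $\Gamma$-extensions whose images remain small enough for the correction loop to stay in a single contractible neighborhood of the crossing point. The Definition~\ref{def:essential-crossing}(2) case (where the crossing is of the form $[\gamma] = [\delta^n]$) is handled identically, since the smoothed components $[\delta^m]$ and $[\delta^{n-m}]$ remain nontrivial.
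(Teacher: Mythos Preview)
Your argument is correct and is essentially the paper's own proof: the paper packages your bridging cost $m(y_1,y_2)$ as the fiber diameter $\diam_\iota(x)$ in the pullback $\wt V = V \times_S \wt S$ (a path in $\wt V$ between two points over the same $\wt x$ is precisely a path in~$V$ whose image in~$S$ is null-homotopic), proves it upper semi-continuous on the compact base and hence globally bounded, and then cuts at the crossing and splices in two bridges to get a representative of~$C'$ with excess length at most $2R$. The only difference is that the paper states and proves the result in the generality of an arbitrary compact length space~$V$ with a $\pi_1$-surjective map to~$S$ (Proposition~\ref{prop:cocompact-quasismooth}), of which the immersed-graph case is a special instance.
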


This is a special case of a more general result. Let $V$ be a
connected length space, with a continuous, $\pi_1$-surjective map
$\iota \colon V \to S$. Then for $C$ a multi-curve on~$S$, a \emph{lift} of
$C$ is a multi-curve $\tilde{C}$ in~$V$ so that $\iota_* \tilde{C} = C$. (Both $\tilde{C}$
and~$C$ are defined up to homotopy.) Define
$\ell_{\iota,V}(C)$ to be the infimum, over all lifts $\tilde{C}$ of~$C$, of the
length of~$\tilde{C}$ in~$V$.

\begin{proposition}\label{prop:cocompact-quasismooth}
  Let $V$ be a connected, compact length space, with a continuous,
  $\pi_1$-surjective map $\iota \colon V \to S$. Then
  $\ell_{\iota,V} \colon \Curves(S) \to \R_{+}$ satisfies quasi-smoothing.
\end{proposition}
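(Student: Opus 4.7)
The plan is to take a near-minimal concrete lift $\tilde\gamma$ of $\gamma$ in $V$ and construct a lift of $\gamma'$ by combinatorially smoothing $X(\tilde\gamma)$ at the essential crossing, bridging the two $V$-preimages of the crossing point by a path whose $\iota$-image is null-homotopic in $S$. The quasi-smoothing constant $R$ will come from the diameter $K = \diam(V)$, which is finite since $V$ is compact, together with a uniform bound $M$ on the extra length needed to correct the bridge to be $\iota$-null-homotopic.

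The key preliminary step, and what I expect to be the main obstacle, is to establish the following uniform bound: there exists a finite constant $M$ such that every based loop $\eta$ in $V$ of length at most $2K$ has the property that the class $\iota_*[\eta] \in \pi_1(S)$ admits a lift via $\iota_*$ to some based loop in $V$ of length at most $M$. The argument would use that the universal cover $\wt V$ of the compact length space $V$ is proper, so a closed ball of radius $2K$ in $\wt V$ contains only finitely many deck translates of a chosen basepoint; hence based loops in $V$ of length at most $2K$ fall into finitely many homotopy classes in $\pi_1(V)$, and by cocompactness this finiteness persists when the basepoint varies over $V$. Applying $\iota_*$ produces a finite subset of $\pi_1(S)$; by $\pi_1$-surjectivity each element admits at least one loop preimage in $V$, and one takes $M$ to be the maximum of the corresponding minimum lift lengths over this finite set.

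Granting $M$, set $R = 2(K + M)$. For a multi-curve $C$ with essential crossing and smoothing $C'$, fix $\epsilon > 0$ and choose a concrete lift $\tilde\gamma$ of $\gamma$ in $V$ of length at most $\ell_{\iota,V}(C) + \epsilon$. By Lemma~\ref{lem:essential-cross} applied to $\gamma$ and its homotopic companion $\iota\tilde\gamma$ in $S$, there exist parameters $(p, q) \in X(\tilde\gamma)$ for which $\iota\tilde\gamma$ has an essential crossing at $y := \iota\tilde\gamma(p) = \iota\tilde\gamma(q)$ whose smoothing in $S$ is homotopic to $\gamma'$. Writing $\tilde y_1 = \tilde\gamma(p)$ and $\tilde y_2 = \tilde\gamma(q)$ in $V$, I would choose a path $\alpha_0$ in $V$ from $\tilde y_1$ to $\tilde y_2$ of length at most $K$, and then apply the uniform bound to find a loop $\delta$ in $V$ based at $\tilde y_1$ of length at most $M$ with $\iota_*[\delta] = [\iota\alpha_0]^{-1} \in \pi_1(S, y)$. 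The concatenation $\alpha := \delta \cdot \alpha_0$ is then a path from $\tilde y_1$ to $\tilde y_2$ in $V$ of length at most $K + M$ whose $\iota$-image is null-homotopic in $S$ as a based loop at $y$.

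Finally, I would perform the oriented smoothing on $X(\tilde\gamma)$ at $(p, q)$ combinatorially by cutting and regluing, inserting copies of $\alpha$ and $\alpha^{-1}$ in $V$ to bridge the mismatched endpoints $\tilde y_1$ and $\tilde y_2$; the resulting concrete multi-curve $\tilde\gamma'$ in $V$ has length at most $\ell_{\iota,V}(C) + \epsilon + 2(K + M)$. Because $\iota\alpha$ is null-homotopic in $S$ as a based loop at $y$, each component of $\iota\tilde\gamma'$ differs from the corresponding component of the smoothing of $\iota\tilde\gamma$ at $(p, q)$ only by an inserted null-homotopic loop, and is therefore freely homotopic to a component of $\gamma'$; so $\tilde\gamma'$ is indeed a lift of $\gamma'$. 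Letting $\epsilon \to 0$ yields the quasi-smoothing inequality $\ell_{\iota,V}(C) \geq \ell_{\iota,V}(C') - R$, completing the proof modulo the uniform bound $M$, which is the hard part.
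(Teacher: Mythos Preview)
Your overall architecture---take a near-optimal lift in $V$, locate the essential crossing via Lemma~\ref{lem:essential-cross}, cut and reglue using a bridging path in $V$ whose $\iota$-image is null-homotopic, and bound the length of that bridge uniformly---is exactly the paper's strategy. The differences are in how the uniform bound on the bridge is obtained, and here your sketch has two real problems.

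First, your ``key preliminary step'' as stated is vacuous: if $\eta$ is already a based loop in $V$ of length at most $2K$, then $\eta$ itself is a lift of $\iota_*[\eta]$, so $M=2K$ works trivially. What you actually need later is different: you need to lift $[\iota\alpha_0]^{-1}\in\pi_1(S,y)$ to a loop at $\tilde y_1$, where $\alpha_0$ is a \emph{path} (not a loop) in $V$ between two distinct points with the same $\iota$-image. There is no obvious short loop in $V$ representing either $[\iota\alpha_0]$ or its inverse, so your finiteness-of-short-loop-classes argument does not apply to the element you care about. The correct object to bound directly is the length of a path in $V$ from $\tilde y_1$ to $\tilde y_2$ whose $\iota$-image is null-homotopic; the paper packages this as $\diam_\iota(y)$, the diameter of the fiber $\wt\iota^{-1}(\tilde y)$ in the pullback $\wt V = V\times_S \wt S$.

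Second, your finiteness argument invokes the universal cover of~$V$, but a compact length space need not be semi-locally simply connected and so need not admit one. The paper sidesteps this by working in the pullback cover $\wt V = V\times_S\wt S$, which always exists because $\wt S\to S$ is a genuine covering map. Since $\iota$ is proper (compact source, Hausdorff target), so is its pullback $\wt\iota\colon\wt V\to\wt S$; the paper then shows $\diam_\iota(x)$ is upper semi-continuous on~$S$ and hence bounded, giving the quasi-smoothing constant $2\sup_x\diam_\iota(x)$ without any two-step $K+M$ decomposition or basepoint-uniformity argument. If you replace your universal cover by this pullback and reformulate the bound as a fiber-diameter bound, your argument becomes the paper's.
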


As a corollary of Proposition~\ref{prop:cocompact-quasismooth} and
Theorems~\ref{thm:stable} and~\ref{thm:convex}, $\norm{\ell_V}$
extends continuously to a function on $\GC(S)$. This continuous
extension was first proved by Bonahon
\cite[Proposition~10]{Bon91:NegativelyCurvedGroups} in the context of
a hyperbolic group acting discretely and cocompactly on a length space
(replacing $\pi_1(S)$ acting on~$\wt V$), with an additional technical
assumption that the space is uniquely geodesic at infinity. Later, Erlandsson, Parlier, and Souto
\cite[Theorem~1.5]{EPS:CountingCurves} lifted this assumption.

We remark that given a properly discontinuous action of $\pi_1(S)$ on
$X$ a CW-complex, we can construct a $\pi_1(S)$-surjective map $\iota
\colon X/\pi_1(S) \to S$. In fact, we will construct a
$\pi_1(S)$-equivariant map $\iota \colon X \to \wt{S}$. First, we
define $\iota \colon X_0 \to \tilde{S}$ by picking a value on each $\pi_1(S)$ orbit of
the 0-skeleton arbitrarily and extending equivariantly. Similarly on
the 1-skeleton~$X_1$, for each $\pi_1(S)$ orbit on $X_1$, pick a
path in $\wt{S}$ between the images of the endpoints. Continue the construction
inductively.
This construction works because $\pi_1(S)$ acts freely---since
$\pi_1(S)$ is torsion-free and acts properly discontinuously---and
$\wt{S}$ is contractible.

In this construction, proper discontinuity of the action is crucial. For example, it was shown by Bonahon in \cite[Proposition~11]{Bon91:NegativelyCurvedGroups} that if $W$ is a finite graph which is a deformation retract of $S$, the action of $\pi_1(S)$ on the universal cover $\wt{W}$ of $W$ is cocompact but not properly discontinuous, and translation length of conjugacy classes of $\pi_1(S)$ acting on $X$ does not extend continuously to $\GC(S)$.

\begin{proof}[Proof of Proposition \ref{prop:cocompact-quasismooth}]
  Let $\wt\iota \colon \wt V \to\wt S$ be the pull-back of $\iota$
  along the universal cover~$\pi_S$ of~$S$, part of the pull-back square
  \begin{equation}
  \mathcenter{\begin{tikzpicture}[x=2.25cm,y=1.25cm]
    \node (pullback) at (0,0) {$\wt V$};
    \node (X) at (1,0) {$V$};
    \node (Z) at (0,-1.5) {$\wt{S}$};
    \node (Y) at (1,-1.5) {$S$};
    \draw[->,dashed]  (pullback) -- (X) node[midway,above,cdlabel] {\pi_V};
    \draw[->,dashed] (pullback) -- (Z) node[midway,right,cdlabel] {\tilde{\iota}};
    \draw[->] (Z) -- (Y) node[midway,above,cdlabel] {\pi_S};
    \draw[->] (X) -- (Y) node[midway,right,cdlabel] {\iota};
  \end{tikzpicture}}
  \label{eq:pullback}
  \end{equation}
 where $\wt V=V \times_S \wt{S}$.
  Since
  $\iota$~is $\pi_1$-surjective, $\wt V$ is a connected covering space
  of~$V$. For
  $\wt x \in \wt S$, let $\diam_\iota(\wt x)$ be the diameter of
  $\wt\iota^{-1}(\wt x) \subset \wt V$. (Set $\diam_\iota(\wt x) = 0$
  if $\wt x$ is not in the image of~$\wt\iota$.) Set
  \[
    \diam_\iota(V) \coloneqq \sup_{\wt x \in \wt S} \diam_\iota(\wt x).
  \]
  We wish to see that $\diam_\iota(V)$ is finite. First,
  since $\iota$ is $\pi_1$-surjective, for every $\gamma \in
  \pi_1(S,x)$ there exists $\delta_{\gamma} \in \pi_1(V,\tilde x)$ so
  that $\iota_* \delta_\gamma = \gamma$. This implies
  \[
      \delta_{\gamma} \cdot \tilde{\iota}^{-1}(\tilde{x}) = \tilde{\iota}^{-1}(\gamma \cdot \tilde{x}).
    \]
    But $\delta_{\gamma}$ is a deck transformation, and thus acts as
    an isometry on $\wt{V}$,
    so
    $\diam(\tilde{\iota}^{-1}(\gamma \cdot
    \tilde{x}))=\diam(\tilde{\iota}^{-1}(\tilde{x}))$ and so we can define
   \begin{equation}\label{eq:diam-iota}
    \diam_\iota(x) = \diam_\iota(\wt x).
  \end{equation}
  for $x \in S$ and any lift $\wt x$ of $x$. (Note that
  $\diam_\iota(x)$ is not in general the diameter of $\iota^{-1}(x)$;
  rather than looking at the length of a shortest path connecting two
  points in $\iota^{-1}(x)$, we restrict to paths that map to
  null-homotopic loops.)
     
    \begin{lemma}\label{lem:diam-bounded}
      In the above setting, the diameter $\diam_\iota(x)$ is upper
      semi-continuous as a function of~$x$.
    \end{lemma}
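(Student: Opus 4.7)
The plan is to establish upper semi-continuity by a sequential compactness argument based on properness of $\wt\iota$.

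First I would observe that $\iota \colon V \to S$ is a proper map, since it is continuous from a compact space to a Hausdorff space. Properness is preserved under pullback in the square~\eqref{eq:pullback}: for any compact $K \subset \wt S$, using the description $\wt V = V \times_S \wt S$ of Diagram~\eqref{eq:pullback}, the set $\wt\iota^{-1}(K)$ is a closed subset of $\iota^{-1}(\pi_S(K)) \times K$, which is compact as a product of compact sets (the first factor because $\iota$ is proper and $\pi_S(K)$ is compact). Hence $\wt\iota$ is proper, i.e., preimages of compact sets are compact.

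Next I would argue upper semi-continuity sequentially. Take a sequence $x_n \to x_0$ in $S$. Fix a lift $\wt x_0$ of $x_0$, and, using the local homeomorphism property of the covering $\pi_S$ near $\wt x_0$, choose lifts $\wt x_n$ of $x_n$ (for large $n$) with $\wt x_n \to \wt x_0$ in $\wt S$; by~\eqref{eq:diam-iota} this choice is harmless. Suppose for contradiction that $\limsup_n \diam_\iota(x_n) > \diam_\iota(x_0)$; passing to a subsequence, assume $\diam_\iota(x_n) > \diam_\iota(x_0) + \epsilon$ for some fixed $\epsilon > 0$ and all $n$. By definition of the diameter, pick $a_n, b_n \in \wt\iota^{-1}(\wt x_n)$ with $d_{\wt V}(a_n, b_n) > \diam_\iota(x_0) + \epsilon$.

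The set $\{\wt x_n\}_n \cup \{\wt x_0\}$ is compact in $\wt S$, so by properness of $\wt\iota$ all the points $a_n, b_n$ lie in a compact subset of $\wt V$. Extracting a further subsequence, $a_n \to a_\infty$ and $b_n \to b_\infty$ in $\wt V$. By continuity of $\wt\iota$, $\wt\iota(a_\infty) = \wt\iota(b_\infty) = \wt x_0$, so $a_\infty, b_\infty \in \wt\iota^{-1}(\wt x_0)$. The distance function on any metric space is continuous (from $|d(a,b) - d(a',b')| \le d(a,a') + d(b,b')$), so $d_{\wt V}(a_\infty, b_\infty) \ge \diam_\iota(x_0) + \epsilon$, contradicting the definition of $\diam_\iota(\wt x_0) = \diam_\iota(x_0)$. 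The only subtle step is verifying properness of $\wt\iota$; once that is in hand, the rest is a routine compactness-plus-diagonalization argument, and in particular the conclusion holds regardless of whether $\diam_\iota(x_0)$ is finite.
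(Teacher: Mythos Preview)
Your proof is correct and follows essentially the same approach as the paper: both arguments establish that $\wt\iota$ is proper (the paper phrases this via a closedness lemma from the Stacks Project), then extract convergent subsequences of near-diameter-realizing pairs and pass to the limit using continuity of the distance function. Your version is perhaps slightly cleaner in that it invokes properness directly rather than closedness, which is the property actually used to obtain the subsequential limits.
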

    \begin{proof}
      For each $x_0 \in S$, consider an evenly covered neighborhood
      $U$ of $x_0$, and fix a lift $\tilde{x}_0 \in \wt S$.
        We want to show that for all sequences $\{ x_i \} \subset U$
        with $x_i \to x$ and for all $\epsilon>0$, there exists~$i_0$
        so that for all $i \geq i_0$
    \[
     \diam(\tilde{\iota}^{-1}(\tilde{x_i}))  < \diam(\tilde{\iota}^{-1}(\tilde{x})) + \epsilon.
    \]

    Now, $\tilde{\iota}$ is a pullback of a proper map, so it is a closed map:
    \begin{lemma} [{\cite[\href{https://stacks.math.columbia.edu/tag/005R}{Theorem 005R}]{stacks-project}}]
      Let $X$ be a metric space, and $f \colon X \to Y$ a proper map. For any continuous map $g \colon Z \to Y$, the pullback map $X \times_Y Z \to Z$ is closed.
    \end{lemma}

    By definition of diameter, and since the fibers $\tilde{\iota}^{-1}(\tilde{x})$ are compact for any $\tilde{x}_i$, we can find points $p_i,q_i \in \tilde{\iota}^{-1}(\tilde{x_i})$  so that $\diam(\tilde{\iota}^{-1}(\tilde{x_i}))=d(p_i,q_i)$. Also, by closedness of $\tilde{\iota}$, a subsequence of the $p_i$ and $q_i$ converges to points $p,q \in \tilde{\iota}^{-1}(\tilde{x})$.
    Furthermore, by continuity of distance, for any $\epsilon>0$, we have, for $i$ large enough,
    \[
    \diam(\tilde{\iota}^{-1}(\tilde{x_i}))=d(p_i,q_i)   <d(p,q) + \epsilon \leq \diam(\tilde{\iota}^{-1}(\tilde{x})) + \epsilon,
    \]
    finishing the proof of Lemma~\ref{lem:diam-bounded}.
  \end{proof}

  As a result of Lemma~\ref{lem:diam-bounded}, the function
  $\diam_\iota(x)$ is bounded on~$S$; let $R(V)$ be this global bound.
  
  We now finish the proof of
  Proposition~\ref{prop:cocompact-quasismooth}. We are given a curve
  $C$ with an essential crossing~$p$ and corresponding smoothing $C'$.
  Pick a concrete curve $\tilde\gamma$ on~$V$ that comes within
  $\epsilon$ of realizing $\ell_{\iota,V}(C)$; in particular,
  $\iota \circ \tilde \gamma$ represents~$C$. By
  Lemma~\ref{lem:essential-cross}, there are points
  $x,y \in X(\tilde\gamma)$ so that
  $\iota(\tilde\gamma(x)) = \iota(\tilde\gamma(y))$ is a crossing
  corresponding to~$p$. We wish to find another curve $\tilde\gamma'$
  on~$V$, with length not too much longer, so that
  $\iota \circ \tilde\gamma'$ represents $C'$. We can do this by
  cutting $\wt\gamma$ at $x$ and~$y$, yielding endpoints $x_1$, $x_2$
  and $y_1$, $y_2$, and reconnecting $x_1$ to~$y_2$ and $y_1$ to~$x_2$
  by paths in~$V$ that project to the identity in $\pi_1(S)$.

  But the maximal length of a path connecting any two points $x,y \in V$
  with $\iota(x) = \iota(y)$ that projects to a null-homotopic path is
  exactly $\diam_\iota(\iota(x))$. We can therefore construct a
  desired representative $\tilde\gamma'$ with
  \[
    \ell_V(\tilde\gamma') \le \ell_V(\tilde\gamma) + 2R(V)
      \le \ell_{\iota,V}(C) + \epsilon + 2R(V).
  \]
  Since $\epsilon$ was arbitrary, we have proved the result with
  quasi-smoothing constant $2R(V)$.
\end{proof}

We show  now an example of a curve functional that satisfies quasi-smoothing but not strict smoothing.
\begin{example}
\label{ex:puncturedtorus}
  Consider the torus with one puncture with fundamental group
  generated by the usual horizontal loop~$a$ and vertical loop~$b$, as in
  Figure~\ref{fig:traintrack}.
  (This does not strictly speaking fit in the context of closed
  surfaces considered in this paper, but we can embed this punctured
  torus in a larger surface without essential change.)
    \begin{figure}
\centering{
\scalebox{0.4}{\Huge{
\begingroup%
  \makeatletter%
  \providecommand\color[2][]{%
    \errmessage{(Inkscape) Color is used for the text in Inkscape, but the package 'color.sty' is not loaded}%
    \renewcommand\color[2][]{}%
  }%
  \providecommand\transparent[1]{%
    \errmessage{(Inkscape) Transparency is used (non-zero) for the text in Inkscape, but the package 'transparent.sty' is not loaded}%
    \renewcommand\transparent[1]{}%
  }%
  \providecommand\rotatebox[2]{#2}%
  \newcommand*\fsize{\dimexpr\f@size pt\relax}%
  \newcommand*\lineheight[1]{\fontsize{\fsize}{#1\fsize}\selectfont}%
  \ifx\svgwidth\undefined%
    \setlength{\unitlength}{382.55013053bp}%
    \ifx\svgscale\undefined%
      \relax%
    \else%
      \setlength{\unitlength}{\unitlength * \real{\svgscale}}%
    \fi%
  \else%
    \setlength{\unitlength}{\svgwidth}%
  \fi%
  \global\let\svgwidth\undefined%
  \global\let\svgscale\undefined%
  \makeatother%
  \begin{picture}(1,0.87259457)%
    \lineheight{1}%
    \setlength\tabcolsep{0pt}%
    \put(0,0){\includegraphics[width=\unitlength,page=1]{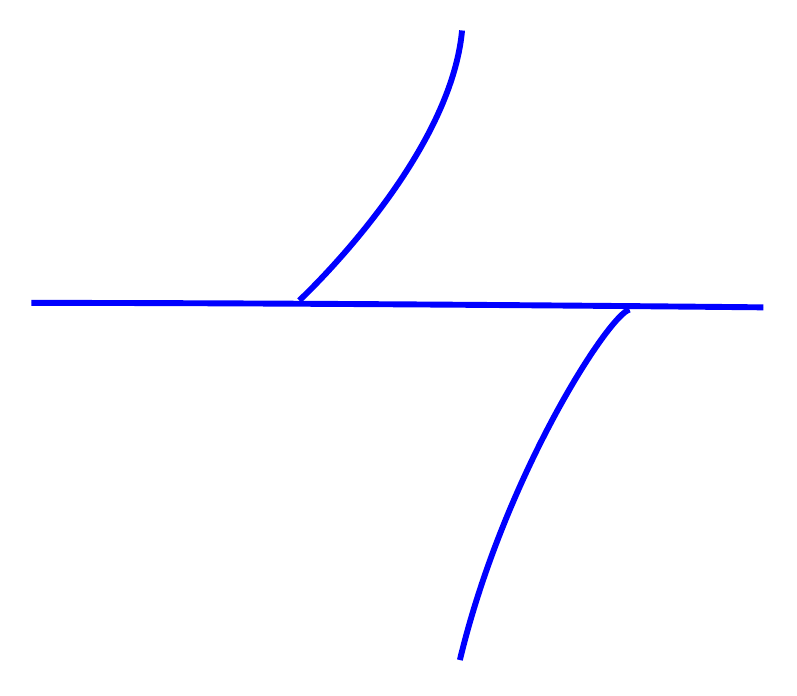}}%
    \put(0.12889388,0.41803715){\color[rgb]{0,0,0}\makebox(0,0)[lt]{\lineheight{1.25}\smash{\begin{tabular}[t]{l}$1$\end{tabular}}}}%
    \put(0.54788658,0.64657857){\color[rgb]{0,0,0}\makebox(0,0)[lt]{\lineheight{1.25}\smash{\begin{tabular}[t]{l}$x$\end{tabular}}}}%
    \put(0.42297294,0.41635672){\color[rgb]{0,0,0}\makebox(0,0)[lt]{\lineheight{1.25}\smash{\begin{tabular}[t]{l}$1-x$\end{tabular}}}}%
    \put(0.67810349,0.22708417){\color[rgb]{0,0,0}\makebox(0,0)[lt]{\lineheight{1.25}\smash{\begin{tabular}[t]{l}$x$\end{tabular}}}}%
    \put(0,0){\includegraphics[width=\unitlength,page=2]{traintrack.pdf}}%
    \put(0.25449158,0.76079829){\color[rgb]{0.7254902,0,0}\makebox(0,0)[lt]{\lineheight{1.25}\smash{\begin{tabular}[t]{l}$a$\end{tabular}}}}%
    \put(0.06104159,0.18407605){\color[rgb]{0,0.60392157,0}\makebox(0,0)[lt]{\lineheight{1.25}\smash{\begin{tabular}[t]{l}$b$\end{tabular}}}}%
  \end{picture}%
\endgroup%
}}
\caption{A punctured torus with, in blue, a train-track carrying a slice of measured laminations on the punctured torus depending on a parameter $x\in[0,1]$. In red, the parallel loop~$a$. In green, the meridian loop~$b$. }
\label{fig:traintrack}
}
\end{figure}
  Its fundamental group is the free group $F_2 = \langle a, b\rangle$.
  We will consider word-length $f$ with respect to the generating set
  $(a, a^2, b)$.
  Word-length satisfies quasi-smoothing and additive union, but not
  stability. (For instance, $f(a^2) = 1 \ne f(2a) = 2$.) The stable
  word-length $\| f\|$ satisfies stability and still satisfies quasi-smoothing,
  but it doesn't satisfy strict smoothing. We will show it behaves
  more erratically than word-length with respect to
  embedded generating sets.

  Consider for example the collection of weighted curves $C(x)$
  carried by the train-track in Figure~\ref{fig:traintrack}, with
  weights depending
  on a rational parameter $x \in [0,1]$. For instance, for $x = 2/5$,
  the curve is $1/5[aabab]$, with stable length $(1/5) \cdot 4$.
  If we plot the stable word-length of~$C(x)$ multiplied by the
  weight, we obtain the saw-tooth graph in Figure~\ref{fig:sawtooth}.
  We note the erratic behavior as a function of~$x$. In particular, it
  is far from convex.
  If $\|f\|$ satisfied the smoothing
  property, then it would be a convex function of the train-track
  weights, since if $w_1$ and $w_2$ are two rational weights on a
  train track~$T$,
  the weighted multi-curve $T(w_1) \cup T(w_2)$ can be smoothed
  to $T(w_1 + w_2)$
  (observed in \cite[Appendix~A]{Mirzakhani04:Thesis} and
  \cite[Section~3.2]{Thurston16:RubberBands}).
\end{example}

\begin{figure}
\centering{
\fontsize{12pt}{12pt}\selectfont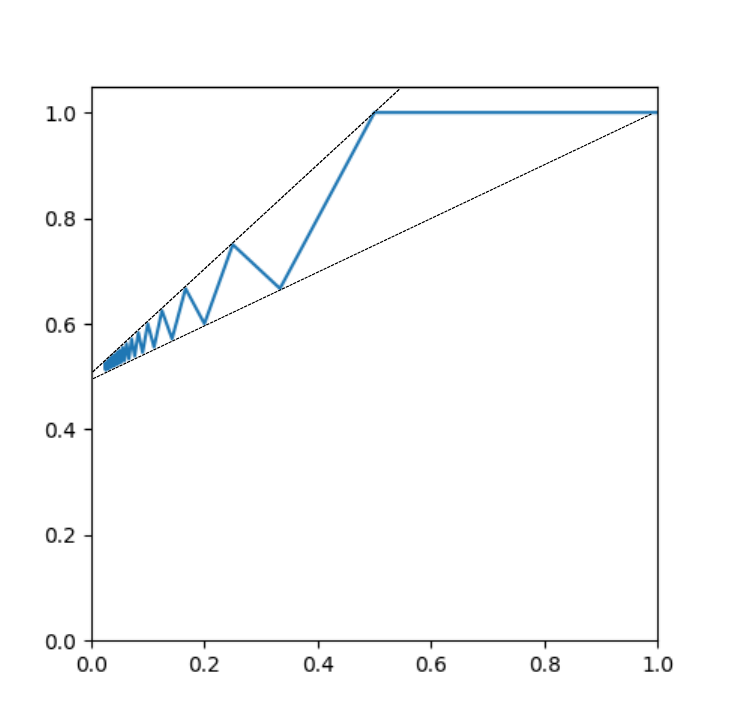
}
\caption{We consider the curve $C(x)$ carried by a train-track
  depending on a parameter $x$, and plot the stable word-length of $C(x)$
  as a function of~$x$. The graph has
  vertices at $\bigl(\frac{1}{2n+1},\frac{n+1}{2n+1}\bigr)$ and $\bigl(\frac{1}{2n},\frac{n+1}{2n}\bigr)$.}
\label{fig:sawtooth}
\end{figure}

\subsection{Asymmetric lengths}
\label{sec:asymmetric-lengths}

The arguments in Section~\ref{sec:stable-examp} apply equally well to
cases where distances may be zero or not symmetric. For instance, we
can take a directed graph $\Gamma$ with a non-negative length on each
edge, together with a map $\iota \colon \Gamma \to S$ so that the
corresponding cover $\wt \Gamma$ is strongly connected (every vertex
can be reached from any other vertex). The same arguments apply to
show that $\ell_\Gamma(\vec C)$ satisfies the \emph{oriented}
quasi-smoothing property and so its stabilization $\norm{\ell_\Gamma}$
extends to a continuous function $\GC^+(S) \to \R_{\ge 0}$.

One example would be to take a generating set for $\pi_1(S)$ as a
monoid. This corresponds to taking $\Gamma$ to be a graph with a
single vertex and one edge for each monoid generator.

\subsection{Generalized translation lengths from higher
  representations}
\label{sec:transl-lengths}

  Let $G$ be a real, connected, non-compact, semi-simple, linear Lie group. Let $K$ denote a maximal compact subgroup of $G$, 
so that $X=G/K$ is the Riemannian symmetric space of $G$. Let $[P]$ be the conjugacy class of
a parabolic subgroup $P \subset G$. 
Then there is a natural notion of $[P]$-Anosov representation $\rho\colon \pi_1(S) \to G$; see, for example, \cite[Section~4]{K18:GeometricStructures}.
When $\operatorname{rank}_{\mathbb{R}}(G)=1$ there is essentially one
class $[P]$, so we can simply refer to them as Anosov representations,
and they can be defined as those injective representations
$\rho\colon \pi_1(S) \to G$ where $\Gamma\coloneqq \rho(\pi_1(S))$
preserves and acts cocompactly on some nonempty convex
subset~$V$ of~$X$.

Rank~$1$ Anosov representations include two familiar examples.

\begin{enumerate}
\item Fuchsian representations into $G=\PSL(2,\mathbb{R})$. Here,
  $K=\SO(2)$ and $X=\mathbb{H}^2$. The convex set~$V$ in this
  case is the lift of the convex core of the hyperbolic surface.
\item  Quasi-Fuchsian representations of surface groups into
  $G=\PSL(2,\mathbb{C})$. In this case, $K=\SU(2)$, $X=\mathbb{H}^3$,
  and $V$ is the lift of the convex core of the hyperbolic
  quasi-Fuchsian manifold.
\end{enumerate}

In general, the conjugacy classes of parabolic subgroups of~$G$ correspond to
subsets~$\theta$ of the set of restricted simple roots $\Delta$ of $G$. 
For a given $[P]$-Anosov representation and each $\alpha \in \theta$,
Martone and Zhang
define~\cite[Definition~2.21]{MZ19:PositivelyRatioed}
a curve functional
\[
l_{\alpha}^{\rho}\colon \mathcal{C}(S) \to \mathbb{R}_{\geq 0}
\]
and show that for a certain subset of Anosov representations, these can be extended to geodesic currents as intersection numbers with some fixed geodesic current.

For the two rank 1 examples above, this length  $l_{\alpha}^{\rho}(C)$ corresponds to
hyperbolic length of the closed geodesic in the homotopy class $C$ in
the quotient hyperbolic manifold $\H^2/\rho(\pi_1(S))$ or $\H^3/\rho(\pi_1(S))$.

Bonahon showed the length in the Fuchsian case extends to geodesic
currents \cite[Proposition~14]{Bonahon88:GeodesicCurrent}. Length in
the quasi-Fuchsian case also extends to geodesic currents
\cite[Lemma~4.3]{BridgemanTaylor05:QCDef}.
Our techniques give another proof in this second case.

\begin{proposition} Translation length $l_{\rho}$, for $\rho\colon \pi_1(S) \to \PSL(2,\mathbb{C})$ a quasi-Fuchsian representation, extends to geodesic currents.
\end{proposition}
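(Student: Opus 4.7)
The plan is to realize $l_\rho$ as a curve functional of the form $\ell_{\iota,V}$ from Proposition~\ref{prop:cocompact-quasismooth}, verify the remaining hypotheses, and then invoke Corollary~\ref{cor:unorconvex}. First set $\Gamma \coloneqq \rho(\pi_1(S))$ and $M \coloneqq \H^3/\Gamma$. Since $\rho$ is quasi-Fuchsian, it is convex cocompact, so the convex core $V \subset M$ is a compact connected length space with the induced path metric. Moreover $V$ is a deformation retract of $M$ and $M$ is homotopy equivalent to $S$, so I fix a continuous map $\iota \colon V \to S$ realizing this equivalence; then $\iota_*$ is an isomorphism on~$\pi_1$, in particular surjective.

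Next I would identify $\ell_{\iota,V}(C) = l_\rho(C)$ for every unoriented multi-curve~$C$. For each component, the closed geodesic in~$M$ representing it lies inside~$V$ (since all closed geodesics of a convex cocompact manifold lie in the convex core) and its length is the translation length of the corresponding element of~$\Gamma$; conversely, any lift of~$C$ to~$V$ has length at least that of this geodesic. Proposition~\ref{prop:cocompact-quasismooth} then shows $l_\rho$ satisfies quasi-smoothing (for either of the two smoothings of an unoriented essential crossing, since the cut-and-reconnect argument does not depend on the choice). Additive union, and hence convex union, is immediate because translation length is additive over components of a multi-curve. Homogeneity $l_\rho(nC) = n\, l_\rho(C)$ follows by taking $n$ parallel copies. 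Finally, stability $l_\rho(C^n) = n\, l_\rho(C)$ follows from the fact that every non-trivial element of $\Gamma$ is loxodromic and the translation length on~$\H^3$ of a power $\gamma^n$ of a loxodromic element equals $n$ times the translation length of~$\gamma$.

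With quasi-smoothing, convex union, homogeneity, and stability in hand, Corollary~\ref{cor:unorconvex} gives the unique continuous homogeneous extension $\bar l_\rho \colon \GC(S) \to \R_{\ge 0}$. The main technical point will be the identification $\ell_{\iota,V} = l_\rho$, which rests on compactness of the convex core and the fact that every closed geodesic of~$M$ lies in~$V$---both standard consequences of convex cocompactness of quasi-Fuchsian representations.
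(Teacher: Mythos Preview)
Your proposal is correct and follows essentially the same approach as the paper: take $V$ to be the convex core of $\H^3/\rho(\pi_1(S))$, choose a homotopy equivalence $\iota\colon V\to S$, and apply Proposition~\ref{prop:cocompact-quasismooth}. The paper's proof is terser, leaving implicit the identification $\ell_{\iota,V}=l_\rho$ and the verification of stability and additivity that you spell out; your explicit check that $l_\rho$ is already stable (so no stabilization via Theorem~\ref{thm:stable} is needed) is a useful clarification.
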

\begin{proof} Let $V$ to be the convex core of
  $\mathbb{H}^3/\rho(\pi_1(S))$. We have an obvious retract
  $r\colon V \to S$ (defined up to homotopy). Now use
  Proposition~\ref{prop:cocompact-quasismooth}, taking $\iota=r$.
\end{proof}

\begin{question}
  The \emph{complex} translation length of a quasi-Fuchsian
  representation also extends continuously to a function on
  geodesic currents
  \cite[Section~6]{BridgemanTaylor08:ExtensionWP}.
  Is there a version of our main
  theorem that would prove that a complex-valued curve functional like this
  extends to currents?
\end{question}

\begin{remark}
All the proposed generalizations of the definition of convex cocompact
representations for higher rank groups turn out to yield
 products of representations of rank~$1$ (see \cite[Theorem~1.3]{KL05:RigidityInvariantConvex} and \cite[Th\'eor\`eme]{Q05:GroupConvComp}) so it's not clear
that the approach using Proposition~\ref{prop:cocompact-quasismooth}
will allow one to extend curve functionals in higher
rank to geodesic currents.

 For $G = \PSL(3,\R)$, there is another cocompact action, not on a convex subset of the symmetric space, but on a convex subset of $G/P=\mathbb{R}P^2$.
 In this case, there is a natural metric on this convex subset, the Hilbert metric. One can easily show using this metric that smoothing is satisfied.
 In general, in higher rank one can construct similar cocompact actions on convex domains of $G/P$ (see \cite{GW12:AnosovDiscontinuity}), but there is not a known canonical choice of metric. 
Martone and Zhang show \cite[Theorem~2.1]{MZ19:PositivelyRatioed} that
some types of representations known as \emph{positively ratioed} can
be realized as intersection numbers with a distinguished geodesic
current.
This immediately implies this subclass of representations satisfy the smoothing property. It would be interesting to subsume their extension result under our scope. More specifically:

\begin{question}
Can we prove quasi-smoothing for the translation length for a subclass
of $[P]$-Anosov representations, as in
\cite[Definition~2.25]{MZ19:PositivelyRatioed}, directly from the
definition of translation length?
\end{question}
\end{remark}

\subsection{Extremal length}
\label{sec:extremal-length}
We now turn to curve functionals that satisfy only convex union and not
additive union, starting with the original motivation for this work,
extremal length.

\begin{definition}
  Fix $\Sigma$ a Riemann surface with a metric~$g$. Let $C = \bigcup t_iC_i$ be a
  weighted multi-curve on~$\Sigma$. For $\rho \colon \Sigma \to \R_{\ge 0}$ a
  measurable rescaling function, the \emph{area} of~$\rho$ is
  \[
    \Area(\rho g) \coloneqq \int_{x \in \Sigma} \rho(x)^2 \mu_g(x),
  \]
  where $\mu_g$ is the Lebesgue measure of~$g$.
  The \emph{length} of~$C$ is
  \[
    \ell_{\rho g}(C) \coloneqq \inf_{\gamma \in C} \sum_i t_i \int_{x\in\gamma_i} \rho(x)\,dx,
  \]
  where $dx$ is measured with respect to $g$ arc-length, and the
  infimum runs overall all representatives 
  $\gamma = \bigcup_i \gamma_i$ of~$C$, where $\gamma_i$ is a representative of the component $C_i$ of $C$.
  When $\rho$ is continuous, $\ell_\rho(C)$  is the length with respect to the
  metric~$g$ rescaled by~$\rho$. The square root of the \emph{extremal length} of~$C$ is
  \[
    \sqrt{\EL}(C) \coloneqq \sup_\rho \frac{\ell_{\rho
        g}(C)}{\sqrt{\Area(\rho g)}}.
  \]
  \label{def:EL}
\end{definition}

Observe that the supremand is unchanged under multiplying $\rho$ by a
positive constant. It is a standard result that the supremum is
realized by some generalized metric (not necessarily Riemannian) \cite[Theorem~12]{Rodin74:ExtremalLength} and that, when $C$ is a simple multi-curve, the optimum
metric $\rho g$ is the cone Euclidean metric associated to a quadratic
differential \cite{Jenkins57:OnExistenceOfExtremalLength}. Very little is known about the optimum metric
when $C$ is not simple, except in special cases
\cite{WZ94:Plumbing,Calabi96:ExtremalIsosystolic,HZ20:SwissCross,NZ19:Isosystolic}.

\begin{lemma}\label{lem:el-smooth}
  As a function of~$C$ with fixed~$\Sigma$, $\sqrt{\EL}$ satisfies homogeneity, stability, and smoothing.
\end{lemma}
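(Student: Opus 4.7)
The plan is to first establish each of the three properties for the ``length with respect to a fixed density'' functional $F_\rho(C) := \ell_{\rho g}(C)$ at an arbitrary measurable density~$\rho$, and then transfer them to $\sqrt{\EL}(C)=\sup_\rho F_\rho(C)/\sqrt{\Area(\rho g)}$ by observing that $\Area(\rho g)$ is independent of~$C$, so that uniform pointwise (in)equalities in~$\rho$ survive the supremum.

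Homogeneity of $F_\rho$ is immediate from Definition~\ref{def:EL}: multiplying all weights by~$n$ multiplies $\sum_i t_i\int_{\gamma_i}\rho\,dx$ by~$n$, and the infimum over representatives respects this. For smoothing, suppose $C\reducesto C'$ at an essential crossing, and fix $\epsilon>0$; pick $\gamma\in C$ with $\sum_i t_i\int_{\gamma_i}\rho\,dx \le F_\rho(C)+\epsilon$. By Lemma~\ref{lem:essential-cross}, $\gamma$ already carries an essential crossing whose smoothing $\gamma'$ represents~$C'$. The cut-and-reglue operation leaves the map to~$\Sigma$ pointwise unchanged and merely reorganises weighted components (either splitting one component of weight~$t$ into two components of weight~$t$, or merging two components of equal weight into one), so the weighted $\rho$-length is identical for~$\gamma$ and~$\gamma'$. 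Hence $F_\rho(C')\le F_\rho(C)+\epsilon$, and sending $\epsilon\to 0$ gives the smoothing inequality for~$F_\rho$.

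Stability of $F_\rho$ then follows exactly as in Lemma~\ref{lem:length-stable}: the obvious $n$-fold representative gives $F_\rho(C^n)\le nF_\rho(C)=F_\rho(nC)$, while the chain $C^n\reducesto_{n-1} nC$ through $n-1$ essential self-crossings of type~(\ref{item:ess-multi}) in Definition~\ref{def:essential-crossing}, combined with the smoothing just proved and homogeneity, supplies the reverse inequality.

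To pass from $F_\rho$ to $\sqrt{\EL}$, divide by the $C$-independent factor $\sqrt{\Area(\rho g)}$ and take the supremum in~$\rho$. The equalities $F_\rho(nC)=nF_\rho(C)$ and $F_\rho(C^n)=nF_\rho(C)$ survive normalisation and supremum, giving homogeneity and stability for $\sqrt{\EL}$. The uniform pointwise inequality $F_\rho(C)\ge F_\rho(C')$ implies $\sup_\rho\bigl[F_\rho(C)/\sqrt{\Area(\rho g)}\bigr] \ge F_\rho(C')/\sqrt{\Area(\rho g)}$ for every~$\rho$, and taking a second supremum on the right yields $\sqrt{\EL}(C)\ge\sqrt{\EL}(C')$. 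The main point requiring care is not any analytic subtlety of the supremum over~$\rho$, but the weight-bookkeeping under the smoothing move: verifying that for any weighting of components of~$\gamma$ the induced weighting on~$\gamma'$ is of the type permitted by Definition~\ref{def:smoothing} and preserves the weighted $\rho$-integral. Once this routine check is in place, the rest of the argument is formal.
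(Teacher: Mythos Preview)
Your proposal is correct and follows essentially the same approach as the paper: establish each property for the length functional $\ell_{\rho g}$ at a fixed density~$\rho$, then pass to the supremum using that $\Area(\rho g)$ does not depend on~$C$. The paper's proof is considerably terser (it simply asserts that $\ell_{\rho g}$ satisfies these properties and writes out only the smoothing case), whereas you spell out the stability argument via Lemma~\ref{lem:length-stable} and the supremum-transfer step explicitly; but the logical structure is identical.
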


\begin{proof}
  This follows since $\ell_{\rho g}$ satisfies these properties for
  each~$\rho$; here is the argument for smoothing.
  
  Let $C$ be a multi-curve with an essential crossing, and let $C'$ be
  the curve obtained by smoothing at the crossing. Then, for any
  scaling function~$\rho$,
  \[
    \frac{\ell_{\rho g}(C')}{\sqrt{\Area(\rho g)}} \le
    \frac{\ell_{\rho g}(C)}{\sqrt{\Area(\rho g)}}.
  \]
  Since $\sqrt{\EL}(C')$ and $\sqrt{\EL(C)}$ are the suprema of such
  terms, the result follows.
\end{proof}

Convex union is different, since on one side of the
inequality we have a sum of values of $\sqrt{\EL}$. (Extremal length
does not satisfy additivity.)

\begin{lemma}\label{lem:el-convex-union}
  $\sqrt{\EL}$ satisfies convex union.
\end{lemma}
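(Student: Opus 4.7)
The key observation is that the infimum defining $\ell_{\rho g}$ separates over unions of multi-curves, while the supremum defining $\sqrt{\EL}$ interacts with sums via the elementary inequality $\sup_\rho (F(\rho) + G(\rho)) \le \sup_\rho F(\rho) + \sup_\rho G(\rho)$.

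My plan is to carry this out in two steps. First, I would observe that $\ell_{\rho g}$ is additive under disjoint union of multi-curves: a concrete representative of $C_1 \cup C_2$ is given by a choice of concrete representatives of $C_1$ and $C_2$ independently, so the infimum defining $\ell_{\rho g}(C_1 \cup C_2)$ factors as
\[
\ell_{\rho g}(C_1 \cup C_2) = \ell_{\rho g}(C_1) + \ell_{\rho g}(C_2).
\]
Second, for any fixed $\rho$ with $\Area(\rho g) > 0$, I would divide both sides by $\sqrt{\Area(\rho g)}$ and take the supremum over $\rho$:
\[
\sqrt{\EL}(C_1 \cup C_2)
= \sup_\rho \frac{\ell_{\rho g}(C_1) + \ell_{\rho g}(C_2)}{\sqrt{\Area(\rho g)}}
\le \sup_\rho \frac{\ell_{\rho g}(C_1)}{\sqrt{\Area(\rho g)}} + \sup_\rho \frac{\ell_{\rho g}(C_2)}{\sqrt{\Area(\rho g)}}
= \sqrt{\EL}(C_1) + \sqrt{\EL}(C_2),
\]
using the fact that the supremum of a sum of non-negative functions is at most the sum of the suprema.

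There is no substantial obstacle here; both steps are essentially formal consequences of Definition~\ref{def:EL}. The only minor point worth verifying is that the additivity of $\ell_{\rho g}$ under disjoint union holds even when the representatives of $C_1$ and $C_2$ might otherwise be coupled, but this is immediate from the multi-curve definition. The inequality is typically strict: equality would force the same optimal $\rho$ to realize the extremal length of both $C_1$ and $C_2$ simultaneously, which generically fails (as Example~\ref{ex:selfintersection} with two curves intersecting once already suggests, though that example is about self-intersection).
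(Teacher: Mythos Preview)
Your proof is correct and follows essentially the same approach as the paper: both use additivity of $\ell_{\rho g}$ under union and then bound the supremum of the sum by the sum of the suprema. The only cosmetic difference is that the paper invokes the optimal~$\rho$ realizing $\sqrt{\EL}(C_1 \cup C_2)$ and evaluates there, whereas you use the general inequality $\sup_\rho(F+G) \le \sup_\rho F + \sup_\rho G$ directly, which has the minor advantage of not needing the supremum to be attained.
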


\begin{proof}
  Fix a curve split as a union $C = C_1 \cup  C_2$, and let
  $\rho \colon \Sigma \to \R$ be the function realizing the supremum
  in the definition of extremal length for~$C$.
  Then
  \begin{align*}
    \sqrt{\EL}(C_1 \cup C_2)
      &= \frac{\ell_{\rho g}(C_1)}{\sqrt{\Area(\rho g)}}
        + \frac{\ell_{\rho g}(C_2)}{\sqrt{\Area(\rho g)}}\\
    &\le \sqrt{\EL}(C_1) + \sqrt{\EL}(C_2),
  \end{align*}
 where the last inequality holds by the supremum in the definition of $\EL$.
\end{proof}

Thus, by Theorem~\ref{thm:convex}, $\sqrt{\EL}$ (and $\EL$) extend
uniquely to continuous functions
on geodesic currents. 
With this extension,
we propose the following conjecture.

\begin{conjecture}For some universal constant $C$,
\[\EL_{\Sigma}(\mathcal{L}_{\Sigma})=C\Area(\Sigma).\]

where $\mathcal{L}_{\Sigma}$ is the Liouville current (compare Subsection~\ref{sec:hyperbolic-length}).
\end{conjecture}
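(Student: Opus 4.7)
The plan is to combine a variational characterization of $\sqrt{\EL}$ on currents with the description of $\mathcal{L}_\Sigma$ as the Liouville measure on $UT\Sigma$. For each smooth positive conformal factor $\rho \colon \Sigma \to \R_{>0}$, the length functional $\ell_{\rho g}$ satisfies the hypotheses of Theorem~\ref{thm:convex} (see Section~\ref{sec:arbitrary-metrics}), so extends continuously to $\GC(S)$. The defining inequality $\sqrt{\EL}(C) \ge \ell_{\rho g}(C)/\sqrt{\Area(\rho g)}$ for weighted multi-curves then passes to the limit, yielding
\[
  \sqrt{\EL}(\mu) \ge \sup_\rho \frac{\ell_{\rho g}(\mu)}{\sqrt{\Area(\rho g)}}
\]
for every $\mu \in \GC(S)$, where the supremum ranges over smooth positive conformal factors.

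Next I would identify the continuous extension of $\ell_{\rho g}$ with integration against $\rho \circ \pi$ over $UT\Sigma$, where $\pi \colon UT\Sigma \to \Sigma$ is the bundle projection and $\mu$ is viewed as a flow-invariant measure via Definition~\ref{def:currents-2}. Both sides are continuous functions of the current that agree on weighted multi-curves, since the canonical lift $\wt\gamma \subset UT\Sigma$ of a closed geodesic has length-normalized $\delta$-mass whose integral against $\rho \circ \pi$ equals $\int_\gamma \rho \, ds$, so they agree on $\GC(S)$ by density. For $\mu = \mathcal{L}_\Sigma$, the corresponding flow-invariant measure on $UT\Sigma$ is $dA_g \otimes d\theta$ up to a universal normalization~$c_0$, hence
\[
  \ell_{\rho g}(\mathcal{L}_\Sigma) = c_0 \int_\Sigma \rho \, dA_g.
\]
Combined with $\Area(\rho g) = \int_\Sigma \rho^2 \, dA_g$, the Cauchy--Schwarz inequality gives
\[
  \frac{\ell_{\rho g}(\mathcal{L}_\Sigma)}{\sqrt{\Area(\rho g)}}
    = \frac{c_0 \int_\Sigma \rho \, dA_g}{\sqrt{\int_\Sigma \rho^2 \, dA_g}}
    \le c_0 \sqrt{\Area(\Sigma)},
\]
with equality when $\rho$ is constant. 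This already proves the lower bound $\EL(\mathcal{L}_\Sigma) \ge c_0^2 \Area(\Sigma)$ of the correct form, with $C = c_0^2$.

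The hard part will be the matching upper bound: showing that the continuous extension of $\sqrt{\EL}$ to currents in fact equals the supremum $\sup_\rho \ell_{\rho g}(\cdot)/\sqrt{\Area(\rho g)}$, not merely an upper bound on it. This interchange of sup and limit requires some equicontinuity or compactness on the near-optimal conformal factors for a sequence of weighted multi-curves $C_n \to \mathcal{L}_\Sigma$. A natural strategy is to take $C_n$ to be an equidistributing sequence of long $g$-geodesics normalized by hyperbolic length; then, because $C_n$ is ``generic'' in the conformal class, the constant factor $\rho \equiv 1$ should be asymptotically optimal for $C_n$ in Definition~\ref{def:EL}, and one would need to estimate the defect between $\sqrt{\EL}(C_n)$ and the ratio produced by $\rho \equiv 1$ and let $n \to \infty$. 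Alternative routes include extending Jenkins--Strebel theory beyond measured laminations, or exploiting the Gardiner--Masur variational characterization of extremal length; any of these seems to demand genuinely new techniques beyond the continuous extension established in this paper, which is presumably why the authors have left this as a conjecture.
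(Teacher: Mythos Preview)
This statement is a \emph{conjecture} in the paper; the authors give no proof and do not claim one. So there is nothing to compare against, and your proposal is not a proof but a sketch of a possible attack together with an honest acknowledgment that the hard direction remains open. That assessment is correct.

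On the lower bound, your conclusion $\EL_\Sigma(\mathcal{L}_\Sigma)\ge C\Area(\Sigma)$ is right, but the argument contains an error that happens not to matter. You claim that the continuous extension of $\ell_{\rho g}$ to currents agrees with $\mu\mapsto\int_{UT\Sigma}(\rho\circ\pi)\,d\mu$, arguing that for a closed curve~$C$ with $g$-geodesic representative~$\gamma$ the latter gives $\int_\gamma\rho\,ds_g$. But $\ell_{\rho g}(C)$ is the infimum of $\rho g$-length over the homotopy class, which is in general strictly smaller than $\int_\gamma\rho\,ds_g$ unless $\gamma$ is also a $\rho g$-geodesic. So your identification fails for non-constant~$\rho$, and with it the Cauchy--Schwarz computation of the full supremum. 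Fortunately the lower bound only needs one test factor: taking $\rho\equiv 1$ gives $\sqrt{\EL}(\mathcal{L}_\Sigma)\ge \ell_g(\mathcal{L}_\Sigma)/\sqrt{\Area(\Sigma)}=i(\mathcal{L}_\Sigma,\mathcal{L}_\Sigma)/\sqrt{\Area(\Sigma)}$, and Bonahon's computation of $i(\mathcal{L}_\Sigma,\mathcal{L}_\Sigma)$ as a universal multiple of $\Area(\Sigma)$ finishes it. The Cauchy--Schwarz detour is unnecessary.

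On the upper bound, you correctly identify the obstruction: passing the supremum over~$\rho$ through the limit defining the extension. Your proposed strategies (equidistributing geodesics, Jenkins--Strebel beyond laminations) are reasonable directions, but none is carried out, and this is exactly the content of the conjecture.
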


\begin{remark}
  For the extremal length \emph{without} the square root, we instead have
  inequalities
  \[
    \EL(C_1) + \EL(C_2) \le \EL(C_1 \cup C_2) \le 2(\EL(C_1) + \EL(C_2)).
  \]
  The second inequality is a simple consequence of
  Lemma~\ref{lem:el-convex-union}. To see the first inequality, take
  optimal rescaling functions~$\rho_i$ for $\EL(C_i)$, normalized so
  that $\ell_{\rho_i g}(C_i) = \Area(\rho_i g) = \EL(C_i)$. Then using
  $\rho_1 + \rho_2$ as the test function for $\EL(C_1 \cup C_2)$ gives
  the desired inequality after elementary manipulations.
\end{remark}

\subsection{Extremal length with respect to elastic graphs}
\label{sec:el-graph}

There is a parallel notion of extremal length with respect to elastic
graphs \cite{Thurston19:Elastic}, just as there is for ordinary
lengths (Section~\ref{sec:length-emb-graph}).

An \emph{elastic graph}  $(\Gamma,\alpha)$ is a 1-dimensional CW
complex~$\Gamma$ (i.e., allowing multiple edges and loops) together
with an assignment of
positive real numbers $\alpha(e)$ for each $e \in
\operatorname{Edge}(\Gamma)$, where the \emph{edges} are the
1-dimensional cells of~$\Gamma$.

By a concrete \emph{multi-curve $\gamma$ on $\Gamma$} we mean a
1-manifold $X(\gamma)$ and a PL map $\gamma\colon X(\gamma) \to
\Gamma$. 
Given a scaling function $\rho\colon \operatorname{Edge}(\Gamma) \to
\mathbb{R}_{\geq 0}$, the length metric $\rho \alpha$ on $\Gamma$
gives edge $e$ the length $\rho(e)\alpha(e)$.
We define the \emph{length} of $\gamma$ as
\[
 \ell_{\rho \alpha}(\gamma) \coloneqq \sum_{e \in \operatorname{Edge}(\Gamma)} n_\gamma(e)\rho(e)\alpha(e),
\]
where $n_\gamma(e)$ is the weighted number of times that $\gamma$ runs over~$e$.
We can likewise define the length of a multi-curve~$D$ on $\Gamma$ as
the infimum over of concrete multi-curves in~$D$.

The \emph{area} of $\Gamma$ with respect to $\rho \alpha$ is defined to  be
\[
 \Area_{\rho}(\Gamma,\alpha) \coloneqq \sum_{e \in \operatorname{Edge}(\Gamma)} \rho(e)^2 \alpha(e).
\]
Intuitively, each edge is turned into a rectangle of width $\rho(e)$,
aspect ratio $\alpha(e)$, and thus area $\rho(e)^2\alpha(e)$.

As for extremal length for surfaces, we define the square root of extremal
length of a multi-curve on~$\Gamma$ by
\begin{equation}\label{eq:el-graph}
\sqrt{\EL}(D; \Gamma,\alpha) \coloneqq \sup_{ \rho : \operatorname{Edge}(\Gamma) \to \mathbb{R}_{\geq 0}} \frac{\ell_{\rho \alpha}(D)}{\sqrt{\Area_{\rho}(\Gamma,\alpha)}}.
\end{equation}
It is easy to do this optimization. We get a more interesting quantity
by incorporating a filling
embedding $\iota \colon \Gamma \hookrightarrow S$ of $\Gamma$ in a
surface~$S$, i.e., an embedding $\iota$ that is $\pi_1$-surjective. Then, for a
multi-curve~$C$ on~$S$ and scaling~$\rho$, the length is defined as in
Section~\ref{sec:length-emb-graph}:
\[
  \ell_{\rho\alpha;\iota}(C) \coloneqq \inf_{\substack{D\textrm{ on }\Gamma\\\iota_*(D) = C}} \ell_{\rho\alpha}(D).
\]
(The ``filling'' condition guarantees that there are such
multi-curves~$D$ with $\iota_*(D) = C$.) We can then define a version of
extremal length, following Equation~\eqref{eq:el-graph}, with respect
to~$\iota$:
\begin{equation}\label{eq:el-graph-iota}
  \sqrt{\EL}(C; \Gamma,\alpha,\iota) \coloneqq
  \sup_{ \rho : \operatorname{Edge}(\Gamma) \to \mathbb{R}_{\geq 0}} \frac{\ell_{\rho \alpha;\iota}(D)}{\sqrt{\Area_{\rho}(\Gamma,\alpha)}}.
\end{equation}

\begin{proposition}\label{prop:EL-embed}
  For $\iota \colon \Gamma \to \Sigma$ a filling embedding,
  $\sqrt{\EL}(C; \Gamma,\alpha,\iota)$ satisfies the convex
  union, stability, homogeneity, and smoothing properties.
\end{proposition}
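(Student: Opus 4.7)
The plan is to reduce each of the four properties of $\sqrt{\EL}(C;\Gamma,\alpha,\iota)$ to the corresponding property of $\ell_{\rho\alpha;\iota}$ for each fixed scaling $\rho$, and then pass to the supremum over $\rho$. For each fixed $\rho$, the map $\rho\alpha$ endows $\Gamma$ with a length metric, and $\ell_{\rho\alpha;\iota}$ is then exactly the length functional associated to the filling embedded graph $\iota \colon \Gamma \hookrightarrow \Sigma$ discussed in Section~\ref{sec:length-emb-graph}. In that setting we already know that $\ell_{\rho\alpha;\iota}$ satisfies additive union, homogeneity, stability, and smoothing.

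Homogeneity and stability pass to the supremum trivially. For homogeneity, $\ell_{\rho\alpha;\iota}(nC) = n\,\ell_{\rho\alpha;\iota}(C)$ for every $\rho$ and $\Area_\rho$ does not depend on $C$, so dividing by $\sqrt{\Area_\rho}$ and taking $\sup_\rho$ yields $\sqrt{\EL}(nC;\Gamma,\alpha,\iota) = n\sqrt{\EL}(C;\Gamma,\alpha,\iota)$. For stability, the graph-length stability $\ell_{\rho\alpha;\iota}(C^n) = n\,\ell_{\rho\alpha;\iota}(C) = \ell_{\rho\alpha;\iota}(nC)$ survives the $\sup_\rho$.

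For smoothing, let $C\reducesto C'$ by smoothing at an essential crossing. For each fixed $\rho$, $\ell_{\rho\alpha;\iota}$ satisfies smoothing (take a minimal representative $\delta$ of a minimal-length lift $D$ of $C$ on $\Gamma$; by Lemma~\ref{lem:essential-cross} applied to $\iota\circ\delta$ and the fact that $\iota$ is an embedding, $\delta$ itself has a crossing corresponding to the essential crossing of $C$; smoothing $\delta$ there gives a lift of $C'$ of length at most $\ell_{\rho\alpha;\iota}(C)$). Therefore
\[
\frac{\ell_{\rho\alpha;\iota}(C')}{\sqrt{\Area_\rho(\Gamma,\alpha)}} \le \frac{\ell_{\rho\alpha;\iota}(C)}{\sqrt{\Area_\rho(\Gamma,\alpha)}}
\]
for every $\rho$, and taking $\sup_\rho$ on each side gives $\sqrt{\EL}(C';\Gamma,\alpha,\iota) \le \sqrt{\EL}(C;\Gamma,\alpha,\iota)$.

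For convex union, mimic the proof of Lemma~\ref{lem:el-convex-union}: pick $\rho$ realizing the supremum for $C_1\cup C_2$ (or an $\epsilon$-maximizer if no exact maximizer exists). Since $\ell_{\rho\alpha;\iota}$ satisfies additive union in the graph setting (a lift of $C_1\cup C_2$ is just the union of a lift of $C_1$ and a lift of $C_2$, and infima add on disjoint parts), we obtain
\begin{align*}
\sqrt{\EL}(C_1\cup C_2;\Gamma,\alpha,\iota) &= \frac{\ell_{\rho\alpha;\iota}(C_1)}{\sqrt{\Area_\rho(\Gamma,\alpha)}} + \frac{\ell_{\rho\alpha;\iota}(C_2)}{\sqrt{\Area_\rho(\Gamma,\alpha)}}\\
&\le \sqrt{\EL}(C_1;\Gamma,\alpha,\iota) + \sqrt{\EL}(C_2;\Gamma,\alpha,\iota),
\end{align*}
where the last inequality uses the definition of $\sqrt{\EL}$ as a supremum. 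The main point requiring care is verifying additivity of $\ell_{\rho\alpha;\iota}$ under disjoint union on the graph side, which holds because the minimization over lifts of $C_1\cup C_2$ decouples into independent minimizations over lifts of $C_1$ and $C_2$ when $\iota$ is an embedding; this is the only place embedding (rather than mere immersion) is used, and is the step most worth double-checking.
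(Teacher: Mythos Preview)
Your proof is correct and follows essentially the same approach as the paper: reduce each property to the corresponding property of $\ell_{\rho\alpha;\iota}$ for fixed~$\rho$ (which is the length functional of an embedded length graph, as in Section~\ref{sec:length-emb-graph}), and then pass to the supremum, exactly mirroring Lemmas~\ref{lem:el-smooth} and~\ref{lem:el-convex-union}.

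One small correction to your closing remark: you flag the additivity of $\ell_{\rho\alpha;\iota}$ under disjoint union as ``the only place embedding is used'' and ``the step most worth double-checking,'' but in fact additivity holds for immersions just as well (a lift of $C_1\cup C_2$ is simply a union of lifts of $C_1$ and~$C_2$, and the length minimizations decouple regardless). The paper's Proposition~\ref{prop:EL-immerse} confirms that convex union survives for immersions. The place where embedding is genuinely essential is in your smoothing argument (to pull the essential crossing back from $S$ to~$\Gamma$ via the injective~$\iota$), and consequently in stability (which relies on smoothing via Lemma~\ref{lem:length-stable}). For immersions one only gets quasi-smoothing, and stability can fail.
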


\begin{proof}
  As in Lemma~\ref{lem:el-smooth},
  smoothing, stability, and homogeneity follow because
  $\ell_{\rho \alpha}(C)$ satisfies them for any~$\rho$. Convex union
  follows as in Lemma~\ref{lem:el-convex-union}.
\end{proof}

We can also consider extremal length with respect to an immersion
$\iota$ (rather than an embedding), defined in the same way.
\begin{proposition}\label{prop:EL-immerse}
  For $\iota \colon \Gamma \to \Sigma$ a $\pi_1$-surjective immersion,
  $\sqrt{\EL}(C; \Gamma, \alpha, \iota)$ satisfies the convex union,
  homogeneity, and quasi-smoothing properties.
\end{proposition}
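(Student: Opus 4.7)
The plan is to mirror the proof of Proposition~\ref{prop:EL-embed}, weakening strict smoothing to quasi-smoothing at the single step where the embedding hypothesis was used, exactly analogously to how Section~\ref{sec:stable-examp} weakens Section~\ref{sec:length-emb-graph}. Homogeneity is immediate from the definition of $\ell_{\rho\alpha;\iota}$: $n$ parallel copies of a lift of $C$ form a lift of $nC$. For convex union, I would first observe that $\ell_{\rho\alpha;\iota}$ is additive on disjoint unions---any lift of $C_1 \cup C_2$ on $\Gamma$ splits as independent lifts of $C_1$ and $C_2$---after which the argument of Lemma~\ref{lem:el-convex-union} applies verbatim: evaluate at a $\rho$ almost realizing $\sqrt{\EL}(C_1 \cup C_2;\Gamma,\alpha,\iota)$ and bound each of the two resulting summands by the corresponding supremum.

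The real content of the proposition is quasi-smoothing. For each fixed $\rho$, the graph $\Gamma$ equipped with the length metric $\rho\alpha$ is a connected compact length space and $\iota$ remains $\pi_1$-surjective, so Proposition~\ref{prop:cocompact-quasismooth} applies and yields, for any weighted essential-crossing smoothing $C \reducesto_w C'$,
\[
\ell_{\rho\alpha;\iota}(C) \;\ge\; \ell_{\rho\alpha;\iota}(C') - 2w R_\rho,
\qquad R_\rho \coloneqq \sup_{x\in S}\diam_{\iota,\rho\alpha}(x).
\]
After dividing by $\sqrt{\Area_\rho(\Gamma,\alpha)}$ and taking $\sup_\rho$ on the right-hand side, this will give $\sqrt{\EL}(C;\Gamma,\alpha,\iota) \ge \sqrt{\EL}(C';\Gamma,\alpha,\iota) - 2wR$ for any constant $R$ with $R_\rho / \sqrt{\Area_\rho(\Gamma,\alpha)} \le R$ uniformly in $\rho$.

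The hard part will be establishing this uniform ratio bound, which is essentially a combinatorial statement. Although $R_\rho$ itself depends on $\rho$, I expect the \emph{combinatorial} diameter---the minimum number of edges of $\Gamma$ needed to connect two preimages of a point $x$ in the cover $\wt\Gamma$ by a path whose image in $S$ is null-homotopic---to admit an upper bound $M$ depending only on the topological data $(\Gamma, \iota, S)$, by a finiteness argument in the spirit of Lemma~\ref{lem:diam-bounded}: as $x$ varies over the finite graph $\Gamma$, there are only finitely many combinatorial types of preimage pairs, and each admits a minimum-edge connecting path. Granted such an $M$, we have $R_\rho \le M \max_e \rho(e)\alpha(e)$, while $\sqrt{\Area_\rho(\Gamma,\alpha)} \ge \rho(e_\ast)\sqrt{\alpha(e_\ast)}$ for the edge $e_\ast$ achieving that maximum, so $R_\rho/\sqrt{\Area_\rho(\Gamma,\alpha)} \le M \max_e \sqrt{\alpha(e)}$, giving the desired uniform bound and completing the proof.
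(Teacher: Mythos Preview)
Your proposal is correct and reaches the same conclusion as the paper, but the quasi-smoothing step is organized differently.

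For convex union and homogeneity, both you and the paper simply observe that the arguments for the embedded case (Proposition~\ref{prop:EL-embed} and Lemma~\ref{lem:el-convex-union}) go through unchanged.

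For quasi-smoothing, the paper normalizes by restricting to the compact set $\{\rho : \Area_\rho(\Gamma,\alpha)=1\}$ in $\R_{\ge 0}^{\lvert\operatorname{Edge}(\Gamma)\rvert}$, notes that $\iota(\Gamma)$ has only finitely many self-intersection points~$x$, and takes the supremum of $\diam_\iota(x)$ over this compact family of metrics; this supremum is finite and serves as the uniform quasi-smoothing constant for all $\ell_{\rho\alpha;\iota}$. You instead keep $\rho$ unnormalized and look for an explicit bound on the ratio $R_\rho/\sqrt{\Area_\rho}$ via a combinatorial edge-count~$M$ in the cover~$\wt\Gamma$, obtaining $R_\rho/\sqrt{\Area_\rho}\le M\max_e\sqrt{\alpha(e)}$. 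Both arguments are sound; the paper's is shorter and more conceptual, while yours produces an explicit constant depending only on $(\Gamma,\alpha,\iota)$.

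Two small points on your write-up. First, the self-intersection points of $\iota(\Gamma)$ are typically interior to edges, so the paths you count traverse two partial edges in addition to full edges; this only shifts~$M$ by a bounded amount and does not affect the argument. Second, your invocation of Lemma~\ref{lem:diam-bounded} is slightly misplaced: that lemma concerns upper semi-continuity of $\diam_\iota(x)$ in~$x$, not a combinatorial bound. The reason your~$M$ is finite is simpler and is exactly what you say next: there are only finitely many self-intersection points, each fiber $\iota^{-1}(x)$ is finite, and $\wt\Gamma$ is connected, so only finitely many pairs need connecting and each admits a finite edge-path.
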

\begin{proof}
  Convex union and homogeneity still hold by the same argument. We
  need an extra argument for quasi-smoothing. Instead of taking the
  supremum over all~$\rho$, rewrite Equation~\eqref{eq:el-graph-iota} as
  \[
    \sqrt{\EL}(C; \Gamma, \alpha, \iota) =
    \sup_{\substack{\rho : \operatorname{Edge}(\Gamma) \to \R_{\ge 0}\\
        \operatorname{Area}_\rho(\Gamma,\alpha) = 1}} \ell_{\rho\alpha}(C).
  \]
  The immersed graph $\iota(\Gamma)$ has finitely many
  self-intersections. For each self-intersection~$x$,
  take the supremum over the compact set of metrics
  $\{\rho \mid \Area_\rho(\Gamma,\alpha) = 1\}$ of the diameter
  $\diam_\iota(x)$ defined in Equation~\eqref{eq:diam-iota}. By the
  replacement argument in Proposition~\ref{prop:cocompact-quasismooth}, all
  $\ell_{\rho\alpha}$ for $\rho$ in this set satisfy quasi-smoothing with a
  uniform quasi-smoothing constant. It follows that
  $\EL(C; \Gamma,\alpha,\iota)$ also satisfies quasi-smoothing with
  the same constant.
\end{proof}

\begin{remark}
  We can relate extremal length for elastic graphs and surfaces by
  making a choice of a ribbon structure to $\Gamma$. Given any
  $\epsilon>0$, a ribbon elastic graph $G$ can be thickened into a
  conformal surface with boundary $N_{\epsilon}(G)$ by replacing each
  edge $e$ of $G$ by a rectangle of size $\alpha(e) \times \epsilon$
  and gluing the rectangles at the vertices by using the given ribbon
  structure. There are then inequalities relating
  $\EL(C; \Gamma,\alpha)$ and $\epsilon \EL(C;N_{\epsilon}(G))$, to
  within a multiplicative factor \cite[Props.\ 4.8 and~4.9]{Thurston20:Characterize}.
  For graphs immersed or embedded in a surface, the situation is less
  clear. By suitably choosing the elastic weights on an embedded
  graph, it appears that one can approximate extremal length well;
  Palmer gives one approach
  \cite{Palmer15:harmonic-thesis}. We are not aware of precise
  theorems.
\end{remark}

\subsection{$p$-extremal-length with respect to immersed graphs}
Extremal length fits into a family of energies for graphs
\cite[Appendix~A]{Thurston19:Elastic}. For $\Gamma$ a metric graph
with metric~$g$, a constant~$p$ with $1 \le p \le \infty$, and $C$ a curve on~$\Gamma$,
define
\begin{equation}\label{eq:Ep-graph}
E_p(C;\Gamma,g) \coloneqq\! \sup_{\sigma:\operatorname{Edge}(\Gamma) \to \R_{\ge 0}} \frac{\ell(C; \sigma g)}{\norm{\sigma}_p}
\end{equation}
where the $L^p$ norm $\norm{\sigma}_p$ is taken with respect to the metric~$g$. As
in the previous section, we can also consider a $\pi_1$-surjective immersion
$\iota \colon \Gamma \to S$, and consider $C$ to be a curve on~$S$ rather
than on~$\Gamma$.

For $p=\infty$, $E_\infty(C)$ is in fact just the length with respect
to~$g$ (as in Section~\ref{sec:stable-examp}).
Indeed, let $\sigma$ be any scaling factor, and let $\gamma$ be the
shortest representative of~$C$ on~$\Gamma$ with respect to~$g$
(not with respect to~$\sigma g$). Then
\[
\ell(C; \sigma g)
  \le \ell(\gamma; \sigma g)
  \le \norm{\sigma}_\infty \ell(\gamma; g)
\]
from which the result easily follows.

\begin{proposition}
  For any $\pi_1$-surjective immersion $\iota \colon \Gamma \to S$,
  the curve functional
  $E_p(\cdot;\allowbreak \Gamma, g, \iota)$ satisfies convex union, homogeneity, and
  quasi-smoothing, and thus its stabilization extends continuously to
  a function on geodesic currents. If $\iota$ is a filling embedding,
  then $E_p$ in addition satisfies stability and smoothing.
\end{proposition}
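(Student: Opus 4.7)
The plan is to follow closely the arguments for $p = 2$ given in Propositions~\ref{prop:EL-embed} and~\ref{prop:EL-immerse}, observing that the essential structure of the proofs does not depend on the specific exponent. The basic philosophy is that for each fixed scaling factor $\sigma$, the length $\ell(\cdot;\sigma g)$ is a length functional on the metric graph $(\Gamma,\sigma g)$, and so inherits all the favorable properties of length from Section~\ref{sec:length-emb-graph} and Section~\ref{sec:stable-examp}. Passing to the supremum over $\sigma$ preserves each inequality-type property.

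First I would dispatch homogeneity and convex union. Homogeneity is immediate: $\ell(nC;\sigma g) = n\,\ell(C;\sigma g)$ for every $\sigma$, so the supremum in~\eqref{eq:Ep-graph} also scales by $n$. For convex union, fix $C = C_1 \cup C_2$ and let $\sigma$ be any scaling factor. Since lengths on a metric graph are additive under disjoint union of multi-curves,
\[
  \frac{\ell(C_1 \cup C_2;\sigma g)}{\norm{\sigma}_p}
  = \frac{\ell(C_1;\sigma g)}{\norm{\sigma}_p} + \frac{\ell(C_2;\sigma g)}{\norm{\sigma}_p}
  \le E_p(C_1;\Gamma,g,\iota) + E_p(C_2;\Gamma,g,\iota).
\]
Taking the supremum over $\sigma$ on the left yields convex union.

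Next I would handle smoothing in the embedded case. When $\iota$ is a filling embedding, each $\ell(\cdot;\sigma g)$ is precisely the graph length functional from Section~\ref{sec:length-emb-graph}, which satisfies smoothing (using Lemma~\ref{lem:essential-cross} on a shortest representative on~$\Gamma$) and stability (by Lemma~\ref{lem:length-stable} applied to the length space $(\Gamma,\sigma g)$). Since both properties are preserved under taking the supremum over~$\sigma$, $E_p$ inherits them.

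The main obstacle, and the step requiring real work, is quasi-smoothing for an immersion~$\iota$. I would copy the strategy of Proposition~\ref{prop:EL-immerse}: normalize by rewriting
\[
  E_p(C;\Gamma,g,\iota) = \sup_{\norm{\sigma}_p = 1} \ell(C;\sigma g).
\]
The set $K = \{\sigma:\operatorname{Edge}(\Gamma)\to\R_{\ge 0} \mid \norm{\sigma}_p = 1\}$ is a compact subset of the finite-dimensional space $\R^{\operatorname{Edge}(\Gamma)}$. The immersed graph $\iota(\Gamma)$ has finitely many self-intersection points, and at each such point $x$ the quantity $\diam_\iota(x)$ defined in~\eqref{eq:diam-iota} depends continuously on $\sigma \in K$; hence there is a global bound $R$ valid uniformly over $K$ and over all self-intersections. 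The path-replacement construction from the proof of Proposition~\ref{prop:cocompact-quasismooth} then produces, for every essential crossing of a multi-curve~$C$ in~$S$ and every $\sigma \in K$, a representative of the smoothed curve on~$\Gamma$ of length at most $\ell(C;\sigma g) + 2R$. Taking the supremum over $\sigma \in K$ yields quasi-smoothing of $E_p$ with constant $2R$. With convex union, homogeneity, and quasi-smoothing established, Theorem~\ref{thm:stable} together with Theorem~\ref{thm:convex} then delivers the continuous extension of $\norm{E_p}$ to $\GC^+(S)$.
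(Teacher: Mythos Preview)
Your proof is correct and follows essentially the same approach as the paper, which simply refers back to Propositions~\ref{prop:EL-embed} and~\ref{prop:EL-immerse} and notes that for quasi-smoothing one restricts to the compact set $\{\sigma : \norm{\sigma}_p = 1\}$. You have supplied the details the paper leaves implicit, and your justification that $\diam_\iota(x)$ is bounded uniformly over this compact set is a slight elaboration of the paper's identical argument.
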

\begin{proof}
  This follows as in Propositions~\ref{prop:EL-embed}
  and~\ref{prop:EL-immerse}. To prove quasi-smoothing in the immersed
  case, we restrict to those functions $\sigma$ on
  $\operatorname{Edge}(\Gamma)$ where $\norm{\sigma}_p = 1$; as in the
  proof of Proposition~\pageref{prop:EL-immerse}, this set is
  compact.
\end{proof}


\section{Counting problems}\label{sec:counting}

One direct application of Theorem~\ref{thm:convex} is to obtain new
counting results for curves on surfaces of a given topological type.

A \emph{filling current} is a geodesic current
$\alpha \in \mathcal{G}\mathcal{C}(\Sigma)$ so that $i(\alpha,\mu)>0$
for all $\mu \in \mathcal{G}\mathcal{C}(\Sigma)\backslash \{ 0 \}$.
One example is a filling multi-curve, one whose complement in $S$
consists of disks.

Rafi and Souto proved the following.

\begin{definition}
A function $f$ on currents is \emph{positive} if
$f(\mu) > 0$ for all $\mu \ne 0$.
\label{def:positive}
\end{definition}

For a fixed continuous, homogeneous, and positive function $f\colon
\GC \to \R_{+}$, $\alpha$ a current, and $L$ a positive real number, let 
\[N(f,\alpha,L) \coloneqq \# \{ \phi \in \MCG \mid f(\phi(\alpha)) \leq L\}.\]

\begin{theorem}[Rafi-Souto {\cite[Main~Theorem]{RS19:CountingProblems}}]
For a fixed continuous, homogeneous, and positive function $f\colon
\GC \to \R_{+}$ and for a fixed filling current~$\alpha \in
\mathcal{G}\mathcal{C}(\Sigma)$, the
limit
\[
\lim_{L \to \infty} \frac{N(f,\alpha,L)}{L^{6g-6}}
\]
exists and is equal to
\[ 
\frac{m(f)m(\alpha)}{\mathfrak{m}_g}
\]
where $m(f)$, $m(\alpha)$, and $\mathfrak{m}_g$ are constants
depending only on $f$, $\alpha$, and the genus~$g$ respectively:
\begin{align}
  m(f)&=\mu_{\mathrm{Thu}}(\{\lambda \in \ML \mid f(\lambda) \leq 1 \})\label{eq:m-f}\\
  m(\alpha)&=\mu_{\mathrm{Thu}}(\{\lambda \in \ML \mid i(\alpha,\lambda) \leq 1 \})\\
  \mathfrak{m}_g &= \int_{\mathcal{M}_g}\!m(Y)\,d\omega_{\mathrm{WP}}(Y).
\end{align}
Here $\mu_{\mathrm{Thu}}$ is the Thurston volume measure on $\ML$
induced by the symplectic pairing on $\ML$.%
\footnote{There are at least two natural normalizations for
  volume on measured foliations; one coming from the Lebesgue measure on train-track charts, the other
  induced from the symplectic structure. We use the former here.}
\label{thm:RafiSouto17}
\end{theorem}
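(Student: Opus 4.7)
The plan is to reduce the counting problem to equidistribution of mapping class group orbits in the space of measured laminations $\ML$, and then use Mirzakhani's integration formula over moduli space to evaluate the resulting volume. Since $f$ is continuous, homogeneous and positive on $\GC^+(S)$, the sublevel set $B_f \coloneqq \{\mu \in \GC : f(\mu) \leq 1\}$ is compact and has non-empty interior, and by homogeneity
\[
N(f,\alpha,L) = \#\bigl\{\phi \in \MCG : L^{-1}\phi(\alpha) \in B_f\bigr\}.
\]
Thus, setting $\nu_L \coloneqq L^{-(6g-6)} \sum_{\phi \in \MCG} \delta_{L^{-1}\phi(\alpha)}$, the quantity $N(f,\alpha,L)/L^{6g-6}$ equals $\nu_L(B_f)$, and the goal becomes proving weak$^*$ convergence of $\nu_L$ to an explicit measure on $\GC$ and then integrating the indicator of $B_f$ against it.

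The first key step is to show that $\nu_L$ converges to a limit measure $\nu_\alpha$ supported on $\ML \subset \GC$. The support claim is geometric: as $L \to \infty$, the rescaled currents $L^{-1}\phi(\alpha)$ have self-intersection $L^{-2} i(\alpha,\alpha) \to 0$, so any weak$^*$ limit point lies in the vanishing locus of $i(\cdot,\cdot)$, which is exactly $\ML$. Identifying the limit requires Masur's ergodicity theorem for the $\MCG$ action on $\PML$ with respect to Thurston measure $\mu_{\mathrm{Thu}}$, combined with Mirzakhani's integration formula over the moduli space $\mathcal{M}_g$, equipped with the Weil--Petersson form $\omega_{\mathrm{WP}}$. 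The formula $m(\alpha)/\mathfrak{m}_g$ arises precisely as the proportion of the $\MCG$-orbit of $\alpha$ captured by a $\mu_{\mathrm{Thu}}$-ball of unit radius, averaged over the fiber structure $\mathcal{M}_{g,\ast} \to \mathcal{M}_g$ where each hyperbolic structure $Y$ contributes its own measure $m(Y)$ of unit-length laminations.

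The second key step is to transfer this weak$^*$ convergence into convergence of $\nu_L(B_f)$. Because $f$ extends continuously to $\GC$, the boundary $\partial B_f$ has $\nu_\alpha$-measure zero generically, and the portmanteau theorem yields
\[
\lim_{L \to \infty} \nu_L(B_f) = \nu_\alpha(B_f \cap \ML) = \frac{m(\alpha)}{\mathfrak{m}_g}\cdot \mu_{\mathrm{Thu}}(\{\lambda \in \ML : f(\lambda) \leq 1\}) = \frac{m(f) m(\alpha)}{\mathfrak{m}_g}.
\]
Positivity of $f$ is what ensures $B_f \cap \ML$ is bounded in the Thurston measure, giving finiteness of $m(f)$; continuity of $f$ (guaranteed for our examples by Theorem~\ref{thm:convex}) is what allows $B_f$ to be substituted into the limit, rather than only sets cut out by length functions.

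The main obstacle is the first step: proving the equidistribution $\nu_L \to \nu_\alpha$ for an arbitrary filling current $\alpha$, rather than only for simple multi-curves. For simple multi-curves this goes back to Mirzakhani's work~\cite{Mir16:Counting}; the extension to arbitrary filling currents is precisely the content of Rafi--Souto~\cite{RS19:CountingProblems}, relying on an unfolding argument that relates counting orbits of the current $\alpha$ to counting orbits of simple multi-curves by pushing $\alpha$ along a train track approximation. The role of our contribution is to supply the hypothesis: continuous, homogeneous, positive functions on $\GC$ are exactly what the Rafi--Souto theorem consumes, and our main theorems produce many new such $f$ (notably extremal length) to which the counting result then applies.
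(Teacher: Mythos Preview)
The paper does not contain a proof of this theorem. Theorem~\ref{thm:RafiSouto17} is stated purely as a citation of the Main Theorem of Rafi--Souto~\cite{RS19:CountingProblems}; the paper quotes it in Section~\ref{sec:counting} and then \emph{applies} it (together with Theorem~\ref{thm:convex}) to deduce Corollary~\ref{cor:extremallengthcounting} and Proposition~\ref{prop:fsimple}. There is therefore nothing in the paper to compare your argument against.

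Your sketch is a reasonable high-level outline of how the Rafi--Souto argument is structured (equidistribution of rescaled orbit measures, support on $\ML$ via vanishing self-intersection, identification of the limit via Mirzakhani's work and ergodicity, then Portmanteau), and you correctly identify that the hard step---equidistribution for an arbitrary filling current rather than a simple multi-curve---is precisely what \cite{RS19:CountingProblems} supplies. But as a self-contained proof it is incomplete: the equidistribution statement $\nu_L \to \nu_\alpha$ is asserted rather than proved, the ``unfolding along a train track'' description does not match the actual Rafi--Souto mechanism, and the claim that $\partial B_f$ is $\nu_\alpha$-null needs the argument (which the paper does cite, on p.~879 of \cite{RS19:CountingProblems}) that $\mu_{\mathrm{Thu}}(\{\lambda : f(\lambda)=1\})=0$. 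In the context of this paper, however, none of that is expected: the theorem is an imported black box, and your final paragraph correctly captures the paper's actual contribution, namely producing new continuous homogeneous positive $f$ to feed into it.
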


Dumas communicates a proof of the following theorem (attributed to
Mirzakhani).

\begin{theorem}[Dumas-Mirzakhani {\cite[Theorem~5.10]{Dumas15:SkinningMaps}}]
The function $\Lambda\colon \mathcal{M}_g \to \mathbb{R}_{\geq 0}$ given by
$\Sigma \mapsto m(\operatorname{EL}_{\Sigma})$ is constant, where
$m(\EL_\Sigma)$ is defined by Eq.~(\ref{eq:m-f}).
\label{thm:DumasMirzakhani15}
\end{theorem}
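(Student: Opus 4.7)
The plan is to use the Hubbard-Masur parametrization to identify extremal length with an $L^1$-norm on holomorphic quadratic differentials, and then argue that the resulting volume of the unit ball is independent of the complex structure. For each Riemann surface $\Sigma \in \mathcal{M}_g$, the Hubbard-Masur theorem gives a canonical homeomorphism $HM_\Sigma \colon \ML(S) \to Q(\Sigma)$, where $Q(\Sigma)$ is the space of holomorphic quadratic differentials on~$\Sigma$; the map sends $\lambda$ to the unique $q$ whose horizontal foliation corresponds to~$\lambda$. A classical identity, established first for Jenkins-Strebel differentials associated to weighted simple closed curves and then extended by continuity and homogeneity (both furnished by Theorem~\ref{thm:convex}), yields
\[
\EL_\Sigma(\lambda) = \|HM_\Sigma(\lambda)\|, \qquad \|q\| \coloneqq \int_\Sigma |q|.
\]
Consequently,
\[
m(\EL_\Sigma) \;=\; (HM_\Sigma)_*\mu_{\mathrm{Thu}}\bigl(\{q \in Q(\Sigma) : \|q\| \leq 1\}\bigr).
\]

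The heart of the proof would be to identify the pushforward $(HM_\Sigma)_*\mu_{\mathrm{Thu}}$ with a natural volume form on $Q(\Sigma)$ in a $\Sigma$-independent manner. One promising route is via the pair-of-foliations identification: the map $q \mapsto (F_h(q), F_v(q))$ realizes the bundle of quadratic differentials over Teichmüller space as an open dense subset of $\ML(S) \times \ML(S)$, and under this identification, the $L^1$-norm becomes intersection number: $\|q\| = i(F_h(q), F_v(q))$. Thurston's symplectic form pulls back (up to a universal constant depending only on $g$) to the natural symplectic form on $Q(\Sigma)$ inherited from $H^1(\Sigma\setminus\operatorname{Zero}(q);\mathbb{C})$, so a Lebesgue-type volume matches $\mu_{\mathrm{Thu}}$ under Hubbard-Masur. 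Combined with the $\SL(2,\R)$-invariance of the Masur-Veech measure, this would lead to a purely topological formula for $m(\EL_\Sigma)$, yielding constancy.

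The main obstacle is careful bookkeeping of scaling exponents and Jacobians: $\EL_\Sigma$ is homogeneous of degree~$2$ in the weight, while $\mu_{\mathrm{Thu}}$ scales with degree $6g-6$, and the symplectic form on $Q(\Sigma)$ comes with its own normalization. One must also verify the infinitesimal compatibility between the Thurston symplectic form on $\ML$ and the symplectic form on $Q(\Sigma)$ at pairs of transversely filling foliations (this being the symplectic version of Hubbard-Masur's deformation theory), as well as treat the measure-zero locus where $F_h$ and $F_v$ fail to fill. Once this identification is in place and one shows the $L^1$-unit ball volume is topologically invariant, the theorem follows.
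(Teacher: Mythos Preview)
The paper does not give its own proof of this theorem; it is quoted from the literature, with the sentence ``Dumas communicates a proof of the following theorem (attributed to Mirzakhani)'' and a citation to \cite{Dumas15:SkinningMaps}. So there is nothing in the paper to compare your argument against.

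That said, your outline is essentially the Dumas--Mirzakhani strategy: identify $\EL_\Sigma(\lambda)$ with the $L^1$-norm of the Hubbard--Masur differential, rewrite the norm as $i(F_h(q),F_v(q))$, and then argue that the pushforward of $\mu_{\mathrm{Thu}}$ under the horizontal-foliation map is a fiberwise volume on $Q(\Sigma)$ whose unit-ball mass is independent of~$\Sigma$. The decisive step, which you flag as an ``obstacle,'' is precisely the content of Mirzakhani's lemma: the involution $\lambda\mapsto F_v\bigl(HM_\Sigma(\lambda)\bigr)$ on $\ML$ preserves $\mu_{\mathrm{Thu}}$, equivalently the horizontal- and vertical-foliation maps pull $\mu_{\mathrm{Thu}}$ back to the same measure on $Q(\Sigma)$. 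Once that is established, the $\SL(2,\R)$-symmetry (rotation $q\mapsto e^{i\theta}q$) and the foliation-pair description do the rest. Your sketch is correct in spirit; what is missing is not an idea but the actual verification of that measure-preservation statement, which is where all the work lies.

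One small remark: you appeal to Theorem~\ref{thm:convex} to extend the identity $\EL_\Sigma(\lambda)=\lVert HM_\Sigma(\lambda)\rVert$ from simple multi-curves to all of $\ML$. This is unnecessary (and slightly anachronistic in the context of the cited result): the extension of extremal length to $\ML$ and the identity with the quadratic-differential norm are due to Kerckhoff and predate this paper. Theorem~\ref{thm:convex} is used in the paper to extend $\sqrt{\EL}$ to \emph{all} geodesic currents, which is a strictly larger space than $\ML$ and is what is needed for the counting applications, but not for Theorem~\ref{thm:DumasMirzakhani15} itself.
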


Let $\sqrt{\operatorname{EL}_{\Sigma}}\colon \GC \to \R_{\geq 0}$ be the continuous extension of square-root of extremal length to currents provided by Theorem~\ref{thm:convex}.
In order to be able to apply Theorem~\ref{thm:RafiSouto17}, it remains
to check that square root of extremal length is non-zero.

\begin{lemma}
For any $\Sigma \in \Teich(S)$, the functional
$\sqrt{\operatorname{EL}_{\Sigma}}$ is positive on $\GC(\Sigma)$.
\label{lem:ELpos}
\end{lemma}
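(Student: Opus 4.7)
The plan is to bound $\sqrt{\EL_\Sigma}$ from below by a multiple of hyperbolic length and then use positivity of hyperbolic length on nonzero currents.

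First I would fix a representative metric in the conformal class of $\Sigma$: since extremal length depends only on the conformal structure, I can take $g$ to be the hyperbolic metric on $\Sigma$ given by uniformization. Then, testing the supremum in Definition~\ref{def:EL} with the constant rescaling function $\rho \equiv 1$ gives, for every weighted multi-curve~$C$,
\[
\sqrt{\EL_\Sigma}(C) \;\ge\; \frac{\ell_g(C)}{\sqrt{\Area(g)}}.
\]
Both sides extend continuously to $\GC(\Sigma)$: the left-hand side by Lemmas~\ref{lem:el-smooth} and~\ref{lem:el-convex-union} together with Theorem~\ref{thm:convex}, and the right-hand side by Section~\ref{sec:hyperbolic-length}. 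Since weighted multi-curves are dense in $\GC(\Sigma)$, the inequality persists on currents:
\[
\sqrt{\EL_\Sigma}(\mu) \;\ge\; \frac{\ell_g(\mu)}{\sqrt{\Area(g)}} \qquad \text{for all } \mu \in \GC(\Sigma).
\]

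Next I would invoke Bonahon's identity $\ell_g(\mu) = i(\mathcal{L}_\Sigma, \mu)$, where $\mathcal{L}_\Sigma$ is the Liouville current (Section~\ref{sec:hyperbolic-length}). It is classical that $\mathcal{L}_\Sigma$ is a filling current: its support consists of all geodesics on $\wt\Sigma$, so it has positive intersection with every nonzero current. Hence for $\mu \ne 0$ we have $\ell_g(\mu) = i(\mathcal{L}_\Sigma,\mu) > 0$, and combining with the displayed inequality yields $\sqrt{\EL_\Sigma}(\mu) > 0$, as required.

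There is no real obstacle here; the only thing to be slightly careful about is that the constant-$\rho$ test function in the definition of $\sqrt{\EL_\Sigma}$ produces precisely the length functional whose continuous extension to currents is already known to be positive, so the argument is essentially a one-line comparison once the extensions provided by Theorem~\ref{thm:convex} are in place.
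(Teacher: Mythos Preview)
Your proof is correct and follows essentially the same route as the paper: test with $\rho\equiv 1$ to bound $\sqrt{\EL_\Sigma}$ below by hyperbolic length divided by $\sqrt{\Area(g)}$, pass the inequality to currents by density and continuity, and conclude from positivity of hyperbolic length on nonzero currents. The only difference is that you spell out the positivity of $\ell_g$ via the Liouville current being filling, whereas the paper simply asserts it.
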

\begin{proof}
Let $A = \sqrt{-2\pi\chi(S)}$. By Definition~\ref{def:EL} applied to
the hyperbolic metric,
$\ell_{\Sigma}(C)/A \leq
\sqrt{\operatorname{EL}_{\Sigma}}(C)$ for all curves~$C$ and all $\Sigma \in \Teich(S)$.
For any $\mu \in \GC(\Sigma)$, there exists a sequence $(\lambda_i C_i)_{i \in \mathbb{N}}$ of weighted curves so that $\lambda_i C_i \to \mu$ in the weak$^*$ sense.
It thus follows that $\ell_{\Sigma}(\mu)/A\leq\sqrt{\EL_\Sigma}(\mu)$. Since $\ell_{\Sigma}$ is a positive
function on currents, $\sqrt{\EL_\Sigma}$ is as well.
\end{proof}

We thus get solutions to counting problems for extremal length.
\begin{corollary}
For any filling current $\alpha$ and ${\Sigma} \in \Teich(S)$, the limit
 \[
\lim_{L \to \infty} \frac{N(\sqrt{\EL_{\Sigma}},\alpha,L)}{L^{6g-6}}
\]
exists, is independent of~$\Sigma$, and is equal to
\begin{equation}\label{eq:counting-formula}
\frac{\Lambda \cdot m(\alpha)}{\mathfrak{m}_g}.
\end{equation}
\label{cor:extremallengthcounting}
\end{corollary}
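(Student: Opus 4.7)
The plan is that this corollary follows almost immediately by feeding the right ingredients into the Rafi--Souto counting theorem (Theorem~\ref{thm:RafiSouto17}). The nontrivial preparatory work has already been done in Section~\ref{sec:extremal-length} and in Lemma~\ref{lem:ELpos}, so the proof amounts to checking that the hypotheses of Rafi--Souto are met for $f = \sqrt{\EL_\Sigma}$ and then invoking Dumas--Mirzakhani to identify the constant.

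First, I would verify the three hypotheses required by Theorem~\ref{thm:RafiSouto17} for the curve functional $f = \sqrt{\EL_\Sigma}$. Continuity on $\GC(S)$ is supplied by Theorem~\ref{thm:convex}, whose hypotheses (convex union, smoothing, stability, homogeneity) are verified in Lemmas~\ref{lem:el-smooth} and~\ref{lem:el-convex-union}. Homogeneity on $\GC(S)$ is also part of the conclusion of Theorem~\ref{thm:convex}. Positivity of $\sqrt{\EL_\Sigma}$ on $\GC(S) \setminus \{0\}$ is exactly the content of Lemma~\ref{lem:ELpos}.

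Second, applying Theorem~\ref{thm:RafiSouto17} with $f = \sqrt{\EL_\Sigma}$ and the fixed filling current~$\alpha$ yields
\[
\lim_{L \to \infty} \frac{N(\sqrt{\EL_\Sigma}, \alpha, L)}{L^{6g-6}} = \frac{m(\sqrt{\EL_\Sigma}) \cdot m(\alpha)}{\mathfrak{m}_g},
\]
with $m(\sqrt{\EL_\Sigma}) = \mu_{\mathrm{Thu}}(\{\lambda \in \ML \mid \sqrt{\EL_\Sigma}(\lambda) \le 1\})$ in the notation of Eq.~\eqref{eq:m-f}. In particular the limit exists.

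Finally, to identify the constant and see that the answer does not depend on~$\Sigma$, I would invoke Theorem~\ref{thm:DumasMirzakhani15}: the function $\Sigma \mapsto m(\EL_\Sigma)$ on $\mathcal{M}_g$ is constant, equal to $\Lambda$, so $m(\sqrt{\EL_\Sigma}) = \Lambda$ independently of~$\Sigma$, yielding the formula~\eqref{eq:counting-formula}. There is essentially no obstacle at this stage; the whole content is packaged into the already-cited results, and the only thing to be slightly careful about is that the Rafi--Souto theorem is stated for positive continuous homogeneous functions on~$\GC$, which is precisely what the three hypothesis checks establish for $\sqrt{\EL_\Sigma}$.
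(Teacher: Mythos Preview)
Your proposal is correct and follows essentially the same approach as the paper: verify continuity and homogeneity via Theorem~\ref{thm:convex} together with the lemmas of Section~\ref{sec:extremal-length}, verify positivity via Lemma~\ref{lem:ELpos}, apply Rafi--Souto (Theorem~\ref{thm:RafiSouto17}), and then invoke Dumas--Mirzakhani (Theorem~\ref{thm:DumasMirzakhani15}) for the $\Sigma$-independence of the constant. Your write-up is somewhat more explicit than the paper's, but the logical structure is identical.
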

\begin{proof}
Using Theorem~\ref{thm:convex} and the results in Subsection~\ref{sec:extremal-length}, we can extend $\sqrt{EL_X}$ as a continuous, real-homogeneous functional on geodesic currents.
Furthermore, $\sqrt{EL_X}$ is positive on currents, by Proposition~\ref{lem:ELpos}.
Thus, by Theorem~\ref{thm:RafiSouto17}, the result follows.
Independence of~$\Sigma$ follows from
Theorem~\ref{thm:DumasMirzakhani15}.
\end{proof}

A similar counting result is true for $\alpha$ a simple multi-curve by
Mirzakhani's work, but one has to count slightly differently.
For $f$ a curve functional, $\alpha$ a simple curve, and $L > 0$, set
\[n(f,\alpha,L) \coloneqq \# \{ \phi(\alpha)\in \MCG \mid f(\phi(\alpha)) \leq L\}.\]
In general, note that
$n(f,\alpha,L) \neq N(f,\alpha,L)$; in fact, $N$ will be infinite if
$\alpha$ is not filling. Even for $\alpha$ filling, $N$ will be bigger
than $n$ if $\alpha$ has non-trivial stabilizer in the  mapping class
group.

We state the corresponding result for simple multi-curves.

\begin{proposition}
\label{prop:fsimple}
For any simple multi-curve $\alpha$, there is a constant $c(\alpha)$
so that, for any
$f \colon \GC^+(S) \to \mathbb{R}$ continuous, positive and
real-homogeneous function,
the limit
\begin{equation}
\lim_{L \to \infty} \frac{n(f,\alpha,L)}{L^{6g-6}}
\end{equation}
exists and is equal to
\begin{equation}
\frac{m(f)c(\alpha)}{\mathfrak{m}_g}.
\end{equation}
\end{proposition}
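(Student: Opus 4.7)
The plan is to deduce this from Mirzakhani's original simple multi-curve counting theorem combined with a weak-$*$ convergence argument on geodesic currents, in the style used by Erlandsson-Souto and Rafi-Souto. Since $\alpha$ is simple, every $\phi(\alpha)$ lies in $\ML(S)\subset\GC^+(S)$, so this is really a statement about the $\MCG$-orbit of $\alpha$ inside the measured lamination space, which is where Mirzakhani's machinery applies.

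The first step is to introduce the family of counting measures
\[
  \nu_L^\alpha \coloneqq \frac{1}{L^{6g-6}}\sum_{\beta \in \MCG\cdot\alpha} \delta_{\beta/L}
\]
on $\GC^+(S)$, where the sum runs over the distinct images of $\alpha$ under the mapping class group. Mirzakhani's simple multi-curve counting theorem, in its measure-theoretic reformulation, states that there exists a constant $c(\alpha) > 0$ (depending only on the topological type of $\alpha$, computable from her Weil--Petersson integration formula) such that $\nu_L^\alpha \to \frac{c(\alpha)}{\mathfrak{m}_g}\mu_{\mathrm{Thu}}$ in the weak-$*$ topology on $\GC^+(S)$, with the limit supported on $\ML(S)$.

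Next, given $f$ continuous, positive, and homogeneous, set $U_f \coloneqq \{\mu \in \GC^+(S) \mid f(\mu) \le 1\}$. By homogeneity, $\phi(\alpha)/L \in U_f$ if and only if $f(\phi(\alpha)) \le L$, so $\nu_L^\alpha(U_f) = n(f,\alpha,L)/L^{6g-6}$. To pass the weak-$*$ limit through the indicator of $U_f$, one checks that the topological boundary $\{\mu \in \ML \mid f(\mu) = 1\}$ has $\mu_{\mathrm{Thu}}$-measure zero. This follows from homogeneity together with positivity: the level sets $\{f = t\}\cap\ML$ foliate $\ML\setminus\{0\}$ as $t$ varies over $(0,\infty)$, they are $\mu_{\mathrm{Thu}}$-measurable, and since $\mu_{\mathrm{Thu}}$ restricted to $\{f \le 1\}$ is finite (because $f$ is positive on currents and homogeneous, giving $m(f) < \infty$), Fubini-type reasoning forces almost every level set to have measure zero; homogeneity then upgrades this to every level set. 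Hence weak-$*$ convergence yields
\[
  \lim_{L\to\infty} \frac{n(f,\alpha,L)}{L^{6g-6}} = \lim_{L\to\infty}\nu_L^\alpha(U_f) = \frac{c(\alpha)}{\mathfrak{m}_g}\,\mu_{\mathrm{Thu}}(U_f\cap\ML) = \frac{m(f)\,c(\alpha)}{\mathfrak{m}_g}.
\]

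The main obstacle is the weak-$*$ convergence $\nu_L^\alpha\to \frac{c(\alpha)}{\mathfrak{m}_g}\mu_{\mathrm{Thu}}$, but this is exactly Mirzakhani's theorem, so the new content reduces to the measure-boundary argument above. A minor subtlety is that the measures live on $\GC^+(S)$ while the limit is supported on $\ML$; this is harmless because $f$ is continuous on the ambient space $\GC^+(S)$ (by Theorem~\ref{thm:convex} for our examples, or by hypothesis here) and the atoms of $\nu_L^\alpha$ already sit inside $\ML$, so testing $\nu_L^\alpha$ against $\mathbf{1}_{U_f}$ or against an approximating sequence of compactly supported continuous functions gives the same answer in the limit.
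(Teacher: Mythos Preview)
Your proposal is correct and follows essentially the same route as the paper: both arguments invoke Mirzakhani's simple multi-curve counting theorem (in its measure-convergence form $\nu_L^\alpha \to \tfrac{c(\alpha)}{\mathfrak{m}_g}\mu_{\mathrm{Thu}}$), verify that the level set $\{f=1\}\cap\ML$ has Thurston measure zero, and then apply the Portmanteau theorem to pass the limit through the indicator of $U_f$. The only difference is cosmetic: the paper cites Rafi--Souto directly for the measure-zero boundary claim, whereas you sketch the homogeneity/Fubini argument yourself.
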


\begin{proof}
First note that the set $\{
\lambda \in \ML(\Sigma) \mid
f(\lambda) \leq 1 \}$ is
compact because
$f$ is positive on non-zero measured laminations.
Let $A \coloneqq \{ \lambda \in \ML \mid f(\lambda) = 1
\})$;
then $\mu_{\mathrm{Thu}}(A)=0$, as proved by Rafi and Souto
\cite[p.~879]{RS19:CountingProblems}.
Finally, we apply Mirzakhani's counting result
\cite[Theorem~1.3]{Mirzakhani08:SimpleClosed} and the Portmanteau
theorem
(see \cite[Theorem~30.12]{Bau01:Measure}) to
conclude the limit exists.
\end{proof}

\begin{corollary}
\label{cor:ELsimple}
For any simple multi-curve $\alpha$ and ${\Sigma} \in \Teich(S)$, the limit
\[
\lim_{L \to \infty} \frac{n(\sqrt{\EL_{\Sigma}},\alpha,L)}{L^{6g-6}}
\]
exists, is independent of $\Sigma$, and is equal to
\begin{equation}
\frac{\Lambda \cdot c(\alpha)}{\mathfrak{m}_g}.
\end{equation}
\end{corollary}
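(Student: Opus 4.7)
The plan is to apply Proposition~\ref{prop:fsimple} directly to the curve functional $f = \sqrt{\EL_\Sigma}$, with the bookkeeping for the value of the constant coming from Theorem~\ref{thm:DumasMirzakhani15}. So the strategy is essentially a verification that the hypotheses of Proposition~\ref{prop:fsimple} are met by $\sqrt{\EL_\Sigma}$, followed by identifying $m(\sqrt{\EL_\Sigma})$ with the universal constant~$\Lambda$.

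First I would assemble the needed properties of $\sqrt{\EL_\Sigma}$. By Lemmas~\ref{lem:el-smooth} and~\ref{lem:el-convex-union} (in Section~\ref{sec:extremal-length}), the curve functional $\sqrt{\EL_\Sigma}$ satisfies homogeneity, stability, smoothing, and convex union; homogeneity is also immediate because rescaling $C$ by an integer $n$ scales $\ell_{\rho g}(C)$ by~$n$ for every test~$\rho$. By Theorem~\ref{thm:convex}, this gives a unique continuous homogeneous extension $\sqrt{\EL_\Sigma} \colon \GC(S) \to \R_{\ge 0}$. Lemma~\ref{lem:ELpos} then supplies positivity: on any non-zero current the extension dominates a fixed multiple of hyperbolic length, which is positive.

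With these properties in hand, I would invoke Proposition~\ref{prop:fsimple} with $f = \sqrt{\EL_\Sigma}$ and the given simple multi-curve~$\alpha$. This immediately yields the existence of the limit
\[
\lim_{L \to \infty} \frac{n(\sqrt{\EL_\Sigma}, \alpha, L)}{L^{6g-6}} = \frac{m(\sqrt{\EL_\Sigma}) \cdot c(\alpha)}{\mathfrak{m}_g}.
\]

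The last step is to identify the constant $m(\sqrt{\EL_\Sigma})$. By definition, $m(\sqrt{\EL_\Sigma}) = \mu_{\mathrm{Thu}}(\{\lambda \in \ML \mid \sqrt{\EL_\Sigma}(\lambda) \le 1\})$, which is exactly the quantity that Dumas-Mirzakhani (Theorem~\ref{thm:DumasMirzakhani15}) show to be independent of the marked conformal structure $\Sigma \in \Teich(S)$, with common value~$\Lambda$. Substituting gives the claimed formula and simultaneously shows that the limit does not depend on~$\Sigma$. There is no real obstacle here; the corollary is a straightforward packaging of the continuity/positivity provided by our main theorem with the Rafi-Souto and Mirzakhani counting theorems and the Dumas-Mirzakhani invariance result.
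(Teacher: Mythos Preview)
Your proposal is correct and matches the paper's intended argument: the corollary is stated without explicit proof, as it follows immediately from Proposition~\ref{prop:fsimple} applied to $f=\sqrt{\EL_\Sigma}$ (continuous, homogeneous, positive by Lemma~\ref{lem:ELpos} and Theorem~\ref{thm:convex}) together with Theorem~\ref{thm:DumasMirzakhani15} for the $\Sigma$-independence and value of the constant, exactly as you outline. One tiny remark: the identification $m(\sqrt{\EL_\Sigma})=m(\EL_\Sigma)=\Lambda$ is immediate since $\sqrt{\EL_\Sigma}(\lambda)\le 1$ iff $\EL_\Sigma(\lambda)\le 1$, so the set whose Thurston measure you take is literally the same.
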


\begin{remark}
The constant $c(\alpha)$ in Proposition~\ref{cor:ELsimple} is not the
same as the constant $m(\alpha)$ in Theorem~\ref{thm:RafiSouto17}. For
details on how $c(\alpha)$ is defined, see
\cite[Equation~(1.2)]{Mirzakhani08:SimpleClosed}.
This is related to the fact that in Proposition~\ref{cor:ELsimple} we
count multi-curves instead of mapping classes because the stabilizer of a simple multi-curve under the mapping class group
is infinite.

The counting problem  $n(\ell_{\Sigma},\alpha,L)$  with $\alpha$ an
arbitrary essential multi-curve (possibly neither simple
nor filling) and $f=\ell_{\Sigma}$ a hyperbolic length, is established
in more recent work of Mirzakhani
\cite[Theorem~1.1]{Mir16:Counting}. Relying on her work,
Erlandsson-Parlier-Souto \cite[Theorem~1.6]{EPS:CountingCurves} give
the corresponding  result where $f$ is allowed to be intersection
number with other filling currents (not just a hyperbolic Liouville current). From Mirzakhani's work and work of
Erlandsson-Souto \cite[Corollary~4.4]{ES16:Counting}, one also can get the corresponding
counting problems where $\alpha$ is allowed to be a current (not just
a multi-curve). In fact, from Erlandsson-Souto's work one can also
allow $f$ to be any continuous, positive and real-homogeneous function
on currents, although they don't explicitly state this in their paper.
 This is done in Rafi-Souto's work (Theorem~\ref{thm:RafiSouto17}
 above) which also 
gives the expression~\eqref{eq:counting-formula} for the limit of the
counting problem. Rafi-Souto also
relies on
Mirzakhani's work \cite{Mir16:Counting}.

Finally, recently
Erlandsson-Souto~\cite[Theorem~8.1]{ES20:GeodesicCount} have given an
independent proof of the counting argument in \cite{Mir16:Counting}
illuminating the connection between counting problems for
simple and non-simple multi-curves.
Corollary \ref{cor:extremallengthcounting} also follows from 
Proposition \ref{cor:ELsimple} and forthcoming work of Erlandsson and
Souto~\cite{ES19:MirzakhaniCurveCount,ES20:GeodesicCount}.

We remark that Erlandsson-Souto's work
shows that if one knows a counting result for simple closed curves,
then one can obtain a counting result for non-simple closed curves
(for curve functionals extending continuously to currents).
The connection between these two types of counting problems is perhaps
that, in
some sense, the simple closed curves are the extremal points of the
space of currents, in the sense of convex sets
\cite{Rockafellar70:ConvexAnalysis}. For instance, the systole of
positive curve functional
satisfying smoothing and convex union is always a simple curve.
(Here by \emph{systole} 
we mean a weight~$1$, non-trivial multi-curve~$C$ with a minimal
value of $f(C)$.)
\end{remark}


\section{Proof outline}
\label{sec:proof}

  In this section, we prove the core theorem of the paper, Theorem~\ref{thm:weightconvex}, giving a continuous extension to geodesic currents of a functional~$f$ on weighted multi-curves satisfying convex union, stability, homogeneity and weighted quasi-smoothing.
  
\begin{theorem}\label{thm:weightconvex}
   Let $f$ be a weighted curve
   functional defined on weighted oriented multi-curves satisfying the
   weighted quasi-smoothing, convex union, stability, and homogeneity
  properties. Then there is a unique continuous homogeneous function
  $\bar{f} \colon \GC^+(S) \to \R_{\ge 0}$ that extends~$f$.
\end{theorem}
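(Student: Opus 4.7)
The plan is to define $\bar f$ by discretizing the current via cross-sections to the geodesic flow, building approximating weighted multi-curves by flowing each atom for a long time and closing up, and showing the resulting limit converges by a Fekete-style subadditive argument. Uniqueness of $\bar f$ is automatic from density of weighted multi-curves in $\GC^+(S)$.

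\textbf{Step 1 (discretization).} Fix an auxiliary hyperbolic metric $\Sigma$ on $S$. For $\mu \in \GC^+(S)$ viewed as a flow-invariant measure on $UT\Sigma$ (Definition~\ref{def:currents-2}), cover $\supp(\mu)$ by flow boxes with cross-sections $\tau_i$ and a subordinate partition of unity $\{\psi_i\}$. For each $\tau_i$, approximate the flux $\mu_{\tau_i}$ by a finite atomic measure $\sum_j w_{ij}\delta_{x_{ij}}$. For a parameter $T>0$, flow each $x_{ij}$ forward by time $T$ under $\phi_t$ and close up by a short geodesic arc back to $x_{ij}$, producing a weighted concrete multi-curve $C^T = C^T(\mu)$ on~$S$, and set tentatively
\[
  \bar f(\mu) \coloneqq \lim_{T \to \infty} \frac{f(C^T)}{T}.
\]

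\textbf{Step 2 (convergence via Fekete).} To get subadditivity in $T$, split each flow orbit of length $T_1 + T_2$ at time $T_1$ and insert two new closing arcs; the result is homotopic to $C^{T_1} \cup C^{T_2}$ up to a bounded number of essential crossings near the closing arcs. Convex union gives
\[
  f(C^{T_1+T_2}) \;\le\; f(C^{T_1}) + f(C^{T_2}) + E,
\]
where $E = O(1)$ is controlled by weighted quasi-smoothing applied to those crossings, with the bound independent of $T_1, T_2$ because the flow-box geometry and the number of atoms are fixed. Fekete's lemma yields convergence of $f(C^T)/T$.

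\textbf{Step 3 (agreement, independence, continuity).} When $\mu = wC$ is the current of a weighted closed curve, the orbit is already closed of period $\ell_\Sigma(C)$, so for $n \approx T/\ell_\Sigma(C)$ the approximating curve $C^T$ is homotopic to $wC^n$; by stability and homogeneity $f(wC^n) = nwf(C)$, and after the proper normalization we recover $\bar f(wC) = wf(C)$. Independence of the choice of cross-sections, partition of unity, and atomic approximation follows from comparing the resulting $C^T$ via smoothings and convex union, absorbing $O(1)$ per-orbit discrepancies into the $1/T$ normalization. For continuity, if $\mu_n \to \mu$ in $\GC^+(S)$ then the flux measures $\mu_n^{\tau_i}$ converge to $\mu^{\tau_i}$ for cross-sections with $\mu$-null boundary (using Lemma~\ref{lem:flux-continuous}), so the approximating curves $C^T(\mu_n)$ converge to $C^T(\mu)$ in $\R\Curves^+(S)$ for fixed $T$; the continuity of $f$ in the weights established in Proposition~\ref{prop:convexweights} then gives $f(C^T(\mu_n)) \to f(C^T(\mu))$, and a diagonal argument in~$T$ concludes.

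\textbf{Main obstacle.} The crux of the argument is uniformly bounding the error produced by the closing-up procedure. Each closing arc can create essential crossings both with the flowed trajectories and with other closing arcs, and every such crossing incurs a $wR$ penalty under weighted quasi-smoothing; the argument succeeds only because the total number of closing arcs is controlled by the total flux, a quantity independent of $T$, rather than scaling with $T$. A second delicate point is that the flux operator $\mu \mapsto \mu_\tau$ is not weak-$*$ continuous in general (Remark~\ref{rem:flux-not-cont}), so the cross-sections in Step~1 must be chosen carefully relative to $\mu$, and the continuity argument in Step~3 must pass through a regularity statement rather than blindly invoking weak-$*$ convergence.
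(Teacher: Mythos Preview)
Your outline shares the paper's architecture---cross-sections, closing up flow trajectories, Fekete---but two steps have real gaps. First, the continuity argument: pointwise convergence of continuous approximants does not give a continuous limit, and a diagonal argument cannot repair this. You need \emph{uniform} convergence of $f(C^T(\mu))/T$ on bounded sets, for which the one-sided subadditive bound of Step~2 is insufficient; a matching superadditive bound $2f(C^T) \le f(C^{2T}) + O(1)$ is required. The paper obtains this (Proposition~\ref{prop:superadditivity}) by running the join construction in reverse, and combines the two bounds into the uniform estimate $\abs{f_\tau(\mu) - f^k_\tau(\mu)/k} \le K(\mu)/k$ (Lemma~\ref{lem:uniformapprox}). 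Second, the claim that splitting and re-closing relates $C^{T_1+T_2}$ to $C^{T_1} \cup C^{T_2}$ by ``a bounded number of essential crossings'' is the technical heart of the argument, not a triviality: it requires the cross-section to be a wedge set over a closed geodesic~$\delta$ (Section~\ref{sec:cross-section}) so closing arcs lie along~$\delta$, and then the broken-path estimates of Section~\ref{sec:hyperbolic} (Lemmas~\ref{lem:brokenpathshort} and~\ref{lem:brokenpathslinked}) to guarantee linking at infinity. Arbitrary short closing arcs will not do.

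A structural difference worth noting: your atomic approximation of $\mu_{\tau_i}$ introduces a noncanonical choice and an extra limit, making both independence and continuity harder than necessary. The paper sidesteps this by integrating a continuous \emph{smeared} return trajectory $[M^k]$ (made continuous by a bump function~$\psi$ that feathers the boundary of the cross-section) directly against the flux $\psi\mu_\tau$. Continuity of each approximant $f^k_\tau$ then follows immediately from continuity of $[M^k]$ and Lemma~\ref{lem:flux-continuous}, with no atomic refinement needed.
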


\begin{proof}[Proof of Theorem~\ref{thm:weightconvex}]
  The proof proceeds by studying the geodesic flow on the unit tangent
  bundle to~$S$ (with respect to an arbitrary hyperbolic metric),
  picking a suitable global cross-section with boundary~$\tau$,
  and looking at a ``smeared first return map'' to~$\tau$.
  
The proof breaks up into the following steps.
\begin{itemize}
\item \textbf{Step 1:} In Section \ref{sec:extension}, we define the
  cross-sections we consider, though we delay proving existence. We
  introduce \emph{bump functions}
  and the associated
  \emph{smeared first return map} (Def.~\ref{def:smeared-return}); indeed,
  there are several varieties of return maps
  (Table~\ref{tab:return-notation}). The main advantage of smeared
  return maps is that they are continuous (Prop.~\ref{prop:smeared-return-cont}).
  We use these smeared returns to define our
  purported extension~$f_\tau$ to geodesic currents as a
  limit (Def.~\ref{def:extension}), assuming a suitable global
  cross-section~$\tau$ exists.
    \item \textbf{Step 2:} In Section~\ref{sec:cross-section}, we find
      a suitable ``good''~$\tau$, defined in
      Definition~\ref{def:goodcross}, by considering certain
      \emph{wedge subsets} (Def.~\ref{def:wedge})
      of $UT\Sigma$,
      based at a collection of geodesic sub-segments of a closed
      geodesic~$\delta$ (see Prop.~\ref{prop:transverse-approx}).
    \item \textbf{Step 3:} In Section \ref{sec:join}, Proposition
      \ref{prop:limitexists}, we show the limit defining $f_\tau$ exists, by using convex union, quasi-smoothing, stability and homogeneity assumptions on $f$, and applying a version of  Fekete's Lemma (Lemma~\ref{lem:fekete}).
    \item \textbf{Step 4:} In Section \ref{sec:continuity},
      Proposition \ref{prop:continuity}, we show that $f_{\tau}$ is a
      continuous function on the space of oriented geodesic currents.
      We do this by showing that the iterates $f^k_{\tau}$ used in the
      definition of $f_{\tau}$ are continuous for each $k$, using the
      continuity of
      the smeared homotopy return map from Step 1, and $f$
      is a continuous function of the weights of a fixed multi-curve
      (Proposition~\ref{cor:convex-extend}).
      Propositions~\ref{prop:subadditivity}
      and~\ref{prop:superadditivity} give enough control to
      ensure uniform convergence of the iterates $f^k_{\tau}$ to
      $f_{\tau}$~(see Lemma~\ref{lem:uniformapprox}), and thus
      continuity of $f_{\tau}$ follows.
    \item \textbf{Step 5:} In Section \ref{sec:extends}, Proposition
      \ref{prop:extends}  we show that $f_{\tau}$ extends $f$ for
      oriented curves, by analyzing the image of the smeared return map in that case, and mixing-and-matching the components of the multi-curve.
\end{itemize}

The definition of $f_{\tau}$ depends on many choices: the
hyperbolic metric on~$S$, a choice of global
cross-section~$\tau$, and in fact
nested cross-sections $\tau_0 \subset \tau \subset \tau'$ and a choice
of bump function $\psi$ on~$\tau$ (see
Definition~\ref{sec:smeared-return}).
Different choices yield, a priori, different extensions $f_{\tau}$.
But we have proved that $f_\tau$ is a continuous function on the
space of geodesic currents, and moreover it restricts to $f$ on
multi-curves. Since weighted multi-curves are a dense subset
of the space of geodesic
currents (see Section~\ref{subsec:topologycurrents}), the extension
doesn't depend on these choices.
This proves Theorem \ref{thm:weightconvex}.
\end{proof}

Now, we prove Theorem~\ref{thm:convex}.

\begin{proof}[Proof of Theorem~\ref{thm:convex}]
By Proposition~\ref{prop:weightextend}, $f$ extends uniquely to a weighted curve functional satisfying convex union, homogeneity, stability, and weighted quasi-smoothing with the same constant.
Then, by Theorem~\ref{thm:weightconvex}, the theorem follows.
\end{proof}


\section{Defining the extension}
\label{sec:extension}

We now turn to the proof of Theorem~\ref{thm:convex}. As mentioned
above, we will fix a hyperbolic structure~$\Sigma$ on $S$ (with no
relation to the curve functional~$f$) and use the geodesic
flow~$\phi_t$ on
the unit tangent bundle to define the extension to currents.

In this section we will deal with \emph{return maps} for this flow. After
some generalities about return maps for cross-sections with boundary,
we introduce a \emph{smeared return map} that is continuous. We also
define a \emph{homotopy return map} (and a smeared version of it) that
keeps track of homotopy classes of closures of trajectories.
Then we use the smeared homotopy return map to define the
extension~$f_\tau$ of~$f$. The several variants of the return map are
summarized in
Table~\ref{tab:return-notation}.
\begin{table}
  \begin{tabular}{@{}cr@{$\mskip\thickmuskip\to\mskip\thickmuskip$}ll@{}}
    \toprule
    Notation & \multicolumn{2}{c}{Type} & Meaning \\ \midrule
    $p$ & $\tau$ & $\tau$ & Ordinary first return \\
    $P$ & $\tau$ & $\R_1\tau$ & Smeared first return \\
    $q$ & $\tau$ & $\tau\times\pi_1(M)$ & Complete homotopy return \\
    $Q$ & $\tau$ & $\R_1(\tau\times\pi_1(M))$
                              & Smeared homotopy return \\
    $m$ & $\tau$ & $\pi_1(M)$ & Return curve (projection of $q$)\\
    $[m]$ & $\tau$ & $\Curves^+(S)$ & Conjugacy class of projection of
                                      $m$ to~$S$\\
    $M$ & $\tau$ & $\R_1\pi_1(M)$
                       & Smeared return curve (projection of $Q$)\\
    $[M]$ & $\tau$ & $\R_1\Curves^+(S)$ \\
    $R$ & $\GC^+(S)$ & $\R\Curves^+(S)$
                    & Integral of $[M]$\\
    \bottomrule\addlinespace
  \end{tabular}
  \caption{Various types of return maps. $M = UT\Sigma$ is the
    domain of the flow $\phi_t$.}
  \label{tab:return-notation}
\end{table}
\subsection{Smeared first return map}
\label{sec:smeared-return}
Let $Y$ be a smooth closed manifold with a smooth flow~$\phi_t$.
For us, a \emph{cross-section} is a compact smooth codimension~$1$
submanifold-with-boundary~$\tau$ that
is smoothly transverse to the foliation of $Y$ given by~$\phi_t$.
A \emph{global cross-section} is a cross-section $\tau$ so that its
interior $\tau^\circ$
intersects all forward and backward orbits: for
all $x \in Y$, there exists $s<0$ and $t > 0$ with $\phi_s(x),
\phi_t(x) \in \tau^\circ$.
Any flow on a compact manifold has global cross-section consisting of
a union of finitely many disks, although not all flows on compact
manifolds admit a global cross-section consisting of a single
connected component. (For instance, the Reeb foliation on
$\mathbb{T}^2$ has no connected global cross-section.) By the implicit
function theorem, for any cross-section~$\tau$ there is a larger cross
section~$\tau'$ with $\partial \tau
\subset \tau'{}^\circ$, which we also write  $\tau \Subset \tau'$.

Let $t_\tau \colon Y \to \mathbb{R}$ be the first return time defined by
$t_{\tau}(x)\coloneqq\min \{ t>0 \mid \phi_t(x) \in \tau \}$, and let
$p_{\tau}(x)\coloneqq\phi_{t_{\tau}(x)}(x)$. Then $p_{\tau}$
(restricted to~$\tau$) is the \emph{first return map} associated to
the cross-section~$\tau$. We will omit the subscript on $p_\tau$ if it
is clear from context. We also have the first return time to the
interior, denoted~$t_\tau^\circ$. (Recall we assume $t_\tau^\circ(x)$ is finite.)

If $\tau$ has no boundary, then $p$ is a
homeomorphism.
On the other hand, if the cross-section has a non-invariant boundary
(i.e., $p(\partial \tau) \ne \partial \tau$) then 
$t_\tau$ and $p$ will have discontinuities. This necessarily happens
for the geodesic flow on the unit tangent bundle of a hyperbolic
surface. See \cite[Sec.~1]{DC16:BirkhoffSections} for a
justification and examples of global cross-sections with boundary for
this flow; we construct our own cross-section in
Section~\ref{sec:cross-section}.
However, by the continuity of~$\phi_t$ with respect to initial
parameters, we have the following ``local continuity''
claim.

\begin{lemma}
  Let $\tau_1, \tau_2$ be cross-sections (not necessarily global)
  of~$\phi$. Let $x_1 \in \tau_1^\circ$, and suppose we are given
  $t> 0$ so $x_2=\phi_t(x_1) \in \tau_2^\circ$. Then there exists
  a neighborhood $U_1$ of $x_1$ in~$\tau_1$, a neighborhood $U_2$ of $x_2$
  in~$\tau_2$, and a continuous function $t^1_2\colon U_1 \to \R_{>0}$ so
  that for $x \in U_1$, $\phi_{t^1_2}(x) \in U_2$. Furthermore,
  $\phi_{t^1_2}\colon U_1 \to U_2$ is a diffeomorphism. If $t$ is the
  first return to~$\tau_2$, then we can choose $U_1$ and~$U_2$ so that
  $t^1_2(x) = t_{\tau_2}(x)$ is also the first return time.
\label{lem:localhomeo}
\end{lemma}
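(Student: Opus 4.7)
The plan is to apply the implicit function theorem together with a local flow-box argument; the first-return refinement then requires an additional compactness step. Since $\tau_2$ is a codimension-$1$ smooth submanifold transverse to the flow at $x_2 \in \tau_2^\circ$, there is a smooth defining function $h$ on a neighborhood $V$ of $x_2$ in~$Y$ with $\tau_2 \cap V = h^{-1}(0)$ and $\partial_s h(\phi_s(x_2))\bigl|_{s=0} \neq 0$. Apply the implicit function theorem to the smooth function $G(x, s) \coloneqq h(\phi_s(x))$ near $(x_1, t)$: since $G(x_1, t) = 0$ and $\partial_s G(x_1, t) \neq 0$, one obtains a smooth function $t^1_2 \colon U_1 \to \R_{>0}$ on a neighborhood $U_1$ of $x_1$ in~$\tau_1$, with $t^1_2(x_1) = t$ and $\phi_{t^1_2(x)}(x) \in \tau_2 \cap V$ for all $x \in U_1$.

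Set $U_2 \coloneqq \phi_{t^1_2}(U_1) \subset \tau_2$. The map $\phi_{t^1_2} \colon U_1 \to U_2$ is smooth, being the composition of $x \mapsto (x, t^1_2(x))$ with the flow. Its smooth inverse is obtained by the symmetric construction: choose a smooth defining function for $\tau_1$ near $x_1$, and apply the implicit function theorem to solve for the (negative) time needed to flow from a point of $U_2$ back to $\tau_1$. After shrinking $U_1$ and $U_2$ if necessary, the two maps are mutual inverses, so $\phi_{t^1_2}$ is a diffeomorphism onto its image.

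For the first-return refinement, assume $t = t_{\tau_2}(x_1)$, so the orbit arc $\{\phi_s(x_1) : 0 < s < t\}$ is disjoint from~$\tau_2$. Split $[0, t]$ into three subintervals and handle each separately. On $[\epsilon, t-\epsilon]$ for small $\epsilon > 0$, the orbit of $x_1$ is a compact set at positive distance from the compact set~$\tau_2$, so continuity of the flow in initial conditions keeps any sufficiently nearby orbit disjoint from $\tau_2$ throughout this subinterval. Near $s = t$, the local flow box~$V$ forces the orbit of any $x \in U_1$ to meet $\tau_2$ in the window $(t-\epsilon, t+\epsilon)$ precisely at $s = t^1_2(x)$. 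Near $s = 0$, if $x_1 \notin \tau_2$ continuity again suffices, while if $x_1 \in \tau_2$ a separate flow-box argument at $x_1$ provides a uniform lower bound on the first return time of nearby orbits. Shrinking $U_1$ accordingly yields $t^1_2(x) = t_{\tau_2}(x)$ on the smaller neighborhood. The main delicacy, and the only step not immediately handled by the implicit function theorem, is the endpoint-near-zero case when $x_1 \in \tau_2$, which is exactly where the second flow-box around $x_1$ is invoked.
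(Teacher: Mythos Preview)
Your proof is correct and essentially equivalent to the paper's, though the mechanics differ slightly. The paper builds a flow-box around $x_1$, pushes it forward by $\phi_t$, and then defines the map \emph{backward} from $\tau_2$ to $\tau_1$ via the flow-box projection, invoking the inverse function theorem on that backward map; you instead use a defining function for $\tau_2$ and apply the implicit function theorem directly to solve for the hitting time, then argue diffeomorphism by symmetry. Both are standard flow-box/IFT arguments and yield the same conclusion with the same amount of work. For the first-return refinement, the paper gives a one-line compactness argument (the orbit segment and $\tau_2$ are disjoint compact sets, so an open tube around the orbit misses $\tau_2$); your three-interval decomposition is more explicit and in particular flags the endpoint case $x_1 \in \tau_2$, which the paper does not single out. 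That extra care is harmless here, since in the paper's applications the relevant cross-sections are nested ($\tau \Subset \tau'$ on the same transversal), so the near-zero issue does not arise.
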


This is presumably standard (Basener gives this as ``a useful
technical lemma, the proof of which is trivial''
\cite[Lem.~1]{Basener02:Global}), but we give a proof for
completeness.

\begin{proof}
  Pick an initial neighborhood $U_1'$ of $x_1$ in~$\tau$, and let
  $\epsilon$ be small enough so that
  $V_1 \coloneqq \phi_{(-\epsilon,\epsilon)}(U_1')$ is a 3-dimensional
  flow-box
  neighborhood of $x_1$ in~$Y$. Then the restriction of~$\phi_t$
  to~$V_1$ is a
  homeomorphism to a neighborhood $V_2$ of $x_2$ in~$Y$.
  Set $U_2'\coloneqq V_2 \cap \tau_2$. Now consider
  the composition
  $\psi \coloneqq \pi_1 \circ \phi_{-t} \circ \iota_2 \colon U_2' \to U_1'$,
  where $\iota_i \colon U_i' \hookrightarrow V_i$ is the inclusion and
  $\pi_1 \colon V_1 \to U_1$ is the flow projection:
  \[
    \begin{tikzpicture}[x=1.75cm,y=1.25cm]
      \node(U1) at (0,0) {$U_1'$};
      \node(V1) at (1,0) {$V_1$};
      \node(V2) at (2,0) {$V_2$};
      \node(U2) at (3,0) {$U_2'$};
      \node(tau1) at (0,-1) {$\tau_1$};
      \node (tau2) at (3,-1) {$\tau_2$.};
      \draw[right hook->,bend right=15] (U1) to node[below,cdlabel]{\iota_1} (V1);
      \draw[->>,bend right=15] (V1) to node[above,cdlabel]{\pi_1} (U1);
      \draw[->] (V1) to node[above,cdlabel]{\phi_t} node[below,cdlabel]{\cong} (V2);
      \draw[left hook->] (U2) to node[below,cdlabel]{\iota_2} (V2);
      \draw[right hook->] (U1) to (tau1);
      \draw[right hook->] (U2) to (tau2);
      \draw[->] (U2) to[out=140,in=0] (1.5,0.7) node[above,cdlabel]{\psi} to[out=180,in=40] (U1);
    \end{tikzpicture}
  \]
  Then $\psi$ is a map from $U_2'$ to $U_1'$, taking $x_2$ to~$x_1$.
  By transversality of $\tau_1$ and~$\tau_2$, the differential
  of~$\psi$ at~$x_2$ is invertible. Thus by the inverse function
  theorem, there is a neighborhood $U_2$ of~$x_2$ and $U_1$ of~$x_1$
  so that the restriction of $\psi$ is a diffeomorphism from $U_2$
  to~$U_1$. For $x \in U_1$, set
  $t^1_2(x) \coloneqq t + \pi_t(\phi_{-t}(\psi^{-1}(x)))$, where $\pi_t
  \colon V_1 \to (-\epsilon,\epsilon)$ is the projection onto the time
  coordinate of the flow box. We have
  $\phi_{t^1_2}(x) = \psi^{-1}(x) \in U_1$, as desired for the first claim.
  
  For the second claim, by hypothesis, the compact sets
  $\phi_{[0,t]}(x_1)$ and $\tau_2$ do not intersect, so
  $\phi_{[0,t]}(x_1)$ has an open neighborhood that does not intersect
  $\tau_2$. It follows that we can shrink $U_1$ and $U_2$ so that
  $\psi^{-1}$ restricted to~$U_1$ is the first return map to~$\tau_2$.
\end{proof}

\begin{lemma}\label{lem:contint}
Let $x \in \tau$. If $p(x) \in \tau^\circ$, then
$p_\tau$ and $t_\tau$ are continuous in a neighborhood of $x$ in~$\tau$.
\end{lemma}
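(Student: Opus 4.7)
The plan is to deduce this lemma directly from Lemma~\ref{lem:localhomeo}. Set $t \coloneqq t_\tau(x)$, so that $p(x) = \phi_t(x) \in \tau^\circ$ by hypothesis. In the easy case where $x \in \tau^\circ$, I would apply Lemma~\ref{lem:localhomeo} with $\tau_1 = \tau_2 = \tau$, $x_1 = x$, and $x_2 = p(x)$; both interiority hypotheses hold, and the ``furthermore'' clause applies since $t$ is by definition the first return time to $\tau$. This yields a neighborhood $U_1$ of $x$ in $\tau$ on which $t_\tau$ coincides with the continuous function $t^1_2$ of Lemma~\ref{lem:localhomeo}, and on which $p_\tau = \phi_{t_\tau(\cdot)}(\cdot)$ coincides with the diffeomorphism $\phi_{t^1_2}\colon U_1 \to U_2 \subset \tau^\circ$.

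The only subtlety is the case $x \in \partial\tau$, where we cannot apply Lemma~\ref{lem:localhomeo} directly with $\tau_1 = \tau$. My plan is to first enlarge $\tau$ slightly to a cross-section $\tau'$ with $\tau \Subset \tau'$ and in particular $x \in (\tau')^\circ$; this is possible by the implicit function theorem, as noted in the paragraph preceding Lemma~\ref{lem:localhomeo}. Then apply Lemma~\ref{lem:localhomeo} with $\tau_1 = \tau'$ and $\tau_2 = \tau$, obtaining a neighborhood $U_1'$ of $x$ in $\tau'$ together with a continuous time function $t^1_2$ satisfying $\phi_{t^1_2(x')}(x') \in \tau^\circ$ for $x' \in U_1'$. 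Intersecting $U_1 \coloneqq U_1' \cap \tau$ gives a neighborhood of $x$ in $\tau$ on which continuity of the two functions will be inherited from continuity of $t^1_2$ and of the flow.

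The key point to verify, and really the only nontrivial ingredient, is that for $x' \in U_1$ the time $t^1_2(x')$ is genuinely the first return time \emph{to $\tau$}, not merely a return time. This follows from the ``furthermore'' clause of Lemma~\ref{lem:localhomeo} (which gives that $t^1_2$ is the first return time to $\tau_2 = \tau$ from points of $U_1 \subset \tau'$), together with the observation that any intermediate passages of the orbit of $x'$ through $\tau' \setminus \tau$ do not count as returns to $\tau$ and so cause no conflict. Having identified $t_\tau = t^1_2$ on $U_1$, continuity of $p_\tau$ on $U_1$ is immediate from continuity of the flow composed with a continuous time function. I expect no serious obstacle beyond this boundary bookkeeping; the content of the lemma is contained entirely in Lemma~\ref{lem:localhomeo}.
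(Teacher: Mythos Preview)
Your proposal is correct and takes essentially the same approach as the paper: enlarge $\tau$ to $\tau' \Supset \tau$, apply Lemma~\ref{lem:localhomeo} with $\tau_1 = \tau'$ and $\tau_2 = \tau$, then restrict the resulting neighborhood to $U \cap \tau$. The only difference is cosmetic: the paper does not split into the cases $x \in \tau^\circ$ versus $x \in \partial\tau$, instead passing to $\tau'$ uniformly, and leaves the verification that $t^1_2$ equals the first return time to $\tau$ implicit in its invocation of the ``furthermore'' clause.
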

\begin{proof}
  Let $\tau'\Supset \tau$ be a slightly enlarged global cross-section
  (to cover cases when $x \in \partial\tau$).
  By Lemma~\ref{lem:localhomeo}, there exists a neighborhood $U$ of
  $x$ in $\tau'$  such
  that $p_{\tau}(U) \subset \tau^\circ$. By
  taking $V\coloneqq U \cap \tau$, we get the desired neighborhood
  in~$\tau$.
\end{proof}

\begin{lemma}\label{lem:return-semi-cont}
  Let $\tau$ be a global cross-section. Then, on~$\tau$,
  $t_\tau$ is lower
  semi-continuous and $t_\tau^\circ$ is
  upper semi-continuous. There are thus positive global upper and lower bounds
  on~$t_\tau$.
\end{lemma}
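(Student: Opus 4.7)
The plan is to combine a local flow-box argument (which controls the behavior of $t_\tau$ near any point of $\tau$) with the already-established Lemma~\ref{lem:localhomeo}, and then to extract the global bounds from compactness of $\tau$. By transversality of $\tau$ to $\phi_t$, for each $x \in \tau$ there is a flow-box neighborhood $B \cong V \times (-\epsilon, \epsilon)$ of $x$ in $Y$ with $V \subset \tau$ a neighborhood of $x$ and $B \cap \tau = V \times \{0\}$. After shrinking $V$ so that the orbit of every $y \in V$ for time in $[0, \epsilon)$ stays inside $B$, I obtain the uniform local bound $t_\tau(y) \geq \epsilon$ for $y \in V$. This flow-box estimate is what prevents the return time from degenerating to zero, and will be used both for lower semi-continuity and for the global lower bound.

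For lower semi-continuity of $t_\tau$, I will choose $y_n \to x$ in $\tau$ realizing $T' \coloneqq \liminf_{y \to x} t_\tau(y)$. The flow-box estimate gives $T' \geq \epsilon > 0$, so passing to a subsequence we may assume $t_\tau(y_n) \to T' \in (0, \infty]$. Joint continuity of $\phi_t$ in $(x,t)$ yields $\phi_{t_\tau(y_n)}(y_n) \to \phi_{T'}(x)$, and since $\tau$ is closed this limit lies in $\tau$; hence $t_\tau(x) \leq T'$, which is the desired inequality $t_\tau(x) \leq \liminf_{y \to x} t_\tau(y)$.

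For upper semi-continuity of $t_\tau^\circ$ at $x \in \tau$, I will invoke Lemma~\ref{lem:localhomeo}. Since that lemma requires its source point to lie in the interior of the source cross-section and $x$ may lie on $\partial \tau$, I first enlarge $\tau$ to a cross-section $\tau'$ with $\tau \Subset \tau'$. Applying the lemma with $\tau_1 = \tau'$, $\tau_2 = \tau$, $x_1 = x$, and $x_2 = \phi_{t_\tau^\circ(x)}(x) \in \tau^\circ$ produces a continuous positive function $t^1_2$ on a neighborhood $U$ of $x$ with $t^1_2(x) = t_\tau^\circ(x)$ and $\phi_{t^1_2(y)}(y) \in \tau^\circ$ for $y \in U$. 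Thus for $y \in U \cap \tau$ one has $t_\tau^\circ(y) \leq t^1_2(y)$, and continuity of $t^1_2$ gives $\limsup_{y \to x} t_\tau^\circ(y) \leq t_\tau^\circ(x)$.

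The global bounds will then follow from compactness of $\tau$ together with two general facts: a lower semi-continuous function on a compact set attains its infimum, and an upper semi-continuous, finite-valued function on a compact set is bounded above (otherwise a convergent subsequence produces a limit with infinite value, contradicting upper semi-continuity). The flow-box estimate gives $t_\tau > 0$ everywhere on $\tau$, so the infimum of $t_\tau$ is a positive number $t_{\min}$; the global cross-section hypothesis gives $t_\tau^\circ < \infty$ everywhere, so the supremum of $t_\tau^\circ$ is a finite number $t_{\max}$; and $\tau^\circ \subset \tau$ gives $t_\tau \leq t_\tau^\circ$, so $0 < t_{\min} \leq t_\tau \leq t_\tau^\circ \leq t_{\max}$. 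The only real subtlety I foresee is the boundary issue in the upper semi-continuity step, which is handled by passing to the enlarged cross-section $\tau'$; no other part of the argument should present a serious obstacle.
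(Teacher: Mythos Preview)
Your argument is correct. For upper semi-continuity of $t_\tau^\circ$ and for the global bounds you proceed essentially as the paper does (the paper is a bit terser and does not spell out the enlargement to $\tau'$ in this proof, but it is the same device used in Lemma~\ref{lem:contint}). The one place you genuinely diverge is the lower semi-continuity of $t_\tau$: the paper enlarges $\tau$ to $\tau' \Supset \tau$, notes $t_\tau \ge t_{\tau'}$, and invokes continuity of $t_{\tau'}$ at $x$ via Lemma~\ref{lem:contint}; you instead argue directly by taking a sequence $y_n \to x$ realizing the $\liminf$, using the flow-box to bound $T' = \liminf t_\tau(y_n)$ away from zero, and then using joint continuity of the flow together with closedness of $\tau$ to conclude $\phi_{T'}(x) \in \tau$, hence $t_\tau(x) \le T'$. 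Your route is slightly more self-contained (it does not need the auxiliary cross-section or Lemma~\ref{lem:contint} for this half), while the paper's route keeps everything phrased in terms of the return-map machinery already set up. One small point: when you write ``$\phi_{t_\tau(y_n)}(y_n) \to \phi_{T'}(x)$'' you should explicitly dispose of the case $T' = \infty$ first (where the conclusion is trivial), since $\phi_{T'}$ is not defined there.
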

\begin{proof}
  Fix $x \in \tau$. If $p_\tau(x) \in \tau^\circ$, then $t_\tau$
  is continuous at~$x$ by Lemma~\ref{lem:contint}. Otherwise, find a
  cross-section~$\tau'$ with $\tau \Subset \tau'$. Then $t_{\tau'}$
  is continuous at~$x$, and since $t_\tau(y) \ge t_{\tau'}(y)$ we have
  proved that $t_\tau$ is lower semi-continuous.

  On the other hand, for any $x_1 \in \tau$, we can set $x_2=p_\tau^\circ(x_1)$ and
  find a neighborhood $U_1$ of $x_1$ in $\tau$
  with a function $t^1_2$
  as in Lemma~\ref{lem:localhomeo}. But then $t^\circ_\tau(x) \le
  t^1_2(x)$ for $x \in U_1$. (Note that $x_2$ is the first return point
  to~$\tau^\circ$, not to~$\tau$, so we cannot conclude that
  $t_\tau^\circ$ is continuous.)
\end{proof}

In the $C^1$ setting, Basener showed that a global cross-section can
be perturbed
slightly so that the first return map is piecewise continuous with a
cellular structure~\cite{04TransverseDisk}. However, we want a
\emph{continuous} version of the first return map, so we proceed in a
different direction.

\begin{definition}\label{def:smeared-return}
  Fix a nested pair of global cross-sections $\tau_0 \Subset \tau$.
  A \emph{bump function}~$\psi$ for this pair is a continuous function
  $\psi \colon \tau \to [0,1]$ so that $\psi$ is $1$ on $\tau_0$ and
  $0$ on an open neighborhood of $\partial \tau$. Set
  $\bar{\psi}(x) = 1-\psi(x)$. Let $p\colon \tau \to \tau$
  be the first return map with respect to~$\tau$. Then the
  \emph{smeared first return map} of $\psi$ is a function
  $P_\psi \colon \tau \to \R_1\tau$ defined by
\[
  P_{\psi}(x)\coloneqq
  \begin{cases}
    p(x) & p(x) \in \tau_0\\
    \psi(p(x))\cdot p(x) + \bar\psi(p(x))\cdot P_{\psi}(p(x))
      & p(x) \in \tau - \tau_0.
  \end{cases}
\]
Here, $\R_1\tau \subset \Meas_1(\tau)$ is the subspace of
  measures with finite support and total mass~$1$; see
  Convention~\ref{conv:finitemeasures}.
\end{definition}

The convention here is that a \emph{smeared map} takes values in
finite linear combinations of the target space (or maybe in measures).
We use capital letters for smeared maps.

Intuitively, we iterate $x$ forward, stopping at each iterate
with probability given by~$\psi$. More visually, imagine the original
cross-section as a disk. As we look along the flow lines, we see an
overlapping set of disks, with hard edges between them. To find the
smeared first return map, we ``feather'' the edges by giving the disks
partially-transparent boundaries made out of cellophane. If we
continuously increase the transparency towards the boundary, the
resulting image will have soft edges. See
Figure~\ref{fig:smearedfirstreturn}.

We will usually omit~$\psi$ from the notation and denote
the smeared first return map by~$P$.

Since $\tau_0$ is a global cross-section, in the definition of
$P$ we eventually take the first choice, and so $P(x)$ is a
finite sum of elements of~$\tau_1$ as claimed. A little more is true.
\begin{lemma}\label{lem:return-bound}
  If $\tau_0 \Subset \tau_1$ is a nested pair of global
  cross-sections, there is an $N>0$ so that for any
  $x \in \tau_1$, there is an integer $k < N$ so that
  $p_1^k(x) \in \tau_0$, where $p_1$ denotes the first return map for $\tau_1$
\end{lemma}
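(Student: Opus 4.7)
The plan is to combine two compactness estimates: an absolute upper bound on the time required to first reach $\tau_0$ from any point of $Y$, and a positive lower bound on the first return time to $\tau_1$. Dividing the former by the latter will yield the desired $N$.

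First I would establish a uniform bound $T$ on hitting times to $\tau_0$. For each $x \in Y$, since $\tau_0$ is a global cross-section, there is some $t_x > 0$ with $\phi_{t_x}(x) \in \tau_0^\circ$. Continuity of the flow provides an open neighborhood $U_x \subset Y$ of $x$ with $\phi_{t_x}(U_x) \subset \tau_0^\circ$. Compactness of $Y$ extracts a finite subcover $U_{x_1}, \dots, U_{x_n}$, and then $T := \max_i t_{x_i}$ has the property that every $y \in Y$ satisfies $\phi_t(y) \in \tau_0$ for some $t \in (0, T]$.

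Second, Lemma~\ref{lem:return-semi-cont} (applied to $\tau_1$) gives a positive constant $c > 0$ such that $t_{\tau_1}(x) \ge c$ for all $x \in \tau_1$. Consequently, the time at which the $k$-th iterate $p_1^k(x)$ occurs is bounded below by $kc$.

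Combining these, for any $x \in \tau_1$ consider the smallest $k \ge 0$ with $p_1^k(x) \in \tau_0$ (taking $k=0$ if $x \in \tau_0$ already). The point $\phi_{T}(x) \in \tau_0$ for some hitting time $\le T$, and this hitting time coincides with one of the partial sums $\sum_{j=0}^{k-1} t_{\tau_1}(p_1^j(x)) \ge kc$. Therefore $kc \le T$, so $k \le T/c$, and we may take $N := \lceil T/c \rceil + 1$. There is no real obstacle here; the only subtlety worth flagging in the write-up is to distinguish $\tau_0$ from $\tau_0^\circ$ so that the hitting time argument yields membership in $\tau_0$ (which suffices), and to handle the boundary case $x \in \tau_0$ where $k=0$.
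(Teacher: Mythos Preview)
Your overall strategy is the same as the paper's: bound the time to reach $\tau_0$ from above, bound the return time to $\tau_1$ from below, and divide. The paper compresses this into one sentence, citing Lemma~\ref{lem:return-semi-cont} for the upper bound on the hitting time of~$\tau_0$ and then observing that a trajectory of bounded length meets the compact transversal~$\tau_1$ boundedly many times.

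There is, however, a technical slip in your first paragraph. You write that continuity of the flow gives an open $U_x \subset Y$ with $\phi_{t_x}(U_x) \subset \tau_0^\circ$. This is false as stated: $\tau_0^\circ$ is a codimension-$1$ submanifold of~$Y$, not an open subset, so the image of a nonempty open set under $\phi_{t_x}$ cannot sit inside it. What you actually want is a neighborhood $U_x$ on which there is a \emph{continuous} function $s \colon U_x \to \R_{>0}$ close to $t_x$ with $\phi_{s(y)}(y) \in \tau_0^\circ$; this is the content of the flow-box argument in Lemma~\ref{lem:localhomeo}. With that correction (and a bound on $s$ over a slightly smaller neighborhood), your compactness extraction goes through. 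Alternatively, and more directly, you can simply invoke the upper semi-continuity of $t_{\tau_0}^\circ$ from Lemma~\ref{lem:return-semi-cont}, which immediately yields the uniform upper bound~$T$ on the compact set~$\tau_1$; this is what the paper does.
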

\begin{proof}
  By Lemma~\ref{lem:return-semi-cont}, there is an upper bound on the
  return time from~$\tau_1$ to~$\tau_0$, and thus an upper bound on
  the number of intersections of the return path to~$\tau_0$ with the
  compact set~$\tau_1$.
\end{proof}

As a consequence of Lemma~\ref{lem:return-bound}, we can rewrite~$P$
directly. Let $N$ be the bound from Lemma~\ref{lem:return-bound}. Then
\begin{equation}
  \begin{aligned}
  \label{eq:P-expr}
  P(x) &= \psi(p(x)) \cdot p(x) + \bar\psi(p(x))\psi(p^2(x)) \cdot p^2(x) + \cdots \\
    &= \sum_{k=1}^N \bar\psi(p(x))\cdots\bar\psi(p^{k-1}(x)) \psi(p^k(x)) \cdot p^k(x).
  \end{aligned}
  \end{equation}
If we extend the upper limit of the sum beyond~$N$, the additional
terms will be~$0$.

\begin{proposition}\label{prop:smeared-return-cont}
  For any nested global cross-sections $\tau_0 \Subset \tau$ and bump
  function~$\psi$, the smeared first return map~$P$
  is continuous. 
\end{proposition}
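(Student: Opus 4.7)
My plan is to establish continuity of $P\colon \tau \to \R_1\tau$ in the weak$^*$ topology by showing, for every continuous $g\colon \tau \to \R$, that $F_g(x) \coloneqq \int g\,dP(x)$ is continuous on~$\tau$. By Lemma~\ref{lem:return-bound} and formula~\eqref{eq:P-expr},
\[
F_g(x) = \sum_{k=1}^N c_k(x)\,g(p^k(x)), \qquad c_k(x) = \prod_{j=1}^{k-1}\bar\psi(p^j(x))\cdot\psi(p^k(x)),
\]
is a finite sum with a uniformly bounded number of terms, $N$ independent of~$x$. The difficulty is that the summands are not individually continuous: $p^k$ is discontinuous precisely at points whose forward orbit visits $\partial\tau$ before the $k$-th return.

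Fix $x_0 \in \tau$ and a sequence $x_n \to x_0$. By compactness of~$\tau$ and the uniform bound on return times coming from Lemma~\ref{lem:return-semi-cont}, I will pass to a subsequence so that for each $1 \le k \le N$ both $y^n_k \coloneqq p^k(x_n) \to y^*_k \in \tau$ and the elapsed flow time from $x_n$ to $p^k(x_n)$ converges to some $t_k^* \ge 0$. Continuity of the flow then gives $y^*_k = \phi_{t_k^*}(x_0)$, and since limits of points of~$\tau$ lie in~$\tau$, the point $y^*_k$ is a return of $x_0$'s orbit: $y^*_k = p^{\sigma(k)}(x_0) \eqqcolon y^0_{\sigma(k)}$ for some weakly increasing $\sigma\colon \{1,\dots,N\} \to \Z_{\ge 1}$.

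The technical heart is the following matching lemma: every index $\ell \le \sigma(N)$ that is either skipped by~$\sigma$ or appears as a repeated value of~$\sigma$ must satisfy $y^0_\ell \in \partial\tau$. Indeed, if $y^0_\ell \in \tau^\circ$, then continuity of the flow combined with transversality of~$\tau$ forces the orbit of~$x_n$ to have exactly one $\tau$-crossing close to~$y^0_\ell$ for large~$n$; this crossing equals $y^n_k$ for a unique~$k$, so $\sigma(k) = \ell$ with $\ell$ hit exactly once. Once the matching lemma is in hand, each skipped or repeated index~$\ell$ is on $\partial\tau$ and so carries the factor $\bar\psi(y^0_\ell) = 1$; the coefficients then telescope:
\[
c_k(x_n) \longrightarrow \Bigl(\prod_{j=1}^{k-1}\bar\psi(y^0_{\sigma(j)})\Bigr)\psi(y^0_{\sigma(k)}) = \prod_{j=1}^{\sigma(k)-1}\bar\psi(y^0_j)\cdot\psi(y^0_{\sigma(k)}) = c^0_{\sigma(k)}.
\]

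Finally, to identify $\sum_k c^0_{\sigma(k)}g(y^0_{\sigma(k)})$ with $F_g(x_0)$, I note that any $\ell$ with $c^0_\ell > 0$ has $\psi(y^0_\ell) > 0$ and so $y^0_\ell \in \tau^\circ$, placing $\ell$ in the image of~$\sigma$ by the matching lemma, while all skipped, repeated, and post-stopping indices contribute zero on both sides. I expect the main obstacle to be a clean formulation and proof of the matching lemma, especially ruling out pathologies when $x_0$'s orbit grazes $\partial\tau$ tangentially so that perturbed orbits of~$x_n$ acquire ``extra'' $\tau$-crossings without a direct counterpart for~$x_0$; such extras are confined to a neighborhood of~$\partial\tau$ where $\psi$ vanishes, so their weights $c_k(x_n)$ tend to zero and the limit is unaffected.
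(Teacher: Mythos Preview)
Your approach is correct and genuinely different from the paper's. The paper sidesteps your matching-lemma bookkeeping by a trick: since $\psi$ vanishes on a \emph{neighborhood} of $\partial\tau$, one may replace $\tau$ by a slightly smaller cross-section $\tau'' \supset \supp(\psi)$ whose boundary avoids the finitely many points $p(x_0),\dots,p^{n}(x_0)$ (where $n$ is the first index with $p^n(x_0)\in\tau_0^\circ$). After this replacement every iterate lands in the interior, so the ordinary return map is continuous at~$x_0$ by Lemma~\ref{lem:contint}, and continuity of~$P$ follows by a short induction on~$n$ via the recursion $P(x)=\psi(p(x))\cdot p(x)+\bar\psi(p(x))\cdot P(p(x))$.

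Your direct limit argument also works, and is in fact simpler than you fear. The lower bound on return times from Lemma~\ref{lem:return-semi-cont} gives $t^*_{k+1}-t^*_k \ge \inf t_\tau > 0$, so $\sigma$ is \emph{strictly} increasing and never repeats; and ``tangential grazing'' cannot occur because the cross-section is assumed transverse to the flow, so no genuinely extra crossings arise. With those two observations your matching lemma becomes an immediate consequence of Lemma~\ref{lem:localhomeo}: if $y^0_\ell\in\tau^\circ$ then nearby orbits cross exactly once nearby, and since $\sigma(k)\ge k$ this crossing is among the first $N$ for~$x_n$. The telescoping and final identification then go through exactly as you describe (using $c^0_\ell\ne 0\Rightarrow \ell<N\le\sigma(N)$ to stay within the range where the matching lemma applies). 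The paper's proof is slicker; yours makes the cancellation mechanism more explicit.
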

\begin{proof}
  We wish to show that $P$ is continuous at $x \in \tau$. There is
  some first $n > 0$ such that $p^n(x) \in \tau_0^\circ$. If
  there is any $i$ between $1$ and~$n$ so that $p^i(x) \in \partial \tau$, we first find
  a smaller cross-section~$\tau'$ with
  $\tau_0 \Subset \tau' \Subset \tau$ without this problem, as
  follows. Recall that we assumed that $\psi$ vanishes in a
  neighborhood of $\partial\tau$. Since there are finitely many points
  $p^i(x)$ in the open set $\tau \setminus \supp(\psi)$, we can pick
  $\tau'$ containing $\supp(\psi)$ so that its boundary avoids those
  finitely many~$p^i(x)$. Since $\psi$ vanishes on
  $\tau \setminus \tau'$, the smeared first return map defined with
  respect to~$\tau'$ agrees with that defined with respect to~$\tau$.
  By replacing $\tau$ by~$\tau'$, we may thus assume that
  $p^i(x) \notin \partial \tau$. Similarly shrink $\tau_0$ so that
  $p^i(x) \notin \partial \tau_0$ for $0 < i < n$.

  We proceed by induction on~$n$. If $n=1$, that is, if
  $p(x) \in \tau_0^\circ$, then $p$ is continuous at~$x$ by
  Lemma~\ref{lem:contint}, and $P$ is continuous since the map taking
  a point $x$ to the delta function $\delta_x$ is continuous.
  Otherwise, note that $p_1$ (the return map for $\tau$) is continuous
  at~$x$ (again by Lemma~\ref{lem:contint}, since
  $p_1(x) \in \tau^\circ$). By induction $P$ is continuous at
  $p_1(x)$, and therefore
  \[
    P(x) = \psi(p_1(x)) \cdot p_1(x) + \bar\psi(p_1(x)) \cdot P(p_1(x))
  \]
  is continuous at~$x$.
\end{proof} 

\begin{figure}
\centering{
\resizebox{80mm}{!}{\Huge{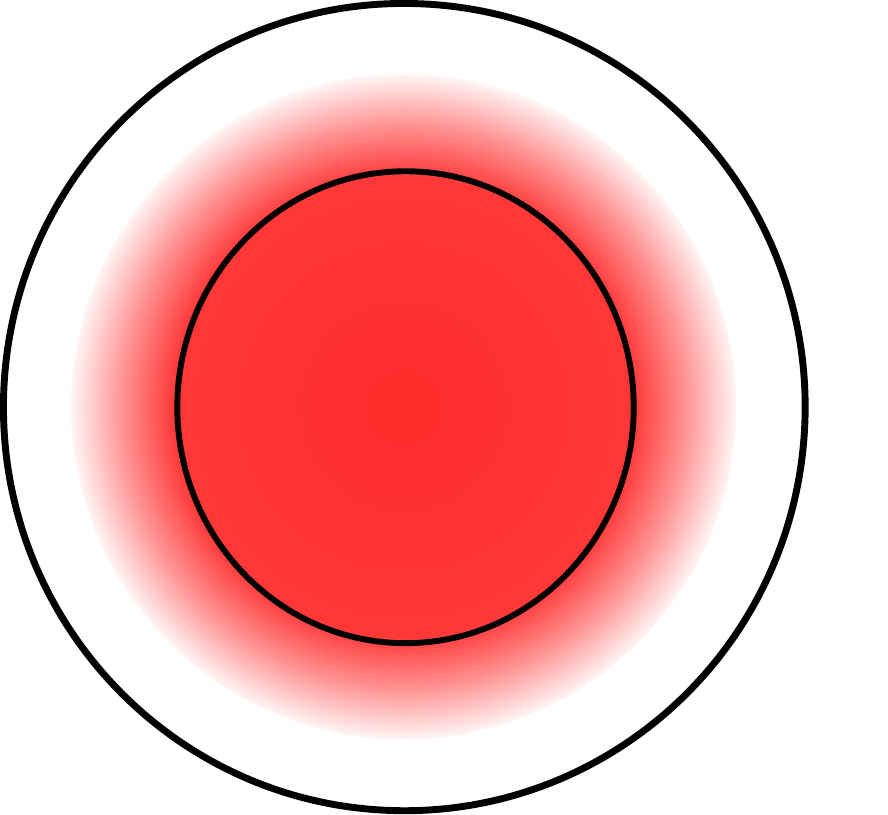}}
\caption{Smeared first return map, illustrating the proof of
  continuity in the case $n=3$ (before shrinking $\tau$). The bump
  function $\psi$ is indicated by the density of red.}
\label{fig:smearedfirstreturn}
}
\end{figure}

To define iterates of~$P$, we first extend $P$ and other functions to
act on measures.

\begin{definition}\label{def:smeared-iterate}
  When $X, Y$ are measure spaces and $f \colon X \to Y$ is a
  measurable function, by convention we extend $f$ to a function
  $\Meas_1(X) \to \Meas_1(Y)$
  acting on measures, denoted~$f_*$ (or simply~$f$),
  by setting
  \[
    f_*(\mu)(S) \coloneqq \mu(f^{-1}(S))
  \]
  for $\mu \in \Meas_1(X)$ and $S \subset Y$ a measurable set. If $f$ is continuous, then this
  extension is continuous with respect to the weak$^*$ topology on
  $\Meas_1(X)$ and $\Meas_1(Y)$.
  (This uses Proposition~\ref{prop:narrowwide} and the fact that if $g
  \colon Y \to \R$ is bounded, then $g \circ f \colon X \to \R$ is
  also bounded.)
  If $f$ is invertible and $\psi\colon X \to \R_{\ge 0}$ is a scaling
  factor, then
  \begin{equation}
    \label{eq:lift-scale}
    f(\psi \cdot \mu) = (\psi \circ f^{-1}) \cdot f(\mu).
  \end{equation}
  In practice, we will often be
  interested in the subspace of finitely supported measures, in which
  case the extension $f\colon \R_1 X \to \R_1 Y$ is given by
  \[
    f\Bigl(\sum a_i x_i\Bigr) \coloneqq \sum a_i f(x_i).
  \]
  For $F: X \to \Meas_1(Y)$ a smeared function,
  we extend $F$ to a function $\Meas_1(X) \to \Meas_1(Y)$ from measures to
  measures, denoted $\tilde F$ (or
  simply~$F$), by setting, for any measurable function $\varphi\colon Y \to
  \mathbb{R}_{\geq 0}$,
  \begin{align*}
    F_\varphi(x) &\coloneqq \int_{y \in Y}\varphi(y) \bigl(F(x)\bigr)(y) \\
    \int_{y \in Y} \varphi(y)\tilde{F}(\mu)(y)
              &\coloneqq \int_{x \in X} F_\varphi(x)\mu(x),
  \end{align*}
  where $F_\varphi \colon X \to \R_{\ge 0}$ is an auxiliary function.
  See also Equations~(\ref{eq:pushforward-P})
  and~(\ref{eq:pushforward-finite}) below.
    \label{def:smearedextension}
\end{definition}
    
  \begin{proposition}
    If $F\colon X \to \Meas_1(Y)$ is continuous, then the extension
    $\tilde{F}\colon \Meas_1(X) \to \Meas_1(Y)$ is continuous.
  \end{proposition}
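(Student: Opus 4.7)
The plan is to reduce the continuity of $\tilde F$ in the weak$^*$ topology to a direct application of the defining identity
\[
  \int_Y \varphi(y)\,\tilde F(\mu)(y) = \int_X F_\varphi(x)\,\mu(x),
\]
after verifying that the auxiliary function $F_\varphi$ is bounded and continuous on~$X$.

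First I would invoke Proposition~\ref{prop:narrowwide} to replace the weak$^*$ topology on $\Meas_1(X)$ and $\Meas_1(Y)$ by the narrow topology, which agree here because we are working with probability measures on locally compact Polish spaces. Thus it suffices, given a net (or sequence) $\mu_n \to \mu$ in $\Meas_1(X)$, to show that for every bounded continuous test function $\varphi \colon Y \to \R$,
\[
  \int_Y \varphi(y)\,\tilde F(\mu_n)(y) \longrightarrow \int_Y \varphi(y)\,\tilde F(\mu)(y).
\]

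Next I would fix such a~$\varphi$ and analyze the function $F_\varphi \colon X \to \R$ appearing in Definition~\ref{def:smearedextension}. Boundedness is immediate: since $F(x) \in \Meas_1(Y)$ has total mass one,
\[
  |F_\varphi(x)| = \left| \int_Y \varphi(y)\,F(x)(y) \right| \le \|\varphi\|_\infty.
\]
Continuity of $F_\varphi$ is exactly the hypothesis that $F$ is continuous into $\Meas_1(Y)$, translated through Proposition~\ref{prop:narrowwide}: narrow continuity of $x \mapsto F(x)$ against the bounded continuous test function $\varphi$ is the statement that $F_\varphi$ is continuous.

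Finally, using $\mu_n \to \mu$ narrowly in $\Meas_1(X)$ together with the bounded continuous integrand~$F_\varphi$, we obtain
\[
  \int_X F_\varphi(x)\,\mu_n(x) \longrightarrow \int_X F_\varphi(x)\,\mu(x),
\]
and the defining identity for~$\tilde F$ translates this into the desired narrow convergence $\tilde F(\mu_n) \to \tilde F(\mu)$. There is no real obstacle here; the only subtlety is the systematic use of narrow versus wide topology, which is precisely the reason Proposition~\ref{prop:narrowwide} was recorded just above. One should also note in passing, by taking $\varphi \equiv 1$, that $\tilde F(\mu)$ indeed has total mass one, so $\tilde F$ genuinely lands in $\Meas_1(Y)$.
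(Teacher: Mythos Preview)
Your proof is correct and follows essentially the same approach as the paper's: both arguments show that $F_\varphi$ is bounded (since $F(x)$ is a probability measure) and continuous (by the hypothesis on~$F$ together with Proposition~\ref{prop:narrowwide}), and then conclude by applying narrow convergence $\mu_n \to \mu$ to the bounded continuous integrand~$F_\varphi$. Your write-up is slightly more explicit about the role of Proposition~\ref{prop:narrowwide} and the verification that $\tilde F$ lands in $\Meas_1(Y)$, but the structure is identical.
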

  \begin{proof}
  Let $(x_i)_{i=0}^\infty$ be a sequence approaching~$x\in X$. By
  assumption, $F(x_i)$ approaches  $F(x)$ in the weak$^*$ topology. By Proposition~\ref{prop:narrowwide},
  this is equivalent to saying that for all continuous
  bounded
  functions $\varphi\colon Y \to \mathbb{R}_{\geq 0}$ the function
  $F_{\varphi}$ above is
  continuous.
  Furthermore, $F_{\varphi}$ is bounded since $F$ takes values in probability measures.
  We now show that $\tilde{F}$ is continuous. Let $\mu_i \to \mu \in \Meas(X)$.
  We want to show that
  $\tilde F(\mu_i) \to \tilde F(\mu)\in \Meas_1(Y)$, i.e., for any
  continuous bounded function
  $\varphi\colon Y \to \mathbb{R}_{\geq 0}$,
   \[
  \int_{x \in X} F_{\varphi}(x)\mu_i(x) \to \int_{x \in X} F_{\varphi}(x)\mu(x).
  \] 
   This is true by definition of the weak$^*$ topology in
   $\Meas_1(X)$ and Proposition~\ref{prop:narrowwide}, since $F_{\varphi}$ is
   continuous and bounded.
\end{proof}

In our applications, $F$ takes values in finitely-supported measures,
with a bound on the size of the support.
  Concretely, if $F: X \to \R Y$ can be written
  as a finite sum
  \[
    F(x) = \sum_{i=1}^N \psi_i(x) f_i(x)
  \]
  for real-valued functions $\psi_i$ and invertible $Y$-valued
  functions~$f_i$, then, by Equation~(\ref{eq:lift-scale}), the extension is defined by
  \begin{equation}\label{eq:pushforward-P}
    F(\mu) = \sum_{i=1}^N f_i(\psi_i\cdot\mu) =
    \sum_{i=1}^N (\psi_i\circ f_i^{-1}) \cdot f_i(\mu)
  \end{equation}
  where the middle expression is a sum of pushforwards of scaled
  measures, and in the last expression we have pulled the scaling
  factors out. If $\mu$ is also finitely supported, we have
  \begin{equation}\label{eq:pushforward-finite}
    F\Bigl(\sum_i a_i x_i \Bigr) = \sum_{i,j} a_i \psi_j(x_i) f_j(x_i).
  \end{equation}
  
\begin{definition}
  With the above extension of notation, the iterates of the smeared return
  map~$P$ are defined by
  \begin{equation}\label{eq:iterate-P-def}
  \begin{aligned}
    P^0(x) &\coloneqq x \\
    P^n(x) &\coloneqq P(P^{n-1}(x)).
  \end{aligned}
  \end{equation}
\end{definition}
  
\begin{definition}
A measure $\nu$
on~$Y$ that is invariant under the flow~$\phi_t$ induces a \emph{flux}
$\mu = \nu_\tau$ on a global cross-section $\tau$ that is invariant
under the first return
map $p$ \cite[Section 3.4.2]{VO16:FoundationsErgodic}. Concretely,
pick $\epsilon > 0$ small enough so that the map
$b \colon [0,\epsilon] \times \tau \to Y$ defined by
$b(t,x) = \phi_t(x)$ is an embedded flow box. Then for $S \subset \tau$,
define $\mu(S) \coloneqq \nu\bigl(\phi_{[0,\epsilon]}(S)\bigr)/\epsilon$. (Since
our cross-sections are compact manifolds-with-boundary, we can always
find such an~$\epsilon$.)
\label{def:flux}
\end{definition}

We have the following invariance property.

\begin{proposition}\label{prop:smeared-invariance}
  If $\nu$ is a measure on $UT\Sigma$ that is invariant under
  $\phi_t$, then $\nu_\tau$ is
  invariant under $p$ and $\psi\nu_\tau$ is invariant under~$P$.
\end{proposition}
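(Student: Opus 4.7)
My plan is to prove the two claims in sequence, writing $\mu := \nu_\tau$ throughout. The $p$-invariance of $\mu$ is the classical content of the flux construction (Definition~\ref{def:flux}): for any Borel set $S \subset \tau^\circ$ and $\epsilon$ small enough that the flow boxes $\phi_{[0,\epsilon]}(S)$ and $\phi_{[0,\epsilon]}(p^{-1}(S))$ are embedded, the two boxes are translates of one another along the flow, so $\phi_t$-invariance of $\nu$ forces $\mu(S) = \mu(p^{-1}(S))$. The tangency locus in $\partial\tau$ is $\mu$-null, so this extends to all Borel subsets of $\tau$.

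For the $P$-invariance of $\psi\mu$, I will compute $\int g\, dP_*(\psi\mu)$ directly from the closed form~(\ref{eq:P-expr}). For $g \in C_b(\tau)$,
\[
\int g\,dP_*(\psi\mu) = \sum_{k=1}^N \int_\tau \psi(x)\bar\psi(p(x))\cdots\bar\psi(p^{k-1}(x))\psi(p^k(x))\,g(p^k(x))\,d\mu(x).
\]
Since $\tau$ is a global cross-section, $p$ is $\mu$-a.e.\ invertible on $\tau$ (with inverse given by the backward return map), so $p$-invariance of $\mu$ allows the change of variables $y = p^k(x)$ in each summand. Pulling the common factor $\psi(y)g(y)$ outside produces
\[
\int_\tau \psi(y)g(y) \left(\sum_{k=1}^N \psi(p^{-k}(y)) \prod_{j=1}^{k-1}\bar\psi(p^{-j}(y))\right) d\mu(y).
\]
An elementary induction on $N$ identifies the parenthesized sum with $1 - \prod_{j=1}^N \bar\psi(p^{-j}(y))$. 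Applying Lemma~\ref{lem:return-bound} to the reverse flow, for which $\tau_0 \Subset \tau$ is still a nested pair of global cross-sections, some $p^{-j}(y)$ with $j \le N$ lies in $\tau_0$; there $\psi = 1$ and hence $\bar\psi = 0$, forcing the product to vanish. The parenthesized sum is therefore identically $1$, and we obtain $\int g\,dP_*(\psi\mu) = \int \psi g\,d\mu = \int g\,d(\psi\mu)$.

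The main obstacle I anticipate is the delicate behavior of $p$ on $\partial\tau$, where $t_\tau$ is only lower semi-continuous (Lemma~\ref{lem:return-semi-cont}) and $p$ can fail to be a local diffeomorphism. This is resolved by observing that the $p$-tangency locus in $\partial\tau$ has $\mu$-measure zero (the flow is transverse to $\tau^\circ$ and only a codimension-one subset of $\partial\tau$ carries tangencies), so the $\mu$-a.e.\ bijectivity of $p$ needed for the substitution above is legitimate. Equivalently, since $\psi$ vanishes in a neighborhood of $\partial\tau$, the support of $\psi\mu$ stays away from the singular set of $p$ and boundary pathologies do not affect either side of the identity.
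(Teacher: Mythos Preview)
Your proof is correct and follows essentially the same route as the paper: both arguments expand $P$ via the explicit sum~\eqref{eq:P-expr}, push forward term by term, use $p$-invariance of $\mu$ to rewrite each summand in terms of $p^{-j}$, and then observe that the resulting sum telescopes to $\psi\mu$. Your version is slightly more explicit about the telescoping identity $\sum_{k=1}^N \psi(p^{-k}y)\prod_{j=1}^{k-1}\bar\psi(p^{-j}y) = 1 - \prod_{j=1}^N\bar\psi(p^{-j}y)$ and about why the residual product vanishes (invoking Lemma~\ref{lem:return-bound} for the reversed flow), whereas the paper simply says ``this sum telescopes, and the result is $\psi\mu$''; one small caveat is that the backward bound from Lemma~\ref{lem:return-bound} may give a different constant than the forward $N$ appearing in~\eqref{eq:P-expr}, but since extending the upper limit of that sum adds only zero terms this is harmless.
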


For motivation for the factor of~$\psi$ in the proposition statement,
think about extending the
definitions to
allow $\psi$ to be the (non-continuous) characteristic function of $\tau_0 \Subset \tau$; then $P$
is the ordinary first return map to~$\tau_0$, and $\psi \nu_\tau = \nu_{\tau_0}$ is
the flux of~$\tau_0$. See also Example~\ref{ex:smearedreturn}.

\begin{proof}
  The first part is standard. For the second part, let $\mu =
  \nu_\tau$. Then we have
  \begin{align*}
    P\bigl(\psi\cdot\mu\bigr)
    &= p\bigl(\psi\cdot(\psi\circ p)\cdot\mu\bigr) +
      p^2\bigl(\psi\cdot (\bar\psi\circ p)\cdot (\psi\circ p^2)\cdot\mu\bigr) + \dots\\
    &= \sum_{k=1}^N p^k\bigl(\psi\cdot(\bar\psi \circ p)\cdots
      (\bar\psi\circ p^{k-1})\cdot(\psi \circ p^k)\cdot \mu\bigr)\\
    &= \sum_{k=1}^N (\psi\circ p^{-k})\cdot(\bar\psi \circ p^{-k+1})\cdots
      (\bar\psi \circ p^{-1}) \cdot \psi\cdot p^k(\mu)\\
    &= \sum_{k=1}^N (\psi\circ p^{-k})\cdot(\bar\psi \circ p^{-k+1})\cdots
      (\bar\psi \circ p^{-1}) \cdot \psi\cdot \mu.
  \end{align*}
  using the definition of~$P$ (in the form of
  Equation~\eqref{eq:P-expr}); rewriting as a sum;
  Equation~\eqref{eq:pushforward-P}; and invariance of~$\mu$ under~$p$.
  Since $\psi + \bar \psi = 1$, this sum telescopes, and the result
  is~$\psi\mu$.
\end{proof}

We will sometimes blur the distinction between a geodesic current and
its flux, and write, for instance, $\nu(\tau)$ for the total mass of
$\nu_\tau$ on~$\tau$, or $\nu(\psi\tau)$ for $\int_{x \in \tau} \psi(x) \nu_\tau(x)$.

\subsection{Homotopy type of return}
\label{sec:return-homotopy}
We will additionally need to track \emph{how} a point returns to the
cross-section. For this, we suppose that we have a global
cross-section~$\tau$ contained in a simply-connected
cross-section~$\tau'$. (For a $C^1$ flow on a manifold of
dimension at least 3, there is always a simply-connected
cross-section~\cite{04TransverseDisk}.)
For such a cross-section, from the first return for $x \in \tau$, we can
extract another piece of information: the homotopy class of the return
trajectory.

\begin{definition}
  Let $\phi_t$ be a flow on a manifold~$Y$ and $\tau$ be a
  global
  cross-section, contained in a larger compact simply-connected
  cross-section~$\tau'$. Fix a basepoint $* \in \tau'$. For $x\in
  \tau$, define
  the \emph{return trajectory} $m(x) \in \pi_1(Y, *)$ by taking the
  homotopy class of a path that runs in~$\tau'$ from~$*$ to~$x$, along
  the flow trajectory from $x$ to $p_\tau(x)$, and then in~$\tau'$ from
  $p_\tau(x)$ back to~$*$. Since $\tau'$ is simply-connected, $m(x)$
  is independent of the choice of path.
  \label{def:homotopyreturn}
\end{definition}

\begin{lemma}\label{lem:homotopy-finite}
  Let $Y$ be a compact manifold with flow $\phi_t$ and $\tau$ be a
  global
  cross-section.
  As $x$ varies in~$\tau$, the return trajectory $m(x)$ takes on only
  finitely many values.
\end{lemma}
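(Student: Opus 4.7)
The plan is to lift to the universal cover and exploit proper discontinuity: the homotopy class $m(x)$ can be read off from which deck translate of a fixed lift of $\tau'$ contains the endpoint of the lifted flow segment, and only finitely many such translates can meet any given compact set.

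First, by Lemma~\ref{lem:return-semi-cont}, the first-return time $t_\tau$ is bounded above by some finite $T$. Next, pass to the universal cover $\pi \colon \tilde Y \to Y$, lift the flow to $\tilde\phi_t$, and use the simple-connectedness of $\tau'$ to select a distinguished lift $\tilde\tau' \subset \tilde Y$ containing a chosen preimage $\tilde *$ of the basepoint~$*$; every other lift of $\tau'$ is then a deck translate $g \cdot \tilde\tau'$, indexed by $g \in \pi_1(Y,*)$.

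For each $x \in \tau$ with lift $\tilde x \in \tilde\tau'$, the lifted trajectory $\tilde\phi_{[0,t_\tau(x)]}(\tilde x)$ ends at a lift of $p_\tau(x) \in \tau \subset \tau'$, which must therefore lie in $g \cdot \tilde\tau'$ for a unique $g \in \pi_1(Y,*)$. Unwinding Definition~\ref{def:homotopyreturn}, this $g$ is precisely $m(x)$: the defining concatenation (a path in $\tau'$ from $*$ to $x$, the flow segment, then a path in $\tau'$ from $p_\tau(x)$ back to $*$) lifts to a path from $\tilde *$ to $g\cdot \tilde *$. Since $t_\tau(x) \le T$ and $\tilde x \in \tilde\tau'$, the endpoint $\tilde\phi_{t_\tau(x)}(\tilde x)$ lies in the compact set $K \coloneqq \tilde\phi_{[0,T]}(\tilde\tau')$, so the image of $m$ is contained in $\{g \in \pi_1(Y,*) \mid g\cdot\tilde\tau' \cap K \neq \emptyset\}$. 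Because $\pi_1(Y,*)$ acts properly discontinuously on $\tilde Y$ (as $Y$ is a compact manifold) and both $\tilde\tau'$ and $K$ are compact, this set is finite; hence so is the image of~$m$.

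The only real obstacle is organizational: checking that the distinguished lift $\tilde\tau'$ exists (by the lifting criterion together with simple-connectedness of $\tau'$) and that the unique $g$ with $\tilde\phi_{t_\tau(x)}(\tilde x) \in g\cdot\tilde\tau'$ genuinely matches the homotopy class $m(x)$ from Definition~\ref{def:homotopyreturn}. Both are routine unwindings of lifts, so I do not anticipate serious technical difficulty.
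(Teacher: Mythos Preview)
Your proof is correct and takes a genuinely different route from the paper. The paper argues metrically: it bounds the return time (as you do), bounds the speed of $\phi_t$ with respect to some Riemannian metric on~$Y$, and bounds the diameter of~$\tau'$, concluding that the concatenated loop representing $m(x)$ has uniformly bounded length; then it invokes the fact that on a compact manifold only finitely many elements of $\pi_1(Y,*)$ admit representatives of bounded length. Your approach instead lifts to the universal cover and reads off $m(x)$ as the unique deck translate carrying the endpoint of the lifted flow segment, finishing with proper discontinuity against the compact set $K = \tilde\phi_{[0,T]}(\tilde\tau')$. The two arguments are close cousins---the bounded-length fact the paper uses is typically proved via exactly the proper discontinuity you invoke---but yours avoids choosing an auxiliary metric and makes the covering-space bookkeeping explicit, which meshes well with how $m(x)$ is actually defined. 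The paper's version is shorter because it packages that bookkeeping into a single standard fact.
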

\begin{proof}
  Since there are upper bounds on the return time
  (Lemma~\ref{lem:return-semi-cont}), on the speed
  of~$\phi_t$ with respect to a Riemannian metric on~$Y$, and on the
  diameter of~$\tau'$, the length
  of the path representing $m(x)$ is bounded.
  On a compact manifold, there are only finitely many elements
  of~$\pi_1(Y,*)$ that have representatives of bounded length.
\end{proof}

To get the return map for iterates, we
also incorporate the point of first return.
\begin{definition}\label{def:homotopy-return}
  The \emph{homotopy return map} is the map $q \colon \tau \to \tau
  \times \pi_1(Y,*)$ defined by
  \[
    q(x) \coloneqq (p(x), m(x)).
  \]
  We can iterate $q$ by inductively defining $q^{n+1}$ to be the
  composition
  \[
    \tau \overset{q^n}{\longrightarrow} \tau \times \pi_1(Y)
      \xrightarrow{q \times \id}
        \tau \times \pi_1(Y) \times \pi_1(Y)
      \xrightarrow{(x,g,h)\mapsto(x,hg)}
        \tau \times \pi_1(Y).
  \]
  Define $m^n(x)\in \pi_1(Y,*)$ to be the second component of
  $q^n(x)$.
\end{definition}

\begin{remark}
  An alternative approach to defining homotopy types of return
  trajectories is to pick a cross-section in the universal cover
  \cite[Section~3.2]{EPS:CountingCurves}.
\end{remark}

\begin{definition}
  For $\tau$ a global cross-section with basepoint~$*$,
  $\tau_0 \Subset \tau$ a smaller global cross-section, $\psi$ a
  bump function for this pair, and $\tau' \supset \tau$ a simply-connected
  cross-section, the \emph{smeared homotopy return map}
  $Q\colon \tau \to \R_1(\tau \times \pi_1(Y,*))$ is defined by
  \begin{align*}
    Q(x) &\coloneqq
    \begin{cases}
      q(x) & p(x) \in \tau_0 \\
      \psi(p(x)) \cdot q(x) + \bar\psi(p(x))\cdot L_{m(x)}Q(p(x))
        & p(x) \in \tau - \tau_0
      \end{cases}
  \end{align*}
  where $L_g$ is left translation by~$g \in \pi_1(Y,*)$:
  \[
    L_{g}\Bigl(\sum_ia_i(x_i,h_i)\Bigr) \coloneqq \sum_i a_i (x_i, gh_i).
  \]
  There is once again a natural notion of iteration, defined by
  inductively setting $Q^{n+1}$ to be the composition
  \[
    \tau \overset{Q^n}{\longrightarrow} \R_1(\tau \times \pi_1(Y))
      \xrightarrow{\R_1(Q \times \id)} \R_1(\R_1(\tau \times \pi_1)\times\pi_1(Y))
     \xrightarrow{\operatorname{join}}  \R_1(\tau \times \pi_1(Y)).
  \]
  where $\operatorname{join}$ is the somewhat more involved
  operation
  \[
    \operatorname{join}\Biggl(\sum_i a_i \Bigl(\Bigl(\sum_j b_{ij} (x_{ij},
      g_{ij})\Bigr), h_i\Bigr)\Biggr)
    \coloneqq \sum_{i,j} a_i b_{ij}(x_{ij},h_ig_{ij}).
  \]
  (The terminology comes from the theory of monads
  \cite{Moggi91:Monads,Wadler92:MonadsFP}. See
  Equation~(\ref{eq:pushforward-finite}).)
\end{definition}
  
\begin{definition}
  We define the \emph{smeared $n$-th return trajectory}
  $M^n \colon \tau \to \R_1 \pi_1(Y)$ to be the composition
  \[
    \tau \overset{Q^n}{\longrightarrow}\R_1(\tau \times \pi_1(Y))
    \longrightarrow \R_1 \pi_1(Y)
  \]
  where at the second step we lift the projection on the second
  component to act on weighted objects as in
  Definition~\ref{def:smeared-iterate}.
  \label{def:smearedhomotopyreturn}
\end{definition}

Let $\Lambda(n,\tau)$ be the set of curves that appear with non-zero
coefficient in $M^n(x)$ for some $x \in \tau$.
\begin{lemma}\label{lem:smeared-iterate-finite}
  $\Lambda(n,\tau)$ is finite.
\end{lemma}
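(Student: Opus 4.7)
The plan is to trace through the construction of $M^n$ and show at each stage that the set of $\pi_1(Y)$-elements (equivalently, curves) that can appear is finite, then argue by induction on~$n$.

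First, I would invoke Lemma~\ref{lem:homotopy-finite} to get a finite set $F \subset \pi_1(Y,*)$ containing all values of the return trajectory $m(x)$ as $x$ ranges over~$\tau$. Next, I would unfold the recursive definition of the smeared homotopy return map, in parallel with the unfolding of~$P$ in Equation~\eqref{eq:P-expr}, using Lemma~\ref{lem:return-bound} to bound the recursion depth by some~$N$ independent of~$x$. This gives
\[
Q(x) = \sum_{k=1}^{N} \bar\psi(p(x))\cdots\bar\psi(p^{k-1}(x))\,\psi(p^k(x)) \cdot \bigl(p^k(x),\, m(p^{k-1}(x))\cdots m(p(x)) m(x)\bigr),
\]
so the second coordinates of points in the support of $Q(x)$ are products of at most $N$ elements of~$F$. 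Call this finite subset of $\pi_1(Y,*)$ by $F_1$; it contains all second components of $Q(x)$ for every $x \in \tau$.

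Now I would proceed by induction on~$n$. For the inductive step, $Q^{n+1}$ is obtained from $Q^n$ by the join construction of Definition~\ref{def:homotopy-return}: each term $(x_i,h_i)$ in the support of $Q^n(x)$ is replaced by the terms of $Q(x_i)$, with their $\pi_1$-components left-multiplied by~$h_i$. If, by induction, the second components of $Q^n(x)$ all lie in a finite set $F_n \subset \pi_1(Y,*)$, then the second components of $Q^{n+1}(x)$ lie in $F_n \cdot F_1$, which is again finite. Hence the set of elements of $\pi_1(Y,*)$ appearing as second components of $Q^n(x)$ for any $x \in \tau$ is a finite set~$F_n$.

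Finally, $M^n(x)$ is the projection of $Q^n(x)$ onto the $\pi_1(Y)$ factor, so its support lies in~$F_n$. Passing to curves amounts to taking the image in $\Curves^+(S)$ under the composition $\pi_1(Y,*) = \pi_1(UT\Sigma,*) \to \pi_1(\Sigma) \to \Curves^+(S)$ (the last map sending an element to its conjugacy class); this is a set-valued map with finite image when restricted to a finite set. Therefore $\Lambda(n,\tau)$ is finite, as claimed. There is no real obstacle here; the only thing to be careful about is that the bound $N$ from Lemma~\ref{lem:return-bound} applies uniformly in~$x$, which is exactly what that lemma provides.
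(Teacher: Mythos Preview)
Your proof is correct and takes the same approach as the paper, which simply says the result is immediate from Lemmas~\ref{lem:return-bound} and~\ref{lem:homotopy-finite}; you have just written out the details of that immediacy. (A minor slip: the $\pi_1$-component in your displayed formula for $Q(x)$ should be $m(x)\,m(p(x))\cdots m(p^{k-1}(x))$ rather than the reverse order, but this is immaterial to the finiteness argument.)
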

\begin{proof}
  Immediate from Lemmas~\ref{lem:return-bound}
  and~\ref{lem:homotopy-finite}.
\end{proof}

\begin{lemma}\label{lem:smeared-homotopy-cont}
  The maps $Q^k$ and $M^k$ are continuous.
\end{lemma}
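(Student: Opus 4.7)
The plan is to first prove continuity of $Q$ by adapting the proof of Proposition~\ref{prop:smeared-return-cont}, then obtain continuity of each iterate $Q^k$ by induction using the fact that $Q^{k+1}$ is a composition of continuous maps, and finally deduce continuity of $M^k$ as the composition of $Q^k$ with the continuous lift of the projection onto $\R_1\pi_1(Y,*)$.

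The key new ingredient beyond Proposition~\ref{prop:smeared-return-cont} is continuity of the unweighted homotopy return $q = (p, m) \colon \tau \to \tau \times \pi_1(Y,*)$ at points where $p$ is continuous. I would establish this by showing that $m$ is \emph{locally constant} wherever $p$ is continuous. Indeed, fix $x$ with $p(x) \in \tau^\circ$; by Lemma~\ref{lem:contint} there is a neighborhood $U$ of $x$ on which $p$ is continuous and $p(U) \subset \tau^\circ$. For $y \in U$, the path representing $m(y)$ — running in $\tau'$ from $*$ to $y$, along the flow from $y$ to $p(y)$, and back to $*$ in $\tau'$ — can be continuously deformed to the path representing $m(x)$ by reassigning the endpoint arcs in the simply connected $\tau'$ and using continuity of $\phi_{t_\tau(\cdot)}$ from $y$ to $p(y)$. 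Since $\pi_1(Y,*)$ is discrete, this forces $m(y) = m(x)$ on a neighborhood of~$x$, and hence $q$ is continuous at~$x$.

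With this in hand, continuity of $Q$ follows the template of Proposition~\ref{prop:smeared-return-cont} almost verbatim. Fix $x \in \tau$ and let $n \ge 1$ be the smallest index with $p^n(x) \in \tau_0^\circ$; as in that proof, shrink $\tau$ so that $\{p(x), \dots, p^n(x)\}$ avoids $\partial\tau \cup \partial\tau_0$, which does not change $Q$ because $\psi$ vanishes near $\partial\tau$. Induct on~$n$: the base case $n=1$ gives $Q(x) = q(x)$, continuous at~$x$ by the previous paragraph. For $n>1$, the recursion
\[
  Q(x) = \psi(p(x)) \cdot q(x) + \bar\psi(p(x)) \cdot L_{m(x)} Q(p(x))
\]
exhibits $Q$ at $x$ as a continuous combination: $\psi \circ p$, $q$, and (by the inductive hypothesis applied at~$p(x)$) $Q \circ p$ are continuous at~$x$, while $L_{m(\cdot)}$ is continuous near~$x$ because $m$ is locally constant there, so $L_{m(x)}$ is a single fixed isometry on a neighborhood.

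For the iterates, Definition~\ref{def:homotopy-return} presents $Q^{k+1}$ as $\operatorname{join} \circ \bigl((Q \times \id)_*\bigr) \circ Q^k$, where $(\cdot)_*$ denotes the lift to finitely supported measures from Definition~\ref{def:smearedextension}. The lift of a continuous map is continuous by the proposition following that definition, and $\operatorname{join}$ is a termwise reassembly with support-size uniformly bounded by the bounds in Lemma~\ref{lem:smeared-iterate-finite}, hence continuous in the weak$^*$ topology on $\R_1(\tau \times \pi_1(Y,*))$ via Remark~\ref{rem:boundedfinitemeasures}. Inducting on~$k$ gives continuity of all $Q^k$, and $M^k$ inherits continuity as the composition of $Q^k$ with the lifted projection $\tau \times \pi_1(Y,*) \to \pi_1(Y,*)$. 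The main obstacle is handling the left-translation $L_{m(x)}$ across jumps of~$m$: this is precisely what the local constancy of $m$ on $p^{-1}(\tau^\circ)$ ensures, and the shrinking argument keeps us in this good region throughout the finite recursion.
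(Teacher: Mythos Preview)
Your proposal is correct and follows essentially the same approach as the paper's proof, which simply states that the argument of Proposition~\ref{prop:smeared-return-cont} carries over verbatim to~$Q$ and that continuity of $Q^k$ and $M^k$ then follows. You have supplied the details the paper omits---most notably that $m$ is locally constant on $p^{-1}(\tau^\circ)$, which is precisely what makes the $L_{m(x)}$ term behave well in the recursion---and your treatment of the iterates via continuity of lifts and of $\operatorname{join}$ is a faithful unpacking of the paper's ``it then follows.''
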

\begin{proof}
  The proof of Proposition~\ref{prop:smeared-return-cont} also proves that
  $Q$ is continuous. It then follows that $Q^k$ and $M^k$ are continuous.
\end{proof}

\subsection{Return maps for the geodesic flow}
We now turn to the specifics of our situation. Let $\Sigma$ be the
surface~$S$ endowed with an arbitrary
hyperbolic Riemannian metric~$g$.
Points in $UT\Sigma$ will be denoted~$\vec x$, meaning a pair of a
point $x \in \Sigma$ and a unit tangent
vector at~$x$.
Let
$\phi_t \colon UT \Sigma \to UT \Sigma$ be the geodesic flow associated
to~$g$.

We pick nested global cross-sections $\tau_0 \Subset \tau \subset
\tau'$, with $\tau'$ simply-connected,
and a bump function~$\psi$ for the pair $(\tau_0,\tau)$. We thus get a
smeared $n$-th return
trajectory $M^n \colon \tau \to \R_1 \pi_1(UT\Sigma, *)$. We want to work
with curves in~$\Sigma$ rather than its unit tangent bundle, so
compose with the projection
$\pi_\Sigma \colon UT\Sigma \to \Sigma$ to get a linear combination of
elements of $\pi_1(S, \pi_\Sigma(*))$. Then take conjugacy classes (to pass
to unbased curves) to get
an element of $\R_1\Curves^+(S)$. We call the resulting map~$[M^n]$, which has type
\[
  [M^n] \colon \tau \to \R_1 \Curves^+(S).
\]
From Lemmas~\ref{lem:smeared-iterate-finite}
and~\ref{lem:smeared-homotopy-cont}, $[M^n]$ is a continuous function
with values in the finite
dimensional subspace
$\R^{[\Lambda(n,\tau)]} \subset \R \Curves^+(S)$, where
$[\Lambda(n,\tau)]$ is
the projection of $\Lambda(n,\tau)$.

\subsection{Definition of the extension}
\label{subsec:extension}
Now, we will use the above return map $[M^n]$ to define the extension of $f$ to geodesic currents in Theorem \ref{thm:convex}.
With $\tau_0 \Subset \tau \subset \tau'$ as above, we define

\begin{gather*}
  R^n \colon \GC^+(S) \to \R\Curves^{+}(S)\\
  R^n(\mu) \coloneqq \int_{\tau} [M^n(\vec x)] \psi(\vec x) d\mu_{\tau}(\vec x).
\end{gather*}
Observe that, for fixed~$n$, $R^n(\mu)$ is a weighted multi-curve with a fixed set of
possible connected components, but with weights depending on~$\mu$. As
we will explain in Section~\ref{sec:continuity}, because
$[M^n(\vec x)]$ is continuous on~$\tau$, $R^n$ is continuous with respect to the weak$^*$
topology on $\GC^+(S)$.

We can now finally define our extension of~$f$.
\begin{definition}\label{def:extension}
Let $\tau$ be a good cross-section and $f$ a weighted curve functional satisfying stability,
  homogeneity, weighted quasi-smoothing, and convex union. We define
\begin{align}
  f^n_{\tau}(\mu)&\coloneqq f(R^n(\mu))\label{eq:fntau}  \\
  f_{\tau}(\mu)&\coloneqq\lim_{n \to \infty} \frac{f^n_{\tau}(\mu)}{n}.\label{eq:ftau}
\end{align}
\end{definition}

We will prove that the limit exists (at least for our 
cross-section) in
Proposition~\ref{prop:limitexists}.

\begin{warning}
  We work with weighted linear combinations of objects (or, more
  generally, measures) at many places in the paper. Some functions
  (like~$R^n$) are by definition additive under linear combinations,
  and in Definition~\ref{def:smeared-iterate}, we also silently
  extend other functions (like~$p$) to apply additively to
  linear combinations of points or measures.
  But the main curve functional~$f$ we are interested is \emph{not}
  necessarily additive. (We only assume that $f$ satisfies convex
  union in the main theorems.)
\end{warning}


\section{Constructing global cross-sections}
\label{sec:cross-section}

Next we define the specific global cross-section we use. We make
choices that are convenient for guaranteeing that certain crossings
are essential.

\begin{definition}
\label{def:wedge}
  For $\Sigma$ a hyperbolic surface, $c$ an oriented geodesic segment
  on~$\Sigma$, and $0 < \theta < \pi/2$ an angle, the \emph{wedge set}
  $W(c,\theta) \subset UT\Sigma$ is the set of vectors that cross~$c$
  nearly perpendicularly:
  \[
    W(c,\theta) \coloneqq \bigl\{\vec{x}=(x,v)\bigm\vert x\in c,\,
      \abs{\ang(T_x c,v)-\pi/2} \le \theta\,\bigr\}.
  \]
  (Angles $\ang(v,w)$ are measured by the
  counterclockwise rotation from $v$ to~$w$.) We can likewise define
  the wedge set $W(\{c_i\},\theta)$ for a collection of geodesic
  segments $\{c_i\}_{i=1}^k$.
\end{definition}

We wish to find a wedge set $W(\{c_i\},\theta)$ that is an embedded global
cross-section for the geodesic flow $\phi_t$.

Fix $\theta=\pi/6$. For any geodesic arc $c$, the wedge set
$W(c,\theta)$ intersects any geodesic that passes through a non-empty open
set. Thus 
by compactness of $UT\Sigma$, there exist a finite collection of
immersed arcs $(c_i)_{i=1}^n$ so that $\bigcup_{i=1}^n W(\{c_i\},\theta)_{i=1}^{n}$ is a
disconnected, not necessarily embedded, global cross-section of the
geodesic flow.
We will produce an embedded global cross-section from it.
Immersed points come from intersection points between the geodesic segments $c_i$, but not all of them produce immersed points of the global cross-section.

Indeed, suppose $\ang(c_i,c_j)=\varphi$.
There are two good cases:
\begin{enumerate}
    \item If $2\theta < \abs{\varphi}$, the wedge sets don't intersect, as
      shown in Figure~\ref{fig:cut-and-flow2}.
    \begin{figure}
\centering{
\resizebox{80mm}{!}{\fontsize{12pt}{12pt}\selectfont
\begingroup%
  \makeatletter%
  \providecommand\color[2][]{%
    \errmessage{(Inkscape) Color is used for the text in Inkscape, but the package 'color.sty' is not loaded}%
    \renewcommand\color[2][]{}%
  }%
  \providecommand\transparent[1]{%
    \errmessage{(Inkscape) Transparency is used (non-zero) for the text in Inkscape, but the package 'transparent.sty' is not loaded}%
    \renewcommand\transparent[1]{}%
  }%
  \providecommand\rotatebox[2]{#2}%
  \newcommand*\fsize{\dimexpr\f@size pt\relax}%
  \newcommand*\lineheight[1]{\fontsize{\fsize}{#1\fsize}\selectfont}%
  \ifx\svgwidth\undefined%
    \setlength{\unitlength}{223.90879748bp}%
    \ifx\svgscale\undefined%
      \relax%
    \else%
      \setlength{\unitlength}{\unitlength * \real{\svgscale}}%
    \fi%
  \else%
    \setlength{\unitlength}{\svgwidth}%
  \fi%
  \global\let\svgwidth\undefined%
  \global\let\svgscale\undefined%
  \makeatother%
  \begin{picture}(1,0.77681533)%
    \lineheight{1}%
    \setlength\tabcolsep{0pt}%
    \put(0,0){\includegraphics[width=\unitlength,page=1]{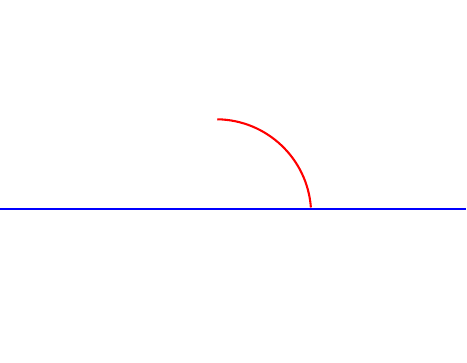}}%
    \put(0.4496151,0.50758296){\color[rgb]{0,0,1}\makebox(0,0)[rt]{\lineheight{1.25}\smash{\begin{tabular}[t]{r}$\theta$\end{tabular}}}}%
    \put(0.60300728,0.48266068){\color[rgb]{1,0,0}\makebox(0,0)[lt]{\lineheight{1.25}\smash{\begin{tabular}[t]{l}$\varphi$\end{tabular}}}}%
    \put(0,0){\includegraphics[width=\unitlength,page=2]{cut-and-flow2.pdf}}%
    \put(0.49309963,0.14838242){\color[rgb]{0,0.69803922,0}\makebox(0,0)[lt]{\lineheight{1.25}\smash{\begin{tabular}[t]{l}$c_j$\end{tabular}}}}%
    \put(0.01767887,0.28048561){\color[rgb]{0,0,1}\makebox(0,0)[lt]{\lineheight{1.25}\smash{\begin{tabular}[t]{l}$c_i$\end{tabular}}}}%
    \put(0,0){\includegraphics[width=\unitlength,page=3]{cut-and-flow2.pdf}}%
  \end{picture}%
\endgroup%
}
\caption{Wedge sets not intersecting, in the case $2\theta < \varphi$.}
\label{fig:cut-and-flow2}
}
\end{figure}
\item
  If $\theta < \frac{\pi}{2}-\abs{\varphi}$, then the corresponding
  wedge sets do intersect, but we can perturb the $c_j$ slightly to
  avoid the intersection.
Given a small interval $[a,b]$ of the geodesic segment $c_j$, containing one intersection point between $c_i$ and $c_j$,
we consider the wedge $W([a,b],\theta)$. By pushing the endpoints
$a,b$ forward along the extremal angles, and removing $[a,b]$ from $c_j$, as shown in
Figure~\ref{fig:cut-and-flow},
we obtain new interval with endpoints $a',b'$, and a new wedge set
$W([a',b'],\theta')$, for some $\theta' > \theta$, so that
$W([a',b'],\theta')$ is disjoint from $c_i$ and
$W([a',b'],\theta)$ intersects every geodesic that $W([a,b],\theta)$
does (so we still have a global cross-section).

\begin{figure}
\centering{
\resizebox{80mm}{!}{\fontsize{12pt}{12pt}\selectfont
\begingroup%
  \makeatletter%
  \providecommand\color[2][]{%
    \errmessage{(Inkscape) Color is used for the text in Inkscape, but the package 'color.sty' is not loaded}%
    \renewcommand\color[2][]{}%
  }%
  \providecommand\transparent[1]{%
    \errmessage{(Inkscape) Transparency is used (non-zero) for the text in Inkscape, but the package 'transparent.sty' is not loaded}%
    \renewcommand\transparent[1]{}%
  }%
  \providecommand\rotatebox[2]{#2}%
  \newcommand*\fsize{\dimexpr\f@size pt\relax}%
  \newcommand*\lineheight[1]{\fontsize{\fsize}{#1\fsize}\selectfont}%
  \ifx\svgwidth\undefined%
    \setlength{\unitlength}{225.93812805bp}%
    \ifx\svgscale\undefined%
      \relax%
    \else%
      \setlength{\unitlength}{\unitlength * \real{\svgscale}}%
    \fi%
  \else%
    \setlength{\unitlength}{\svgwidth}%
  \fi%
  \global\let\svgwidth\undefined%
  \global\let\svgscale\undefined%
  \makeatother%
  \begin{picture}(1,0.27350157)%
    \lineheight{1}%
    \setlength\tabcolsep{0pt}%
    \put(0,0){\includegraphics[width=\unitlength,page=1]{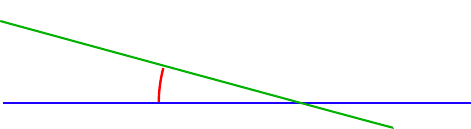}}%
    \put(0.783732,0.18441088){\color[rgb]{0.11372549,0,1}\makebox(0,0)[lt]{\lineheight{1.25}\smash{\begin{tabular}[t]{l}$\theta$\end{tabular}}}}%
    \put(0.32698362,0.08144041){\color[rgb]{1,0,0}\makebox(0,0)[rt]{\lineheight{1.25}\smash{\begin{tabular}[t]{r}$\varphi$\end{tabular}}}}%
    \put(0,0){\includegraphics[width=\unitlength,page=2]{cut-and-flow.pdf}}%
    \put(0.08014293,0.01835524){\color[rgb]{0.11372549,0,1}\makebox(0,0)[t]{\lineheight{1.25}\smash{\begin{tabular}[t]{c}$c_j$\end{tabular}}}}%
    \put(0.11928592,0.16361802){\color[rgb]{0,0.69803922,0}\makebox(0,0)[rt]{\lineheight{1.25}\smash{\begin{tabular}[t]{r}$c_i$\end{tabular}}}}%
    \put(0,0){\includegraphics[width=\unitlength,page=3]{cut-and-flow.pdf}}%
  \end{picture}%
\endgroup%
}
\caption{Cut and flow operation when the angle of intersection is
  small, $\theta < \pi/2 - \varphi$.}
\label{fig:cut-and-flow}
}
\end{figure}

\end{enumerate}
\begin{remark}
Note that the inequality must be strict in case (2).
Indeed, if $\theta = \pi/2-|\phi|$, the
wedge sets do intersect, but this is not a good case: we cannot
guarantee to make the wedge sets locally disjoint by flowing a segment
of $c_j$ forward slightly.
\end{remark}

Our choice $\theta=\pi/6$ guarantees that one of these two cases happens.

Next, we will construct an immersed connected global
cross-section~$\tau'$ containing this disconnected wedge set.

\begin{proposition}\label{prop:transverse-approx}
For any wedge set $W(\{ c_i \},\theta)$ and any $\epsilon>0$,
there exists a closed geodesic~$\delta$ so that for each~$i$,
there is a sub-segment $\delta_i \subset \delta$ so that
$c_i \subset B_{\epsilon}(\delta_i)$ and every geodesic that intersects
$W(c_i,\theta)$ also intersects $W(\delta_i,\theta + \epsilon)$.
\end{proposition}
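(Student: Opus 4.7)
The plan is to produce $\delta$ via the specification property (equivalently, the Anosov closing lemma) for the geodesic flow $\phi_t$ on $UT\Sigma$, which is a transitive Anosov flow since $\Sigma$ is a closed hyperbolic surface. First I would lift each $c_i$ to an orbit segment of $\phi_t$ in $UT\Sigma$: fix a unit tangent vector $\vec{x}_i$ along $c_i$ at its starting point, so $c_i$ is the base-point projection of the segment $\{\phi_t(\vec{x}_i) : t \in [0,\ell_i]\}$, where $\ell_i = \len(c_i)$. Closed geodesics on $\Sigma$ are exactly base-point projections of periodic orbits of~$\phi_t$, so the problem reduces to finding a single periodic orbit of $\phi_t$ that shadows each of the $n$ orbit segments coming from the $c_i$'s.

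Second, choose an auxiliary constant $\epsilon' > 0$ that is small compared to $\epsilon$ and to $\pi/2 - \theta$; the precise smallness is determined by the quantitative estimates in the next step. By Bowen's specification property for the Anosov flow $\phi_t$, there is a gap time $T_{\epsilon'} > 0$ such that for any finite ordered collection of orbit segments and any spacing times $\ge T_{\epsilon'}$, there exists a periodic orbit $\epsilon'$-shadowing each of the segments in turn. I apply this to the orbit segments coming from the $c_i$'s, slightly extended at each end by an additional length of order $\epsilon'$ (to be made precise below). This yields a closed geodesic $\delta$ and sub-segments $\delta_i \subset \delta$ such that each point of $\delta_i$ lies within $\epsilon'$ (in the natural metric on $UT\Sigma$) of the corresponding point of the extended $c_i$. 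In particular, base points in $\Sigma$ are within $\epsilon'$, and the tangent directions of $\delta_i$ and $c_i$ at corresponding points differ by an angle of at most $O(\epsilon')$.

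Third, I would deduce the two conclusions. The containment $c_i \subset B_\epsilon(\delta_i)$ follows immediately once $\epsilon' \le \epsilon$. For the wedge condition, suppose a geodesic $g$ meets $W(c_i,\theta)$, so $g$ crosses $c_i$ at a point $p$ with $\abs{\ang(T_p c_i, T_p g) - \pi/2} \le \theta$. Since $\delta_i$ passes within $\epsilon'$ of $p$ with tangent direction within $O(\epsilon')$ of $T_p c_i$, and $g$ is a geodesic crossing $c_i$ nearly perpendicularly, the usual transverse-intersection argument in hyperbolic geometry forces $g$ to cross $\delta_i$ at some point $p'$ within distance $O(\epsilon')$ of $p$; this is where the small extension of $\delta_i$ past the shadowed region is used, to guarantee the crossing occurs inside $\delta_i$ rather than past its endpoints. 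The crossing angle at $p'$ then differs from that at $p$ by at most $O(\epsilon')$, giving $\abs{\ang(T_{p'}\delta_i, T_{p'}g) - \pi/2} \le \theta + O(\epsilon') \le \theta + \epsilon$ once $\epsilon'$ is chosen small enough, as required.

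The main obstacle is the quantitative conversion in the third step: translating $\epsilon'$-closeness of two orbit segments in $UT\Sigma$ into explicit estimates on where and at what angle a transverse geodesic $g$ crosses each of them. The specification argument itself is standard for Anosov flows on compact manifolds, so the real care goes into uniform Lipschitz estimates on intersection points and angles between a fixed geodesic and a varying nearby geodesic arc, which are available because $UT\Sigma$ is compact and $\phi_t$ is smooth and uniformly hyperbolic; these estimates let every implicit $O(\epsilon')$ be made $\le \epsilon$ by taking $\epsilon'$ sufficiently small.
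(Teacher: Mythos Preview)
Your proposal is correct and follows essentially the same overall strategy as the paper, but with two differences worth noting. First, where you invoke Bowen's specification property for the geodesic flow to produce the closed geodesic~$\delta$ shadowing the orbit segments of the~$c_i$, the paper simply cites a ready-made density result (Theorem~2.4 of Basmajian--Parlier--Souto, \emph{Filling geodesics}) giving a closed geodesic with $c_i \subset B_\epsilon(\delta)$; these are the same phenomenon packaged differently. Second, and more substantively, the paper obtains the wedge inclusion by a cleaner trick than your direct angle estimate: rather than comparing crossing points and angles of~$g$ with~$c_i$ versus~$\delta_i$, it \emph{defines}~$\delta_i$ by flowing the wedge set $W(c_i,\theta)$ forward along the geodesic flow until it hits~$\delta$, so that the inclusion $W(c_i,\theta) \subset$ (flow-translate of) $W(\delta_i,\theta+\epsilon')$ is essentially automatic, with the angle defect~$\epsilon'$ coming only from the short flow time. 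Your transverse-intersection argument works too, but requires the endpoint extension and the uniform Lipschitz estimates you flag as the main obstacle; the paper's flow argument sidesteps that bookkeeping entirely.
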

\begin{proof}
  Use \cite[Theorem 2.4]{BPS17:FillingGeodesics} to construct the
  closed geodesic $\delta$ with $c_i \subset B_{\epsilon}(\delta)$. If
  $\epsilon$ is small enough, by
  following the geodesic flow from $W(c_i,\theta)$ we hit $\delta$ in
  a geodesic segment~$\delta_i$, so that every geodesic intersecting
  $W(c_i,\theta)$ also intersects
  $W(\delta_i,\theta + \epsilon')$ for some~$\epsilon'$. Since
  $\epsilon'$ goes to~$0$ as $\epsilon$ goes to~$0$, the result follows.
\end{proof}

Observe that if $\epsilon$ is small enough and the $c_i$ in
Proposition~\ref{prop:transverse-approx} are disjoint, then the
$\delta_i$ will be disjoint as well. Thus
combining the above propositions (and redefining $\theta$ to be
$\theta + \epsilon$), we have found a closed geodesic
$\delta$ and disjoint geodesic segments $\delta_i' \subset \delta$
so that we have the following global cross-sections:
\begin{itemize}
    \item A wedge set $\tau_0 \coloneqq W(\{ \delta_i \},\theta)$ giving a
      disconnected embedded global cross-section.
    \item A global cross-section $W(\delta,\theta)$
      containing the previous one which is connected but not embedded
      (as $\delta$ will self-intersect).
    \end{itemize}
We would like to use the second global cross-section
$W(\delta,\theta)$ to close up the homotopy return trajectories of the
smeared return map. However, $W(\delta,\theta)$ is not
simply-connected, so the homotopy return map will depend on which path
along the cross-section
we choose.
This can be easily fixed by setting, for some small open interval $I \subset
\delta \backslash \bigcup_i \{ \delta_i \}$,
\[ \tau' \coloneqq W(\delta - I, \theta) \]
so that $\tau_0 \subset \tau'$.
Strictly speaking, $\tau'$ is not simply-connected as a subset of
$UT\Sigma$; rather it is the image of an immersed disk. Since
$\tau_0 \subset \tau'$ lies in a portion where the immersion is injective,
there is no ambiguity about how to connect up the return paths to~$\tau_0$
within~$\tau'$.

\begin{definition}
A \emph{good} cross-section is the data of cross-sections $\tau_0,\tau,\tau'$ and bump function $\psi$, where $\tau$ is a slight enlargement of the embedded cross-section $\tau_0$ so that
$\tau_0 \Subset \tau \subset \tau'$, for $\tau'$ a cross-section as above. For simplicity, we will refer to a good cross-section just as~$\tau$.
\label{def:goodcross}
\end{definition}

A good cross-section gives the complete setup of Section~\ref{subsec:extension}. 


\section{Join lemma}
\label{sec:join}

We now turn to the heart of the proof, proving \emph{join lemmas} to
show that we can smooth essential crossings to relate the return maps
of order~$k$, order~$\ell$, and order $k+\ell$. We chose the global
cross-sections $\tau_0 \Subset \tau \subset \tau'$ in
Section~\ref{sec:cross-section}
to be wedge sets in order to connect to hyperbolic geometry and
prove the necessary crossings are essential.
Recall that we refer to the data of the nested
cross-sections from wedge sets (including the bump function~$\psi$, when
relevant) as a good cross-section (Definition~\ref{def:goodcross}), which we refer to as~$\tau$.

\begin{lemma}[Classical join lemma]
  \label{lem:join-classical}
  Let $\tau$ be a good cross section.
  There is a curve~$K_\tau$ and integer $w_\tau$ so that for large enough
  $k,\ell \geq 0$, we have, for all $\vec{x} \in \tau$,
  \begin{enumerate}[(a)]
      \item $[m^k(\vec{x})] \cup [m^\ell(p^k(\vec{x}))] \cup K_\tau
        \reducesto_{w_\tau} [m^{k+\ell}(\vec{x})]$
        \label{item:join-join}
      \item $[m^{k+\ell}(\vec{x})] \cup K_\tau \reducesto_{w_\tau}
        [m^k(\vec{x})] \cup [m^\ell(p^k(\vec{x}))]$.
        \label{item:join-split}
      \end{enumerate}
\end{lemma}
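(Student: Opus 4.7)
The plan is to exploit the identity $m^{k+\ell}(\vec{x}) = m^k(\vec{x}) \cdot m^\ell(p^k(\vec{x}))$ in $\pi_1(UT\Sigma,*)$, which follows from the iterative definition of $q$ and $m^n$ in Definition~\ref{def:homotopy-return}. Projecting to $\pi_1(\Sigma)$ and taking conjugacy classes, the single curve $[m^{k+\ell}(\vec{x})]$ is the free homotopy class of the concatenation of $m^k(\vec{x})$ and $m^\ell(p^k(\vec{x}))$. Passing between a disjoint union of two loops and their product is realized by an oriented smoothing at a common crossing point, so the lemma fundamentally asks us to exhibit essential crossings that realize these smoothings, uniformly in $\vec{x}$, $k$, and $\ell$; the auxiliary curve $K_\tau$ is introduced precisely to make essentiality work.

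I would take $K_\tau$ to be a fixed loop associated to the good cross-section, built from the base geodesic $\delta$ together with a short detour at $*$ wrapping around the gap interval $I \subset \delta$ used to define $\tau'$. In part~(a), an essential crossing of $K_\tau$ with $[m^k(\vec{x})]$ (or $[m^\ell(p^k(\vec{x}))]$) serves as an anchor so that, after a first smoothing, the resulting multi-curve has an essential crossing near the basepoint whose oriented smoothing produces the single loop $[m^{k+\ell}(\vec{x})]$. In part~(b), $K_\tau$ is similarly used to create an essential self-crossing of $[m^{k+\ell}(\vec{x})]$ whose smoothing splits it into components homotopic to $[m^k(\vec{x})]$ and $[m^\ell(p^k(\vec{x}))]$. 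The count of smoothings is uniformly bounded because, by Lemma~\ref{lem:homotopy-finite}, the return trajectories $m(\vec{y})$ for $\vec{y} \in \tau$ take only finitely many values, so only finitely many combinatorial patterns of ``corner'' smoothings need be handled.

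To verify essentiality I would lift to the universal cover $\wt\Sigma$. For large $k,\ell$, the flow arcs of $m^k(\vec{x})$ and $m^\ell(p^k(\vec{x}))$ are long hyperbolic geodesic segments, and their lifts are quasi-geodesics whose endpoints at infinity lie close to fixed points of the relevant deck transformations. The wedge construction of Section~\ref{sec:cross-section} ensures that the closing pieces through $\tau'$ remain in uniformly bounded neighborhoods of lifts of $\delta$, so all relevant endpoints at infinity localize to small arcs near the endpoints of $\wt\delta$ in $\partial_\infty\wt\Sigma$. The linking criterion in Definition~\ref{def:essential-crossing} can then be checked by elementary hyperbolic geometry, once $k,\ell$ are large enough that the flow arcs push the endpoints apart in the required configuration.

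The main obstacle I expect is the uniformity in $\vec{x}$: the trajectory $m(\vec{x})$ varies discontinuously as $\vec{x}$ crosses $\partial\tau$, so the case analysis of which concrete crossings are essential must cover all finitely many return-trajectory patterns at once. Making a single pair $(K_\tau, w_\tau)$ work simultaneously for every configuration is the real combinatorial content of the lemma, reducing ultimately to a finite list of universal-cover linking checks.
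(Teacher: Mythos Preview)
Your proposal has the right high-level picture but misses the central technical obstacle and the mechanism the paper uses to overcome it.

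The core difficulty is not uniformity over finitely many return patterns. The curves $[m^k(\vec{x})]$ and $[m^\ell(p^k(\vec{x}))]$ lift to what the paper calls \emph{broken paths}: alternations of long flow segments and short segments along lifts of~$\delta$, meeting at angles near $\pi/2$. At the junction point $p^k(\vec{x})$ both curves close up along~$\tau' \subset W(\delta,\theta)$, so their central short segments lie on the \emph{same} lift of~$\delta$ with nearby endpoints (both in the interval $(0,\ell(\delta))$). In this situation the endpoints at infinity of the two broken paths need not be linked: Lemma~\ref{lem:brokenpathslinked} only guarantees essential crossing when one short segment extends at least $\kappa(\epsilon)$ farther than the other along~$\delta$. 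So the naive ``smooth at the common basepoint'' step may fail to be an essential smoothing, regardless of how large $k,\ell$ are.

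The paper's fix is to take $K_\tau = N\bigl([\delta] \cup [\delta^{-1}]\bigr)$ for a specific $N$ depending on $\kappa(\epsilon)/\ell(\delta)$. One first smooths $[m^k(\vec{x})]$ against $N$ copies of $[\delta]$ (essential by Lemma~\ref{lem:brokenpathshort}, which says any broken path crosses~$\delta$ essentially); this lengthens the short segment of $\alpha$ by $N\ell(\delta)$, so that now Lemma~\ref{lem:brokenpathslinked} applies to the crossing with $[m^\ell(p^k(\vec{x}))]$. After that join, $N$ copies of $[\delta^{-1}]$ are smoothed back in to undo the homotopy shift. Uniformity in $\vec{x}$ and in $k,\ell$ comes not from finiteness of $m(\vec{y})$ (which is irrelevant since $m^k$ ranges over unboundedly many classes) but from the quantitative hyperbolic estimates in Section~\ref{sec:hyperbolic}, which depend only on the wedge angle~$\epsilon$ and the lower bound $L_0(\epsilon)$ on long-segment length. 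Your outline does not supply these ingredients, and without them the essentiality check at the key join step has no reason to succeed.
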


As a corollary, we will prove a corresponding join lemma for the
smeared return map.

\begin{lemma}[Smeared join lemma]
  \label{lem:join}
  Let $\tau$ be a good cross section.
  There is a curve~$K_\tau$ and weight $w_\tau$ so that for large enough
  $k,\ell \geq 0$, we have, for all $\vec{x} \in \tau$,
  \begin{enumerate}[(a)]
      \item $[M^k(\vec{x})] \cup [M^\ell(P^k(\vec{x}))] \cup K_\tau \reducesto_{w_\tau} [M^{k+\ell}(x)]$
      \item $[M^{k+\ell}(\vec{x})] \cup K_\tau \reducesto_{w_\tau} [M^k(\vec{x})] \cup [M^\ell(P^k(\vec{x}))]$.
  \end{enumerate}
\end{lemma}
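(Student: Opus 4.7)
My plan is to deduce the smeared join lemma from the classical one by unrolling the smeared iterates into convex combinations of classical trajectories and then applying the classical smoothings term by term. Expanding $Q$ as in Equation~\eqref{eq:P-expr}, each smeared iterate decomposes as
\[
 [M^k(\vec x)] = \sum_{\alpha} a_\alpha\,\bigl[m^{n_\alpha}(\vec x)\bigr],
\]
indexed over smeared paths $\alpha$ of length $k$, where $n_\alpha \ge k$ counts the underlying classical first returns along $\alpha$ and $a_\alpha \ge 0$ is a product of $\psi$ and $\bar\psi$ factors summing to~$1$. By the associativity built into the iteration of $Q$, every length-$(k+\ell)$ smeared path $\gamma$ factors uniquely as $\alpha\cdot\beta$ where $\alpha$ has length $k$ from $\vec x$ and $\beta$ has length $\ell$ from $\vec x_\alpha \coloneqq p^{n_\alpha}(\vec x)$, so that $a_\gamma = a_\alpha a_\beta^{(\alpha)}$ and $n_\gamma = n_\alpha + n_\beta^{(\alpha)}$. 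Consequently
\[
 [M^{k+\ell}(\vec x)] = \sum_{\alpha,\beta} a_\alpha a_\beta^{(\alpha)}\bigl[m^{n_\alpha + n_\beta^{(\alpha)}}(\vec x)\bigr]
 \quad\text{and}\quad
 [M^\ell(P^k(\vec x))] = \sum_{\alpha,\beta} a_\alpha a_\beta^{(\alpha)}\bigl[m^{n_\beta^{(\alpha)}}(\vec x_\alpha)\bigr].
\]

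Using $\sum_\beta a_\beta^{(\alpha)} = 1$ to re-index the left-hand side of the smeared join over the same set of pairs, I would rewrite
\[
 [M^k(\vec x)] \cup [M^\ell(P^k(\vec x))]
 = \sum_{\alpha,\beta} a_\alpha a_\beta^{(\alpha)} \Bigl(\bigl[m^{n_\alpha}(\vec x)\bigr]\cup\bigl[m^{n_\beta^{(\alpha)}}(\vec x_\alpha)\bigr]\Bigr).
\]
For $k,\ell$ large enough that each $n_\alpha$ and each $n_\beta^{(\alpha)}$ exceeds the threshold of Lemma~\ref{lem:join-classical}, the classical join lemma applies pairwise to give
\[
 \bigl[m^{n_\alpha}(\vec x)\bigr] \cup \bigl[m^{n_\beta^{(\alpha)}}(\vec x_\alpha)\bigr] \cup K_\tau \reducesto_{w_\tau} \bigl[m^{n_\alpha + n_\beta^{(\alpha)}}(\vec x)\bigr].
\]
Weighting each such smoothing by $a_\alpha a_\beta^{(\alpha)}$ and summing, and noting that $\sum_{\alpha,\beta} a_\alpha a_\beta^{(\alpha)} = 1$ so that a single copy of $K_\tau$ is consumed overall, yields part~(a). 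Part~(b) follows by running the same classical smoothings in the reverse direction.

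The main obstacle I expect is verifying that the constants $K_\tau$ and $w_\tau$ from Lemma~\ref{lem:join-classical} really serve uniformly for all pairs $(\alpha,\beta)$, even though the underlying classical trajectories $m^{n_\alpha}(\vec x)$ and $m^{n_\beta^{(\alpha)}}(\vec x_\alpha)$ sweep out a large family of lengths and endpoints. This uniformity is precisely why the good cross-section in Section~\ref{sec:cross-section} is built from wedge sets around a closed geodesic $\delta$: every return crosses $\tau$ nearly perpendicularly to $\delta$, so the linking-at-infinity condition of Definition~\ref{def:essential-crossing} holds uniformly at the smoothing sites, and a single $K_\tau$ drawn from $\delta$ provides essential crossings to perform the required smoothings regardless of which smeared path is taken. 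Thus all the work is loaded into the classical join lemma, and the smeared statement is obtained by averaging.
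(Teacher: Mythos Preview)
Your proposal is correct and matches the paper's proof essentially line for line: the paper likewise expands $M^k(\vec x)=\sum_{i\ge k}a_i\,m^i(\vec x)$, $P^k(\vec x)=\sum_{i\ge k}a_i\,\vec y_i$, and $M^{k+\ell}(\vec x)=\sum_{i,j}a_ib_{i,j}\,m^i(\vec x)m^j(\vec y_i)$, then distributes and applies Lemma~\ref{lem:join-classical} to each term. Your worry about uniformity of $K_\tau$ and $w_\tau$ is already handled by the statement of the classical join lemma, which gives a single $K_\tau,w_\tau$ valid for all $\vec x\in\tau$ and all sufficiently large return counts; since each $n_\alpha\ge k$ and $n_\beta^{(\alpha)}\ge\ell$, choosing $k,\ell$ large enough suffices.
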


\begin{example}
\label{ex:smearedreturn}
As an example of smeared first return map and to illustrate how the
join lemma is applied, consider the case when the geodesic current
$\mu$ is $\delta_{\gamma}$ for $\gamma$ a closed curve whose lift to the
unit tangent bundle intersects the global cross-section $\tau$ at two
points $\vec{x_0}$ and $\vec{x_1}$. We assume further that $\vec{x_0} \notin \tau_0$,
$\psi(\vec{x_0}) = t \in (0,1)$, and $\vec{x_1} \in \tau_0$.
Then, as illustrated in Figure~\ref{fig:first-return-examp},
$[M^1(\vec{x_0})]$ is a curve $C_{0,1}$ with weight~$1$, since
$p_{\tau}(\vec{x_0})=\vec{x_1} \in \tau_0$.
On the other hand, $[M^1(\vec{x_1})]$ consists of a weighted multi-curve with
two components $C_{1,2}$ and $C_{1,3}$ starting from $x_1$ and landing
at $\vec{x_2}=\vec{x_0}$ and $\vec{x_3}=\vec{x_1}$, with weights $t$ and $1-t$, respectively.
Then (with $\mu = \delta_\gamma$) we have
\begin{align*}
  \psi \mu_\tau &= t \delta_{\vec{x_0}} + \delta_{\vec{x_1}}\\
  R^1(\mu) &= tC_{0,1} + tC_{1,2} + (1-t)C_{1,3}.
\end{align*}

\begin{figure}
\centering{
\resizebox{80mm}{!}{\Huge{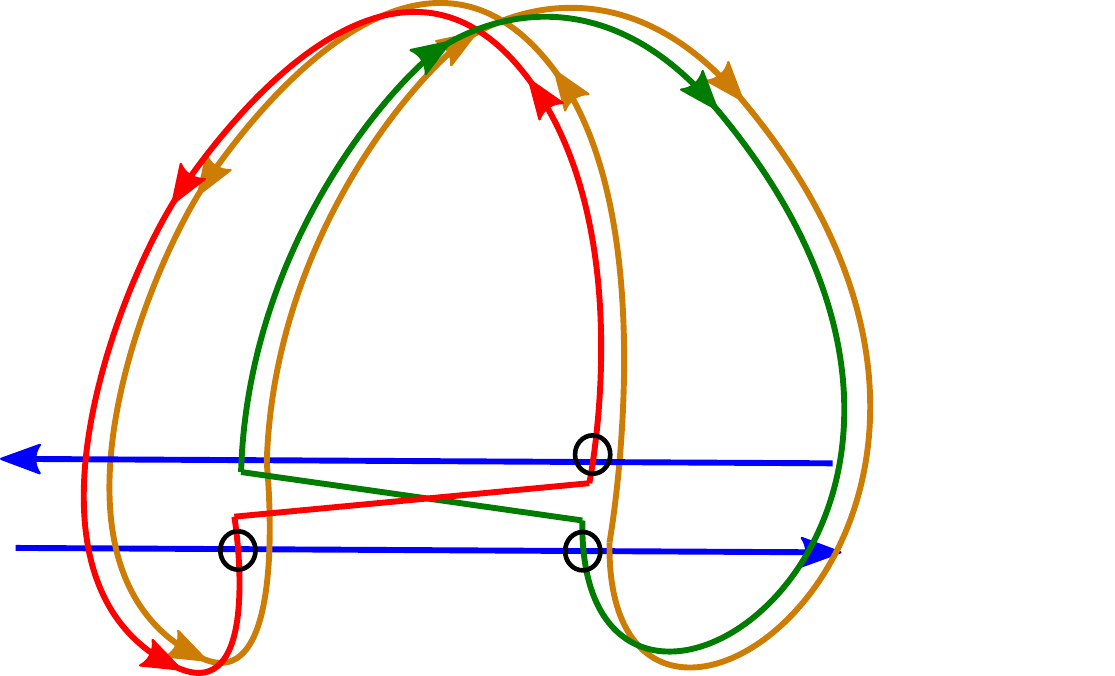}}
}
\caption{Example of a first iteration of the smeared first return map,
  i.e., $[M^1]$ on a geodesic current corresponding to a closed curve
  intersecting the cross-section~$\tau$ twice at points
  $\vec{x_0},\vec{x_1}$ and the cross-section~$\tau_0$ once at point
  $\vec{x_1}$. The weight of the bump function $\psi$ at $\vec{x_0}$
  is $t$. We obtain three weighted curves. $C_{0,1}$ consisting of the
  geodesic trajectory that goes from $\vec{x_0}$ to $\vec{x_1}$ and closes
  off by following the cross-section in some coherent way. $C_{0,1}$ has
  weight 1 by definition of smeared return map, since $\vec{x_1} \in
  \tau_0$. $C_{1,2}$ has weight $t$, whereas $C_{1,3}$ has weight
  $1-t$. }
\label{fig:first-return-examp}
\end{figure}

Now, the join lemma asserts that we can
to join the curves $C_{0,1}$ and $C_{1,2}$, together with an extra
curve $K$, to get $C_{0,2}$. Assuming
all the relevant intersections are essential, we can do it
with $K$ being two copies of~$\delta$, one oriented in each direction,
in the
steps shown in Figure~\ref{fig:join-example}.
\begin{figure}
  \begin{align*}
    \mathcenter{\resizebox{40mm}{!}{\Huge{
\begingroup%
  \makeatletter%
  \providecommand\color[2][]{%
    \errmessage{(Inkscape) Color is used for the text in Inkscape, but the package 'color.sty' is not loaded}%
    \renewcommand\color[2][]{}%
  }%
  \providecommand\transparent[1]{%
    \errmessage{(Inkscape) Transparency is used (non-zero) for the text in Inkscape, but the package 'transparent.sty' is not loaded}%
    \renewcommand\transparent[1]{}%
  }%
  \providecommand\rotatebox[2]{#2}%
  \newcommand*\fsize{\dimexpr\f@size pt\relax}%
  \newcommand*\lineheight[1]{\fontsize{\fsize}{#1\fsize}\selectfont}%
  \ifx\svgwidth\undefined%
    \setlength{\unitlength}{414.20528862bp}%
    \ifx\svgscale\undefined%
      \relax%
    \else%
      \setlength{\unitlength}{\unitlength * \real{\svgscale}}%
    \fi%
  \else%
    \setlength{\unitlength}{\svgwidth}%
  \fi%
  \global\let\svgwidth\undefined%
  \global\let\svgscale\undefined%
  \makeatother%
  \begin{picture}(1,0.77285676)%
    \lineheight{1}%
    \setlength\tabcolsep{0pt}%
    \put(0,0){\includegraphics[width=\unitlength,page=1]{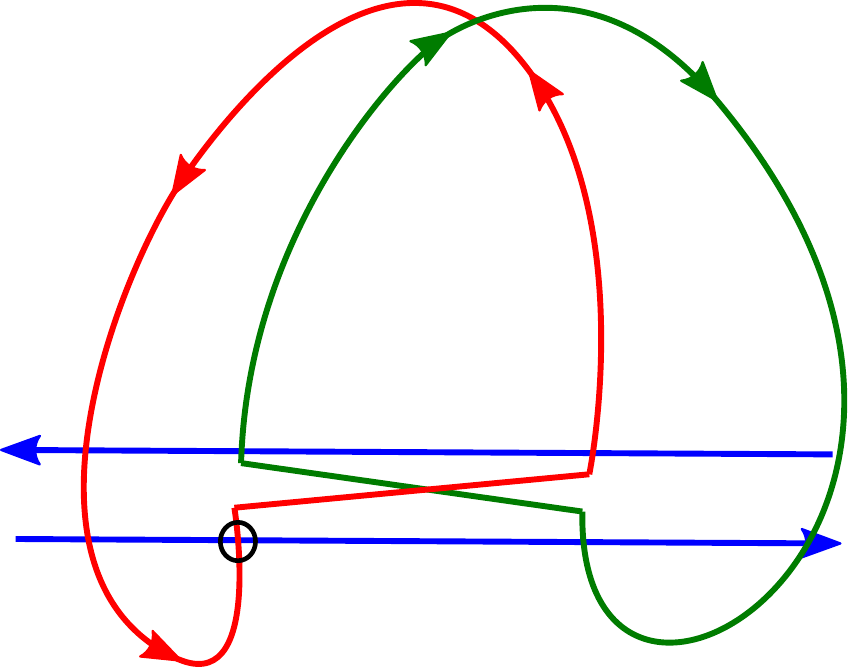}}%
    \put(0.26874801,0.65274748){\color[rgb]{1,0,0}\makebox(0,0)[rt]{\lineheight{1.25}\smash{\begin{tabular}[t]{r}$C_{1,2}$\end{tabular}}}}%
    \put(0.85211048,0.65098466){\color[rgb]{0,0.48627451,0}\makebox(0,0)[lt]{\lineheight{1.25}\smash{\begin{tabular}[t]{l}$C_{0,1}$\end{tabular}}}}%
  \end{picture}%
\endgroup%
}}}\quad
    \searrow \quad\mathcenter{\resizebox{40mm}{!}{\Huge{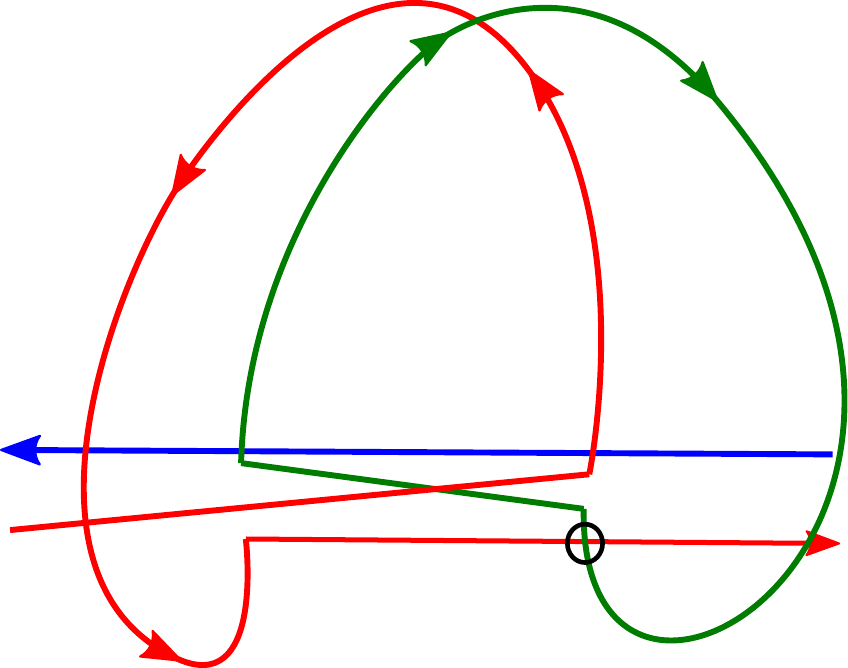}}}\quad
    \searrow \quad\mathcenter{\resizebox{40mm}{!}{\Huge{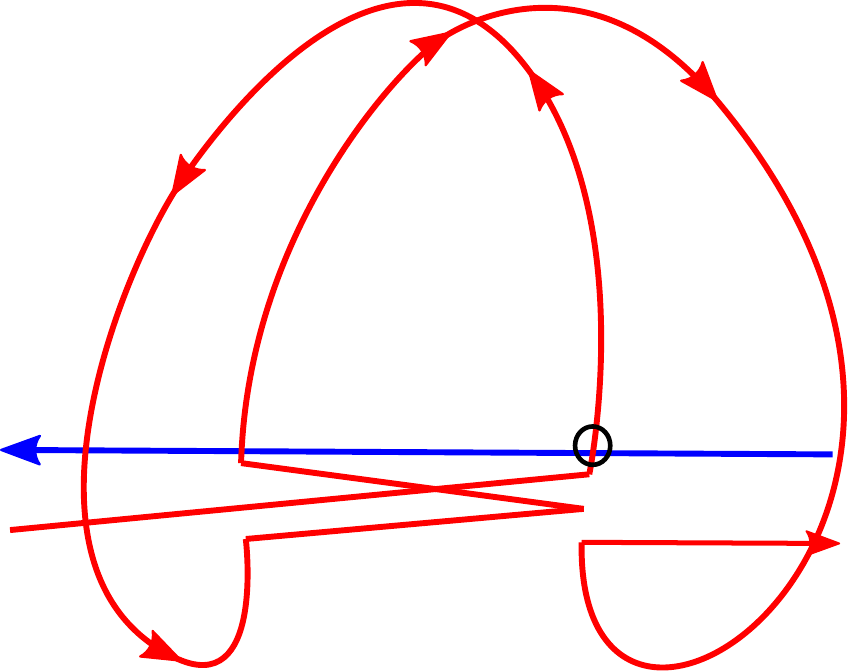}}}\\
    \searrow \quad\mathcenter{\resizebox{40mm}{!}{\Huge{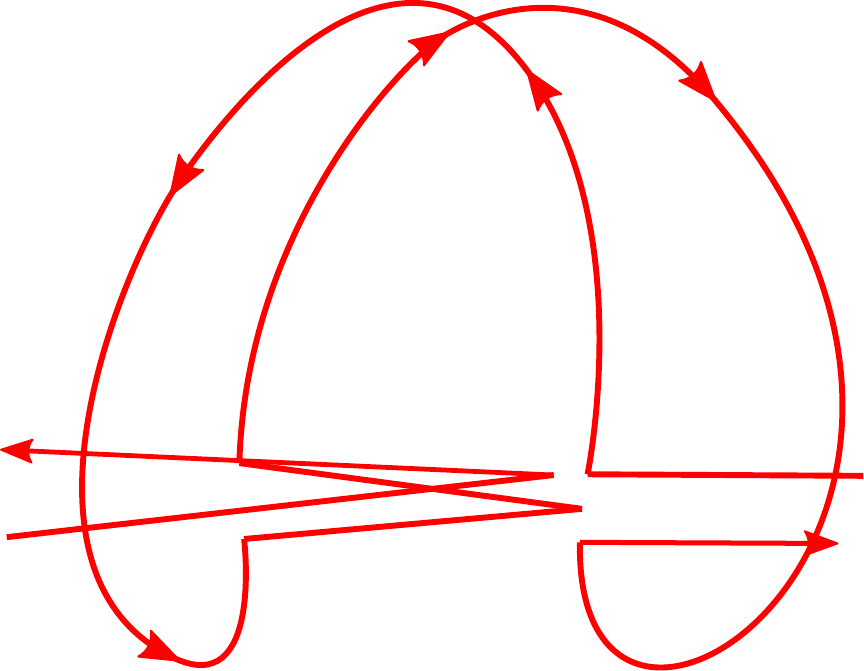}}}\quad
    \simeq \quad\mathcenter{\resizebox{35mm}{!}{\Huge{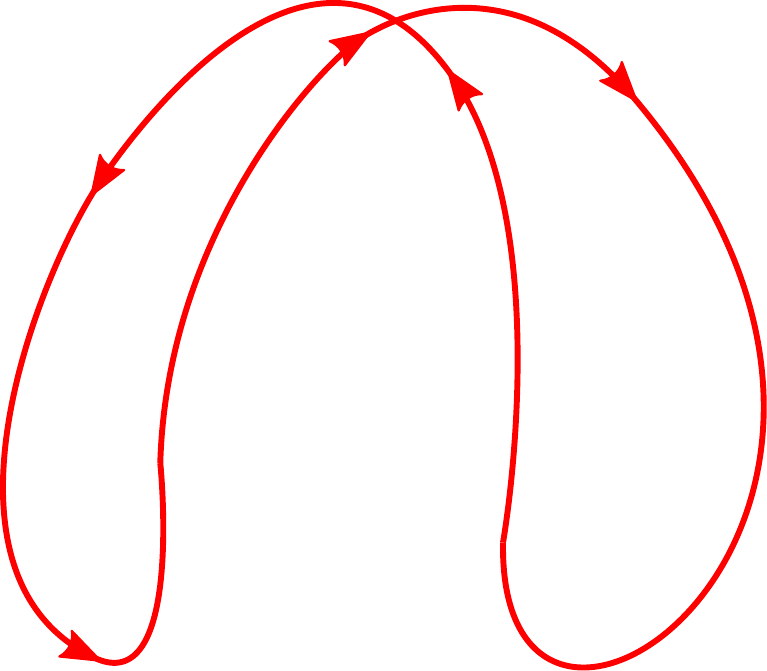}}}
  \end{align*}
  \caption{Applying the Join Lemma in
    Example~\ref{ex:smearedreturn}. At each step, we smooth at the
    circled crossing.}
  \label{fig:join-example}
\end{figure}
In the full proof, to guarantee the analogous intersections are
essential we will add more copies of~$\delta$.
\end{example}

\begin{proof}[Proof of Lemma~\ref{lem:join-classical}]
  First we look at the geometry of a return trajectory $m^k(\vec{x})$
  when $k$ is large, as
  a concrete curve either
  on~$\Sigma$ or lifted to the universal cover.
  Since there is a lower bound on
  the first return time (Lemma~\ref{lem:return-semi-cont}), the $n$-th
  return time grows at least
  linearly in~$k$. We may therefore assume that the portion of
  $m^k(\vec{x})$ that follows $\phi_t(\vec{x})$ is very long. The lift of
  $m^k(\vec{x})$ to the universal cover is thus a \emph{broken path}:
  for some large~$L$, it
  alternates between
  \begin{enumerate}[(a)]
  \item \emph{long} segments of length at least~$L$ following $\phi_t(\vec{x})$ and
  \item \emph{short} segments of some length following~$\delta$,
  \end{enumerate}
  with turns between them that are within $\epsilon$ of a right angle,
  alternating left and right.
  (See Definition~\ref{def:brokenpath} for a precise definition.)

  We study the geometry of broken paths in
  Section~\ref{sec:hyperbolic}. In particular, we prove several lemmas
  there guaranteeing that broken paths intersect essentially in
  certain circumstances. If $L$ is large enough, we have the following
  results.
  \begin{itemize}
  \item A broken path and a lift of~$\delta$ intersect essentially
    (Lemma \ref{lem:brokenpathshort}).
  \item Broken paths with short segments that are different enough in
    length intersect
    essentially (Lemma~\ref{lem:brokenpathslinked}).
  \end{itemize}
  
  We first prove part~\ref{item:join-join} in the Lemma statement. We
  will use the following steps. By convention, $\delta$ is oriented to
  the right, and $\delta^{-1}$ is the same curve oriented to the left.
  Let $\alpha$, $\beta$, and~$\gamma$ be $m^k(\vec x)$ and
  $m^\ell(p^k(\vec x))$, respectively.
  \begin{enumerate}
  \item We start by smoothing $[\alpha]$
    with a large number $N$ of copies of $[\delta]$. Each one of
    these intersections is essential by
    Lemma~\ref{lem:brokenpathshort}. This yields a new curve~$[\alpha_1]$ with
    lift a broken path with an lengthened short segment.
  \item We then smooth $[\alpha_1]$ against~$[\beta]$. The corresponding
    lifts are broken paths with short segments of different enough
    lengths, so
    Lemma~\ref{lem:brokenpathslinked} guarantees that the crossing
    is essential, yielding a new curve~$[\gamma_1]$.
  \item Finally, we smooth $[\gamma_1]$ against $N[\delta^{-1}]$. This returns 
    to the correct homotopy class, again using
    Lemma~\ref{lem:brokenpathshort} to
    guarantee that the crossings are essential. The result
    is~$[\gamma]$, as desired.
  \end{enumerate}

  We need to use a large enough number~$N$ of copies of $\delta$ that
  guarantees that the crossing in the second step above is essential.
  Let $\epsilon$ be the angle of the wedge set, let $\ell$ be the
  length of~$\delta$, and let $\kappa(\epsilon)$ be the constant from
  Lemma~\ref{lem:brokenpathslinked}. Then we claim that it suffices to
  take $N = 2M$ with $M = 1+\lceil\kappa(\epsilon)/\ell \rceil$, so that
  overall constants in the statement are
  \begin{align*}
    K_\tau &= N\bigl([\delta] + [\delta^{-1}]\bigr)\\
    w_\tau &= 2N+1 = 3 + 4\lceil\kappa(\epsilon)/\ell\rceil.
  \end{align*}

  In order to be explicit about how to apply
  Lemmas~\ref{lem:brokenpathshort} and~\ref{lem:brokenpathslinked}, we
  will work
  with concrete lifts of our curves to broken paths in the universal
  cover; to pick out a lift, we work with elements of~$\pi_1$, and so
  pick a basepoint. For concreteness, choose the basepoint $*$ to be
  at the far left end of the segment on $\delta$ defining~$\tau'$.
  (Recall that we removed a short interval to make $\tau'$
  simply-connected.).  We are particularly
  interested in the short segments on the lift of~$\delta$; for that
  purpose, parameterize the lift of~$\delta$ by length in~$\R$,
  with $0$ at the lift of the
  basepoint~$*$ and $\delta$ oriented in the positive direction so
  that $\delta$ itself lifts to a curve ending at $\ell$.
  Given $\gamma \in \pi_1(\Sigma, *)$, we denote by $\ora{\gamma}$ its hyperbolic axis.
  
  Now we state precisely the sequence of smoothings that we will
  perform, illustrating them with slightly schematic figures of both
  the curves on the
  surface and of the corresponding broken paths realizing the lifts in
  the universal cover. At each step we circle the crossings that we
  smooth at the next step.
  \begin{enumerate}[(i)]
  \item Let $\delta, \alpha$ and $\beta$ be elements of $\pi_1(S)$ representing the transversal curve, $m^k(\vec{x})$ and $m^\ell(p^k(\vec{x}))$, respectively:
  \[\mathcenter{\fontsize{12pt}{12pt}\selectfont
\begingroup%
  \makeatletter%
  \providecommand\color[2][]{%
    \errmessage{(Inkscape) Color is used for the text in Inkscape, but the package 'color.sty' is not loaded}%
    \renewcommand\color[2][]{}%
  }%
  \providecommand\transparent[1]{%
    \errmessage{(Inkscape) Transparency is used (non-zero) for the text in Inkscape, but the package 'transparent.sty' is not loaded}%
    \renewcommand\transparent[1]{}%
  }%
  \providecommand\rotatebox[2]{#2}%
  \newcommand*\fsize{\dimexpr\f@size pt\relax}%
  \newcommand*\lineheight[1]{\fontsize{\fsize}{#1\fsize}\selectfont}%
  \ifx\svgwidth\undefined%
    \setlength{\unitlength}{188.43586126bp}%
    \ifx\svgscale\undefined%
      \relax%
    \else%
      \setlength{\unitlength}{\unitlength * \real{\svgscale}}%
    \fi%
  \else%
    \setlength{\unitlength}{\svgwidth}%
  \fi%
  \global\let\svgwidth\undefined%
  \global\let\svgscale\undefined%
  \makeatother%
  \begin{picture}(1,0.78689092)%
    \lineheight{1}%
    \setlength\tabcolsep{0pt}%
    \put(0,0){\includegraphics[width=\unitlength,page=1]{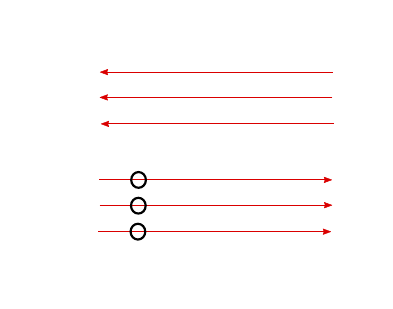}}%
    \put(0.45266109,0.76078764){\color[rgb]{0,0.50588235,0}\makebox(0,0)[t]{\lineheight{1.25}\smash{\begin{tabular}[t]{c}$\alpha$\end{tabular}}}}%
    \put(0.70557713,0.75975979){\color[rgb]{0,0,0.88235294}\makebox(0,0)[t]{\lineheight{1.25}\smash{\begin{tabular}[t]{c}$\beta$\end{tabular}}}}%
    \put(0.86251923,0.24658387){\color[rgb]{0.85490196,0,0}\transparent{0.90980393}\makebox(0,0)[lt]{\lineheight{1.25}\smash{\begin{tabular}[t]{l}$\delta$\end{tabular}}}}%
    \put(0.24276169,0.52093746){\color[rgb]{0.85490196,0,0}\makebox(0,0)[rt]{\lineheight{1.25}\smash{\begin{tabular}[t]{r}$\delta^{-1}$\end{tabular}}}}%
    \put(0,0){\includegraphics[width=\unitlength,page=2]{sm1.pdf}}%
  \end{picture}%
\endgroup%
}
  \qquad
  \mathcenter{\input{hyperbolic-draws/join-01.pgf}}\]
  The endpoints of the central short segments of lifts of~$\alpha$ and~$\beta$ are both
  in $(0,\ell)$, in the parameterization above.
\item Smooth $[\alpha]$ with $N[\delta]$ a total of $N$ times to get
    $[\alpha_1]$ with $\alpha_1 = \delta^M\alpha\delta^M$. The
    crossings are essential by
    Lemma~\ref{lem:brokenpathshort}.
    \[
      \mathcenter{\fontsize{12pt}{12pt}\selectfont
\begingroup%
  \makeatletter%
  \providecommand\color[2][]{%
    \errmessage{(Inkscape) Color is used for the text in Inkscape, but the package 'color.sty' is not loaded}%
    \renewcommand\color[2][]{}%
  }%
  \providecommand\transparent[1]{%
    \errmessage{(Inkscape) Transparency is used (non-zero) for the text in Inkscape, but the package 'transparent.sty' is not loaded}%
    \renewcommand\transparent[1]{}%
  }%
  \providecommand\rotatebox[2]{#2}%
  \newcommand*\fsize{\dimexpr\f@size pt\relax}%
  \newcommand*\lineheight[1]{\fontsize{\fsize}{#1\fsize}\selectfont}%
  \ifx\svgwidth\undefined%
    \setlength{\unitlength}{119.70628389bp}%
    \ifx\svgscale\undefined%
      \relax%
    \else%
      \setlength{\unitlength}{\unitlength * \real{\svgscale}}%
    \fi%
  \else%
    \setlength{\unitlength}{\svgwidth}%
  \fi%
  \global\let\svgwidth\undefined%
  \global\let\svgscale\undefined%
  \makeatother%
  \begin{picture}(1,1.25306674)%
    \lineheight{1}%
    \setlength\tabcolsep{0pt}%
    \put(0,0){\includegraphics[width=\unitlength,page=1]{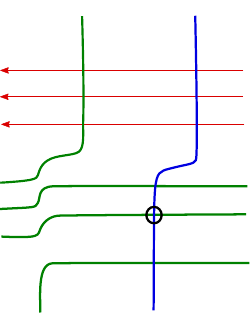}}%
    \put(0.32589353,1.20818034){\color[rgb]{0,0.50588235,0}\makebox(0,0)[t]{\lineheight{1.25}\smash{\begin{tabular}[t]{c}$\alpha\delta^4$\end{tabular}}}}%
    \put(0.77891396,1.21028428){\color[rgb]{0,0,0.88235294}\makebox(0,0)[t]{\lineheight{1.25}\smash{\begin{tabular}[t]{c}$\beta$\end{tabular}}}}%
    \put(0,0){\includegraphics[width=\unitlength,page=2]{sm2.pdf}}%
  \end{picture}%
\endgroup%
}
      \qquad
      \mathcenter{\input{hyperbolic-draws/join-02.pgf}}
    \]
    The endpoints of the central short segment of the lift of~$\alpha_1$ are
    in $(-M\ell,-(M-1)\ell)$ and in $(M\ell,(M+1)\ell)$.
  \item Smooth $[\alpha \delta^N] \cup [\beta]$ at a middle crossing
    to make $[\gamma_1] = [\alpha \delta^{M} \beta \delta^{M}]$.
    Since $(M-1)\ell \ge \kappa(\epsilon)$, the crossing is
    essential by Lemma~\ref{lem:brokenpathslinked}.
  \[\mathcenter{\fontsize{12pt}{12pt}\selectfont
\begingroup%
  \makeatletter%
  \providecommand\color[2][]{%
    \errmessage{(Inkscape) Color is used for the text in Inkscape, but the package 'color.sty' is not loaded}%
    \renewcommand\color[2][]{}%
  }%
  \providecommand\transparent[1]{%
    \errmessage{(Inkscape) Transparency is used (non-zero) for the text in Inkscape, but the package 'transparent.sty' is not loaded}%
    \renewcommand\transparent[1]{}%
  }%
  \providecommand\rotatebox[2]{#2}%
  \newcommand*\fsize{\dimexpr\f@size pt\relax}%
  \newcommand*\lineheight[1]{\fontsize{\fsize}{#1\fsize}\selectfont}%
  \ifx\svgwidth\undefined%
    \setlength{\unitlength}{126.606022bp}%
    \ifx\svgscale\undefined%
      \relax%
    \else%
      \setlength{\unitlength}{\unitlength * \real{\svgscale}}%
    \fi%
  \else%
    \setlength{\unitlength}{\svgwidth}%
  \fi%
  \global\let\svgwidth\undefined%
  \global\let\svgscale\undefined%
  \makeatother%
  \begin{picture}(1,1.18261632)%
    \lineheight{1}%
    \setlength\tabcolsep{0pt}%
    \put(0,0){\includegraphics[width=\unitlength,page=1]{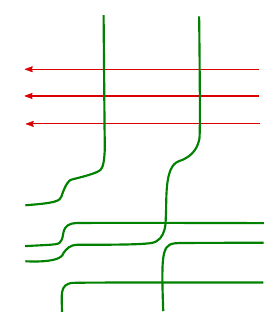}}%
    \put(0.3935756,1.14160613){\color[rgb]{0,0.50588235,0}\transparent{0.85882354}\makebox(0,0)[t]{\lineheight{1.25}\smash{\begin{tabular}[t]{c}$\alpha\delta^2\beta\delta^2$\end{tabular}}}}%
    \put(0,0){\includegraphics[width=\unitlength,page=2]{sm3.pdf}}%
  \end{picture}%
\endgroup%
}
    \qquad
    \mathcenter{\input{hyperbolic-draws/join-03.pgf}}\]
  Here, in the picture in
  the universal cover, two different lifts of $\gamma_1$ are shown
  (one dashed), to make it clearer what happened in the smoothing;
  these are the lifts of $\delta^{M} \alpha \delta^{M}
  \beta$ (solid) and  $\beta \delta^{M} \alpha
  \delta^{M}$ (dashed).
\item Smooth $[\gamma_1]$ with
  $N[\delta^{-1}]$ a total of
  $N$ times at appropriate crossings to make
  $[\alpha\beta] = m^{k+\ell}(\vec x)$. The crossings are essential by
  Lemma~\ref{lem:brokenpathshort}.
    \[\mathcenter{\fontsize{12pt}{12pt}\selectfont
\begingroup%
  \makeatletter%
  \providecommand\color[2][]{%
    \errmessage{(Inkscape) Color is used for the text in Inkscape, but the package 'color.sty' is not loaded}%
    \renewcommand\color[2][]{}%
  }%
  \providecommand\transparent[1]{%
    \errmessage{(Inkscape) Transparency is used (non-zero) for the text in Inkscape, but the package 'transparent.sty' is not loaded}%
    \renewcommand\transparent[1]{}%
  }%
  \providecommand\rotatebox[2]{#2}%
  \newcommand*\fsize{\dimexpr\f@size pt\relax}%
  \newcommand*\lineheight[1]{\fontsize{\fsize}{#1\fsize}\selectfont}%
  \ifx\svgwidth\undefined%
    \setlength{\unitlength}{120.79714479bp}%
    \ifx\svgscale\undefined%
      \relax%
    \else%
      \setlength{\unitlength}{\unitlength * \real{\svgscale}}%
    \fi%
  \else%
    \setlength{\unitlength}{\svgwidth}%
  \fi%
  \global\let\svgwidth\undefined%
  \global\let\svgscale\undefined%
  \makeatother%
  \begin{picture}(1,1.21495285)%
    \lineheight{1}%
    \setlength\tabcolsep{0pt}%
    \put(0,0){\includegraphics[width=\unitlength,page=1]{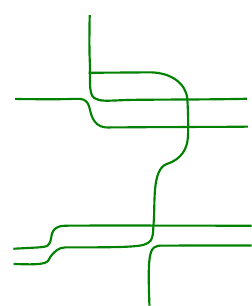}}%
    \put(0.35247498,1.17201646){\color[rgb]{0,0.50588235,0}\makebox(0,0)[t]{\lineheight{1.25}\smash{\begin{tabular}[t]{c}$\alpha\beta$\end{tabular}}}}%
    \put(0,0){\includegraphics[width=\unitlength,page=2]{sm4.pdf}}%
  \end{picture}%
\endgroup%
}\qquad\mathcenter{\input{hyperbolic-draws/join-04.pgf}}\]
  \item The result is $[\alpha \beta]$ as desired.
    \[
      \fontsize{12pt}{12pt}\selectfont
\begingroup%
  \makeatletter%
  \providecommand\color[2][]{%
    \errmessage{(Inkscape) Color is used for the text in Inkscape, but the package 'color.sty' is not loaded}%
    \renewcommand\color[2][]{}%
  }%
  \providecommand\transparent[1]{%
    \errmessage{(Inkscape) Transparency is used (non-zero) for the text in Inkscape, but the package 'transparent.sty' is not loaded}%
    \renewcommand\transparent[1]{}%
  }%
  \providecommand\rotatebox[2]{#2}%
  \newcommand*\fsize{\dimexpr\f@size pt\relax}%
  \newcommand*\lineheight[1]{\fontsize{\fsize}{#1\fsize}\selectfont}%
  \ifx\svgwidth\undefined%
    \setlength{\unitlength}{68.15916584bp}%
    \ifx\svgscale\undefined%
      \relax%
    \else%
      \setlength{\unitlength}{\unitlength * \real{\svgscale}}%
    \fi%
  \else%
    \setlength{\unitlength}{\svgwidth}%
  \fi%
  \global\let\svgwidth\undefined%
  \global\let\svgscale\undefined%
  \makeatother%
  \begin{picture}(1,2.18198822)%
    \lineheight{1}%
    \setlength\tabcolsep{0pt}%
    \put(0,0){\includegraphics[width=\unitlength,page=1]{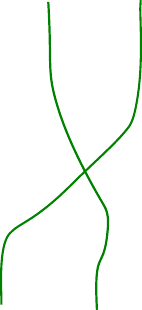}}%
  \end{picture}%
\endgroup%

    \]
  \end{enumerate}

  This completes the proof of
  part~\ref{item:join-join} of the statement. 
  Part~\ref{item:join-split} is very similar. Precisely, we do the
  following steps.
  \begin{enumerate}[(i$^\prime$)]
  \item Let $\delta = \tau$, $\alpha = m^k(\vec x)$, and $\beta =
    m^\ell(p^k(\vec x))$ be as before, so that we start with
    $[\alpha\beta] = [m^{k+\ell}(\vec x)]$.
  \item Use Lemma~\ref{lem:brokenpathshort} to smooth with $N[\delta]$
    a total of $N$ times to get $[\gamma_1]$ with
    $\gamma_1 = \delta^M\alpha\beta\delta^M$. If we set
    $\gamma_2 = \beta\delta^N\alpha$, then $[\gamma_1] = [\gamma_2]$,
    but these two group elements give different canonical lifts to the
    universal cover $\ora{\gamma_1}$ and $\ora{\gamma_2}$: the endpoints of the primary ``short segment''
    of $\ora{\gamma_1}$ are in $(-M\ell,-(M-1)\ell)$ and $(M\ell,(M+1)\ell)$ on
    the lift of~$\delta$, while the endpoints of the (zero-length) primary
    ``short segment'' of $\ora{\gamma_2}$ are at the same point in~$(0,\ell)$.  Therefore, $\ora{\gamma_1}$ and $\ora{\gamma_2}$ cross by
    Lemma~\ref{lem:brokenpathslinked}.
  \item Let $\alpha_1 = \delta^M\alpha$ and $\beta_1 =
    \beta\delta^M$ so $\gamma_1 = \alpha_1\beta_1$. We will now verify that the lifts $\ora{\alpha_1}$ and $\ora{\beta_1}$ are
parallel in the terminology of Proposition~\ref{prop:crossings-triality}. 
By Lemma~\ref{lem:brokenpathshort},
$\ora{\alpha_1}$ and $\ora{\beta_1}$ both cross $\ora{\delta}$ in the
same sense, so they cannot be anti-parallel. Recall that
$\ora{\gamma_1}$ and $\ora{\gamma_2}$ cross. Hence, by Lemma~\ref{lem:parallel_implies_crossing}, $\ora{\alpha_1}$ and $\ora{\beta_1}$ cannot cross and so must be parallel. By Proposition~\ref{prop:crossings-triality},
  smoothing $[\gamma_1]$ yields $[\alpha_1] \cup
    [\beta_1]$.

  \item Smooth $[\alpha_1]$ and $[\beta_1]$ each $M$ times with
    $M[\delta^{-1}]$ to make $[\alpha]$ and $[\beta]$, respectively, using
    Lemma~\ref{lem:brokenpathshort}.
  \item The result is $[\alpha] \cup [\beta]$ as desired. \qedhere
  \end{enumerate}
\end{proof}

\begin{proof}[Proof of Lemma~\ref{lem:join}]
  By definition of $M^k$ and $P^k$, we have non-negative
  constants $K$, $a_i$, $L$, and~$b_{i,j}$ so that
  \begin{align*}
    M^k(\vec{x}) &= \sum_{i=k}^K a_i m^i(\vec{x}) &
      P^k(\vec{x}) &= \sum_{i=k}^K a_i \vec{y_i}\\
    M^\ell(\vec{y_i}) &= \sum_{j=\ell}^L b_{i,j} m^j(\vec{y_i}) &
    M^{k+\ell}(\vec{x}) &= \sum_{i=k}^K \sum_{j=l}^L a_i b_{i,j} m^i(\vec{x}) m^j(\vec{y_i}).
  \end{align*}
  Furthermore, $\sum_i a_i = 1$ and, for fixed~$i$, $\sum_j b_{i,j} = 1$.
  The result follows by distributing and applying
  Lemma~\ref{lem:join-classical} repeatedly.
\end{proof}

As an immediate consequence, we have the following.
\begin{proposition}\label{prop:subadditivity} For a fixed good cross-section~$\tau$ as constructed above and for every curve
  functional~$f$ satisfying quasi-smoothing and convex union,
  there is a constant $\kappa(\tau)$ so that, for sufficiently large
  $k,\ell$ and every geodesic current~$\mu$, we have
  \[
    f^{k+\ell}_{\tau}(\mu)
      \le f^k_{\tau}(\mu) + f^\ell_{\tau}(\mu) + \kappa(\tau)\mu(\psi\tau).
  \]
\end{proposition}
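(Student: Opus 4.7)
The plan is to apply the smeared join lemma pointwise on the cross-section~$\tau$ and then integrate against $\psi\mu_\tau$. Concretely, by Lemma~\ref{lem:join}, for all sufficiently large $k,\ell$ and all $\vec x \in \tau$, there is a weighted smoothing sequence of total weight $w_\tau$ taking $[M^k(\vec x)] \cup [M^\ell(P^k(\vec x))] \cup K_\tau$ to $[M^{k+\ell}(\vec x)]$. Scaling this pointwise relation by $\psi(\vec x)$ and integrating against $d\mu_\tau(\vec x)$ yields, in $\R\Curves^+(S)$, a weighted reduction of total smoothing weight $w_\tau\,\mu(\psi\tau)$ from
\[
R^k(\mu)\ \cup\ B(\mu)\ \cup\ \mu(\psi\tau)\, K_\tau
\]
to $R^{k+\ell}(\mu)$, where I write $B(\mu) \coloneqq \int_\tau [M^\ell(P^k(\vec x))]\,\psi(\vec x)\,d\mu_\tau(\vec x)$.

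The key step is to identify $B(\mu)$ with $R^\ell(\mu)$. For this I would use Proposition~\ref{prop:smeared-invariance}, which states that $\psi\mu_\tau$ is invariant under the smeared return map~$P$, hence under each iterate $P^k$. Rewriting $B(\mu)$ as the integral of $[M^\ell(\vec y)]$ against the pushforward $P^k_*(\psi\mu_\tau)$ and applying this invariance, one obtains
\[
B(\mu)\ =\ \int_\tau [M^\ell(\vec y)]\,\psi(\vec y)\,d\mu_\tau(\vec y)\ =\ R^\ell(\mu).
\]
Combining the two displays gives the smoothing relation
\[
R^k(\mu)\ \cup\ R^\ell(\mu)\ \cup\ \mu(\psi\tau)\,K_\tau\ \reducesto_{w_\tau\,\mu(\psi\tau)}\ R^{k+\ell}(\mu).
\]

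To finish, I would apply the three hypotheses on~$f$ in sequence. Weighted quasi-smoothing with constant~$R$ (from Proposition~\ref{prop:weightextend}) gives $f(R^{k+\ell}(\mu)) \le f(R^k(\mu) \cup R^\ell(\mu) \cup \mu(\psi\tau)K_\tau) + w_\tau R\,\mu(\psi\tau)$; convex union bounds this by $f(R^k(\mu)) + f(R^\ell(\mu)) + f(\mu(\psi\tau) K_\tau) + w_\tau R\,\mu(\psi\tau)$; and homogeneity converts the middle term into $\mu(\psi\tau)\,f(K_\tau)$. Setting $\kappa(\tau) \coloneqq w_\tau R + f(K_\tau)$, which depends only on~$\tau$ (and $f$), yields the desired inequality.

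The main obstacle is the passage from the pointwise smeared join relation to its integrated version: one must be confident that scaling a weighted smoothing by $\psi(\vec x)$ and integrating against the finite Radon measure $\mu_\tau$ produces a legitimate weighted smoothing of the integrated configurations in $\R\Curves^+(S)$. This is not hard once one observes that $[M^{k+\ell}]$ and its join-lemma partners take values in a common finite-dimensional subspace (Lemma~\ref{lem:smeared-iterate-finite}), so the integration reduces to a finite sum of scaled pointwise smoothings; together with the $P$-invariance trick above, this is the only nontrivial structural ingredient.
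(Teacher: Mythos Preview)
Your proposal is correct and follows essentially the same route as the paper: apply the smeared join lemma pointwise, integrate against $\psi\mu_\tau$, use the $P$-invariance of $\psi\mu_\tau$ (Proposition~\ref{prop:smeared-invariance}) to identify the second term with $R^\ell(\mu)$, and then apply quasi-smoothing and convex union to extract the same constant $\kappa(\tau)=f(K_\tau)+Rw_\tau$. Your explicit discussion of why the integrated smoothing relation is legitimate (via the finite-dimensionality from Lemma~\ref{lem:smeared-iterate-finite}) is a helpful elaboration of a step the paper leaves implicit; note also that your appeal to homogeneity for the term $f(\mu(\psi\tau)K_\tau)$ is used tacitly in the paper as well, and is available since in context $f$ is the weighted functional of Theorem~\ref{thm:weightconvex}.
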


\begin{proof}
  We will prove this with $\kappa(\tau) = f(K_\tau) + R w_\tau$, where
  $K_\tau$ and $w_\tau$ are from Lemma~\ref{lem:join} and $R$ is the
  quasi-smoothing constant from Equation~\eqref{eq:quasismoothing}.

  We have
  \begin{align*}
    f^{k+\ell}_\tau(\mu)
      &= f\left(\int_\tau M^{k+\ell}(\vec{x}) \psi(\vec{x})\mu(\vec{x})\right)\\
      &\le f\left(\int_\tau \bigl(M^k(\vec{x}) + M^\ell(P^k(\vec{x})) + K_\tau\bigr) \psi(x)\mu(x)\right)
        + \int_\tau R w_\tau \psi(\vec{x}) \mu(\vec{x})\\
      &\le f\left(\int_\tau M^k(x)\psi(\vec{x})\mu(\vec{x})\right)
        + f\left(\int_\tau M^\ell(P^k(\vec{x}))\psi(\vec{x})\mu(\vec{x})\right)
        + \kappa(\tau)\int_\tau \psi(\vec{x})\mu(\vec{x})\\
      &= f^k(\mu) + f\left(\int_\tau M^\ell(\vec{x})P^k_*(\psi(\vec{x})\mu(\vec{x}))\right) +
          \kappa(\tau)\mu(\psi\tau)\\
      &= f^k(\mu) + f^\ell(\mu) + \kappa(\tau)\mu(\psi\tau),
  \end{align*}
  where we use, successively:
  \begin{itemize}
  \item the definition of $f^{k+\ell}$;
  \item  Lemma~\ref{lem:join} and the quasi-smoothing property of~$f$;
  \item the convex union property of~$f$ and the definition of $\kappa(\tau)$;
  \item change of variables and the definitions of $f^k$ and $\mu(\psi\tau)$; and
  \item Proposition~\ref{prop:smeared-invariance} and the
    definition of~$f^\ell$.\qedhere
  \end{itemize}
\end{proof}

We recall a slight variation of Fekete's lemma, which follows from
standard versions \cite[Theorem~22]{BE52:Fekete}.

\begin{lemma}[Fekete's lemma]
  \label{lem:fekete}
  Let $(a_n)_{n=1}^\infty$ be a sequence of real numbers and suppose there exists $N$ such that for all $m,n \geq N$, 
  $a_{n+m} \le a_n + a_m$. Then
  \[
    \lim_{n \to \infty}\frac{a_n}{n} \quad\text{exists and is equal
      to}\quad
    \inf_{n\ge N} \frac{a_n}{n}.
  \]
\end{lemma}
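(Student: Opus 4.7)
The plan is to reduce to the classical Fekete lemma by showing both
$\limsup a_n/n \le \inf_{n\ge N} a_n/n$ and $\liminf a_n/n \ge \inf_{n\ge N} a_n/n$. Let
$L \coloneqq \inf_{n \ge N} a_n/n \in [-\infty,\infty)$. The inequality $\liminf a_n/n \ge L$ is
immediate from the definition of $L$ as an infimum over all $n \ge N$.

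For the reverse inequality, fix any $\ell > L$, and choose $m \ge N$
with $a_m/m < \ell$. For $n \ge N + m$, I would write $n = qm + r$ where
$q \coloneqq \lfloor (n - N)/m \rfloor$ and $r \coloneqq n - qm$; then
$N \le r < N + m$, and in particular $r \ge N$ and each of $r + jm$ for
$j = 0,1,\dots,q$ is $\ge N$. Applying the subadditivity hypothesis
$a_{s+t} \le a_s + a_t$ (valid whenever $s,t \ge N$) repeatedly along
the decomposition $qm + r = m + ((q-1)m + r) = \cdots$ gives
\[
  a_n \le q\, a_m + a_r.
\]

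Dividing by $n$ and letting $n \to \infty$: since $q/n \to 1/m$ and
$r$ ranges over the finite set $\{N, N+1, \dots, N+m-1\}$, so $a_r$ is
bounded and $a_r/n \to 0$. Hence
\[
  \limsup_{n \to \infty} \frac{a_n}{n} \le \frac{a_m}{m} < \ell.
\]
Taking $\ell \downarrow L$ yields $\limsup a_n/n \le L$, which combined
with the liminf bound shows $\lim_{n\to\infty} a_n/n$ exists and equals~$L$.

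The only mild subtlety, and the thing to be careful about, is the
index range: the subadditivity hypothesis only applies when
\emph{both} summands are $\ge N$, so one must arrange a decomposition
of $n$ into pieces $m, m, \dots, m, r$ in which all intermediate
partial sums $r, r+m, r+2m, \dots, r+qm = n$ exceed $N$. This is
automatic once $r \ge N$, which is why I choose $r$ in the shifted
range $[N, N+m)$ rather than the usual $[0, m)$. No other ingredient
is needed; the argument is otherwise identical to the classical
Fekete lemma.
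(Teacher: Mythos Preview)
Your proof is correct. The shifted decomposition $n = qm + r$ with $r \in [N, N+m)$ is exactly the right adjustment to ensure every application of subadditivity has both summands $\ge N$, and your verification of this point is careful and complete. The case $L = -\infty$ is also handled, since for any $\ell$ you can find $m$ with $a_m/m < \ell$ and conclude $\limsup a_n/n < \ell$.

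The paper does not actually prove this lemma: it simply states that the variant ``follows from standard versions'' and cites de~Bruijn--Erd\H{o}s. So your argument is more than the paper provides---you have written out the classical Fekete proof with the needed index shift, which is precisely what the citation would unpack to.
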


Finally, we can show that the limit defining the extension of $f$ exists.

\begin{proposition}
For any curve functional~$f$ satisfying quasi-smoothing and convex union,
the limit defining~$f_\tau$ in
Equation~\eqref{eq:ftau} exists.
\label{prop:limitexists}
\end{proposition}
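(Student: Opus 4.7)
The plan is to deduce the existence of the limit directly from Fekete's lemma (Lemma~\ref{lem:fekete}), given Proposition~\ref{prop:subadditivity}. Fix $\mu \in \GC^+(S)$ and set the constant
\[
C \coloneqq \kappa(\tau)\mu(\psi\tau),
\]
which depends on $\mu$ and $\tau$ but not on $n$. I would then introduce the shifted sequence $a_n \coloneqq f^n_\tau(\mu) + C$. Proposition~\ref{prop:subadditivity} gives
\[
f^{k+\ell}_\tau(\mu) \le f^k_\tau(\mu) + f^\ell_\tau(\mu) + C
\]
for all sufficiently large $k,\ell$; adding $C$ to both sides and regrouping, this is exactly the subadditivity inequality $a_{k+\ell} \le a_k + a_\ell$. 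Thus $(a_n)$ is subadditive past some threshold $N$.

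Lemma~\ref{lem:fekete} then yields the existence of $\lim_{n\to\infty} a_n/n$. Since $C/n \to 0$, the sequences $a_n/n$ and $f^n_\tau(\mu)/n$ differ by a vanishing term, so
\[
f_\tau(\mu) = \lim_{n\to\infty} \frac{f^n_\tau(\mu)}{n} = \lim_{n\to\infty} \frac{a_n}{n}
\]
exists, completing the argument.

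The main work has already been carried out in Proposition~\ref{prop:subadditivity}; what remains here is essentially bookkeeping, namely absorbing the additive error term $\kappa(\tau)\mu(\psi\tau)$ into an honestly subadditive sequence. One subtlety worth flagging: Fekete's lemma, as formulated in the excerpt, identifies the limit with an infimum in $[-\infty,\infty)$, so a priori the limit could be $-\infty$. Finiteness (and non-negativity) of $f_\tau(\mu)$, as required for the $\R_{\ge 0}$-valued extension promised by Theorem~\ref{thm:weightconvex}, is not addressed by this proposition and will need to come from separate considerations (for instance, via the continuity of the $f^n_\tau$ together with density of weighted multi-curves in $\GC^+(S)$, on which $f$ takes finite real values).
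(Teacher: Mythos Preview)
Your argument is correct and is exactly the paper's approach: apply Lemma~\ref{lem:fekete} to the shifted sequence $f^k_\tau(\mu) + \kappa(\tau)\mu(\psi\tau)$, whose eventual subadditivity is precisely Proposition~\ref{prop:subadditivity}. Your observation about the $-\infty$ possibility is apt; the paper likewise defers finiteness to the later uniform-approximation and continuity arguments.
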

\begin{proof}
Use Proposition \ref{prop:subadditivity} and apply Lemma
\ref{lem:fekete} to the sequence
$f_\tau^k(\mu) + \kappa(\tau)\mu(\psi\tau)$. 
\end{proof}


\section{Continuity of the extension}
\label{sec:continuity}

In order to prove continuity of the extension, we will prove continuity of
$f^k_\tau$, and then get upper and lower
bounds on the limit $f_\tau(\mu)$ in terms of $f^k_\tau(\mu)$.
Proposition~\ref{prop:subadditivity} lets us use Fekete's lemma to get upper
bounds. To get lower bounds, we have the following.

\begin{proposition}\label{prop:superadditivity}
  For a fixed good cross-section~$\tau$ and any weighted curve functional~$f$ satisfying
  homogeneity, weighted quasi-smoothing, and convex union,
  there is a constant $K(\mu) = \kappa(\tau)\mu(\psi\tau)$ so that
  for all sufficiently large~$k$ and every geodesic
  current~$\mu$ we have
  \[
    2f^{k}_{\tau}(\mu) \leq f^{2k}_{\tau}(\mu) + K(\mu).
  \]
\end{proposition}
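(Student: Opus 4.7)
The plan is to apply the ``splitting'' direction of the Smeared Join Lemma (Lemma~\ref{lem:join}(b)) with $\ell = k$, then integrate over $\vec x \in \tau$ against the measure $\psi \, d\mu_\tau$, and exploit invariance of $\psi \mu_\tau$ under the smeared first return map~$P$ together with homogeneity of~$f$. Concretely, for sufficiently large $k$, the lemma gives a pointwise relation
\[
[M^{2k}(\vec x)] \cup K_\tau \reducesto_{w_\tau} [M^k(\vec x)] \cup [M^k(P^k(\vec x))]
\]
for every $\vec x \in \tau$. The idea is that after weighting by $\psi(\vec x)\, d\mu_\tau(\vec x)$ and integrating, this becomes a weighted smoothing between $R^{2k}(\mu) \cup \mu(\psi\tau) K_\tau$ and a union of two integrated pieces, with total smoothing weight $w_\tau \mu(\psi\tau)$.

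Next I would apply weighted quasi-smoothing to this integrated smoothing to obtain
\[
f\bigl(R^{2k}(\mu) \cup \mu(\psi\tau) K_\tau\bigr) \;\geq\; f\!\left(\int_\tau \bigl([M^k(\vec x)] \cup [M^k(P^k(\vec x))]\bigr)\psi(\vec x)\, d\mu_\tau(\vec x)\right) - R\, w_\tau\, \mu(\psi\tau),
\]
then use convex union and homogeneity on the left to bound the left side by $f_\tau^{2k}(\mu) + \mu(\psi\tau)\, f(K_\tau)$. The right-hand integral splits into two pieces: the first is exactly $R^k(\mu)$, and the second, after change of variables, equals
\[
\int_\tau [M^k(\vec x)]\, d\bigl(P^k_*(\psi\mu_\tau)\bigr)(\vec x),
\]
which by Proposition~\ref{prop:smeared-invariance} (invariance of $\psi\mu_\tau$ under $P$) also equals $R^k(\mu)$. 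Thus the integrated term is $f\bigl(R^k(\mu)\cup R^k(\mu)\bigr)$, which by parallel-copy equivalence and homogeneity equals $f(2R^k(\mu)) = 2 f_\tau^k(\mu)$.

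Combining these yields $2 f_\tau^k(\mu) \leq f_\tau^{2k}(\mu) + (w_\tau R + f(K_\tau))\, \mu(\psi\tau)$, which is the desired inequality with constant $\kappa(\tau) = w_\tau R + f(K_\tau)$ matching the constant from Proposition~\ref{prop:subadditivity}. The only delicate step is the bookkeeping of weights when ``integrating'' the per-point smoothing relation into a single weighted-curve smoothing of total weight $w_\tau \mu(\psi\tau)$; this is the main obstacle, but it is essentially routine given how $R^n$ is defined as an integral of finitely-supported measures on $\Curves^+(S)$ and the fact that both convex union and weighted quasi-smoothing are stable under taking such finite weighted sums (via the extension of $f$ to rational and real weights in Proposition~\ref{prop:weightextend}).
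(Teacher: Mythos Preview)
Your proposal is correct and follows essentially the same approach as the paper: apply Lemma~\ref{lem:join}(b) with $k=\ell$, integrate against $\psi\,d\mu_\tau$ using invariance of $\psi\mu_\tau$ under $P^k$ (Proposition~\ref{prop:smeared-invariance}) to obtain $2R^k(\mu)$ on the right, and then apply weighted quasi-smoothing, convex union, and homogeneity. Your identification of the constant $\kappa(\tau)=w_\tau R+f(K_\tau)$ matches the one used in Proposition~\ref{prop:subadditivity}, and the ``delicate bookkeeping'' you flag is exactly what the paper compresses into the phrase ``Integrating this statement with respect to the measure $\psi\mu$.''
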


\begin{proof}
  By Lemma~\ref{lem:join}(b) in the case $k=\ell$,
  \[
    M^{2k}(\vec{x}) \cup K_\tau \reducesto_{w_\tau} M^k(\vec{x}) \cup M^k(P^k(\vec{x})).
  \]
  Integrating this statement with respect to the measure~$\psi\mu$
  (which is invariant under~$P^k$), we
  find that
  \[
    M^{2k}(\psi\mu) \cup K_\tau\cdot \mu(\psi\tau) \reducesto_{w_\tau \mu(\psi\tau)} 2M^k(\psi\mu).
  \]
  Applying $f$ to both sides and using homogeneity of~$f$ gives the
  desired result.
\end{proof}

Since $f$ is not in general additive, by comparison to
Proposition~\ref{prop:subadditivity},
Proposition~\ref{prop:superadditivity} is more restrictive,
requiring $k=\ell$. This still suffices to show that
the $f^k_\tau$ approximate $f_\tau$ well.

\begin{lemma} Let $\tau$ be fixed good cross-section and $f$ be a weighted curve functional satisfying
  homogeneity, weighted quasi-smoothing, and convex union. For any sufficiently large~$k$,
  \[
    \abs*{f_{\tau}(\mu)-\frac{f^{k}_{\tau}(\mu)}{k}} \leq \frac{K(\mu)}{k}
  \]
  where $K(\mu) = \kappa(\tau)\mu(\psi\tau)$ is the constant from
  Propositions~\ref{prop:subadditivity} and~\ref{prop:superadditivity}.
\label{lem:uniformapprox}
\end{lemma}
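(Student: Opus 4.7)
The plan is to bracket $f^k_\tau(\mu)/k$ above and below by $f_\tau(\mu)$, up to an error of size $K(\mu)/k$, using Propositions~\ref{prop:subadditivity} and~\ref{prop:superadditivity} respectively.

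For the upper bound $f_\tau(\mu) \le f^k_\tau(\mu)/k + K(\mu)/k$, I would apply Fekete's lemma to the shifted sequence $a_n \coloneqq f^n_\tau(\mu) + K(\mu)$. Proposition~\ref{prop:subadditivity} rearranges to $a_{n+m} \le a_n + a_m$ for all $n,m$ past some threshold~$N$, so Lemma~\ref{lem:fekete} gives $\lim_{n\to\infty} a_n/n = \inf_{n\ge N} a_n/n$. Since $K(\mu)/n \to 0$, this limit coincides with $f_\tau(\mu)$, and the infimum characterization yields $f_\tau(\mu) \le a_k/k = f^k_\tau(\mu)/k + K(\mu)/k$ for every $k \ge N$.

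For the lower bound $f^k_\tau(\mu)/k \le f_\tau(\mu) + K(\mu)/k$, I would iterate Proposition~\ref{prop:superadditivity} dyadically. A quick induction on~$j$, starting from the base case $j=1$ (which is exactly Proposition~\ref{prop:superadditivity}) and using it again to pass from $2^j k$ to $2^{j+1}k$, shows
\[
  2^j f^k_\tau(\mu) \le f^{2^j k}_\tau(\mu) + (2^j - 1) K(\mu)
\]
for all $j \ge 0$ (and $k \ge N$, so that $2^j k \ge N$ as well). Dividing by $2^j k$ gives
\[
  \frac{f^k_\tau(\mu)}{k} \le \frac{f^{2^j k}_\tau(\mu)}{2^j k} + \frac{(1 - 2^{-j}) K(\mu)}{k},
\]
and letting $j \to \infty$, the first summand tends to $f_\tau(\mu)$ by Proposition~\ref{prop:limitexists} while the coefficient of $K(\mu)/k$ tends to~$1$, producing the desired lower bound.

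Combining the two inequalities yields $|f_\tau(\mu) - f^k_\tau(\mu)/k| \le K(\mu)/k$ for all $k$ past the common threshold, as claimed. There is no real obstacle here: both halves are near-immediate consequences of the join-lemma estimates in Section~\ref{sec:join}, and the only thing requiring attention is to check that the dyadic iteration in the lower bound never leaves the regime of validity of Proposition~\ref{prop:superadditivity}, which it does not since $2^j k \ge k$.
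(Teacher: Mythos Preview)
Your proof is correct and follows essentially the same approach as the paper: both arguments use Propositions~\ref{prop:subadditivity} and~\ref{prop:superadditivity} together with dyadic doubling to control the gap between $f^k_\tau(\mu)/k$ and its limit. The paper bundles the two directions into a single telescoping geometric series $\sum_{j\ge 1} K(\mu)/(2^j k) = K(\mu)/k$, whereas you separate them and obtain the upper bound directly from the infimum characterization in Fekete's lemma; the ingredients and the resulting constant are identical.
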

\begin{proof}
From Propositions~\ref{prop:subadditivity}
and~\ref{prop:superadditivity}, for large enough~$k$ we have
\[
  \Bigl| \frac{f^{2k}_{\tau}(\mu)}{2k}-\frac{f^{k}_{\tau}(\mu)}{k}\Bigr| \leq \frac{K(\mu)}{2k}.
\]
We also have
\begin{multline*}
  \frac{f^{k}_{\tau}(\mu)}{k} +  \Big(\frac{f^{2k}_{\tau}(\mu)}{2k} - \frac{f^{k}_{\tau}(\mu)}{k} \Big) + \Big(\frac{f^{4k}_{\tau}(\mu)}{4k} - \frac{f^{2k}_{\tau}(\mu)}{2k} \Big) + \Big(\frac{f^{8k}_{\tau}(\mu)}{8k} - \frac{f^{4k}_{\tau}(\mu)}{4k} \Big) + \cdots \\
  = \lim_{n \to \infty} \frac{f^{2^nk}_{\tau}(\mu)}{2^nk}
  = \lim_{n \to \infty} \frac{f^{nk}_{\tau}(\mu)}{nk}
  =f_{\tau}(\mu),
\end{multline*}
where the first equality follows by telescoping, and the second one because we have already proved that the limit exists.
We can then give bounds:
\begin{align*}
  \abs*{f_{\tau}(\mu)-\frac{f^{k}_{\tau}(\mu)}{k}}
   &\leq \abs*{\frac{f^{2k}_{\tau}(\mu)}{2k} -\frac{f^{k}_{\tau}(\mu)}{k}}
     + \abs*{\frac{f^{4k}_{\tau}(\mu)}{4k} -\frac{f^{2k}_{\tau}(\mu)}{2k}}
     + \abs*{\frac{f^{8k}_{\tau}(\mu)}{8k} -\frac{f^{4k}_{\tau}(\mu)}{4k}}
     + \cdots\\
  &\leq \frac{K(\mu)}{2k} + \frac{K(\mu)}{4k} + \frac{K(\mu)}{8k} + \cdots \\
   &= \frac{K(\mu)}{k}. \qedhere
\end{align*}
\end{proof}

We next prove that the $f^k_\tau$ are continuous.
\begin{proposition} Let $\tau$ be fixed good cross-section and $f$ be
  a weighted curve functional satisfying
  homogeneity, weighted quasi-smoothing, and convex union. Then the
  functions $f^{k}_{\tau}\colon \GC^+(S) \to \R$ are continuous for
  every~$k$.
\label{prop:iteratescont}
\end{proposition}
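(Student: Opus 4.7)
The plan is to factor $f^k_\tau$ as the composition $f \circ R^k$, with $R^k$ taking values in a fixed finite-dimensional subspace of $\R\Curves^+(S)$; continuity of each factor can then be handled separately. First, by Lemma~\ref{lem:smeared-iterate-finite}, there is a finite set $[\Lambda(k,\tau)] \subset \Curves^+(S)$ containing every curve that appears with nonzero coefficient in $[M^k(\vec x)]$ as $\vec x$ ranges over $\tau$. So we may write
\[
  [M^k(\vec x)] = \sum_{C \in [\Lambda(k,\tau)]} a_C(\vec x)\cdot C,
\]
and by Lemma~\ref{lem:smeared-homotopy-cont}, each coefficient $a_C \colon \tau \to \R_{\ge 0}$ is continuous. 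Consequently,
\[
  R^k(\mu) = \sum_{C \in [\Lambda(k,\tau)]} w_C(\mu)\cdot C,
  \qquad
  w_C(\mu) \coloneqq \int_\tau a_C(\vec x)\psi(\vec x)\,d\mu_\tau(\vec x),
\]
so that $R^k$ takes values in the finite-dimensional cone $\R_{\ge 0}^{[\Lambda(k,\tau)]} \subset \R\Curves^+(S)$.

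Next I would show that each weight $w_C \colon \GC^+(S) \to \R_{\ge 0}$ is continuous. The function $a_C \cdot \psi$ is continuous on~$\tau$ and, because $\psi$ vanishes on a neighborhood of $\partial\tau$ by construction of the bump function, it is supported in the interior~$\tau^\circ$. Expressing the flux $\mu_\tau$ via Definition~\ref{def:flux} as the limit of $\mu(\phi_{[0,\epsilon]}(\cdot))/\epsilon$, the integral $w_C(\mu)$ can be rewritten as $\int_{UT\Sigma} g_C\,d\mu$ for a continuous, compactly supported function $g_C$ on $UT\Sigma$ (here the hypothesis that $a_C\psi$ vanishes near $\partial\tau$ is what rescues us from the discontinuity of the raw flux map; compare Lemma~\ref{lem:flux-continuous} and Remark~\ref{rem:flux-not-cont}). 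Since the topology on $\GC^+(S)$ is the weak$^*$ topology as measures on $UT\Sigma$ (equivalently on $G^+(S)$), integration against such a $g_C$ is continuous by definition.

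Finally, I would invoke the convexity theory of Section~\ref{sec:convexity}: by Proposition~\ref{prop:convexweights} and Corollary~\ref{cor:convex-extend}, $f$ restricted to the finite-dimensional cone of weighted multi-curves with components drawn from the fixed set $[\Lambda(k,\tau)]$ is a continuous function of the real weights. (This is exactly the place where convex union plus homogeneity gets used to promote $\Q$-convexity to $\R$-continuity.) Composing with the continuous map $\mu \mapsto (w_C(\mu))_{C \in [\Lambda(k,\tau)]}$ of the previous paragraph yields the continuity of $f^k_\tau$.

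The principal obstacle in this argument is the middle step: the flux map $\mu \mapsto \mu_\tau$ is not continuous in general, so one must be careful to use only test functions supported in the interior of~$\tau$. The bump function $\psi$ has been designed precisely so that this holds, and the transition from ``integrate $a_C\psi$ against the flux on $\tau$'' to ``integrate a continuous compactly supported function against $\mu$ on $UT\Sigma$'' is the only nontrivial analytic point; everything else follows from finite-dimensionality of the image of $R^k$ and the convexity machinery already established.
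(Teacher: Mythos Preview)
Your proposal is correct and follows essentially the same approach as the paper: factor $f^k_\tau$ through the finite-dimensional space $\R^{[\Lambda(k,\tau)]}$, establish continuity of the weight map $\mu \mapsto (w_C(\mu))_C$ by transferring the flux integral to an integral of a continuous compactly supported function on $UT\Sigma$ (exactly the content of Lemma~\ref{lem:flux-continuous}, exploiting that $\psi$ vanishes near $\partial\tau$), and then invoke Proposition~\ref{prop:convexweights} for continuity of~$f$ on that finite-dimensional cone. The paper presents this as a composition of four separately continuous maps $\GC^+(S) \to \Meas(\tau^\circ) \to \Meas(\tau) \to \R^{[\Lambda(k,\tau)]} \to \R$, but the substance is identical.
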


We will break the proof into lemmas.

\begin{lemma}\label{lem:flux-continuous}
  For $\tau$ a (closed) global cross-section with interior
  $\tau^\circ$, the map $\mu \mapsto \mu_{\tau^\circ}$ from $\GC^+(S)$
  to $\Meas(\tau^\circ)$ is continuous.
\end{lemma}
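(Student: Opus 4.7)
The plan is to verify, by definition of the weak$^*$ topology on $\Meas(\tau^\circ)$, that for every $g \in C_c(\tau^\circ)$ the functional $\mu \mapsto \int_{\tau^\circ} g \, d\mu_{\tau^\circ}$ is continuous on $\GC^+(S)$. The strategy is to write this integral as the pairing of $\mu$---viewed as a flow-invariant measure on $UT\Sigma$ via Definition~\ref{def:currents-2}---against a suitably chosen continuous compactly supported function $\tilde g$ on $UT\Sigma$, after which continuity is automatic from the weak$^*$ topology on $\GC^+(S)$.

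Fix $g \in C_c(\tau^\circ)$ and set $K \coloneqq \supp(g) \subset \tau^\circ$. Since $K$ is compact and contained in the open set $\tau^\circ$, choose an open set $V$ with $K \subset V$ and $\bar V \subset \tau^\circ$. By transversality of $\tau$ to the flow together with compactness of $\bar V$, there exists $\epsilon > 0$ so that the map $b\colon (-\epsilon,2\epsilon) \times \bar V \to UT\Sigma$ defined by $b(t,x) \coloneqq \phi_t(x)$ is an embedding onto an open subset of $UT\Sigma$. Pick a continuous $\eta\colon \R \to [0,1]$ supported in $(-\epsilon, 2\epsilon)$ with $I \coloneqq \int_\R \eta(t)\, dt > 0$, and define
\[
\tilde g(y) \coloneqq \begin{cases} \eta(t)\, g(x) & y = b(t,x), \ (t,x) \in (-\epsilon,2\epsilon) \times V \\ 0 & \text{otherwise.} \end{cases}
\]
The key point is that $g$ vanishes on $V \setminus K$ and $\eta$ vanishes near the endpoints of its interval, so $\tilde g$ vanishes near the boundary of the flow box $b((-\epsilon,2\epsilon) \times V)$ and therefore extends by zero to a continuous, compactly supported function on $UT\Sigma$.

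By flow-invariance of $\mu$ in the flow box, the pullback $b^*(\mu)$ equals the product measure $dt \otimes \mu_{\tau^\circ}|_V$, so Fubini yields
\[
\int_{UT\Sigma} \tilde g \, d\mu \;=\; \int_V g(x)\!\left(\int_\R \eta(t)\, dt\right) d\mu_{\tau^\circ}(x) \;=\; I \int_{\tau^\circ} g \, d\mu_{\tau^\circ}.
\]
The left-hand side is continuous in $\mu$ with respect to the weak$^*$ topology on $\GC^+(S)$ since $\tilde g$ is continuous and compactly supported, and dividing by the nonzero constant $I$ yields the desired continuity of the flux map. The main subtlety, and the reason the analogous statement fails on the closed cross-section $\tau$ (see Remark~\ref{rem:flux-not-cont}), is that the continuity of the lift $\tilde g$ relies crucially on $g$ being supported strictly away from $\partial \tau$; if $g$ were allowed to be nonzero up to the boundary, then $\tilde g$ would fail to extend continuously along $\phi_{[0,\epsilon]}(\partial \tau)$.
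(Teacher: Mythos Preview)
Your proof is correct and follows essentially the same approach as the paper: lift the compactly supported test function on $\tau^\circ$ to a continuous compactly supported function on $UT\Sigma$ by multiplying with a bump in the flow direction, then invoke the weak$^*$ topology on $\GC^+(S)$ (as measures on $UT\Sigma$). The only cosmetic differences are that the paper uses a fixed flow-box $[0,\epsilon]\times\tau$ with a normalized bump~$\omega$, while you shrink to a neighborhood $V$ of $\supp(g)$ and allow an unnormalized bump~$\eta$; neither change is substantive.
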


\begin{proof}
  We first adjust the definition of the flux~$\mu_{\tau^\circ}$. Let
  $\epsilon$ be small enough so that the corresponding flow box is
  embedded. Pick a non-zero continuous function
  $\omega \colon [0,\epsilon] \to \R_{\ge 0}$ so that
  $\omega(0) = \omega(\epsilon) = 0$ and
  $\int_0^\epsilon \omega(t)\,dt = 1$. Then, for any
  measurable function~$r$ on~$\tau^\circ$, the flux~$\mu_{\tau^\circ}$
  satisfies
  \begin{equation}\label{eq:smeared-flux}
    \int_{\vec{x} \in \tau^\circ} r(\vec{x})\,\mu_{\tau^\circ}(\vec{x})
      = \int_{t=0}^\epsilon \int_{\vec{x} \in \tau^\circ}
          \omega(t) r(\vec{x})\,\mu\bigl(\phi_t(\vec{x})\bigr).
  \end{equation}
  Now suppose that we have a sequence of measures $\mu_i$ approaching
  $\mu$ in the weak$^*$ topology, and let $r$ be a continuous function
  on~$\tau^\circ$ with compact support. By
  Theorem~\ref{thm:spacemeasuresmetrizable}, it suffices to show that
  $\int_{\vec{x} \in\tau^\circ} r(\vec{x}) \, \mu_{i,\tau^\circ}(\vec{x})$ converges to
  $\int_{\vec{x}\in\tau^\circ} r(\vec{x}) \,\mu_{\tau^\circ}(\vec{x})$.

  Consider the function~$s$ on~$UT\Sigma$ defined by
  \[
    s(\vec{y}) =
    \begin{cases}
      r(\vec{x})\omega(t) & \text{if $\vec{y} = \phi_t(\vec{x})$ for $\vec{x}\in\tau$, $t \in [0,\epsilon]$}\\
      0 & \text{otherwise}.
    \end{cases}
  \]
  Then $s$ is continuous, since $r$ and $\omega$ vanish on the
  boundaries of their domains of definition, so
  $\int_{\vec{y} \in UT\Sigma} s(\vec{y}) \mu_i(\vec{y})$ converges to
  $\int_{\vec{y} \in UT\Sigma} s(\vec{y}) \mu(\vec{y})$. The result follows from
  Equation~\eqref{eq:smeared-flux}.
\end{proof}

\begin{remark}\label{rem:flux-not-cont}
  The map $\mu \mapsto \mu_\tau$, from $\GC^+(S)$ to positive measures
  $\Meas(\tau)$ on the
  closed cross-section is \emph{not} continuous with
  respect to the weak$^*$ topology at points where
  $\mu_\tau(\partial \tau) \neq 0$. Indeed, let $\mu$ be the geodesic
  current corresponding to a closed curve $[a]$, let $[b]$ be another
  closed curve intersecting~$[a]$, and let $\mu_n$ be the geodesic
  current corresponding to $\frac{1}{n} [a^nb]$ so that
  $\lim_{n\to\infty} \mu_n = \mu$, as shown in
  Figure~\ref{fig:flux-not-cont}. Take a (non-complete)
  transversal~$\tau$ that intersects $\supp(\mu)$ only once on
  $\partial\tau$. Then
  (for appropriate choices, as shown) the total mass of $\mu_n$, i.e.
  $(\mu_n)_\tau(\tau)$, is approximately $1/2$, while
  $\mu_\tau(\tau) = 1$.

  Similarly, for the open transversals~$\tau^\circ$, we have $\mu_{\tau^\circ}(\tau^\circ) = 0$ while
  $(\mu_n)_{\tau^\circ}(\tau^\circ)$ is approximately $1/2$ for large~$n$. This does
  not contradict Lemma~\ref{lem:flux-continuous}; it just says that
  total mass is not a
  continuous function in the weak$^*$ topology on a non-compact space.
\end{remark}

\begin{figure}
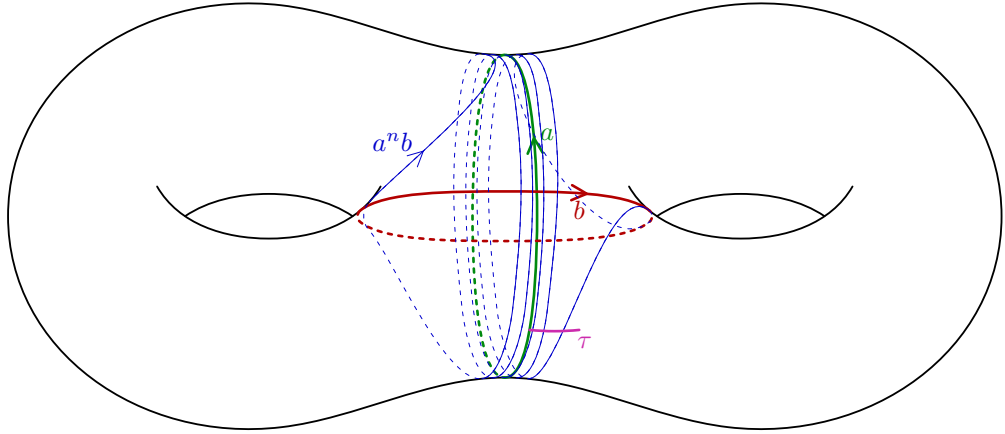

  \begin{gather*}
    \mfig{surface-0}
  \end{gather*}
  \caption{An example showing that the flux map $\mu \mapsto \mu_\tau$
    is not continuous. The sequence of curves $[a^nb]/n$
    approaches~$[a]$, but they have very different intersections
    with~$\tau$.}\label{fig:flux-not-cont}
\end{figure}

\begin{lemma}\label{lem:smeared-homotopy-meas-cont}
  The extension of $[M^k]$ to measures, as a
  map from $\Meas(\tau)$ to $\R\Curves(S)$, is continuous.
\end{lemma}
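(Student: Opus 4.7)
The proof plan is to reduce to a finite-dimensional coordinate computation, exploiting the fact that $[M^k]$ has finite-image structure at each point.

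First, I would recall from Lemma~\ref{lem:smeared-iterate-finite} that the set $[\Lambda(k,\tau)] \subset \Curves^+(S)$ of curves appearing in any $[M^k(\vec{x})]$ is finite, and from Lemma~\ref{lem:smeared-homotopy-cont} that $[M^k] \colon \tau \to \R\Curves^+(S)$ is continuous with image in the finite-dimensional subspace $\R^{[\Lambda(k,\tau)]}$. Decomposing with respect to this finite basis, I write
\[
  [M^k](\vec{x}) \;=\; \sum_{C \in [\Lambda(k,\tau)]} c_C(\vec{x}) \cdot C,
\]
where the coefficient functions $c_C \colon \tau \to \R_{\ge 0}$ are continuous (continuity of each $c_C$ follows since projection onto a basis vector is continuous in a finite-dimensional space).

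Next, following Definition~\ref{def:smearedextension}, the extension of $[M^k]$ to $\Meas(\tau)$ is
\[
  [M^k](\mu) \;=\; \sum_{C \in [\Lambda(k,\tau)]} \left(\int_\tau c_C(\vec{x})\,d\mu(\vec{x})\right) C,
\]
which lies in the same finite-dimensional subspace $\R^{[\Lambda(k,\tau)]} \subset \R\Curves(S)$ regardless of $\mu$. Since the subspace topology on this finite-dimensional space (inherited from the weak$^*$ topology on $\Meas(\Curves^+(S))$, where $\Curves^+(S)$ is discrete) coincides with the usual topology on $\R^{[\Lambda(k,\tau)]}$, it suffices to show that each coefficient $\mu \mapsto \int_\tau c_C\,d\mu$ is continuous.

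For this last step, I use that $\tau$ is compact, so each $c_C$ is bounded and itself compactly supported on $\tau$. Therefore if $\mu_n \to \mu$ in the weak$^*$ topology on $\Meas(\tau)$, then by definition $\int_\tau c_C\,d\mu_n \to \int_\tau c_C\,d\mu$ for each of the finitely many $C \in [\Lambda(k,\tau)]$, and componentwise convergence in the finite-dimensional target yields $[M^k](\mu_n) \to [M^k](\mu)$. I do not anticipate a serious obstacle; the only subtle point is confirming that the target topology really is the componentwise one, which follows because convergence in $\R\Curves(S) \subset \Meas(\Curves^+(S))$ with values in a fixed finite-support subset reduces to convergence of each coefficient.
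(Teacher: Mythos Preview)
Your proposal is correct and matches the paper's proof essentially line for line: both decompose $[M^k(\vec{x})]$ as $\sum_C a_C(\vec{x})\cdot C$ over the finite set $[\Lambda(k,\tau)]$, observe that each coefficient $a_C$ is continuous on the compact $\tau$, and conclude that $\mu \mapsto \int_\tau a_C\,d\mu$ is continuous by the definition of the weak$^*$ topology. Your extra remarks about why the target topology on the finite-dimensional subspace is the componentwise one are a mild elaboration of what the paper leaves implicit (it points to Remark~\ref{rem:boundedfinitemeasures}), but the argument is the same.
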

\begin{proof}
  Recall from Lemma~\ref{lem:smeared-iterate-finite} that
  $[M^k(\mu)]$ takes
  values in the finite-dimensional subspace
  $\R{[\Lambda(k,\tau,\tau_0)]} \subset \R\Curves(S)$. By continuity of
  $[M^k(\vec{x})]$ (Lemma~\ref{lem:smeared-homotopy-cont}), we can
  write
  \[
    [M^k(\vec{x})] = \sum_{C \in [\Lambda(k,\tau,\tau_0)]} a_C(\vec{x}) \cdot C
  \] 
  where $a_C$ is a continuous function on~$\tau$. (Recall that a
  function to a finite-dimensional vector space is continuous iff each
  of the coordinate functions is continuous; see Remark~\ref{rem:boundedfinitemeasures}.) But then
  \[
    [M^k(\mu)] = \sum_{C \in[\Lambda(k,\tau,\tau_0)]} \left(\int_{\vec{x} \in \tau} a_C(\vec{x})\,\mu(\vec{x})\right) \cdot C.
  \]
  The integrals are continuous functions of~$\mu$ by definition of the
  weak$^*$ topology on~$\Meas(\tau)$.
\end{proof}

\begin{proof}[Proof of Proposition \ref{prop:iteratescont} ]
  $f^k_\tau$ is the composition of maps
  \[
    \GC^+(S) \xrightarrow{\mu \mapsto \mu_{\tau^\circ}} \Meas(\tau^\circ)
      \xrightarrow{\cdot \psi} \Meas(\tau)
      \xrightarrow{[M^k]} \R^{\Lambda(k,\tau,\tau_0)}
      \overset{f}{\longrightarrow} \R.
  \]
  The component maps are continuous by, respectively,
  Lemma~\ref{lem:flux-continuous}; the fact that $\psi$ vanishes on a neighborhood of $\partial\tau$;
  Lemma~\ref{lem:smeared-homotopy-meas-cont}; and
  Proposition~\ref{prop:convexweights}.
\end{proof}

\begin{proposition}
Let $\tau$ be fixed good cross-section and $f$ be a curve functional
satisfying
  homogeneity, weighted quasi-smoothing, and convex union. Then $f_{\tau}\colon \GC^+(S) \to \mathbb{R}_{ \geq 0}$ is a continuous function.
\label{prop:continuity}
\end{proposition}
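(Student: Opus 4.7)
The plan is to deduce continuity of $f_\tau$ from the two main ingredients already in place: the locally uniform approximation of $f_\tau$ by $f^k_\tau/k$ provided by Lemma~\ref{lem:uniformapprox}, and the continuity of each $f^k_\tau$ given by Proposition~\ref{prop:iteratescont}. Since a uniform limit of continuous functions is continuous, the only real issue is to ensure that the error bound $K(\mu)/k = \kappa(\tau)\mu(\psi\tau)/k$ from Lemma~\ref{lem:uniformapprox} is locally uniform in~$\mu$.

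First I would observe that the map $\mu \mapsto \mu(\psi\tau) = \int_\tau \psi(\vec{x})\,\mu_\tau(\vec{x})$ is a continuous function $\GC^+(S) \to \R_{\ge 0}$. Indeed, $\psi$ is continuous and supported in the interior of~$\tau$ (vanishing on a neighborhood of $\partial\tau$), so by Lemma~\ref{lem:flux-continuous} applied to the open cross-section $\tau^\circ$, we have $\mu(\psi\tau) = \int_{\tau^\circ} \psi\,d\mu_{\tau^\circ}$ varies continuously with~$\mu$. In particular, $K(\mu)$ is bounded on any compact (or more generally, relatively compact) neighborhood of a given $\mu_0 \in \GC^+(S)$.

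Now fix $\mu_0 \in \GC^+(S)$ and a neighborhood~$U$ of~$\mu_0$ on which $K(\mu) \le B$ for some constant~$B$. By Lemma~\ref{lem:uniformapprox}, for all $\mu \in U$ and all sufficiently large~$k$,
\[
  \abs*{f_\tau(\mu) - \tfrac{1}{k}f^k_\tau(\mu)} \le \tfrac{B}{k}.
\]
Thus $f^k_\tau/k \to f_\tau$ uniformly on~$U$. Since each $f^k_\tau$ is continuous on $\GC^+(S)$ by Proposition~\ref{prop:iteratescont}, so is $f^k_\tau/k$, and the uniform limit on~$U$ of continuous functions is continuous at~$\mu_0$. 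As $\mu_0$ was arbitrary, $f_\tau$ is continuous on $\GC^+(S)$.

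I do not anticipate a serious obstacle here; the whole machinery (choice of good cross-section, smeared return map, join lemma, Fekete-style existence of the limit, continuity of finite iterates) has been built precisely so that this final continuity statement reduces to a standard uniform convergence argument. The only subtle point worth double-checking in writing is that $\mu(\psi\tau)$ really is weak$^*$-continuous in~$\mu$, which requires using the \emph{open} cross-section $\tau^\circ$ and the fact that $\psi$ vanishes near $\partial\tau$, in light of Remark~\ref{rem:flux-not-cont}.
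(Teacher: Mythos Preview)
Your proposal is correct and follows essentially the same approach as the paper: use Proposition~\ref{prop:iteratescont} for continuity of the iterates and Lemma~\ref{lem:uniformapprox} for locally uniform convergence, noting that $K(\mu)=\kappa(\tau)\mu(\psi\tau)$ is bounded on a neighborhood of any given~$\mu_0$. Your explicit verification that $\mu\mapsto\mu(\psi\tau)$ is weak$^*$-continuous via Lemma~\ref{lem:flux-continuous} (using that $\psi$ vanishes near $\partial\tau$) is a detail the paper leaves implicit, but it is exactly the right justification.
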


\begin{proof}
  By Proposition~\ref{prop:iteratescont}, it suffices to show that
  $f_\tau$ is a uniform limit of $f^k_\tau$. The constant $K(\mu)$ in
  Lemma~\ref{lem:uniformapprox} does depend on~$\mu$; however, if we
  bound $\mu$ within a ball so that $\int_\tau \psi\mu_\tau$ is
  bounded, the constant in the approximation becomes uniform and tends
  to~$0$ as $k \to \infty$.
\end{proof}


\section{The extension extends}
\label{sec:extends}

In this section we prove that when restricted to weighted curves,
the purported extension $f_{\tau}$ coincides with the original curve functional~$f$.
More precisely,
let $\gamma \in C$ be the geodesic representative of an oriented closed
curve with corresponding geodesic current~$\mu_C$, and let $\tau$ be a
good cross-section
of the geodesic flow (Definition~\ref{def:goodcross}). We wish to
show that $f_\tau(\mu_C) = f(C)$.

Let $\wt \gamma$ be the canonical lift of~$\gamma$ to the unit tangent bundle,
and let $n$ be the number of times that $\wt \gamma$
intersects~$\tau$, with intersections at
$\vec x_0, \vec x_1,\dots, \vec x_{n-1}$ in order (so $p(\vec x_i)=\vec x_{i+1}$). Then
$(\mu_C)_{\tau}=\sum_{i=0}^{n-1} \vec x_i$. Let $a_i=\psi(\vec x_i)$, so
that
\[
  C \cap \psi\tau \coloneqq \psi \cdot (\mu_C)_\tau = \sum_{i=0}^{n-1} a_i \vec x_i.
\]
By Proposition~\ref{prop:smeared-invariance}, this sum (which we
call $C \cap \psi \tau$, in an abuse of notation) is invariant under
the smeared return map~$P$:
\begin{equation}
    \label{eq:stationarydirac}
    P^k \biggl( \sum_{i=0}^{n-1} a_i \vec x_i \biggr)= \sum_{i=0}^{n-1} a_i \vec x_i
\end{equation}

We need a slightly stronger fact. Recall that each term in
$[M^k(C \cap \psi\tau)]$ is a curve that follows the geodesic trajectory
$\vec x_i \to \vec x_{i+1} \to \dots$ for some time and then travels along $\tau$ to close up. We say that a segment of the
return map $\vec x_i \to \vec x_{i+1}$ is \emph{covered} with degree~$r$ in $[M^k]$ if the
weighted number of times that segment appears in
$[M^k(C \cap \psi\tau)]$ is~$r$.
\begin{lemma}\label{lem:evenly-cover-1}
  For any closed curve~$C$ and a good cross-section~$\tau$ with bump
  function~$\psi$ as above, in
  $[M^k(C \cap \psi \tau)]$, every segment
  $\vec x_{i} \to \vec x_{i+1}$ is covered with degree~$k$.
\end{lemma}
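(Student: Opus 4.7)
My strategy is to introduce a linear coverage functional and reduce the claim to two steps: a direct telescoping calculation for $k=1$, and an induction using the $P$-invariance of $C \cap \psi\tau$ from Equation~\eqref{eq:stationarydirac}. Let $E_{s,t}$ denote the closed loop that follows the trajectory of~$\gamma$ from $\vec x_s$ to $\vec x_t$ and closes up via $\tau$. Define $\operatorname{cov}_i(E_{s,t})$ to be the number of times this trajectory traverses the segment $\vec x_i \to \vec x_{i+1}$, and extend $\operatorname{cov}_i$ linearly to weighted sums in $\R\Curves^+(S)$. The lemma then asserts that $\operatorname{cov}_i\bigl([M^k(C \cap \psi\tau)]\bigr) = k$ for every segment index~$i$.

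For the base case $k=1$, I will observe that one iteration of the smeared return starting at $\vec x_s$ visits $\vec x_{s+1}, \vec x_{s+2},\ldots$ and halts the first time a Bernoulli trial at $\vec x_j$ with parameter $a_j$ succeeds; this walk terminates since $a_j = 1$ on $\tau_0$ and every orbit meets $\tau_0$. It covers segment $\vec x_i \to \vec x_{i+1}$ precisely when it reaches past $\vec x_i$, an event of probability $\prod_{j \in (s,i]} b_j$ (cyclically, with $b_j = 1-a_j$). Summing against the starting weights gives
\[
  \operatorname{cov}_i\bigl([M^1(C\cap\psi\tau)]\bigr) = \sum_s a_s \prod_{j\in(s,i]} b_j.
\]
Letting $\vec x_{j^*}$ be the $\tau_0$-point cyclically closest to and preceding $\vec x_i$, only $s \in \{j^*,j^*+1,\ldots,i\}$ give nonzero contributions, since $b_{j^*}=0$. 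Writing $B_s = \prod_{j=s}^{i} b_j$, each summand equals $(1-b_s)B_{s+1} = B_{s+1}-B_s$, so the sum telescopes to $B_{i+1} - B_{j^*} = 1 - 0 = 1$.

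For the inductive step I will use that a $(k{+}1)$-step smeared walk from $\vec x_s$ factors, by the definition of $Q^{k+1}$ as $(Q \times \id) \circ Q^k$ post-composed with the join, into a $k$-step walk (whose endpoint is distributed as $P^k(\vec x_s)$) followed by an independent one-step walk; moreover $\operatorname{cov}_i$ is additive under concatenation of trajectories, so
\[
  \operatorname{cov}_i\bigl([M^{k+1}(\vec x_s)]\bigr) = \operatorname{cov}_i\bigl([M^k(\vec x_s)]\bigr) + \operatorname{cov}_i\bigl([M^1(P^k(\vec x_s))]\bigr).
\]
Integrating against $C\cap\psi\tau$ and invoking Proposition~\ref{prop:smeared-invariance} to obtain $P^k(C\cap\psi\tau)=C\cap\psi\tau$ yields the recursion $\operatorname{cov}_i([M^{k+1}(C\cap\psi\tau)]) = \operatorname{cov}_i([M^k(C\cap\psi\tau)]) + 1$; combined with the base case, induction completes the proof.

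The hardest part will be setting up the coverage functional carefully in relation to the ``join'' operation defining $Q^{k+1}$, so that it is manifestly additive under concatenation of trajectories; and handling the cyclic bookkeeping in the case when a walk wraps around $\gamma$ several times, which can occur once $k$ is large relative to $\sum_s a_s$.
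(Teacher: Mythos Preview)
Your proposal is correct and follows essentially the same approach as the paper: for $k=1$ you use the existence of a $\tau_0$-point to show the coverage equals~$1$ (your telescoping sum is the explicit unwinding of the paper's forward induction on the segment index), and for $k>1$ both arguments invoke the $P$-invariance of $C\cap\psi\tau$ (Equation~\eqref{eq:stationarydirac}) to reduce to the base case. Your introduction of the coverage functional and the Bernoulli-walk language is more formal than the paper's terse treatment, but the substance is the same.
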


(See Example~\ref{ex:smearedreturn} for one concrete case.)

\begin{proof}
  Fix $a_i = \psi(\vec x_i)$ as above, and consider the case $k=1$. By the
  assumption that $\tau_0$ is complete cross-section, we
have $a_i=1$ for some~$i$.
  By rotating the indices, assume $a_0=1$. We prove the statement for each segment $\vec x_{i} \to \vec x_{i+1}$
  by induction on $i$. For $i=0$, it's clear, since $a_0=1$ and no
  earlier trajectories continue through $\vec x_0$. For $i>0$, we have
  $a_i$ trajectories starting at $\vec x_i$ and going
  to $\vec x_{i+1}$. By the induction hypothesis, we also have weight 1 of
  trajectories arriving at
  $\vec x_i$ from $\vec x_{i-1}$ and so a weight of $1-a_i$ for those
  continuing on to $\vec x_{i+1}$. These two types of trajectories
  have a total weight of~$1$, as desired.
  
  The statement for $k>1$ follows from
  Equation~\eqref{eq:stationarydirac} and induction.
\end{proof}

Now, for $i < j$, let $C_{ij}$ be the curve that starts at
$\vec x_i$, passes through $j-i-1$ intermediate points
to $\vec x_j$, and closes up along $\tau$, with indices
interpreted modulo $n$. Then, for some coefficients $w_{ij}$, we can write
\[
  \Bigl[M^k\Bigl(\sum a_i \vec x_i\Bigr)\Bigr] = \sum w_{ij} C_{ij}.
\]
The non-zero coefficients $w_{ij}$ that appear will have
$k \le j - i \le kn$, so as $k$ gets large the $C_{ij}$ that appear in the
weighted sum also get long.

The invariance from Equation~\eqref{eq:stationarydirac} tells us that for all $i_0$,
\begin{equation}\label{eq:weights-balance}
  \sum_{i \equiv i_0} w_{ij} = \sum_{j \equiv i_0} w_{ij} = a_{i_0}
\end{equation}
while the fact that all $n$ steps $x_i \to x_{i+1}$ are covered with degree~$k$
implies that
\begin{equation}\label{eq:weights-tot}
  \sum_{i,j} w_{ij} = kn.
\end{equation}
\begin{proposition} 
\label{prop:extends}
 If $\mu_C$ is the geodesic current associated to a weighted closed
 multi-curve~$C$, and $\tau$ is a good cross-section of the geodesic flow,
 then $f_{\tau}(\mu_C)=f(C)$.
\end{proposition}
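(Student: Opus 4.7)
The plan is to show $|f^k_\tau(\mu_C) - k f(C)| = O(1)$ uniformly in $k$; once this is established, dividing by $k$ and taking $k \to \infty$ in Definition~\ref{def:extension} yields $f_\tau(\mu_C) = f(C)$. From the decomposition $R^k(\mu_C) = \sum_{i,j} w_{ij} C_{ij}$, the balance equations give $\sum_{i,j} w_{ij} = A \coloneqq \mu_C(\psi\tau)$ independently of~$k$, while Lemma~\ref{lem:evenly-cover-1} gives the total weighted trajectory length $\sum_{i,j} w_{ij}(j - i) = kn$. Since $R^k$ is additive in the current, I will first treat the case of a single closed curve $C$; the extension to a weighted multi-curve then follows by carrying out the construction componentwise and invoking homogeneity on the final assembly. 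The topological key is that each nonzero $C_{ij}$, when $j - i = q_{ij} n + r_{ij}$ with $0 \le r_{ij} < n$, is freely homotopic to $C^{q_{ij}}$ concatenated with a ``remainder loop'' drawn from a finite set (indexed by the pairs $(i \bmod n, r)$), so that $f$-values of remainders are uniformly bounded.

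Lower bound: I apply the combining form of the Join Lemma (Lemma~\ref{lem:join}(a)) iteratively to the pieces $C_{ij}$ of $R^k(\mu_C)$, pairing up compatible endpoints as permitted by the balance equations and adding copies of the auxiliary curve $K_\tau$ at each step. Because the total weighted trajectory length $kn$ covers $\wt\gamma$ exactly $k$ times, the procedure terminates in $O(A)$ join steps at a weighted multi-curve whose $f$-value equals $k f(C)$ by stability and homogeneity, consuming $O(A)$ copies of $K_\tau$ and incurring total weighted quasi-smoothing overhead $O(A)$. Applying weighted quasi-smoothing and convex union then gives
\[
  f(R^k(\mu_C)) + O(A)\,f(K_\tau) \;\ge\; k f(C) - O(1),
\]
whence $f^k_\tau(\mu_C) \ge k f(C) - O(1)$.

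Upper bound: Symmetrically, the splitting form (Lemma~\ref{lem:join}(b)) lets me split a single curve of $f$-value $k f(C)$ back into the pieces of $R^k(\mu_C)$, with the same $O(A)$-bounded overhead in auxiliary curves and smoothing cost, yielding $f^k_\tau(\mu_C) \le k f(C) + O(1)$. The main obstacle in both directions is verifying that each crossing invoked in the iterated join/split is essential, so that weighted quasi-smoothing applies with the uniform constant from Definition~\ref{def:properties}, and that the scheduling keeps the total overhead uniformly bounded in~$k$; both points reduce to a careful reapplication of the broken-path hyperbolic geometry developed for the proof of the Join Lemma itself. Combining the two one-sided estimates and dividing by~$k$ completes the proof.
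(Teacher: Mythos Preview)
Your approach is essentially the paper's: iteratively apply the Join Lemma at each intersection index to transform $R^k(\mu_C)$ into a weighted sum $\sum_j b_j C^j$ of powers of~$C$ (with $\sum_j j b_j = k$ by the coverage count), evaluate this as $k f(C)$, and reverse the joins for the other inequality, with total overhead bounded by $A = \mu_C(\psi\tau)$ independently of~$k$. Two small imprecisions to fix: the upper bound does not start from ``a single curve'' but from the weighted multi-curve $\sum_j b_j C^j$ produced by the joining step (starting from a weight-$1$ curve like $C^k$ cannot reproduce the weight-$A$ overlapping pieces of $R^k(\mu_C)$), and evaluating $f(\sum_j b_j C^j) = k f(C)$ requires \emph{strong} stability (Corollary~\ref{cor:weakstrongstab}), not merely weak stability; the paper does not use your ``remainder loop'' decomposition at all.
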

\begin{proof}
  We first suppose $C$ is a single curve with weight~$1$.
  
  As above, let $\sum_{i=0}^{n-1} a_i \vec x_i = C \cap \psi \tau$.  For the $k$-th
  iterate set
  \[
    C_0^k = \Bigl[M^k\Bigl(\sum a_i \vec x_i\Bigr)\Bigr]=\sum_{i<j} w_{ij}C_{ij}.
  \]
  Note that $C_{i,i+rn} = C^r$; other $C_{ij}$ have a more complicated
  relation to~$C$. For $k$ sufficiently large, we will
  use Lemma~\ref{lem:join} to simplify the sum so that only curves of
  the form $C_{i,i+rn}$ appear.
  
  For each $i=0,\dots,n-1$ (in any order), consider all the curves
  that either start or end at~$\vec x_i$, starting with
  $i = i_0$. By Equation~\eqref{eq:weights-balance},
  \begin{equation*}
    \sum_{\substack{i \equiv i_0\\j \not\equiv i_0}} w_{ij}
    = \sum_{\substack{i \not\equiv i_0\\j \equiv i_0}} w_{ij} \le a_{i_0}.
  \end{equation*}

  We can therefore pair the corresponding components of $C_0^k$ against each
  other using Lemma~\ref{lem:join} pairwise in any order,
  getting a reduction
  \[
    C_0^k \cup a_{i_0} K \reducesto_{a_{i_0}w} C_1^k
  \]
  where $K$ and~$w$ are the curve and weight from Lemma~\ref{lem:join}, and
  $C_1^k$ is another weighted combination
  of the $C_{ij}$ in
  which each component that
  starts at $i_0$ also ends at~$i_0$.

   This join operation doesn't change the degree by which segments of the curves are covered, so
   Equation~\eqref{eq:weights-tot} still holds, and
   Equation~\eqref{eq:weights-balance} still holds at the other
   indices. So we can repeat this at each index. In the end we get a
   reduction
   \[
     C_0^k \cup aK \reducesto_{aw} \sum_j b_j C^j
     \eqqcolon C_n^k
   \]
   where $a = \sum_i a_i$ and $C_n^k $ is another weighted curve. By considering the
   degrees we see that $\sum jb_j = k$.
   
   Similar considerations (similar to part (b) of
   Lemma~\ref{lem:join}) show that
   \[
     C_n^k \cup  aK \reducesto_{aw} C_0^k
   \]

   By Corollary~\ref{cor:weakstrongstab}, $f$ satisfies strong stability.
   The homogeneity and strong stability properties then yield
   \[
     kf(C)=f(kC) = f\Bigl(\sum_j  b_{j} j C\Bigr)
       = f\bigl(C_n^k\bigr)
   \]
  Therefore, since $aK$ and~$aw$ are independent of $k$,
   \[
     f_{\tau}(\mu) = \lim_{k \to \infty} \frac{f(C_0^k)}{k}
     = \lim_{k \to \infty} \frac{f(C_n^k)}{k}
     = \lim_{k \to \infty} \frac{f(kC)}{k} = f(C).
   \]
   We have thus proved that $f_{\tau}$ extends $f$ on unweighted
   curves.

   For the case of a general weighted curve $C = \sum w_\ell C_\ell$,
   the proof proceeds as above, except that we start with the weighted
   intersection of~$C$ with the smeared cross-section. More precisely,
   let $\vec x_{\ell,i}$ be the intersections of~$C_\ell$ with~$\tau$;
   then we work with
   $\sum_{\ell,i} w_\ell \psi(x_{\ell,i}) \cdot \vec x_{\ell,i}$,
   in the same way as above.
 \end{proof}

\section{Hyperbolic geometry estimates}
\label{sec:hyperbolic}

We complete the proof of Theorem~\ref{thm:convex} by proving facts
about the geometry of broken paths, as used in
Section~\ref{sec:join}.

\begin{definition}  \label{def:brokenpath}
  Fix a real length $L$ and angle~$\epsilon < \pi/2$.
  A \emph{broken path} $b(L,\epsilon)$ is a
  concatenation of geodesic segments in $\mathbb{H}^2$ that alternate
  between ``long'' segments of length at least~$\ell$ and ``short''
  segments of unconstrained length, so that the angle between the long
  and short segments is within
  $\epsilon$ of $\pi/2$, alternately turning left and right.
  See Figure~\ref{fig:broken-path-disk} for an example. We will
  denote by $a_i$ the hyperbolic line containing the $i$-th short
  segment.
\end{definition}

\begin{figure}
  \centering
  \input{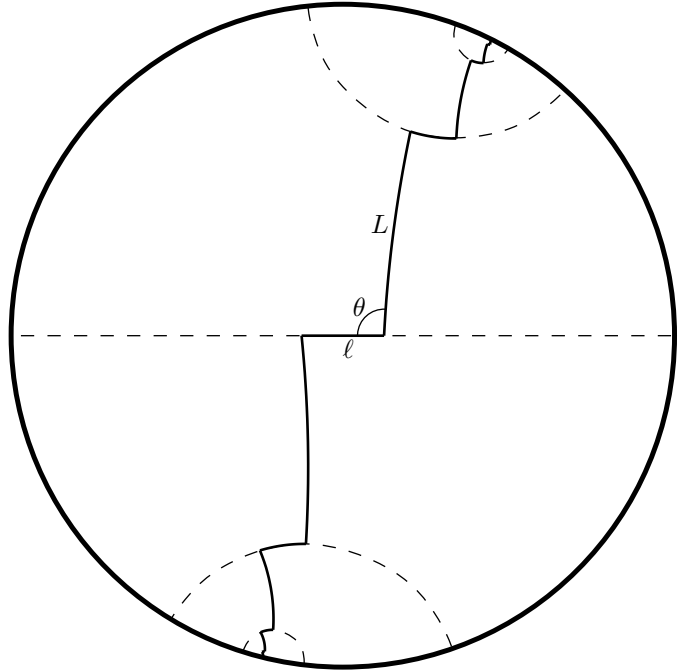}
  \caption{A broken path in the disk model. Here $\abs{\pi/2-\theta}
    < \epsilon$.}
  \label{fig:broken-path-disk}
\end{figure}

We prove some basic facts about when broken paths cross.
\begin{lemma}\label{lem:brokenpathshort}
  For any $0 < \epsilon < \pi/2$, there is a constant $L_0(\epsilon)$
  so that, for any $L > L_0(\epsilon)$, any broken path
  $b(L,\epsilon)$ converges to unique points at infinity that are on
  opposite sides of the hyperbolic line containing any short segment.
  As $\epsilon$ approaches $0$, the constant $L_0(\epsilon)$
  approaches $0$ as well.
\end{lemma}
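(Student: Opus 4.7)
The plan is to combine a standard quasi-geodesicity argument with an explicit hyperbolic trigonometry estimate in Fermi coordinates along the line containing a short segment. First I would show that every broken path $b(L,\epsilon)$ is a $(K,C)$-quasi-geodesic in $\mathbb{H}^2$ with constants depending only on $L$ and~$\epsilon$. This is a standard estimate: a piecewise geodesic whose consecutive segments have length at least $L$ and meet at angles bounded uniformly away from both $0$ and $\pi$ is a quasi-geodesic, with constants deteriorating as the minimum bend shrinks. Here consecutive segments meet at angles in $[\pi/2-\epsilon,\pi/2+\epsilon]$, which stays bounded away from $\pi$ for any fixed $\epsilon<\pi/2$, so the estimate applies once $L$ exceeds some $L_0(\epsilon)$. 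The Morse stability lemma for $\mathbb{H}^2$ then gives that each broken path fellow-travels a unique geodesic at bounded Hausdorff distance, and hence converges to unique points $\xi_\pm\in\partial_\infty\mathbb{H}^2$ in both directions.

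For the opposite-sides claim, fix a short segment $s$ contained in a hyperbolic line $a$, and set up Fermi coordinates $(u,v)$ about $a$, with $a=\{v=0\}$ and metric $dv^2+\cosh^2(v)\,du^2$. The long segment $L_1$ immediately following $s$ makes angle within $\epsilon$ of $\pi/2$ with $s$ and hence with $a$, so it crosses into the half-plane $\{v>0\}$ (say). An explicit hyperbolic trigonometry computation shows that after traversing length at least $L$ the endpoint has $v$-coordinate bounded below by some $v_1(L,\epsilon)$ that tends to $\infty$ with $L$ for any fixed $\epsilon$. The key rigidity is that if $L_1$ were exactly perpendicular to $a$, then $L_1$ would be the common perpendicular between $a$ and any geodesic orthogonal to $L_1$, so the short segment $s_1$ (nearly perpendicular to $L_1$) would lie at distance $\geq v_1$ from $a$; the $\epsilon$-tilt contributes at most an $O(\epsilon L)$ correction that is absorbed by taking $L$ large enough.

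From here I would induct on the segment number, crucially exploiting the alternation of turn directions. With alternating left/right turns and each turn equal to $\pi/2\pm\epsilon$, all the long segments consistently drift into $\{v>0\}$ (the hyperbolic analog of a monotone Euclidean staircase): the minimum $v$-value along the $k$-th long segment is bounded below by a strictly increasing sequence. Thus the forward tail remains in $\{v>0\}$, and combining this with quasi-geodesicity and the Morse bound, the endpoint $\xi_+$ lies in the closure of this half-plane at infinity. The linear growth of~$v$ along the tail excludes the two boundary points $a_\pm\in\partial_\infty a$, so $\xi_+$ lies strictly on one side of $a$; the symmetric argument for the backward tail places $\xi_-$ on the other side. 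Finally, since both the quasi-geodesic constants and the Fermi-coordinate estimates depend continuously on $\epsilon$ and degenerate only as $\epsilon\to\pi/2$, the required $L_0(\epsilon)$ can be taken to shrink to~$0$ as $\epsilon\to 0$.

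The main obstacle is the inductive Fermi-coordinate estimate through the alternating long-short-long pattern: because short segments have unconstrained length, a long short segment could a priori swing the path back toward $a$, and the accumulated angular error from the tilts bounded by~$\epsilon$ must be controlled at every step. The correct inductive invariant pins down the minimum $v$-value along each successive long segment, and verifying it requires balancing the rate at which hyperbolic divergence moves the path away from $a$ against the worst-case error introduced by each $\epsilon$-tilt and each arbitrarily long short segment. Choosing $L$ large enough relative to~$\epsilon$ ensures the invariant is preserved and the staircase escapes monotonically to infinity on one side of~$a$.
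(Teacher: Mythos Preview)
Your route can in principle be made to work, but it is far heavier than necessary and, as written, has real gaps. The paper's proof uses a single clean observation that dispenses with all of the quasi-geodesic machinery and the Fermi-coordinate induction: if $L_0(\epsilon)$ is chosen so that $\pi/2-\epsilon$ exceeds the angle of parallelism of $L_0(\epsilon)/2$ (explicitly, $L_0(\epsilon)>2\gd^{-1}(\epsilon)$), then the hyperbolic lines $a_i$ and $a_{i+1}$ containing \emph{consecutive} short segments are disjoint and not even asymptotic. Since the alternating left/right turns force the broken path to locally cross each~$a_i$, and the~$a_i$ are nested, the path crosses each~$a_i$ exactly once; convergence at infinity follows because the endpoints of the~$a_i$ shrink by a definite factor. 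This simultaneously yields the existence of unique endpoints, the opposite-sides statement for every short segment, and the explicit asymptotic $L_0(\epsilon)\to 0$.

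In your proposal, two steps are not justified as stated. First, the quasi-geodesicity claim invokes the standard lemma for piecewise geodesics with \emph{all} segments of length $\ge L$, but the short segments have unconstrained length (possibly zero), so that lemma does not apply directly; you would need a case split on whether a short segment is itself long, or absorb a very short one into the adjacent long segments. Second, your Fermi-coordinate induction is the crux, and you correctly flag the obstacle: an arbitrarily long short segment $s_1$ could, a priori, carry the path back toward~$a$. Resolving this requires precisely the angle-of-parallelism fact above: with $L>L_0(\epsilon)$ the line $a_1\supset s_1$ is ultraparallel to~$a$, so the $v$-coordinate along~$s_1$ is bounded below by a positive constant independent of the length of~$s_1$. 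Once you see this, the nested-lines argument gives the lemma immediately and the quasi-geodesic and Fermi-coordinate apparatus become unnecessary. Finally, your justification that $L_0(\epsilon)\to 0$ (``constants depend continuously on~$\epsilon$'') is not sufficient on its own; one needs the explicit observation that at $\epsilon=0$ the angle-of-parallelism condition holds for every $L>0$, which the paper's formula makes transparent.
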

That is, in Figure~\ref{fig:broken-path-disk}, the broken path crosses
the dashed paths.
\begin{proof}
  In fact, this is true so long as
  $L_0(\epsilon) > 2\gd^{-1}(\epsilon)$, where $\gd$ is the
  \emph{Gudermann function},
  defined, for instance, by $\gd(x) = \tan^{-1}(\sinh(x))$.
  
  Let $b$ be the broken path, and let $a_i$ be the hyperbolic line
  containing the $i$-th short segment. Since the turns in~$b$ alternate to
  the left and to the right, $b$ locally crosses each $a_i$.

  The bound on $L_0(\epsilon)$ was chosen so that $a_i$ and $a_{i+1}$
  do not cross or meet at infinity. (Another way to say this is that
  $\pi/2 - \epsilon$ is bigger than the angle of parallelism of
  $L_0(\epsilon)/2$.) Thus the path $b$ crosses the sequence of
  non-crossing segments $a_i$, and thus cannot cross a single $a_i$
  more than once, as desired.

  The fact that $L_0(\epsilon)$ is strictly greater than
  $2\gd^{-1}(\epsilon)$ means that as $i \to \pm\infty$ the endpoints
  of the segments~$a_i$ get closer by a definite factor on
  $\partial\H^2$. Thus, in either direction, $b$ converges to a
  definite point on the circle at infinity.
\end{proof}

From now on, we assume that all broken paths have $L>L_0(\epsilon)$.

\begin{lemma}\label{lem:brokenpathslinked}
  Fix $0 < \epsilon < \pi/2$ and $L > L_0(\epsilon)$. Then there is a constant
  $\kappa(\epsilon)$ with the following property. If
  $\gamma = b(L,\epsilon)$ and $\gamma' = b'(L,\epsilon)$ are two
  broken paths with a pair of short segments $s_0 \subset s_0'$ on the same line~$a_0$,
  and $s_0'$ extends at least
  $\kappa(\epsilon)$
  farther along~$a_0$ in each direction than~$s_0$, then $\gamma$
  and~$\gamma'$ cross essentially
  on~$a_0$.
\end{lemma}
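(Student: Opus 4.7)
My plan is to work in the upper half-plane model of $\H^2$, normalized so that $a_0$ is the imaginary axis, oriented upward. After this normalization, $s_0$ has endpoints $iy_p$ and $iy_q$ with $y_p \le y_q$, $s_0'$ has endpoints $iy_{p'}$ and $iy_{q'}$ with $y_{p'} < y_p$ and $y_q < y_{q'}$, and $d_{\H^2}(q,q'), d_{\H^2}(p,p') \ge \kappa(\epsilon)$. I will assume that the two broken paths traverse $a_0$ in the same direction (compatible with how the lemma is applied in the proof of Lemma~\ref{lem:join-classical}), so both turn at the top of their short segments into the same half-plane, WLOG the right. By Lemma~\ref{lem:brokenpathshort}, each of $\gamma$ and $\gamma'$ has a well-defined pair of endpoints at infinity on opposite sides of $a_0$; label them $\xi_\pm$ and $\xi'_\pm$ with $\xi_+, \xi'_+ \in \R_{>0}$ (forward from $q, q'$) and $\xi_-, \xi'_- \in \R_{<0}$ (backward from $p, p'$), under the identification $\partial_\infty \H^2 = \R \cup \{\infty\}$. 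The goal reduces to showing $\{\xi_-, \xi_+\}$ and $\{\xi'_-, \xi'_+\}$ are linked on $\partial_\infty \H^2$; by Definition~\ref{def:essential-crossing}(2)(a) this yields the required essential crossing on $s_0$.

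The technical heart is a \emph{landing-zone estimate}: for each $0 < \epsilon < \pi/2$ and $L \ge L_0(\epsilon)$ there exist constants $0 < c_1(\epsilon, L) \le c_2(\epsilon, L)$ such that whenever a broken path $b(L, \epsilon)$ has its initial short segment ending at $iy \in a_0$ and its next turn is into the right half-plane, its forward endpoint at infinity lies in $[c_1 y, c_2 y] \subset \R_{>0}$, with ratio $c_2/c_1$ arbitrarily close to $1$ for small $\epsilon$ and large $L$. The first long segment leaves $iy$ along a Euclidean semicircle centered at $c \in [-y\tan\epsilon, y\tan\epsilon]$, whose right endpoint on $\R$ lies in $y[\sec\epsilon - \tan\epsilon,\, \sec\epsilon + \tan\epsilon]$; this handles the first-step contribution. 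Inductively, the $k$-th turn along the broken path, separated from $iy$ by at least $k$ long segments of length $\ge L$, perturbs the forward endpoint in the visual metric by a factor $\lesssim e^{-Lk}$ (by exponential contraction of hyperbolic rays moving away from their initial direction); summing a geometric series in $e^{-L}$ bounds the total spread and absorbs it into $c_1, c_2$. A cleaner alternative is to invoke the Morse lemma: $b(L, \epsilon)$ is a $(K, C)$-quasi-geodesic with $K, C$ depending only on $L, \epsilon$, so it stays within a bounded Hausdorff distance of its limiting geodesic, which must cross $a_0$ in a neighborhood of its short segment at an angle in a controlled range about perpendicular, yielding the same bound by direct hyperbolic trigonometry.

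Given the estimate, the conclusion is a short computation. Hyperbolic distance along the imaginary axis is $d_{\H^2}(iy, iy') = \abs{\log(y/y')}$, so the hypotheses force $y_{q'}/y_q \ge e^{\kappa(\epsilon)}$ and $y_p/y_{p'} \ge e^{\kappa(\epsilon)}$. Choosing $\kappa(\epsilon)$ large enough that $e^{\kappa(\epsilon)} > c_2/c_1$ gives $\xi_+ \le c_2 y_q < c_1 y_{q'} \le \xi'_+$, and the symmetric estimate on the left half-plane gives $\abs{\xi_-} \ge c_1 y_p > c_2 y_{p'} \ge \abs{\xi'_-}$, i.e., $\xi_- < \xi'_- < 0 < \xi_+ < \xi'_+$. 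In the cyclic order on $\partial_\infty \H^2 = \R \cup \{\infty\}$ we therefore encounter $\xi_+, \xi'_+, \xi_-, \xi'_-$ counterclockwise (with $\infty$ falling between $\xi'_+$ and $\xi_-$), so the pairs $\{\xi_-, \xi_+\}$ and $\{\xi'_-, \xi'_+\}$ alternate and are linked, as required.

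The main obstacle is the landing-zone estimate itself: the inductive geometric-series argument requires careful bookkeeping of how the alternating left/right turns of the broken path accumulate at infinity, with uniform bounds that are independent of the (possibly long) lengths of individual short segments, so that the final $\kappa(\epsilon)$ depends only on $\epsilon$ and the fixed $L$. The Morse-lemma route sidesteps much of this bookkeeping but imports its own implicit constants which would still need to be unpacked to exhibit a usable $\kappa$.
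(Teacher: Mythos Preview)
Your approach is correct and shares the paper's overall architecture: bound the endpoint at infinity of each broken path within a ``window'' determined by the turning point on~$a_0$, then observe that separating the turning points by enough hyperbolic distance separates the windows, forcing the endpoints to link. The difference lies in how the window is obtained.

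The paper works in the band model (so translation along~$a_0$ is Euclidean) and gets the window by a soft compactness argument: by Lemma~\ref{lem:brokenpathshort}, the forward endpoint of~$\gamma$ lies in the interval of~$\partial\H^2$ cut off by the line~$a_1$ through the \emph{next} short segment, and that interval depends continuously on the finite parameter set (two turning angles in $[\pi/2-\epsilon,\pi/2+\epsilon]$ and the length of~$l_1$ in the compactified interval $[L_0(\epsilon),\infty]$). The union over this compact parameter space is the window~$W$, with no explicit constants needed. This sidesteps precisely the bookkeeping you flag as the ``main obstacle'': you never need to track how later turns perturb the endpoint, because Lemma~\ref{lem:brokenpathshort} already traps the entire tail of the broken path beyond~$a_1$.

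Your route via an inductive geometric series does work, but it is doing strictly more than required; your Morse-lemma alternative is closer in spirit to the paper's soft argument but imports larger machinery. A side benefit of the paper's version is that the window (and hence~$\kappa$) depends only on~$\epsilon$, not on~$L$, since $l_1$ is varied over all of $[L_0(\epsilon),\infty]$; your $c_1,c_2$ carry an $L$-dependence that you would then have to remove by taking the worst case $L=L_0(\epsilon)$. Your orientation assumption (both paths traverse~$a_0$ in the same direction) is harmless for the application and in fact unnecessary: reversing a broken path yields another broken path with the same parameters, so the same window estimate applies to both endpoints, and the linking conclusion survives a short case check.
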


Note that in the last claim there is no control on $\kappa(\epsilon)$.

\begin{proof}

\begin{figure}
  \centering
  \input{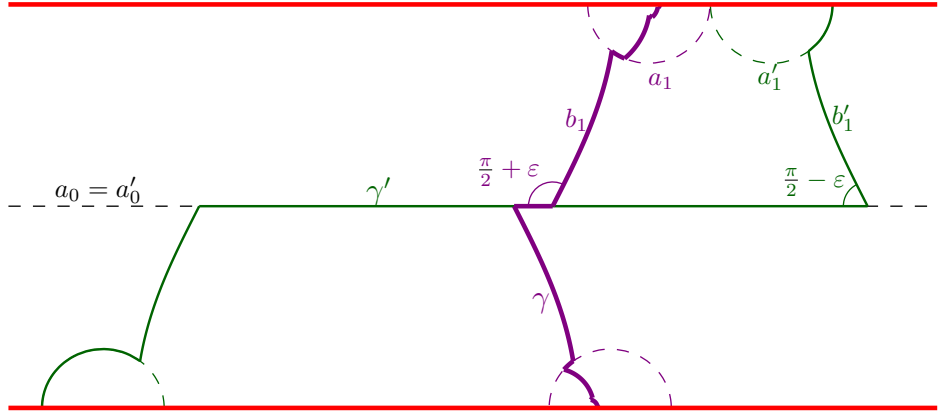}
  \caption{Crossing broken paths in the bands model for
    Lemma~\ref{lem:brokenpathslinked} and its proof, showing the case
    when the windows nearly touch.}
  \label{fig:broken-paths-cross}
\end{figure}
  It is most convenient to work in
  the band model of the hyperbolic plane as in
  Figure~\ref{fig:broken-paths-cross}. Focus first on the
  path~$\gamma$, and let $s_0$, $l_1$, and $s_1$ be the next short and
  long segments of~$\gamma$, and let $a_1$ be the line
  containing~$s_1$. The line $a_1$ defines an interval on
  $\partial\H^2$ that, by Lemma~\ref{lem:brokenpathshort}, must
  contain the endpoint of~$\gamma$. Now fix the endpoints
  of $s_0$ and vary the other parameters defining the interval
  of~$a_1$, namely
  \begin{itemize}
  \item the angles between $s_0$ and~$l_1$ and between $l_1$
    and~$s_1$, both in $[\pi/2-\epsilon,\pi/2+\epsilon]$, and
  \item the length of~$l_1$, in $[L_0(\epsilon),\infty]$.
  \end{itemize}
  (If we allow $\ell_1$ to have infinite length, the interval
  degenerates to a single point on $\partial\H^2$.) As the parameters
  vary, the interval varies continuously on~$\partial\H^2$, remaining
  disjoint from the endpoints of~$a_0$. By compactness of the domain,
  the union of these intervals is a larger interval $W \subset \partial\H^2$
  that necessarily contains the endpoint of~$\gamma$ for fixed
  endpoint of~$s_0$.
  Figure~\ref{fig:broken-paths-cross-a} shows the presumably extremal
  possibilities for~$W$ in one example, but we do not need to identify
  the precise
  values.
  \begin{figure}
    \centering
    \input{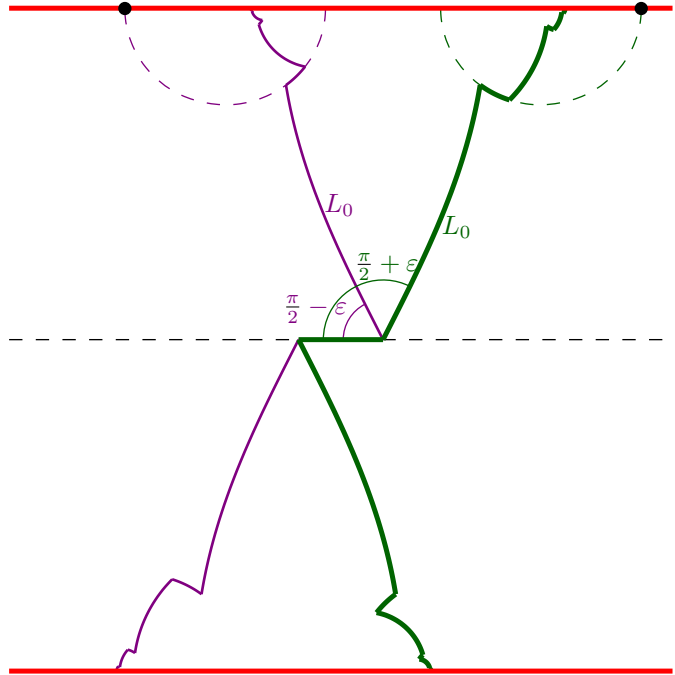}
    \caption{The window of possible endpoints of a broken path. The
      marked points are bounds on ends of broken paths $b(L,\epsilon)$ with
      $L \ge L_0$.}
    \label{fig:broken-paths-cross-a}
  \end{figure}

  A similar argument applies to endpoint of $\gamma'$ on the same side
  of $a_0$: it must lie in another window $W'$ on $\partial\H^2$. By
  symmetry, in the band model $W'$ is a translation of $W$ by a
  (Euclidean) amount proportional to $\kappa(\epsilon)$. Thus for
  $\kappa(\epsilon)$ sufficiently large, $W$ and $W'$
  will be disjoint; one extremal case is shown in
  Figure~\ref{fig:broken-paths-cross}.

  Similar arguments apply to the other endpoints of $\gamma$
  and~$\gamma'$, implying that for large enough $\kappa(\epsilon)$ the
  paths cross essentially.
\end{proof}

\section{Stable functions}
\label{sec:stable}

Some curve functionals satisfy quasi-smoothing and convex union but
are not stable or homogeneous on the nose. For example, the length of
a curve with respect to an arbitrary generating set is of this form
(Example~\ref{ex:puncturedtorus}). We fix this by passing to a stable
length as in Theorem~\ref{thm:stable}. Recall that the \emph{stable curve
  functional} $\norm{f}$ is
  defined by
  \[
    \norm{f}(C) \coloneqq \lim_{n \to \infty} \frac{f(C^n)}{n}.
  \]

As in the proof of Theorem~\ref{thm:weightconvex}, we will consider weighted
curve functionals.
In this section we will prove the following theorem

\begin{theorem}\label{thm:weightstable}
  Let $f$ be a weighted curve
  functional satisfying weighted quasi-smoothing and convex union.
  Then the stabilized curve functional 
  \[
    \|f\|(C) \coloneqq \lim_{n \to \infty} \frac{f(C^n)}{n}.
  \]
  satisfies weighted quasi-smoothing, convex union, strong stability, and homogeneity,
  and thus extends to a continuous function on $\GC^+(S)$.
\end{theorem}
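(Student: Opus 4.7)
The plan is to verify that the stabilized functional $\|f\|$ inherits the four hypotheses of Theorem~\ref{thm:weightconvex}---convex union, weighted quasi-smoothing, strong stability, and homogeneity---after which the continuous extension to $\GC^+(S)$ follows immediately from that theorem.

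First I would establish that the limit $\|f\|(C) = \lim_n f(C^n)/n$ exists, using Fekete's lemma (Lemma~\ref{lem:fekete}). Each $C^{n+m}$ admits essential self-crossings of type~\ref{item:ess-multi} in Definition~\ref{def:essential-crossing}, yielding a reduction $C^{n+m} \reducesto C^n \cup C^m$; combined with convex union $f(C^n \cup C^m) \le f(C^n) + f(C^m)$, the weighted quasi-smoothing inequality should yield a quasi-subadditivity of the sequence $a_n := f(C^n)$ up to a constant. A useful auxiliary is the sequence $b_n := f(nC)$, which is directly subadditive by convex union applied to $(n+m)C = nC \cup mC$, and which is related to $a_n$ by the smoothing bound $b_n \le a_n + (n-1)R$ obtained by iteratively smoothing $C^n \reducesto nC$ at its essential type-\ref{item:ess-multi} crossings.

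For the property inheritance: convex union follows from $(C_1 \cup C_2)^n = C_1^n \cup C_2^n$ together with the convex union of $f$, after dividing by $n$ and passing to the limit. Weighted quasi-smoothing is analogous: a single smoothing $C \reducesto_w C'$ lifts to $n$ parallel smoothings $C^n \reducesto (C')^n$ (one per wrap), at total weighted cost $nwR$, which becomes $wR$ upon dividing by $n$. Homogeneity $\|f\|(C^n) = n\|f\|(C)$ is immediate from the definition by reindexing $k \mapsto nk$. Weak stability $\|f\|(nC) = \|f\|(C^n)$ is then obtained by combining the smoothing bound $f(C^{nk}) \ge f(nC^k) - (n-1)R$ with the convex union bound $f(nC^k) \le n f(C^k)$, together forcing $\|f\|(nC) = n\|f\|(C) = \|f\|(C^n)$. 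Strong stability follows from weak stability via Corollary~\ref{cor:weakstrongstab}.

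The main obstacle is obtaining the lower bound $\|f\|(nC) \ge n\|f\|(C)$ for weak stability: both inequalities arising directly from convex union and from smoothing $C^{nk}$ down to $nC^k$ bound $\|f\|(nC)$ from above rather than below, and the reverse inequality requires a more delicate argument. The natural approach is to smooth $nC^k$ further, all the way down to the maximally-split multi-curve $nkC$, and then to control $f(nkC) = b_{nk}$ via Fekete applied to the auxiliary subadditive sequence $b_n$---comparing its asymptotic slope $\lim b_n/n$ with $\|f\|(C)$ through the bound $b_n \le a_n + (n-1)R$. Once existence and the four properties are in place, Theorem~\ref{thm:weightconvex} produces the continuous extension $\|f\| \colon \GC^+(S) \to \R_{\ge 0}$.
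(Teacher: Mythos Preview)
Your argument has a structural gap that recurs at nearly every step. The two tools you invoke---quasi-smoothing and convex union---yield inequalities that do not chain: from $C^{n+m} \reducesto C^n \cup C^m$ quasi-smoothing gives $f(C^{n+m}) \ge f(C^n \cup C^m) - R$, while convex union gives $f(C^n \cup C^m) \le f(C^n)+f(C^m)$. Together these bound $f(C^n \cup C^m)$ from both sides but say nothing about $a_{n+m}$ versus $a_n + a_m$, so Fekete does not apply to $a_n = f(C^n)$. Your auxiliary sequence $b_n = f(nC)$ is genuinely subadditive, but you relate it to $a_n$ only in one direction ($b_n \le a_n + (n-1)R$), which does not force $\lim b_n/n = \|f\|(C)$; for $f(C) = -(\text{number of components of }C)$, which satisfies both hypotheses with $R=1$, and $C$ connected, one has $a_n \equiv -1$ while $b_n = -n$, so the two limits are $0$ and~$-1$. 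The same one-sidedness blocks your stability lower bound. Your quasi-smoothing claim that $C^n \reducesto (C')^n$ in $n$ smoothings is also unjustified: if the crossing is a self-crossing of a connected component, so that $C' = A \cup B$ has two components, then $(C')^n = A^n \cup B^n$ still has two components while $C^n$ has one, and a parity count shows no sequence of $n$ smoothings connects them when $n$ is even. Finally, invoking Corollary~\ref{cor:weakstrongstab} is circular, since that corollary is deduced from the present theorem.

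What is missing throughout is a \emph{reverse} reduction: a way to smooth $nC$ (together with a fixed auxiliary curve~$K$) down to $C^n$, and likewise $C^n \cup C^m \cup K$ down to $C^{n+m}$. The paper supplies exactly this in Lemmas~\ref{lem:stable-def} and~\ref{lem:quasistable-b}, using the same hyperbolic cross-section machinery that drives the Join Lemma; it is not an elementary combinatorial fact. With $nC \cup (n-1)K \reducesto_{(n-1)w} C^n$ in hand one obtains the missing upper bound $a_n \le b_n + O(n)$, which closes the Fekete argument, and strong stability and quasi-smoothing for $\|f\|$ are then proved directly via the detour $C_1^k \reducesto kC_1 \reducesto kC_1'$ followed by $kC_1' \cup (k-1)K \reducesto (C_1')^k$.
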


We first prove some lemmas.

\begin{lemma}
  \label{lem:stable-def}
  For any connected curve~$C$ and any sufficiently large $n,m \geq 0$,
  there exists some
  curve $K$ and weight~$w$ so that $C^n \cup C^m \cup K \reducesto_w C^{n+m}$.
\end{lemma}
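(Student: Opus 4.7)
The plan is to realize both $C^n$ and $C^m$ as iterates of a homotopy return map based at a single point lying on the canonical lift of the geodesic representative of $C$, and then apply the classical join lemma (Lemma~\ref{lem:join-classical}) directly.

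Fix a good cross-section $\tau=(\tau_0,\tau,\tau')$ for the geodesic flow, as constructed in Section~\ref{sec:cross-section}. We may assume, by perturbing the underlying closed geodesic~$\delta$ used to build the wedge sets if necessary, that the closed geodesic $\gamma_C$ representing $C$ meets $\tau_0^\circ$ transversely at a positive number of points. Pick one such intersection $\vec x_0\in\tau_0^\circ$ on the canonical lift of $\gamma_C$ to $UT\Sigma$. Because this lift is a periodic orbit of $\phi_t$, there is a least positive integer $N$, equal to the number of times the lift crosses $\tau$ per period, so that $p_\tau^N(\vec x_0)=\vec x_0$. With these choices, the return word $m^N(\vec x_0)\in\pi_1(UT\Sigma,*)$ projects to the free homotopy class $[\gamma_C]=C$, and more generally $[m^{nN}(\vec x_0)]=C^n$ in $\Curves^+(S)$ for every $n\ge 1$.

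For $n,m$ large, set $k\coloneqq nN$ and $\ell\coloneqq mN$; these meet the ``sufficiently large'' hypothesis in Lemma~\ref{lem:join-classical}(a). Because $p^{k}(\vec x_0)=\vec x_0$, part~(a) of that lemma specializes to
\[
  [m^{k}(\vec x_0)]\;\cup\;[m^{\ell}(p^{k}(\vec x_0))]\;\cup\;K_\tau \;\reducesto_{w_\tau}\;[m^{k+\ell}(\vec x_0)],
\]
which is exactly
\[
  C^n\cup C^m\cup K_\tau \;\reducesto_{w_\tau}\; C^{n+m}.
\]
Thus the lemma holds with $K=K_\tau$ and $w=w_\tau$, both of which depend only on the chosen cross-section (and thereby, indirectly, on $C$ through the requirement that the cross-section meet $\gamma_C$ transversely).

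The main point requiring care is verifying that the broken-path arguments of Section~\ref{sec:hyperbolic}, which underpin the proof of Lemma~\ref{lem:join-classical}, still apply when $\alpha\coloneqq m^{k}(\vec x_0)$ and $\beta\coloneqq m^{\ell}(\vec x_0)$ come from the same periodic orbit $\phi_t(\vec x_0)\subset\gamma_C$, rather than from two distinct trajectories. Inspecting that proof, the essential crossings used at each of the steps (i)--(iv) are produced by inserting extra copies of the transversal geodesic~$\delta$, and Lemmas~\ref{lem:brokenpathshort} and~\ref{lem:brokenpathslinked} only require the long segments (along $\gamma_C$) to be sufficiently long, which is ensured by taking $n,m$ large, and the short segments on the two sides to differ in length by at least the universal constant $\kappa(\epsilon)$, which is arranged by the number $N$ of $\delta$'s inserted. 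Periodicity of $\phi_t(\vec x_0)$ obstructs neither condition, so the argument of Lemma~\ref{lem:join-classical}(a) goes through unchanged in this case.
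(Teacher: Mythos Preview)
Your argument is correct and follows the same strategy as the paper: pick a point on the periodic orbit of~$C$ in the good cross-section, observe that $[m^{rN}(\vec x_0)]=C^r$, and invoke the classical join lemma with $k=nN$, $\ell=mN$. The paper's own proof is more terse and in fact cites part~(b) of Lemma~\ref{lem:join-classical}, displaying $C^{n+m}\cup K_\tau \reducesto_{w_\tau} C^n\cup C^m$; as stated, that is the reverse of what the lemma asserts, so your use of part~(a) is the one that actually matches the statement (and the downstream application to subadditivity). Your remark about perturbing the cross-section is unnecessary, since $\tau_0$ is already a global cross-section and hence meets every orbit in its interior, but it does no harm.
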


\begin{proof}
  Let $p$ be the number of intersections of our cross-section~$\tau$
  with the canonical lift of~$C$ to $UT\Sigma$, and apply
  Lemma~\ref{lem:join-classical}(b), taking $k=np$ and $l=mp$.
  (We reuse the same cross-section~$\tau$ for convenience;
    nothing here depends on the definition of the extension.)
  Then for sufficiently large $n,m$,
   \begin{align*}
    C^{n+m} \cup K_{\tau} &\reducesto_{w_\tau} C^m \cup C^n
   \end{align*}
   proving the lemma with $K = K_\tau$ and $w = w_\tau$.
\end{proof}
\begin{lemma}
  \label{lem:quasistable-a}
  For any connected curve~$C$, we have $C^{n} \reducesto_{n-1} n C$.
\end{lemma}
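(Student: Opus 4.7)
The proof is by induction on $n$. The base case $n=1$ is vacuous, since $C^1$ and $1\cdot C$ coincide as multi-curves and no smoothings are required. For the inductive step it suffices to produce a single essential smoothing
\[
  C^n \reducesto_1 C^{n-1} \cup C,
\]
because then, applying the inductive hypothesis to the $C^{n-1}$ component (and leaving the extra parallel copy of~$C$ untouched), we obtain a further $n-2$ essential smoothings $C^{n-1} \cup C \reducesto_{n-2} (n-1)C \cup C = nC$, giving in total $C^n \reducesto_{n-1} nC$.

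To realize this single smoothing geometrically, fix a hyperbolic metric on $S$, choose a primitive $\delta \in \pi_1(S)$ with $[C] = [\delta^k]$, and let $\alpha$ be the geodesic representative of~$\delta$. Build a concrete representative of $C^n = [\delta^{nk}]$ inside a thin annular tubular neighborhood of~$\alpha$ by starting from $n$ parallel copies of the standard $k$-strand cyclic perturbation of~$\delta^k$ and then inserting $n-1$ extra ``linking'' crossings, concentrated in a short interval of~$\alpha$, whose effect on the strand permutation is to splice the $n$ disjoint $k$-cycles into a single $nk$-cycle: a standard braid-combinatorial construction. Smoothing one of these linking crossings reverses exactly one splicing and separates a single $k$-cycle from the main cycle; the resulting concrete curve is a representative of $C^{n-1} \cup C$.

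The chosen crossing is essential by Definition~\ref{def:essential-crossing}(2)(b): it lies on the connected concrete curve $[\delta^{nk}]$, and the two subloops from $p$ to~$q$ and from $q$ to~$p$ represent $\delta^k$ and $\delta^{(n-1)k}$ respectively, both nontrivial powers of the primitive class~$\delta$ with exponents strictly between $0$ and~$nk$. All of the smoothings supplied by the inductive hypothesis are of the same type, carried out on the connected component representing some positive power of~$\delta$, so they remain essential when checked at each intermediate stage of the reduction (the presence of the extra parallel copies of~$C$ does not affect the case~(2)(b) condition).

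The only step requiring care is the braid-combinatorial construction of the $n-1$ linking crossings and the identification of the subloops at the chosen crossing with the claimed powers of~$\delta$; once this is in place, both essentiality and the count of smoothings are immediate, and the induction closes.
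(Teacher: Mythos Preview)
Your proof is correct and follows the same idea as the paper's, which consists of a single sentence: ``Self-crossings of an $n$-fold cover of a curve are essential by definition.'' You have simply unpacked this, writing out the induction and the explicit representative that makes the $n-1$ type-(2)(b) crossings visible; the paper leaves all of that implicit in the phrase ``by definition''.
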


\begin{proof}
  This is Proposition~\ref{prop:crossings-powers}.
\end{proof}

\begin{lemma}
\label{lem:quasistable-b}
There is a curve $K$ and a constant~$w$ so
that, for any curve $C$ on~$S$ and any $n \ge 2$ we have
\[
nC \cup (n-1)K\reducesto_{(n-1)w} C^n.
\]
\end{lemma}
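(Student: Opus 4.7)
The plan is to invert Lemma~\ref{lem:quasistable-a} by iteratively joining parallel copies of $C$ into higher powers of $C$, one copy at a time, using the classical join lemma (Lemma~\ref{lem:join-classical}(a)). Since the statement demands a curve $K$ and constant $w$ that are uniform in~$C$, I will take $K = K_\tau$ and $w = w_\tau$ from the join data for the good cross-section $\tau$ of Section~\ref{sec:cross-section}; these constants depend only on the fixed hyperbolic metric and cross-section, not on~$C$.

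Concretely, let $\gamma$ be the geodesic representative of~$C$, pick an intersection point $\vec x_0$ of the canonical lift $\wt\gamma$ with~$\tau$, and let $p$ be the total number of such intersection points, so that $[m^{jp}(\vec x_0)] = C^j$ for all $j \ge 1$. Applying Lemma~\ref{lem:join-classical}(a) with $k = (j{-}1)p$ and $\ell = p$ gives
\[
  C^{j} \cup C \cup K_\tau \reducesto_{w_\tau} C^{j+1} \qquad\text{for } j = 1, \ldots, n-1,
\]
and chaining these $n-1$ reductions yields the desired $nC \cup (n-1) K_\tau \reducesto_{(n-1)w_\tau} C^n$.

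The main obstacle is the largeness hypothesis built into Lemma~\ref{lem:join-classical}: the broken-path essentiality arguments of Section~\ref{sec:hyperbolic} require both iterate indices to exceed a threshold depending only on~$\tau$, which in the chain above translates into needing $p$ itself to exceed that threshold. For curves whose canonical lifts cross~$\tau$ only a few times, the earliest joins are not directly covered. I would address this by inflating $K$ to a fixed union of translates of $K_\tau$ (equivalently, several copies of $K_\tau$ together with additional copies of the transversal geodesic~$\delta$) chosen large enough to lengthen the relevant broken paths so that the smoothed crossings remain essential regardless of~$p$. Since the inflation factor depends only on~$\tau$, the resulting $K$ and $w$ remain uniform in~$C$, and the per-join cost stays bounded, yielding the advertised $(n-1)w$ smoothings.
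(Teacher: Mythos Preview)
Your overall strategy---iteratively applying the join machinery of Lemma~\ref{lem:join-classical}(a) to merge $C^{j}\cup C$ into $C^{j+1}$ and chaining---is exactly the paper's approach. The gap is in how you handle the largeness hypothesis.

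The obstruction you identify is real, but your proposed fix does not address it. In the broken-path picture of Section~\ref{sec:hyperbolic}, the ``long'' segments are the pieces of the geodesic trajectory of~$C$, while the ``short'' segments lie along the transversal geodesic~$\delta$. The essentiality arguments (Lemmas~\ref{lem:brokenpathshort} and~\ref{lem:brokenpathslinked}) require the long segments to have length exceeding $L_0(\epsilon)$; there is no compensation available from making the short segments longer. Inflating $K$ by adding more copies of~$\delta$ only lengthens the short segments, so it does nothing for the step where $\beta = [m^p(\vec x_0)] = C$ has long-segment length $\ell_\Sigma(C)$, which may be below~$L_0(\epsilon)$ for the fixed cross-section of Section~\ref{sec:cross-section}.

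The paper's resolution is to choose the cross-section differently for this lemma: pick $\epsilon$ small enough that $L_0(\epsilon)$ is less than the systole of~$\Sigma$ (possible since $L_0(\epsilon)\to 0$ as $\epsilon\to 0$, by Lemma~\ref{lem:brokenpathshort}), and then build a good cross-section inside a wedge set $W(K,\epsilon)$ for that~$\epsilon$. Now every closed geodesic has length at least the systole, hence exceeds $L_0(\epsilon)$, so the long segments in the broken-path lift of a single copy of~$C$ are already long enough, uniformly in~$C$. With this choice the join argument for $2C\cup K \reducesto_w C^2$ goes through directly, and iteration gives the lemma.
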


\begin{proof}
  Pick $0 < \epsilon < \pi/2$ so that $L_0(\epsilon)$ from
  Lemma~\ref{lem:brokenpathshort} is less than the
  \emph{systole} of~$\Sigma$, the length of the shortest closed
  geodesic on~$\Sigma$. As in Section~\ref{sec:cross-section}, find a
  curve~$K$ and a
  complete global cross-section $\tau \subset W(K, \epsilon)$. Then,
  by the arguments of Lemma~\ref{lem:join-classical}(a), there is some
  integer~$w$ so that
   \[
     2C \cup K \reducesto_{w} C^2.
   \]
   (We are not directly applying Lemma~\ref{lem:join-classical}, since
   we do not let the iteration in the return map go to infinity;
   but all of the long segments of the broken paths are long enough to
   make the arguments there
   work.) Iterating in this
   way, we deduce the desired result.
\end{proof}

\begin{proof}[Proof of Theorem~\ref{thm:weightstable}]
We must show that $\norm{f}$ is well-defined and satisfies convex union,
weighted quasi-smoothing, strong stability and homogeneity. Let $R\ge 0$ be the
quasi-smoothing constant of~$f$, and let $K$ and $w$ be
the curves and constants from Lemmas~\ref{lem:stable-def}
and~\ref{lem:quasistable-b} (which we can take to be the same), and
let $f(K)^+$ be $\max(f(K), 0)$.
\begin{itemize}
\item \textbf{Well-defined:} Lemma~\ref{lem:stable-def} shows that the sequence
  $(f(C^n) + wR + f(K)^+)_{n \in \mathbb{N}}$ is sub-additive for large enough~$n$, and
  thus by Lemma~\ref{lem:fekete} the limit defining $\norm{f}$ exists.
\item  \textbf{Convex union:} this follows immediately from the fact that $(C_1
  \cup C_2)^k = C_1^k \cup C_2^k$, the definition of $\norm{f}$, and
  convex union property of~$f$.
\item \textbf{Strong stability:}
  For any
  multi-curve $D$ and any curve~$C$,
  by Lemmas \ref{lem:quasistable-b}
  and~\ref{lem:quasistable-a} (applied to $C^k)$ and the
  quasi-smoothing and convex union properties of~$f$, we have, 
\begin{align*}
  f(D^k \cup C^{nk})-(n-1)wR &\leq f(D^k \cup nC^k) + (n-1)f(K)^+\\
  f(D^k \cup nC^k)-(n-1)R &\leq f(D^k \cup C^{nk}).
\end{align*}
Combining the inequalities, dividing
by~$k$, and letting $k$ go to infinity, we obtain
\begin{align*}
\norm{f}(D \cup C^n)&=\norm{f}(D \cup nC).
\end{align*}
We can iterate this to prove the result when $C$ is a multi-curve.
\item \textbf{Homogeneity:} It is clear from the definition of
  $\norm{f}$ that $\norm{f}(C^n) = n\norm{f}(C)$. Homogeneity then
  follows from stability.
\item \textbf{Weighted Quasi-smoothing:} Let $C = C_1 \cup C_2$ be a
  multi-curve, where the
  smoothing involves the component(s) in $C_1$, so that $C_1$ and its smoothing
  $C_1'$ each have at most two components. Thus
  $C_1^k \reducesto_{2(k-1)} k C_1$ and
      $kC_1 \reducesto_k kC_1'$.
      Then
      \begin{align*}
        \norm{f}(C)
          &=\lim_{k \to \infty} \frac{f(C_1^k \cup C_2^k)}{k} \\
          & \ge \lim_{k \to \infty} \frac{f(kC_1 \cup C_2^k)-2kR}{k}
            && \text{(Lemma~\ref{lem:quasistable-a})}\\
          & \ge \lim_{k \to \infty}  \frac{f(kC_1' \cup C_2^k) - 3kR}{k}
            && \text{(quasi-smoothing for $f$)}\\
          &\ge \lim_{k \to \infty}  \frac{f((C_1')^k \cup C_2^k) - 3kR
            - 2kwR - 2kf(K)^+}{k}
            &&\text{(Lemma~\ref{lem:quasistable-b})}\\              
        & = \norm{f}(C') - (3+2w)R - 2f(K)^+,
      \end{align*}
      so $\norm{f}$ satisfies quasi-smoothing, with
      constant $(3+2w)R + 2f(K)^+$. By
      Proposition~\ref{prop:weightextend}, $\norm{f}$ also satisfies
      weighted quasi-smoothing.
      \qedhere
    \end{itemize}
\end{proof}

Finally, we show that with other hypotheses, (weak) stability implies strong
stability, so that we don't need to assume strong stability in the
statement of Theorem~\ref{thm:weightconvex}.

\begin{corollary}
  Let $f$ be a weighted curve
  functional satisfying weighted quasi-smoothing, convexity, stability, and homogeneity. Then $f$ also satisfies strong stability.
  \label{cor:weakstrongstab}
\end{corollary}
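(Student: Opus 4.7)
The plan is to run a $k$-fold cover limiting argument directly on $f$, in complete parallel with the strong-stability step for $\|f\|$ in the proof of Theorem~\ref{thm:weightstable}, exploiting the fact that weak stability and homogeneity already give an exact rescaling under $k$-fold covers. Specifically, applying weak stability to the multi-curve $D \cup C^n$ and then homogeneity, together with the identity $(D \cup C^n)^k = D^k \cup C^{nk}$, gives
\[
k \cdot f(D \cup C^n) = f\bigl(k(D \cup C^n)\bigr) = f\bigl((D \cup C^n)^k\bigr) = f(D^k \cup C^{nk}),
\]
and the analogous computation with $nC$ in place of $C^n$ (using $(nC)^k = nC^k$) yields $k \cdot f(D \cup nC) = f(D^k \cup nC^k)$. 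Hence it suffices to bound the difference $\lvert f(D^k \cup C^{nk}) - f(D^k \cup nC^k)\rvert$ by a constant independent of $k$, and then divide by $k$ and let $k \to \infty$.

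To produce such a $k$-independent bound I would apply Lemmas~\ref{lem:quasistable-a} and~\ref{lem:quasistable-b} to $C^k$ in place of $C$. Using $C^{nk} = (C^k)^n$, these give the smoothings
\[
C^{nk} \reducesto_{n-1} nC^k
\qquad\text{and}\qquad
nC^k \cup (n-1)K \reducesto_{(n-1)w} C^{nk},
\]
where the auxiliary curve $K$ and the constant $w$ are produced once and for all from the wedge cross-section in Section~\ref{sec:cross-section} and hence do not depend on $k$. Adjoining $D^k$ to both sides and combining with weighted quasi-smoothing and convex union, exactly as in the strong-stability step for $\|f\|$, yields
\[
\lvert f(D^k \cup C^{nk}) - f(D^k \cup nC^k)\rvert \le A(n)
\]
for some $A(n)$ depending only on $n$, $R$, $w$, and $f(K)$. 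Dividing by $k$ and taking $k \to \infty$ gives $f(D \cup C^n) = f(D \cup nC)$ when $C$ is connected; the general multi-curve case follows by iterating the swap $C_i^n \leftrightarrow nC_i$ one component at a time, each time absorbing the untouched components of $C$ into the multi-curve in the role of $D$.

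The main thing to monitor is that the quantities $n-1$, $(n-1)w$, and $f(K)$ produced by Lemmas~\ref{lem:quasistable-a} and~\ref{lem:quasistable-b} really are independent of $k$ when those lemmas are applied to $C^k$. This is immediate once one inspects the lemmas: the cross-section and the auxiliary curve $K$ are fixed before the argument, and the broken-path analysis underlying Lemma~\ref{lem:quasistable-b} produces a smoothing count that scales only with $n$. Once this is verified, the argument reduces to dividing an $O_n(1)$ error by $k$ and sending $k \to \infty$, and there is no further obstacle.
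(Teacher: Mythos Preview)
Your argument is correct and is essentially the same as the paper's, just unpacked. The paper's proof is a two-line observation: under weak stability and homogeneity one has $\norm{f}(C)=\lim_k f(C^k)/k=\lim_k f(kC)/k=f(C)$ for every multi-curve~$C$, so $f=\norm{f}$; then strong stability for~$f$ is inherited from the strong-stability clause already established for~$\norm{f}$ in Theorem~\ref{thm:weightstable}. Your computation $k\,f(D\cup C^n)=f(D^k\cup C^{nk})$ is exactly the identity $f=\norm{f}$ applied to $D\cup C^n$, and the $k$-independent bound you extract from Lemmas~\ref{lem:quasistable-a} and~\ref{lem:quasistable-b} is verbatim the strong-stability step inside the proof of Theorem~\ref{thm:weightstable}. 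So the two arguments coincide; the paper simply cites that step rather than rerunning it.
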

\begin{proof}
By the definition of $\norm{f}$ and stability and homogeneity of $f$, we have, for all oriented multi-curves $C$,
\[
\norm{f}(C)=f(C).
\]
By Theorem~\ref{thm:stable}, $\norm{f}$ satisfies strong stability.
\end{proof}

The proof of this part of Theorem~\ref{thm:weightstable} does not use
Theorem~\ref{thm:weightconvex}, so we can use
Corollary~\ref{cor:weakstrongstab} in the proof of
Theorem~\ref{thm:weightconvex}.

We finish by proving Theorem~\ref{thm:stable}.

\begin{proof}[Proof of Theorem~\ref{thm:stable}]
By Proposition~\ref{prop:weightextend}, $f$ extends uniquely to a weighted curve functional satisfying convex union, homogeneity, stability, and weighted quasi-smoothing with the same constant.
Theorem~\ref{thm:weightstable} applied to this extension gives the result.
\end{proof}


\bibliographystyle{hamsalpha}
\bibliography{Smoothings}

\end{document}